\documentclass[11pt,reqno]{amsart}
\usepackage[letterpaper,margin=1.1in]{geometry}
\usepackage{amsmath,amssymb,amsthm,mathtools}
\usepackage{enumitem}
\usepackage{booktabs}
\usepackage{graphicx}
\usepackage{array}
\usepackage[authoryear,round]{natbib}
\usepackage{microtype}
\usepackage{hyperref}
\usepackage{placeins}
\usepackage{subcaption}
\usepackage{longtable}
\usepackage{booktabs}
\usepackage{rotating}
\usepackage{multirow}
\usepackage{caption}
\usepackage{subcaption}
\usepackage{longtable}
\usepackage{capt-of}
\usepackage{float}
\usepackage{multirow}
\usepackage{xcolor}
\makeatletter
\AtBeginDocument{\let\@biblabel\NAT@biblabelnum\let\@bibsetup\NAT@bibsetnum}
\makeatother

\hypersetup{
  colorlinks=true,
  linkcolor=blue,
  citecolor=blue,
  urlcolor=blue
}
\newtheorem{theorem}{Theorem}[section]
\newtheorem{proposition}[theorem]{Proposition}
\newtheorem{lemma}[theorem]{Lemma}
\newtheorem{corollary}[theorem]{Corollary}
\theoremstyle{definition}

\newtheorem{assumption}[theorem]{Assumption}

\theoremstyle{remark}
\newtheorem{remark}[theorem]{Remark}
\DeclareMathOperator*{\argmin}{arg\,min}

\title[Scale Analysis and Shape Selection for the GGM]
{Scale Analysis and Shape Selection for the Generalized Gaussian Mechanism under Approximate Differential Privacy}
\author{Xiang Zhang}
\author{Mohamedou Ould Haye}
\author{Yiqiang Q. Zhao}
\address{Department of Mathematics and Statistics, Carleton University, Ottawa, Canada}

\keywords{Differential privacy, generalized Gaussian mechanism, privacy-feasible scale, utility-optimal shape selection}
\makeatletter
\def\@settitle{\begin{center}%
  \baselineskip14\p@\relax
  \bfseries\Large
  \@title
  \end{center}%
}
\def\@setauthors{%
  \begingroup
  \trivlist
  \centering\footnotesize \@topsep30\p@\relax
  \advance\@topsep by -\baselineskip
  \item\relax
  \author@andify\authors
  \def\\{\protect\linebreak}%
  \authors
  \endtrivlist
  \endgroup
}
\makeatother
\begin{document}
\begin{abstract}
Differential privacy provides a rigorous framework for protecting private information, typically achieved by adding random noise to query results. The generalized Gaussian family is a flexible class of additive noise distributions indexed by the shape parameter \(p\) and includes the Laplace and Gaussian distributions as special cases \(p=1\) and \(p=2\), respectively. This paper studies the privacy-feasible scale estimation and the shape parameter selection of the generalized Gaussian mechanism (GGM) under \((\varepsilon,\delta)\)-differential privacy. For a given sensitivity vector $\Delta$ and $p\in[1,\infty]$, let \(b(p)\) denote the smallest value of the scale parameter for which the mechanism satisfies this privacy requirement. In the one-dimensional case, \(b(p)\) can be implicitly characterized by a system of equations. For vector-valued queries, we construct a computable upper approximation \(\widehat b(p)\) of \(b(p)\) that preserves the privacy guarantee. Shapes are compared under a scale-homogeneous utility criterion, with the \(m\)-th absolute moment as the main example. We develop an interval-wise shape search algorithm with an approximation guarantee that can be made arbitrarily precise. We also establish the invariance of the optimal shape under rescaling of the sensitivity vector and characterize its limiting behaviour under high privacy limits. Computational experiments show that optimizing shape parameters can improve utility by reducing the variance of each coordinate by \(5\%\)--\(20\%\) across a variety of cases, with some cases showing even greater reductions, while maintaining the same level of privacy protection. Task-specific experiments further show that shape optimization can improve task-level utility, reduce attacker success, or achieve both.
\end{abstract}
\maketitle
\section{Introduction}
The Laplace and Gaussian mechanisms are two classical additive noise methods in differential privacy \citep{Dwork2006,DworkRoth2014}. They correspond to shape parameters \(p=1\) and \(p=2\) in the generalized Gaussian family, respectively. This motivates considering other shapes in the generalized Gaussian family that may provide better utility at the same privacy level. Several questions then arise. For a fixed shape, how much noise is required for the generalized Gaussian mechanism (GGM) to satisfy \((\varepsilon,\delta)\)-differential privacy? After determining the scale \(b(p)\) for each shape, which \(p\) value minimizes the utility loss? To what extent can shape optimization improve utility compared to classical choices such as Laplace or Gaussian mechanisms? What factors influence the optimal shape parameter values? 

The generalized Gaussian mechanism has been studied before. For example, \citet{Liu2019} introduced the generalized Gaussian mechanism with a positive integer shape parameter and adopted the \(\ell_p\) global sensitivity formula, comparing its utility with Laplace and Gaussian mechanisms. \citet{GaneshZhao2021} studied generalized Gaussian mechanisms for counting queries with bounded \(\ell_\infty\) sensitivity under approximate differential privacy and derived sufficient privacy bounds and asymptotic \(\ell_\infty\)-error guarantees. Their analysis is presented mainly for even integer values of \(p\), although they note that this restriction can be removed. Recently, \citet{Rinberg2025} extended the Privacy Random Variable (PRV) accountant to generalized Gaussian mechanisms and evaluated them in high-composition private learning procedures, including Private Aggregation of Teacher Ensembles (PATE) and Differentially Private Stochastic Gradient Descent (DP-SGD), finding that the Gaussian choice \(\beta=2\) performed as well as or better than the other shape parameters tested within the computationally tractable ranges considered.  For the classical special cases, \citet{BalleWang2018} developed the analytic Gaussian mechanism by calibrating the Gaussian noise directly through the Gaussian distribution function, while \citet{HolohanLeithMason2015} gave a sufficient scale condition for the Laplace mechanism under approximate differential privacy. 

Related work on optimal additive noise includes \citet{GengViswanath2016}, who studied optimal mechanisms under pure differential privacy, and \citet{GengViswanathApprox2016}, who studied near-optimal noise-adding mechanisms for single integer-valued queries and vector-valued histogram-like queries under approximate differential privacy. For the special case of \((0,\delta)\)-differential privacy, \citet{GengDingGuoKumar2019} studied the optimal query-output-independent additive noise mechanism for a scalar query under a general cost-minimization framework. \citet{geng2020tight} derived tight upper and lower bounds for the privacy--utility trade-off for single real-valued queries under approximate differential privacy and analyzed truncated Laplacian mechanisms. More recently, \citet{joseph2025approximate} analyzed the \(\ell_2\) mechanism for vector-valued queries under approximate differential privacy and found that it can achieve lower error than both the Laplace and Gaussian mechanisms across a range of dimensions. In contrast, we treat the shape parameter as continuous over \(p\in[1,\infty]\), study scalar and vector-valued queries, and solve the privacy-preserving scale determination and utility-based shape selection problems within the generalized Gaussian family. Our analysis applies to general scale-homogeneous utility criteria and also considers task-specific criteria.

For each shape \(p\in[1,\infty]\), the differential privacy constraint determines a privacy-feasible scale \(b(p)\). The scale is studied in both one and multiple dimensions, and its regularity properties are used in the certified interval-wise search for the shape parameter. We also consider how the optimal shape depends on the sensitivity vector and what happens as the privacy parameters approach zero. Numerical examples show how the selected shape and the utility gain vary with the privacy parameters, dimension, and sensitivity vector.

The main results are:
\begin{enumerate}
\item For each fixed shape, we study the privacy-feasible scale of the generalized Gaussian mechanism. In the one-dimensional case, \(b(p)\) is determined implicitly as the unique solution to a system of equations. For vector-valued queries, we construct a computable upper approximation \(\widehat b(p)\) of \(b(p)\).

\item We prove the regularity and boundary properties of the scale map \(p \mapsto b(p)\). Based on this, under the general scale-homogeneous utility criterion, we develop a certified interval-wise shape search algorithm on the interval \([1,P_{\max}]\) with an approximation guarantee that can be made arbitrarily precise.

\item We establish properties of the optimal shape, including its invariance under rescaling of the sensitivity vector, and study its limiting behaviour as the privacy parameters approach zero.

\item Through computational experiments, we examine how the selected shape and utility gain vary with the privacy parameters, dimension, and sensitivity vector. For the second absolute moment, numerical results show variance reductions of approximately \(5\%\)--\(20\%\) for a range of the considered privacy parameters, with reductions exceeding \(20\%\) in some cases. We also consider a task-specific example in which shape selection is evaluated through the utility of the released result and the attacker’s success probability, showing that shape optimization can provide benefits beyond variance reduction.
\end{enumerate}

The remainder of the paper is organized as follows. Section~\ref{sec:setup} introduces the privacy definitions, the generalized Gaussian mechanism, and the utility criteria. Section~\ref{sec:calibration} studies fixed-shape scale selection in one and multiple dimensions. Section~\ref{sec:optimization} analyzes the scale map and presents the certified interval-wise shape search algorithm. Section~\ref{sec:optimal-shape-properties} studies further properties of the optimal shape, including rescaling invariance and limiting behaviour. Section~\ref{sec:Computational Experiments} gives numerical illustrations, and Section~\ref{sec:Conclusion} concludes.
 Finally, for convenience, Table~\ref{tab:main-notation} summarizes the main notation used throughout the paper.
\begingroup
\small
\renewcommand{\arraystretch}{1.12}
\setlength{\tabcolsep}{5pt}

\begin{longtable}{
>{\raggedright\arraybackslash}p{0.24\textwidth}
>{\raggedright\arraybackslash}p{0.68\textwidth}}

\caption{Main notation}
\label{tab:main-notation}\\
\toprule
Notation & Meaning \\
\midrule
\endfirsthead

\caption[]{Main notation (continued)}\\
\toprule
Notation & Meaning \\
\midrule
\endhead

\endfoot

\bottomrule
\endlastfoot

\multicolumn{2}{l}{\textit{Queries, privacy, and sensitivity}} \\*[2pt]

\(\mathcal D\), \(\mathcal D^n\)
& Data space and the corresponding space of datasets with \(n\) records. \\

\(D\sim D'\)
& Neighbouring datasets \(D,D'\in\mathcal D^n\) differing in at most one record. \\

\(q\), \(\mathcal H_q\)
& Query \(q:\mathcal D^n\to\mathbb R^d\) and the set of neighbouring query differences \(\mathcal H_q=\{q(D)-q(D'):D\sim D'\}\). \\

\(h\), \(\Delta\)
& Query difference \(h\in\mathcal H_q\) and sensitivity vector \(\Delta=(\Delta_1,\ldots,\Delta_d)\). \\

\(\mathcal S_\Delta\)
& Coordinate-wise sensitivity set \(\{h\in\mathbb R^d:|h_i|\le\Delta_i,\ i=1,\ldots,d\}\). \\

\((\varepsilon,\delta)\)
& Privacy parameters in approximate differential privacy. \\

\(D_\varepsilon(P\|Q)\)
& The \(\varepsilon\)-hockey-stick divergence between probability measures \(P\) and \(Q\). \\

\midrule
\multicolumn{2}{l}{\textit{Generalized Gaussian mechanism}} \\*[2pt]

\(d\), \(p\), \(b\)
& Query dimension, shape parameter, and scale parameter. \\

\(Z_{p,b}\), \(M_{p,b}\)
& Generalized Gaussian noise vector and the additive mechanism \(M_{p,b}(D)=q(D)+Z_{p,b}\). \\

\(\delta_{p,b}(h)\)
& Hockey-stick divergence associated with shape \(p\), scale \(b\), and shift \(h\). \\

$b(p)$ & Minimum privacy-feasible scale for shape $p$ under sensitivity vector $\Delta$. \\
\midrule
\multicolumn{2}{l}{\textit{Utility and shape selection}} \\*[2pt]

\(\mathcal L(p,b)=b^r\nu(p)\)
& Scale-homogeneous utility criterion, where \(r>0\) and \(\nu(p)>0\). \\

\(M_m(p)\)
& Shape factor in the \(m\)-th absolute moment: \(M_m(p)=\Gamma((m+1)/p)/\Gamma(1/p)\) for \(p<\infty\), with \(M_m(\infty)=1/(m+1)\). \\

\(L_m(p,b)\)
& Per-coordinate \(m\)-th absolute moment, \(L_m(p,b)=b^mM_m(p)\). \\

\(F(p)\)
& Logarithmic objective \(F(p)=\log L_m(p,b(p))\). \\

\(p^\star\), \(\hat p\)
& An optimal shape and the shape returned by the numerical procedure, respectively. \\

\([1,P_{\max}]\), \(\eta\)
& Search interval and optimization tolerance; \(e^\eta\) is the corresponding multiplicative accuracy factor. \\

\(\widehat b(p)\)
& Computed privacy-feasible upper approximation of \(b(p)\). \\

\(\tau_b,\alpha\)
& Scale tolerance and Monte Carlo confidence budget. \\
\end{longtable}
\endgroup

\FloatBarrier
\section{Problem Setup and the Generalized Gaussian Mechanism}
\label{sec:setup}
This section introduces the main definitions used throughout the paper. The notation is summarized in Table~\ref{tab:main-notation}. We recall differential privacy, define the generalized Gaussian mechanism and the coordinate-wise sensitivity set used in the multidimensional analysis, and formulate the utility-based shape-selection problem.

\subsection{Differential privacy}
\label{subsec:dp-definition}

Let \(D,D'\in\mathcal D^n\) be neighbouring datasets, written \(D\sim D'\), if they differ in at most one record. A randomized mechanism \(M\) is \((\varepsilon,\delta)\)-differentially private if, for all \(D\sim D'\) and every measurable set \(A\subseteq\mathbb R^d\),
\begin{equation}
\label{eq:dp-definition-paper}
\mathbb P(M(D)\in A)
\le
e^\varepsilon\mathbb P(M(D')\in A)+\delta.
\end{equation}
For probability measures \(P\) and \(Q\), define the \(\varepsilon\)-hockey-stick divergence by
\begin{equation}
\label{eq:hockey-stick-paper}
D_\varepsilon(P\|Q)
:=
\sup_A\{P(A)-e^\varepsilon Q(A)\},
\end{equation}
where the supremum is over all measurable sets; see \citet{barthe2013beyond}. Hence \eqref{eq:dp-definition-paper} is equivalent to
\begin{equation}
\label{eq:dp-hockey-equivalence-paper}
D_\varepsilon(P_D\|P_{D'})
\le
\delta,
\qquad
D\sim D',
\end{equation}
where \(P_D\) denotes the distribution of \(M(D)\). A density representation of \(D_\varepsilon\) is given in Appendix~\ref{app:setup-details}.

\subsection{The generalized Gaussian mechanism}
\label{subsec:ggm-definition}

For \(p\in[1,\infty)\) and \(b>0\), let \(GGD(0,b,p)\) denote the one-dimensional generalized Gaussian distribution with density
\begin{equation}
\label{eq:gg-one-density-paper}
g_{p,b}(x)
=
\frac{p}{2b\Gamma(1/p)}
\exp\!\left(
-\left|\frac{x}{b}\right|^p
\right),
\qquad
x\in\mathbb R.
\end{equation}
For \(p=\infty\), the limiting distribution is uniform on \([-b,b]\), with density
\begin{equation}
\label{eq:gg-infty-density-paper}
g_{\infty,b}(x)
=
\frac{1}{2b}\mathbf 1_{[-b,b]}(x).
\end{equation}
The cases \(p=1\), \(p=2\), and \(p=\infty\) give the Laplace, Gaussian \(N(0,b^2/2)\), and uniform distributions, respectively.

For \(z=(z_1,\ldots,z_d)\in\mathbb R^d\), define
\begin{equation}
\label{eq:gg-d-density-paper}
f^{(d)}_{p,b}(z)
=
\prod_{i=1}^d g_{p,b}(z_i),
\qquad
p\in[1,\infty].
\end{equation}
The generalized Gaussian mechanism is
\begin{equation}
\label{eq:gg-mechanism-paper}
M_{p,b}(D)
=
q(D)+Z_{p,b},
\end{equation}
where \(Z_{p,b}\) has density \(f^{(d)}_{p,b}\).

\subsection{Coordinate-wise sensitivity and privacy-feasible scale}
\label{subsec:coordinate-sensitivity}

Let \(q:\mathcal D^n\to\mathbb R^d\) be a query, and define the set of neighbouring query differences
\begin{equation}
\label{eq:shift-set}
\mathcal H_q
:=
\{q(D)-q(D'):D,D'\in\mathcal D^n,\ D\sim D'\}.
\end{equation}
A sensitivity vector is any \(\Delta=(\Delta_1,\ldots,\Delta_d)\in[0,\infty)^d\) such that
\[
|h_i|
\le
\Delta_i,
\qquad
h\in\mathcal H_q,
\qquad
i=1,\ldots,d.
\]
The corresponding coordinate-wise sensitivity set is
\begin{equation}
\label{eq:coordinate-sensitivity-set-paper}
\mathcal S_\Delta
:=
\{h\in\mathbb R^d:|h_i|\le\Delta_i,\ i=1,\ldots,d\}.
\end{equation}
Thus
\[
\mathcal H_q\subseteq\mathcal S_\Delta.
\]
The set \(\mathcal S_\Delta\) retains the coordinate-level information in the sensitivity vector.

For \(p\in[1,\infty]\), \(b>0\), and \(h\in\mathbb R^d\), let \(P_{p,b}\) be the distribution with density \(f^{(d)}_{p,b}\), and let \(P_{p,b,h}\) have density \(f^{(d)}_{p,b}(z+h)\). Define
\begin{equation}
\label{eq:delta-pbh-paper}
\delta_{p,b}(h)
:=
D_\varepsilon(P_{p,b}\|P_{p,b,h}).
\end{equation}
Then
\begin{equation}
\label{eq:dp-sup-shift-paper}
M_{p,b}\text{ is }(\varepsilon,\delta)\text{-DP}
\quad\Longleftrightarrow\quad
\sup_{h\in\mathcal H_q}\delta_{p,b}(h)
\le
\delta.
\end{equation}
Since \(\mathcal H_q\subseteq\mathcal S_\Delta\),
\[
\sup_{h\in\mathcal H_q}\delta_{p,b}(h)
\le
\sup_{h\in\mathcal S_\Delta}\delta_{p,b}(h).
\]
We define the privacy-feasible scale determined by the coordinate-wise sensitivity vector \(\Delta\) as
\begin{equation}
\label{eq:bstar-paper}
b(p)
:=
\inf\left\{
b>0:
\sup_{h\in\mathcal S_\Delta}\delta_{p,b}(h)
\le
\delta
\right\}.
\end{equation}
Thus, any \(b\) satisfying the constraint in \eqref{eq:bstar-paper} is privacy-feasible. Additional details are given in Appendix~\ref{app:setup-details}.

\subsection{Utility criteria and shape selection}
\label{subsec:utility-shape-selection}

Once a privacy-feasible scale \(b(p)\) has been assigned to each shape \(p\), the shape-selection problem compares a scale-homogeneous utility criterion
\begin{equation}
\label{eq:general-objective-paper}
\mathcal L(p,b)
=
b^r\nu(p),
\qquad
r>0,
\qquad
\nu(p)>0.
\end{equation}
The optimal shape satisfies
\begin{equation}
\label{eq:shape-selection-paper}
p^\star
\in
\arg\min_{p\in[1,\infty]}
\mathcal L\bigl(p,b(p)\bigr).
\end{equation}

The principal example used in this paper is the per-coordinate \(m\)-th absolute moment. Let \(Y_p\sim GGD(0,1,p)\) and define
\begin{equation}
\label{eq:Mm-paper}
M_m(p)
:=
\begin{cases}
\displaystyle
\frac{\Gamma((m+1)/p)}{\Gamma(1/p)},
& 1\le p<\infty,\\[1.2em]
\displaystyle
\frac{1}{m+1},
& p=\infty.
\end{cases}
\end{equation}
Then \(\mathbb E|Y_p|^m=M_m(p)\), and for \(Z_1\sim GGD(0,b,p)\),
\begin{equation}
\label{eq:Lm-paper}
L_m(p,b)
:=
\mathbb E|Z_1|^m
=
b^mM_m(p).
\end{equation}
Thus \(L_m\) is a special case of \eqref{eq:general-objective-paper} with \(r=m\) and \(\nu(p)=M_m(p)\). In particular,
\begin{equation}
\label{eq:variance-paper}
L_2(p,b)
=
\operatorname{Var}(Z_1)
=
b^2\frac{\Gamma(3/p)}{\Gamma(1/p)},
\qquad
1\le p<\infty,
\end{equation}
with \(L_2(\infty,b)=b^2/3\). Other scale-homogeneous utility criteria are recorded in Appendix~\ref{app:setup-details}.

\section{Fixed-Shape Scale under $(\varepsilon,\delta)$-Differential Privacy}
\label{sec:calibration}
For a fixed shape \(p\), we consider the minimum privacy-feasible scale \(b(p)\). In one dimension, \(b(p)\) is characterized exactly. In multiple dimensions, a computable upper approximation \(\widehat b(p)\) is constructed using the coordinate-wise sensitivity vector.
\subsection{Exact scale characterization in one dimension}

Consider a scalar query with sensitivity \(\Delta>0\). Let
\[
M_{p,b}(D)=q(D)+Z,
\qquad
Z\sim \mathrm{GGD}(0,b,p),
\]
where, for \(1\le p<\infty\),
\[
f_{p,b}(z)
=
\frac{p}{2b\Gamma(1/p)}
\exp\left\{-\left(\frac{|z|}{b}\right)^p\right\},
\qquad z\in\mathbb R .
\]
For a given sensitivity \(\Delta\), define
\[
\delta_{p,\Delta}(b)
:=
\int_{\mathbb R}
\left(f_{p,b}(z)-e^\varepsilon f_{p,b}(z+\Delta)\right)_+\,dz .
\]
The one-dimensional mechanism $M$ is \((\varepsilon,\delta)\)-differentially private under sensitivity \(\Delta\) if and only if
\[
\delta_{p,\Delta}(b)\le \delta .
\]
For \(p>1\), set
\[
\phi_p(z):=|z+\Delta|^p-|z|^p .
\]
The following elementary monotonicity property identifies the boundary of the hockey-stick integral.

\begin{lemma}[Monotonicity of the one-dimensional privacy-loss boundary]
\label{lem:phi-monotone}
Let \(p>1\) and \(\Delta>0\). Then \(\phi_p\) is continuous and increasing on \(\mathbb R\). Moreover,
\[
\phi_p(-\Delta/2)=0,
\qquad
\lim_{z\to-\infty}\phi_p(z)=-\infty,
\qquad
\lim_{z\to+\infty}\phi_p(z)=+\infty.
\]
Consequently, for every \(b>0\) and \(\varepsilon>0\), the equation
\[
|z+\Delta|^p-|z|^p=\varepsilon b^p
\]
has a unique solution \(z^\star(b)\in\mathbb R\).
\end{lemma}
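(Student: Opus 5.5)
The plan is to prove strict monotonicity of \(\phi_p\) by computing its derivative, then read off the boundary values and limits directly, and finally obtain existence and uniqueness of \(z^\star(b)\) from the intermediate value theorem.

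\textbf{Continuity and monotonicity.} Continuity is immediate, since \(x\mapsto|x|^p\) is continuous. For \(p>1\) the map \(u(x)=|x|^p\) is \(C^1\) on \(\mathbb R\), with \(u'(x)=p\,|x|^{p-1}\operatorname{sgn}(x)\), and \(u'\) is strictly increasing because \(u\) is strictly convex. Hence \(\phi_p\) is \(C^1\) with
\[
\phi_p'(z)=u'(z+\Delta)-u'(z).
\]
Since \(\Delta>0\) gives \(z+\Delta>z\) and \(u'\) is strictly increasing, \(\phi_p'(z)>0\) for every \(z\). So \(\phi_p\) is strictly increasing. As an alternative that avoids derivatives, one can note that \(\phi_p(z)=\int_z^{z+\Delta}u'(t)\,dt\) and compare the integrands over translated windows.

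\textbf{Boundary value and limits.} At \(z=-\Delta/2\) we get \(\phi_p(-\Delta/2)=|\Delta/2|^p-|{-\Delta/2}|^p=0\).

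For \(z\to+\infty\), take \(z>0\). The mean value theorem gives some \(\xi\in(z,z+\Delta)\) with
\[
\phi_p(z)=\Delta\, p\,\xi^{p-1}\ge \Delta\, p\, z^{p-1}\to\infty,
\]
where the divergence uses \(p>1\).

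For \(z\to-\infty\), use the symmetry \(\phi_p(z)=-\phi_p(-z-\Delta)\), which follows from \(|z+\Delta|=|-z-\Delta|\) and \(|z|=|(-z-\Delta)+\Delta|\). Since \(-z-\Delta\to+\infty\) as \(z\to-\infty\), this gives \(\phi_p(z)\to-\infty\).

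\textbf{Unique solution.} Fix \(b>0\) and \(\varepsilon>0\). The target value \(\varepsilon b^p\) is a finite real number. A continuous, strictly increasing function with limits \(\mp\infty\) at \(\mp\infty\) is a bijection \(\mathbb R\to\mathbb R\), by the intermediate value theorem together with strict monotonicity. So there is exactly one \(z^\star(b)\) with \(\phi_p(z^\star(b))=\varepsilon b^p\). Since \(\varepsilon b^p>0\) and \(\phi_p(-\Delta/2)=0\), we also have \(z^\star(b)>-\Delta/2\).

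The only delicate point is the \(C^1\) regularity of \(|x|^p\) at \(x=0\), which is what makes \(\phi_p'\) well defined there. This holds precisely because \(p>1\), and it explains why the lemma excludes \(p=1\). For \(p=1\), \(\phi_1\) is constant on \((-\infty,-\Delta]\) and on \([0,\infty)\), so both the strict monotonicity and the limits fail.
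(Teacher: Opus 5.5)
Your proof is correct and follows essentially the same route as the paper: show \(\phi_p'>0\) everywhere, then combine the value at \(-\Delta/2\), the limits at \(\pm\infty\), and the intermediate value theorem. The differences are minor: you handle the paper's three regions (\(z\ge0\), \(-\Delta<z<0\), \(z\le-\Delta\)) in one step via strict monotonicity of \(u'(x)=p|x|^{p-1}\operatorname{sgn}(x)\), and you justify the two limits (mean value theorem plus the symmetry \(\phi_p(z)=-\phi_p(-z-\Delta)\)), which the paper simply asserts.
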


\begin{proof}
See Appendix~\ref{lem:unique-zstar}.
\end{proof}
By Lemma~\ref{lem:phi-monotone}, for each \(b>0\) the positive part in the hockey-stick divergence is a right tail. More precisely, if \(z^\star=z^\star(b)\) is the unique solution of
\[
\phi_p(z^\star)=\varepsilon b^p,
\]
then
\[
\left\{z:
f_{p,b}(z)>e^\varepsilon f_{p,b}(z+\Delta)
\right\}
=
[z^\star,\infty).
\]
Thus
\[
\delta_{p,\Delta}(b)
=
\int_{z^\star}^{\infty}
\left\{
f_{p,b}(z)-e^\varepsilon f_{p,b}(z+\Delta)
\right\}\,dz.
\]
This tail representation yields the exact scale characterization in the next theorem.
\begin{theorem}[One-dimensional exact scale characterization]
\label{thm:1d-tight-calibration}
Fix \(p>1\), \(\Delta>0\), \(\varepsilon>0\), and \(0<\delta<1\). Define
\[
b_p(\varepsilon,\delta;\Delta)
:=
\inf\{b>0:\delta_{p,\Delta}(b)\le \delta\}.
\]
Then there is a unique pair
\[
(z^\star,b^\star)\in\mathbb R\times(0,\infty)
\]
such that, with
\[
s^\star:=\mathbf 1_{\{z^\star<0\}},
\]
one has
\begin{equation}
\label{eq:1d-tight-system-paper}
\begin{cases}
|z^\star+\Delta|^p-|z^\star|^p=\varepsilon (b^\star)^p,\\[4pt]
\displaystyle
\delta
=
s^\star
+
\frac{1-2s^\star}{2\Gamma(1/p)}
\Gamma\left(
\frac1p,
\left(\frac{|z^\star|}{b^\star}\right)^p
\right)
-
\frac{e^\varepsilon}{2\Gamma(1/p)}
\Gamma\left(
\frac1p,
\left(\frac{|z^\star+\Delta|}{b^\star}\right)^p
\right).
\end{cases}
\end{equation}
Moreover,
\[
b_p(\varepsilon,\delta;\Delta)=b^\star,
\]
and the one-dimensional generalized Gaussian mechanism is \((\varepsilon,\delta)\)-differentially private under sensitivity \(\Delta\) if and only if
\[
b\ge b^\star .
\]
\end{theorem}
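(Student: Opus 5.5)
The proof has three parts: compute the tail integral in closed form, show that \(\delta_{p,\Delta}(b)\) is continuous and strictly decreasing in \(b\) with the right limits, and read off \(b^\star\) from the intermediate value theorem.

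\textbf{Closed form of the tail integral.} Fix \(b>0\) and let \(z^\star(b)\) be the unique root from Lemma~\ref{lem:phi-monotone}. By the tail representation,
\[
\delta_{p,\Delta}(b)=\int_{z^\star}^\infty f_{p,b}(z)\,dz-e^\varepsilon\int_{z^\star+\Delta}^\infty f_{p,b}(u)\,du .
\]
Substituting \(t=(|z|/b)^p\) gives the tail probability of \(\mathrm{GGD}(0,b,p)\): for \(a\ge 0\), \(\int_a^\infty f_{p,b}=\Gamma(1/p,(a/b)^p)/(2\Gamma(1/p))\). For \(a<0\) it equals \(1-\Gamma(1/p,(|a|/b)^p)/(2\Gamma(1/p))\).

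Since \(\varepsilon b^p>0=\phi_p(-\Delta/2)\), monotonicity gives \(z^\star>-\Delta/2\). Hence \(z^\star+\Delta>0\), so the second tail always uses the \(a\ge0\) formula. The first tail splits according to the sign of \(z^\star\), which is exactly what \(s^\star=\mathbf 1_{\{z^\star<0\}}\) encodes. This yields \(\delta_{p,\Delta}(b)=G(z^\star(b),b)\), where \(G\) is the right-hand side of the second equation in \eqref{eq:1d-tight-system-paper}. Consequently, a pair \((z,b)\) solves the system if and only if \(z=z^\star(b)\) and \(\delta_{p,\Delta}(b)=\delta\).

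\textbf{Monotonicity and continuity of \(\delta_{p,\Delta}\).} Rescale \(z=bw\). Then \(f_{p,b}(bw)\,b=f_{p,1}(w)\), so
\[
\delta_{p,\Delta}(b)=\int\bigl(f_{p,1}(w)-e^\varepsilon f_{p,1}(w+\Delta/b)\bigr)_+\,dw,
\]
which is the unit-scale divergence at shift \(c=\Delta/b\). It therefore suffices to show that \(c\mapsto\int(f_{p,1}(w)-e^\varepsilon f_{p,1}(w+c))_+\,dw\) is continuous and strictly increasing on \((0,\infty)\).

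Write it as \(\sup_A\{P(A)-e^\varepsilon P_c(A)\}\) with optimal set \(A_c=[w^\star(c),\infty)\). The clean route is to differentiate. By the envelope argument, the integrand vanishes at the boundary, so only the \(c\)-dependence inside the integrand contributes:
\[
\frac{d}{dc}=-e^\varepsilon\int_{w^\star+c}^\infty f_{p,1}'(u)\,du=e^\varepsilon f_{p,1}(w^\star+c)>0.
\]
For this I need \(w^\star(c)\) to be continuous (indeed \(C^1\)) in \(c\), which follows from the implicit function theorem applied to \(|w+c|^p-|w|^p=\varepsilon\). The derivative in \(w\) is \(p|w+c|^{p-1}\mathrm{sgn}(w+c)-p|w|^{p-1}\mathrm{sgn}(w)\). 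It is strictly positive except possibly at \(w=0\); there it equals \(pc^{p-1}>0\) for \(p>1\), so it is positive everywhere. Alternatively, continuity and strict monotonicity can be obtained via dominated convergence together with a coupling argument. I expect this step, making the monotonicity strict and handling the nonsmooth point \(w=0\) when \(1<p<2\), to be the main technical obstacle.

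\textbf{Limits and conclusion.} As \(c\to0\), dominated convergence gives the limit \(\int(f-e^\varepsilon f)_+=0\). As \(c\to\infty\), \(P_c\) escapes to \(-\infty\), so the divergence tends to \(1\). Therefore \(b\mapsto\delta_{p,\Delta}(b)\) is a continuous strictly decreasing bijection \((0,\infty)\to(0,1)\). For \(\delta\in(0,1)\) there is a unique \(b^\star\) with \(\delta_{p,\Delta}(b^\star)=\delta\). Moreover \(\{b:\delta_{p,\Delta}(b)\le\delta\}=[b^\star,\infty)\), so the infimum is \(b^\star\) and \((\varepsilon,\delta)\)-DP holds if and only if \(b\ge b^\star\). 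Uniqueness of the pair follows from the equivalence established in the first part: \(b^\star\) is unique and \(z^\star=z^\star(b^\star)\) is unique by Lemma~\ref{lem:phi-monotone}.
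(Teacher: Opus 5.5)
Your proposal is correct and follows the paper's proof essentially step for step. Both use the same tail/incomplete-gamma computation, with $z^\star>-\Delta/2$ fixing the sign of $z^\star+\Delta$, and both get strict monotonicity by differentiating the tail integral and letting the boundary term vanish at the crossing point. They then use the limits $0$ and $1$ and obtain uniqueness of the pair from Lemma~\ref{lem:phi-monotone}.

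The only difference is cosmetic. You rescale to the unit-scale shift $c=\Delta/b$ and obtain $e^\varepsilon f_{p,1}(w^\star+c)$ as the $c$-derivative. The paper differentiates directly in $b$ via $\partial_b f_{p,b}(t)=-\frac1b\frac{d}{dt}[t f_{p,b}(t)]$ and obtains $-\frac{\Delta}{b}f_{p,b}(z^\star(b))$, which is the same quantity after the chain rule. The $w=0$ issue you flag is harmless, since $|x|^p$ is $C^1$ for $p>1$ and that is all the implicit function theorem needs.
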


\begin{proof}
See Appendix~\ref{thm:1d-ggm-tight}.
\end{proof}

\subsection{Classical one-dimensional cases}

The Gaussian case \(p=2\) gives the standard analytic Gaussian expression. Writing \(\sigma=b/\sqrt2\), the hockey-stick divergence is
\[
\delta_{2,\Delta}(b)
=
\Phi\left(
\frac{\Delta}{2\sigma}-\frac{\varepsilon\sigma}{\Delta}
\right)
-
e^\varepsilon
\Phi\left(
-\frac{\Delta}{2\sigma}-\frac{\varepsilon\sigma}{\Delta}
\right),
\qquad
\sigma=\frac{b}{\sqrt2}.
\]
This is the analytic Gaussian form of \citet{BalleWang2018}.

The endpoint \(p=1\) must be treated separately, because the privacy loss is no longer increasing on the whole real line. The next proposition gives the exact one-dimensional Laplace scale.

\begin{proposition}[Exact one-dimensional Laplace scale under \((\varepsilon,\delta)\)-differential privacy]
\label{prop:laplace-calibration}
Let \(\Delta>0\), \(\varepsilon>0\), and \(0\le\delta<1\). For the one-dimensional Laplace mechanism with scale \(b\),
\[
\delta_{1,\Delta}(b)
=
\left[
1-\exp\left(\frac{\varepsilon b-\Delta}{2b}\right)
\right]_+ .
\]
Consequently, the mechanism is \((\varepsilon,\delta)\)-differentially private if and only if
\[
b
\ge
\frac{\Delta}{\varepsilon-2\log(1-\delta)}.
\]
When \(\delta=0\), this reduces to the classical pure-DP Laplace scale
\[
b\ge \frac{\Delta}{\varepsilon}.
\]
\end{proposition}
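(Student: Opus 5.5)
The plan is to compute the hockey-stick integral \(\delta_{1,\Delta}(b)\) in closed form by locating exactly where the density ratio exceeds \(e^\varepsilon\), and then to invert the resulting explicit expression in \(b\). Lemma~\ref{lem:phi-monotone} is not available here (\(p=1\)), so I will work directly with the piecewise-linear privacy loss.

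\textbf{Step 1: the privacy loss.} With \(f_{1,b}(z)=\frac{1}{2b}e^{-|z|/b}\), the log-ratio is
\[
\ell(z):=\log\frac{f_{1,b}(z)}{f_{1,b}(z+\Delta)}=\frac{|z+\Delta|-|z|}{b}.
\]
It takes three forms on three regions:
\begin{itemize}
\item \(\ell(z)=-\Delta/b\) for \(z\le-\Delta\);
\item \(\ell(z)=(2z+\Delta)/b\) for \(-\Delta<z<0\);
\item \(\ell(z)=\Delta/b\) for \(z\ge0\).
\end{itemize}
So \(\ell\) is nondecreasing, but it is constant on the right half-line. This is why \(p=1\) must be treated separately.

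\textbf{Step 2: the two cases in \(b\).}
\begin{itemize}
\item If \(\varepsilon b\ge\Delta\), then \(\ell\le\Delta/b\le\varepsilon\) everywhere. The integrand \((f_{1,b}(z)-e^\varepsilon f_{1,b}(z+\Delta))_+\) vanishes almost everywhere, so \(\delta_{1,\Delta}(b)=0\).
\item If \(\varepsilon b<\Delta\), the set \(\{\ell>\varepsilon\}\) is exactly \((z^\star,\infty)\) with \(z^\star=(\varepsilon b-\Delta)/2\). Since \(\varepsilon>0\) and \(\varepsilon b<\Delta\), we have \(-\Delta<z^\star<0\), so the boundary lies in the linear middle piece. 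Also \(z^\star+\Delta=(\varepsilon b+\Delta)/2>0\).
\end{itemize}
In the second case,
\[
\delta_{1,\Delta}(b)=\mathbb P(Z>z^\star)-e^\varepsilon\,\mathbb P(Z>z^\star+\Delta).
\]
The Laplace tails give \(\mathbb P(Z>z^\star)=1-\tfrac12 e^{z^\star/b}\), because \(z^\star<0\), and \(\mathbb P(Z>z^\star+\Delta)=\tfrac12 e^{-(\varepsilon b+\Delta)/(2b)}\). The factor \(e^\varepsilon\) turns the second exponential into \(e^{(\varepsilon b-\Delta)/(2b)}\), which equals \(e^{z^\star/b}\). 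The two half-terms therefore combine to
\[
\delta_{1,\Delta}(b)=1-\exp\!\big((\varepsilon b-\Delta)/(2b)\big).
\]
This quantity is positive exactly when \(\varepsilon b<\Delta\), so the two cases merge into the positive-part formula of Proposition~\ref{prop:laplace-calibration}.

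\textbf{Step 3: inversion.} The condition \(\delta_{1,\Delta}(b)\le\delta\) is automatic when \(\varepsilon b\ge\Delta\). Otherwise, taking logarithms (valid since \(1-\delta>0\)), it is equivalent to
\[
\frac{\varepsilon}{2}-\frac{\Delta}{2b}\ge\log(1-\delta),
\qquad\text{i.e.}\qquad
\frac{\Delta}{b}\le\varepsilon-2\log(1-\delta).
\]
Because \(\varepsilon-2\log(1-\delta)\ge\varepsilon>0\), this rearranges to \(b\ge\Delta/(\varepsilon-2\log(1-\delta))\). This threshold is at most \(\Delta/\varepsilon\), so it also covers the automatic case. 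The resulting condition is thus necessary and sufficient for all \(b>0\), and setting \(\delta=0\) gives \(b\ge\Delta/\varepsilon\).

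The only real care point is Step 2. One must check that the boundary \(z^\star\) falls strictly inside \((-\Delta,0)\), so that the correct tail formulas apply, and one must handle the flat region \(z\ge0\), where \(\ell=\Delta/b\) exactly. That flat region contributes either to the whole of \(\{\ell>\varepsilon\}\) or to none of it, never to a boundary of positive measure. The rest is routine algebra.
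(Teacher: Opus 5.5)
Your proposal is correct and follows essentially the same route as the paper's proof: the same case split at $\varepsilon b=\Delta$, the same crossing point $z^\star=(\varepsilon b-\Delta)/2\in(-\Delta,0)$, the same Laplace tail computation using $e^\varepsilon e^{-(z^\star+\Delta)/b}=e^{z^\star/b}$, and the same inversion of $1-e^{(\varepsilon b-\Delta)/(2b)}\le\delta$. (One small correction to your closing remark: at $\varepsilon b=\Delta$ the level set $\{\ell=\varepsilon\}$ does contain the whole half-line $[0,\infty)$, so it can have positive measure, but this is harmless because the integrand vanishes wherever $\ell\le\varepsilon$, and your first case already covers equality.)
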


\begin{proof}
The proof is given in Appendix~\ref{app:proof-laplace-scale}.
\end{proof}

For completeness, the limiting case \(p=\infty\) corresponds to uniform noise on \([-b,b]\). A direct overlap calculation gives
\[
\delta_{\infty,\Delta}(b)
=
\min\left\{1,\frac{\Delta}{2b}\right\},
\]
so the uniform mechanism is \((\varepsilon,\delta)\)-differentially private if and only if
\[
b\ge \frac{\Delta}{2\delta}.
\]

\subsection{Dependence on the sensitivity vector and the special role of \(p=2\)}
\label{subsec:scalar-coordinate-profile}
In multidimensional cases, the coordinate structure of a sensitivity vector is often not captured by a scalar norm. The following proposition states that the Gaussian distribution is an exception to the family of generalized Gaussian distributions.
For a shift vector \(\Delta\in\mathbb R^d\), write
\[
\delta_{p,b}(\varepsilon;\Delta)
:=
\int_{\mathbb R^d}
\left(
 f^{(d)}_{p,b}(z)
 -e^\varepsilon f^{(d)}_{p,b}(z+\Delta)
\right)_+\,dz,
\qquad \varepsilon\ge0.
\]
The corresponding minimum feasible scale for a fixed shift vector is
\[
b_p(\varepsilon,\delta;\Delta)
:=
\inf\left\{
 b>0:
 \delta_{p,b}(\varepsilon;\Delta)\le\delta
\right\}.
\]
\begin{proposition}[Gaussian case and Euclidean sensitivity]
\label{prop:gaussian-scalar-coordinate-profile}
Assume \(d\ge2\). Within the generalized Gaussian family, the Gaussian shape \(p=2\) is the only shape with the following property: for every \(b>0\) and \(\varepsilon\ge0\), the value
\[
\delta_{p,b}(\varepsilon;\Delta)
\]
depends on the shift vector \(\Delta\) only through its Euclidean norm \(\|\Delta\|_2\). In particular, for \(p=2\),
\[
\delta_{2,b}(\varepsilon;\Delta)
\]
depends on \(\Delta\) only through \(\|\Delta\|_2/b\), and therefore
\[
b_2(\varepsilon,\delta;\Delta)
=
\|\Delta\|_2\, b_2(\varepsilon,\delta;e_1).
\]
where \(e_1=(1,0,\ldots,0)\) represents a unit shift in one coordinate.
For \(p\ne2\), this reduction generally fails. The way the sensitivity is spread across coordinates can change the value of \(\delta_{p,b}(\varepsilon;\Delta)\). More precisely, for any \(q\in[1,\infty]\), there can be two sensitivity vectors with the same \(\ell_q\)-norm but different values of \(\delta_{p,b}(\varepsilon;\Delta)\). Thus, outside the Gaussian case, knowing only one scalar norm of \(\Delta\) is generally not enough.
\end{proposition}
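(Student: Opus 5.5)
We split the proof into three parts: the Gaussian part, the uniqueness claim for \(p\ne2\), and the failure of reduction to any single \(\ell_q\)-norm.

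\emph{Gaussian part.} For \(p=2\) the density \(f^{(d)}_{2,b}(z)\propto\exp(-\|z\|_2^2/b^2)\) is invariant under orthogonal maps. Given \(\Delta\), choose an orthogonal \(U\) with \(U\Delta=\|\Delta\|_2e_1\). The change of variables \(z\mapsto U^\top z\) in the hockey-stick integral preserves Lebesgue measure and gives \(\delta_{2,b}(\varepsilon;\Delta)=\delta_{2,b}(\varepsilon;\|\Delta\|_2e_1)\). Next substitute \(z=bw\). The problem reduces to a one-dimensional standard Gaussian shift of size \(\|\Delta\|_2/b\), with the other coordinates integrating to one, so the value depends only on \(\|\Delta\|_2/b\). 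By the analytic Gaussian formula of the previous subsection it is decreasing in \(b\). Hence the feasible set in \(b\) is \(\{b\ge \|\Delta\|_2\,b_2(\varepsilon,\delta;e_1)\}\), which gives the scaling identity.

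\emph{Uniqueness.} For \(p\ne2\) it suffices to produce, for some \(b,\varepsilon\), two shifts with equal Euclidean norm but different divergences. The cleanest choice is \(\varepsilon=0\), where \(\delta_{p,b}(0;\Delta)\) is the total variation distance. We compare \(\Delta=te_1\) with \(\Delta'=(t/\sqrt2)(e_1+e_2)\) and expand in small \(t\). Total variation between a density and its shift behaves like \(\tfrac{t}{2}\int|\langle u,\nabla f\rangle|\,dz\), where \(u\) is the unit direction. For a product density this is \(\tfrac{t}{2}\int|u_1 g'(z_1)g(z_2)+u_2g(z_1)g'(z_2)|\,dz\).

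\begin{itemize}
\item For \(u=e_1\) this equals \(\tfrac{t}{2}\|g'\|_1\).
\item For \(u=(e_1+e_2)/\sqrt2\) it is \(\tfrac{t}{2\sqrt2}\,\mathbb E|\psi(Y_1)+\psi(Y_2)|\), where \(\psi=g'/g\propto -\mathrm{sign}(y)|y|^{p-1}\) and \(Y_i\) are i.i.d. with density \(g\).
\end{itemize}

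The two coincide iff \(\mathbb E|S_1+S_2|=\sqrt2\,\mathbb E|S_1|\) for \(S=|Y|^{p-1}\cdot\text{sign}\). This can be checked via Gamma-function moments, or simply in the boundary cases. At \(p=1\), \(S=\pm1\), so \(\mathbb E|S_1+S_2|=1\ne\sqrt2\). At \(p=\infty\) one uses the uniform overlap directly.

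\emph{Main obstacle.} For general \(p\ne2\), verifying the inequality of first-order constants is the hard step. The fallback is a characterization argument. If \(\delta_{p,b}(\varepsilon;\Delta)\) depended only on \(\|\Delta\|_2\) for all \(\varepsilon\), then the law of the privacy loss \(\sum_i\bigl(|z_i+\Delta_i|^p-|z_i|^p\bigr)/b^p\) under \(P_{p,b}\) would be rotation-invariant in \(\Delta\), because the hockey-stick family over all \(\varepsilon\) determines the privacy-loss distribution. Its variance as \(\Delta\to0\) is approximately \(\sum_i\Delta_i^2\,\mathbb E[\psi(Y)^2]\), which is fine. The fourth-order cumulant, however, contains a term \(\sum_i\Delta_i^4\kappa_4(\psi(Y))\) that is not a function of \(\|\Delta\|_2\) unless \(\kappa_4(\psi(Y))=0\). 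Showing that this fails for \(p\ne2\) uses \(\psi(Y)\propto \mathrm{sign}(Y)|Y|^{p-1}\) with \(|Y|^p\sim\mathrm{Gamma}(1/p)\), and reduces to an explicit Gamma identity that holds only at \(p=2\). This identity is where we expect most of the work.

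\emph{Failure of any \(\ell_q\) reduction.} For the final claim, fix \(q\) and take \(\Delta=ce_1\) and \(\Delta'=c'(e_1+e_2)\) with equal \(\ell_q\)-norm, i.e. \(c=2^{1/q}c'\). If the divergences agreed for all such pairs and all \(b\), the small-shift expansion above would force a specific ratio of first-order constants. That ratio holds for at most one \(q\), namely \(q=2\) when \(p=2\). For \(p\ne2\) and \(q=2\) the uniqueness part already supplies a counterexample. For \(q\ne2\), any pair agreeing in \(\ell_q\) norm but not in \(\ell_2\) norm, or the same expansion with a different ratio, should give one, and this can be confirmed numerically for specific \(p\).
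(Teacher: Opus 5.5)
Your Gaussian part is correct and matches the paper: rotate $\Delta$ to $\|\Delta\|_2e_1$, integrate out the other coordinates, and rescale. The genuine gap is in the final $\ell_q$ claim. Take $\Delta=2^{1/q}c'e_1$ and $\Delta'=c'(e_1+e_2)$. Your first-order total-variation constants are $\tfrac{c'}{2}2^{1/q}\mathbb E|\psi(Y_1)|$ and $\tfrac{c'}{2}\mathbb E|\psi(Y_1)+\psi(Y_2)|$. They coincide exactly when $2^{1/q}=\beta_p:=\mathbb E|\psi(Y_1)+\psi(Y_2)|/\mathbb E|\psi(Y_1)|\in[1,2]$. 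So for every $p$ there is exactly one $q_p\in[1,\infty]$ at which your small-shift expansion is inconclusive. For $p\ne2$ this $q_p$ is \emph{not} $2$: it equals $2$ only if $\beta_p=\sqrt2$, which is precisely what your uniqueness step has to exclude. Concretely, at $p=1$ you have $\beta_1=1$, so $q_1=\infty$: the shifts $te_1$ and $t(e_1+e_2)$ have the same $\ell_\infty$-norm and the same first-order total variation $t/2$. Your case split ($q=2$ via uniqueness, $q\ne2$ via the expansion) therefore leaves $q=q_p$ uncovered. The proposed remedy does not close it. For $p\ne2$, different $\ell_2$-norms do not imply different values of $\delta_{p,b}(\varepsilon;\cdot)$, and a numerical check is not a proof.

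The uniqueness step is also left open. You prove neither $\mathbb E|S_1+S_2|\ne\sqrt2\,\mathbb E|S_1|$ nor the fourth-cumulant identity. Your claim that the values for $\varepsilon\ge0$ determine the privacy-loss law also needs the coordinate-wise evenness argument to recover the lower tail: under the shifted law, the loss is distributed as minus the loss under the unshifted law.

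Your cumulant route can in fact be completed. With $a=1/p$, $\kappa_4(\operatorname{sign}(Y)|Y|^{p-1})=0$ iff $\Gamma(4-3a)\Gamma(a)=3\Gamma(2-a)^2$. The derivative of $\log\Gamma(4-3a)+\log\Gamma(a)-2\log\Gamma(2-a)$ is negative on $(0,1)$, because the digamma function is increasing and strictly concave and $\frac13a+\frac23(2-a)\le4-3a$ for $a\le1$. So $a=1/2$ is the only root.

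The paper handles uniqueness and every $q$ at once by passing to an additive functional of the privacy-loss law: its mean, the KL divergence $\sum_i h_p(|\Delta_i|)$, where $h_p(u)=\mathbb E|X+u|^p-\mathbb E|X|^p$. Suppose the divergences agree for $te_1$ and $t2^{-1/q}(e_1+e_2)$ at all $t$ (equivalently, all $b$). Then $h_p(t)=2h_p(t2^{-1/q})$. The small-shift behaviour $h_p(u)\sim c_pu^2$ forces $q=2$, and the large-shift behaviour $h_p(u)\sim u^p$ forces $q=p$. So no finite $q$ survives when $p\ne2$, and $q=\infty$ gives the impossible $h_p(t)=2h_p(t)$. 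Even staying at small shifts, using this mean ($\approx c_p\|\Delta\|_2^2$) in place of the first-order total variation would separate every $q\ne2$. Combined with your fourth-cumulant argument for $q=2$, that would repair your framework.
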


\begin{proof}
See Appendix~\ref{app:sensitivity-geometry}.
\end{proof}
This proposition explains why the coordinate-wise sensitivity vector \(\Delta\) is used when computing privacy-feasible scales for vector-valued queries. In the Gaussian case, the dependence on the shift is fully captured by \(\|\Delta\|_2\). For other shapes, the allocation of sensitivity across coordinates can affect the privacy constraint.
\subsection{Worst-case shift over a coordinate-wise sensitivity box}
\label{subsec:worstcase-box}

Recall that \(\delta_{p,b}(h)\) denotes the hockey-stick divergence associated with a shift \(h\), as defined in Section~\ref{sec:setup}. The following lemma reduces the supremum over the coordinate-wise sensitivity box \(\mathcal S_\Delta\) to the corner point \(\Delta=(\Delta_1,\ldots,\Delta_d)\).

\begin{lemma}[Worst-case shift over a coordinate-wise sensitivity box]
\label{lem:worstcase-box-paper}
Fix \(1\le p<\infty\), \(b>0\), and \(\varepsilon\ge0\). Then:
\begin{enumerate}[label=(\roman*), leftmargin=2em]
\item \(\delta_{p,b}(h)\) is invariant under coordinate-wise sign changes of \(h\).
\item If \(0\le h_i\le t_i\) for all \(i=1,\ldots,d\), then
$
\delta_{p,b}(h)\le \delta_{p,b}(t).
$
\item For the coordinate-wise sensitivity set \(\mathcal S_\Delta\),
$
\sup_{h\in\mathcal S_\Delta}\delta_{p,b}(h)
=
\delta_{p,b}(\Delta).
$
\end{enumerate}
\end{lemma}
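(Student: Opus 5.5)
Part (i) follows from a change of variables. Let \(\sigma\) be a coordinate-wise sign flip, \(z\mapsto (s_1z_1,\ldots,s_dz_d)\) with \(s_i\in\{\pm1\}\). Each one-dimensional density \(g_{p,b}\) is even, so \(f^{(d)}_{p,b}(\sigma z)=f^{(d)}_{p,b}(z)\). Substituting \(z=\sigma w\) in the integral representation
\[
\delta_{p,b}(h)=\int_{\mathbb R^d}\bigl(f^{(d)}_{p,b}(z)-e^\varepsilon f^{(d)}_{p,b}(z+h)\bigr)_+\,dz
\]
and using \(|\det\sigma|=1\) gives \(\delta_{p,b}(h)=\delta_{p,b}(\sigma h)\).

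For (ii) I will use the data-processing inequality for the hockey-stick divergence. \(D_\varepsilon\) is an \(f\)-divergence, so it does not increase under any Markov kernel applied to both arguments. It therefore suffices to treat one coordinate at a time: change \(h_i\) to \(t_i\) with the other coordinates fixed, then chain the \(d\) steps, since each intermediate vector still satisfies \(0\le h\le t\) coordinatewise.

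The key one-dimensional claim is this: for \(0\le a\le c\), there is a Markov kernel \(K\) on \(\mathbb R\) such that \(K\) maps \(GGD(0,b,p)\) to itself and maps the law of \(Z-c\) to the law of \(Z-a\). Here \(Z\sim GGD(0,b,p)\), and \(P_{p,b,h}\) is the law of \(Z-h\).

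My first attempt is to build \(K\) as a mixture of the identity and the reflection \(x\mapsto -x-c\). Let \(\lambda=a/c\) and take \(K(x,\cdot)=\) a state-dependent mixture. Because the family is only a location family, a clean shortcut is to use joint convexity of \(D_\varepsilon\) together with the structure of the integrand. This is the step I expect to be the main obstacle, because a single shift-and-mix kernel need not preserve \(P_{p,b}\) exactly.

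As a fallback, I will prove one-dimensional monotonicity directly. Factorize the integrand over coordinates. Fix \(z_{-i}\) and set
\[
A=\prod_{j\neq i}g(z_j),\qquad B=e^\varepsilon\prod_{j\neq i}g(z_j+h_j).
\]
The \(i\)-th coordinate then contributes
\[
\int(Ag(x)-Bg(x+s))_+dx=\sup_E\Bigl\{A\,G(E)-B\,G(E+s)\Bigr\},
\]
where \(G\) is the GGD probability measure. I need this to be nondecreasing in \(s\ge0\) for every \(A,B\ge0\). Since \(g\) is symmetric and log-concave (\(p\ge1\)), it is unimodal. In that case \(\inf_{|E|=\mathrm{const}}G(E+s)\) is achieved by half-lines, and the quantity
\[
\int(Ag(x)-Bg(x+s))_+dx = \int\bigl(Ag(x-s/2)-Bg(x+s/2)\bigr)_+dx
\]
can be shown to be nondecreasing in \(s\). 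I will verify this through the derivative in \(s\), where the log-concavity of \(g\) makes the positive set a half-line. This mirrors Lemma~\ref{lem:phi-monotone}: when \(p>1\), the log-likelihood ratio \(\log g(x-s/2)-\log g(x+s/2)\) is monotone in \(x\), so \(\{Ag(x-s/2)>Bg(x+s/2)\}=(-\infty,x^\star)\). The derivative in \(s\) then reduces to
\[
\tfrac12\bigl[Ag(x^\star-s/2)+Bg(x^\star+s/2)\bigr]\ge0
\]
after differentiating the integral of \(-g'\) terms, and the boundary terms vanish because the integrand is zero at \(x^\star\). The case \(p=1\) follows by continuity in \(p\) or by a direct piecewise computation. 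Integrating over \(z_{-i}\) gives monotonicity in \(h_i\).

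Part (iii) then follows. For \(h\in\mathcal S_\Delta\), part (i) gives \(\delta_{p,b}(h)=\delta_{p,b}(|h|)\), where \(|h|\) is taken coordinatewise. Since \(0\le|h_i|\le\Delta_i\), part (ii) gives \(\delta_{p,b}(|h|)\le\delta_{p,b}(\Delta)\). Equality in the supremum holds because \(\Delta\in\mathcal S_\Delta\).
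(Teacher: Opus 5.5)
Your argument is correct. Parts (i) and (iii) are the same as the paper's, but part (ii) takes a different route.

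\textbf{Comparison for part (ii).} The paper also freezes the other coordinates. It then writes the inner integral as $a-\int\min\{af(x),cf(x+u)\}\,dx$ and uses the layer-cake representation. The superlevel sets of $af$ and $cf(\cdot+u)$ are intervals centered at $0$ and $-u$, and the length of their overlap is nonincreasing in $u\ge0$. That argument uses only evenness and unimodality of $g_{p,b}$, so it handles $p=1$ and $s=0$ with no differentiation.

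Your fallback instead uses strict convexity of $|x|^p$ for $p>1$ to make the positive set a half-line, then differentiates in $s$. This gives strict monotonicity and an explicit derivative, in the spirit of the derivative computation in the proof of Theorem~\ref{thm:1d-tight-calibration}. The cost is three extra steps:
\begin{enumerate}[label=(\alph*), leftmargin=2em]
\item A justification of the Leibniz rule. Here $x^\star(s)$ is $C^1$ by the implicit function theorem, since $|x+s/2|^p-|x-s/2|^p$ has strictly positive $x$-derivative for $s>0$.
\item Continuity of $s\mapsto\int(Ag(x)-Bg(x+s))_+\,dx$ at $s=0$, where there is no crossing point. This follows from $L^1$-continuity of translations.
\item A limiting argument for $p=1$. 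This is fine: for fixed $A,B,s$ the one-dimensional integral converges as $p\downarrow1$ by Scheff\'e's lemma, and monotonicity passes to pointwise limits.
\end{enumerate}
The Markov-kernel attempt and the remark about $\inf_{|E|=\mathrm{const}}G(E+s)$ are not needed and can be deleted.

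\textbf{One correction.} The positive set is a right half-line, not $(-\infty,x^\star)$. Since $\log g(x-s/2)-\log g(x+s/2)=(|x+s/2|^p-|x-s/2|^p)/b^p$ is increasing in $x$ and tends to $+\infty$, you get $\{Ag(x-s/2)>Bg(x+s/2)\}=(x^\star,\infty)$. Your derivative $\tfrac12[Ag(x^\star-s/2)+Bg(x^\star+s/2)]$ is correct for this right tail. Integrating the $-g'$ terms over a left half-line would give its negative, so as written the stated orientation and the stated sign are inconsistent.
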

\begin{proof}
See Appendix~\ref{app:worstcase-box-proof}.
\end{proof}
Consequently, by part~(iii) and \eqref{eq:bstar-paper},
$
b(p)
=
\inf\left\{
b>0:
\delta_{p,b}(\Delta)\le\delta
\right\}.
$
\subsection{Empirical Bernstein upper bound for a fixed scale}

For \(1\le p<\infty\), the integral defining \(\delta_{p,b}(\Delta)\) is generally not available in closed form. It can be written as an expectation. Let \(Z\) have density \(f^{(d)}_{p,b}\) and define
\[
w_{p,b}(z)
:=
\left(
1
-
e^\varepsilon
\frac{f^{(d)}_{p,b}(z+\Delta)}{f^{(d)}_{p,b}(z)}
\right)_+ .
\]
Equivalently,
\[
w_{p,b}(z)
=
\left(
1-
\exp\left\{
\varepsilon
-
\frac{\|z+\Delta\|_p^p-\|z\|_p^p}{b^p}
\right\}
\right)_+ .
\]
Then
\[
0\le w_{p,b}(z)\le1,
\qquad
\delta_{p,b}(\Delta)
=
\mathbb E\bigl[w_{p,b}(Z)\bigr].
\]
Let
\[
Z^{(1)},\ldots,Z^{(N)}
\stackrel{\mathrm{i.i.d.}}{\sim}
f^{(d)}_{p,b},
\qquad
W_i:=w_{p,b}(Z^{(i)}).
\]
Define
\[
\widehat\delta_N(b)
:=
\frac1N\sum_{i=1}^N W_i,
\qquad
\widehat V_N(b)
:=
\frac{1}{N-1}
\sum_{i=1}^N
\left(W_i-\widehat\delta_N(b)\right)^2.
\]
Following the empirical Bernstein bound of \citet{maurer2009empirical}, for \(\alpha\in(0,1)\) set
\[
R_N(\alpha;b)
:=
\sqrt{
\frac{2\widehat V_N(b)\log(2/\alpha)}{N}
}
+
\frac{7\log(2/\alpha)}{3(N-1)}.
\]
Define
\[
U_N(\alpha;b)
:=
\widehat\delta_N(b)+R_N(\alpha;b),
\qquad
L_N(\alpha;b)
:=
\max\{\widehat\delta_N(b)-R_N(\alpha;b),0\}.
\]

\begin{proposition}[Empirical Bernstein upper bound]
\label{prop:certified-fixed-scale}
Fix \(1\le p<\infty\), \(b>0\), \(N\ge2\), and \(\alpha\in(0,1)\). With probability at least \(1-\alpha\),
\[
L_N(\alpha;b)
\le
\delta_{p,b}(\Delta)
\le
U_N(\alpha;b).
\]
Consequently, on the same event, if
\[
U_N(\alpha;b)\le\delta,
\]
then the mechanism \(M_{p,b}\) is \((\varepsilon,\delta)\)-differentially private under the coordinate-wise sensitivity \(\Delta\).
\end{proposition}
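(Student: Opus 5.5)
The plan is to apply the empirical Bernstein inequality of \citet{maurer2009empirical} twice, once to the sample \(W_1,\ldots,W_N\) and once to the reflected sample \(1-W_1,\ldots,1-W_N\), and then combine the two one-sided bounds with a union bound. The first step is to confirm the hypotheses. The \(W_i=w_{p,b}(Z^{(i)})\) are i.i.d. because the \(Z^{(i)}\) are. They take values in \([0,1]\) because \(w_{p,b}\) is the positive part of \(1\) minus a nonnegative quantity. Their common mean is \(\mathbb E[w_{p,b}(Z)]=\delta_{p,b}(\Delta)\). This identity comes from the density representation of the hockey-stick divergence: \(\int(f(z)-e^\varepsilon f(z+\Delta))_+dz=\int f(z)\bigl(1-e^\varepsilon f(z+\Delta)/f(z)\bigr)_+dz\), which is valid because \(f^{(d)}_{p,b}>0\) everywhere for \(p<\infty\). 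The exponent form of the ratio follows directly from the product density \eqref{eq:gg-d-density-paper}.

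Maurer--Pontil's Theorem~4 states the following for i.i.d. variables in \([0,1]\) and \(\beta\in(0,1)\). With probability at least \(1-\beta\),
\[
\mathbb E W-\widehat\delta_N\le\sqrt{\frac{2\widehat V_N\log(2/\beta)}{N}}+\frac{7\log(2/\beta)}{3(N-1)}.
\]
The \(\log(2/\beta)\) already absorbs an internal union bound over the variance estimate. Taking \(\beta=\alpha\) as written would give only the one-sided statement, so the main point to settle is the confidence accounting for the two-sided claim. The first thing I would try is the symmetric version. Apply the theorem to \(1-W_i\), which again lie in \([0,1]\) and have the same sample variance \(\widehat V_N\); this gives \(\widehat\delta_N-\delta_{p,b}(\Delta)\le R_N\) with probability at least \(1-\beta\). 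A union bound over the two events then yields both inequalities with probability at least \(1-2\beta\).

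To reach exactly \(1-\alpha\) with the stated radius, I would revisit the proof in \citet{maurer2009empirical}. There the \(\log(2/\alpha)\) arises from splitting \(\alpha\) between a Bernstein tail bound with budget \(\alpha/2\) and a one-sided variance concentration bound with budget \(\alpha/2\). Since \(\widehat V_N\) is shared by both directions, it suffices to control the upper deviation of the true variance once. The two Bernstein tails are then bounded with budget \(\alpha/4\) each, at the cost of an essentially identical radius. If this refinement does not close cleanly, I would instead record the result with \(\alpha\) interpreted per side, i.e.\ use \(\log(4/\alpha)\) in place of \(\log(2/\alpha)\). Truncating the lower bound at \(0\) in \(L_N\) is harmless since \(\delta_{p,b}(\Delta)\ge0\).

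The final consequence follows without further work. On the event in question, \(U_N(\alpha;b)\le\delta\) gives \(\delta_{p,b}(\Delta)\le\delta\). Lemma~\ref{lem:worstcase-box-paper}(iii) then gives \(\sup_{h\in\mathcal S_\Delta}\delta_{p,b}(h)\le\delta\). Since \(\mathcal H_q\subseteq\mathcal S_\Delta\), the equivalence \eqref{eq:dp-sup-shift-paper} shows that \(M_{p,b}\) is \((\varepsilon,\delta)\)-DP. The main obstacle is the confidence bookkeeping in the two-sided step; everything else is direct verification.
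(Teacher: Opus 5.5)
Your verification of the hypotheses and your derivation of the privacy consequence follow the paper's proof exactly. That proof has three parts: the $W_i$ are i.i.d.\ in $[0,1]$ with mean $\delta_{p,b}(\Delta)$, Maurer--Pontil is cited, and Lemma~\ref{lem:worstcase-box-paper} is applied. You are also right that Maurer--Pontil's Theorem~4 is one-sided. The paper's proof passes over this when it says the inequality ``gives the stated confidence interval''.

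The problem is your proposed repair. Reserving $\alpha/2$ for the variance event and $\alpha/4$ for each Bernstein tail does not give an essentially identical radius.

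\begin{itemize}
\item The leading term becomes $\sqrt{2\widehat V_N\log(4/\alpha)/N}$. For fixed $\alpha$, this exceeds the leading term of $R_N(\alpha;b)$ by an amount of order $\sqrt{\widehat V_N/N}$.
\item The lower-order term $\frac{7\log(2/\alpha)}{3(N-1)}$ exceeds $\log(2/\alpha)\bigl(\frac{1}{3N}+\frac{2}{\sqrt{N(N-1)}}\bigr)$, the quantity it was built to dominate, by only $O(\log(2/\alpha)/N^2)$. It cannot absorb an $O(N^{-1/2})$ excess.
\item No reallocation within this union-bound scheme helps. With total failure probability $\alpha$ and a positive variance budget, at least one Bernstein tail must receive budget strictly below $\alpha/2$.
\end{itemize}

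So the refinement only improves lower-order terms. It yields a radius at most $R_N(\alpha/2;b)$, not $R_N(\alpha;b)$.

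What your argument actually establishes is the following:
\begin{itemize}
\item Each one-sided inequality holds at level $1-\alpha$ with radius $R_N(\alpha;b)$.
\item The two-sided interval holds at level $1-\alpha$ only with $\log(4/\alpha)$ in place of $\log(2/\alpha)$. Equivalently, it holds at level $1-2\alpha$ with the stated radius.
\end{itemize}

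The one-sided upper bound is all the privacy conclusion needs, since it uses only $\delta_{p,b}(\Delta)\le U_N(\alpha;b)$. For the two-sided claim, your fallback is the correct statement to record. Drop the assertion that the refinement recovers $R_N(\alpha;b)$.

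This distinction affects later results. Theorem~\ref{thm:fixed-shape-upper-approximation} and the interval search in Section~\ref{sec:optimization} act on both $U_N$ and $L_N$ decisions under a single budget per test. Their simultaneous-validity accounting therefore doubles unless the radii use $\log(4/\alpha_i)$.
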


\begin{proof}
The variables \(W_1,\ldots,W_N\) are i.i.d. in \([0,1]\), and
\[
\mathbb E W_i=\delta_{p,b}(\Delta).
\]
The empirical Bernstein inequality of \citet{maurer2009empirical} gives the stated confidence interval. On this event, \(U_N(\alpha;b)\le\delta\) implies
\[
\delta_{p,b}(\Delta)\le\delta.
\]
The privacy statement follows from Lemma~\ref{lem:worstcase-box-paper}.
\end{proof}

\subsection{Upper approximation of the fixed-shape scale}
\label{subsec:fixed-shape-upper-approximation}
Fix \(1\le p<\infty\), \(\varepsilon>0\), \(0<\delta<1\), and a coordinate-wise sensitivity vector \(\Delta\in[0,\infty)^d\setminus\{0\}\). The goal is to compute an upper approximation \(\widehat b(p)\) of \(b(p)\) that is privacy-feasible with high probability.
We will also use the following property:
\begin{equation}
\label{eq:fixed-shape-sign}
b<b(p)\Longrightarrow\delta_{p,b}(\Delta)>\delta,
\qquad
b>b(p)\Longrightarrow\delta_{p,b}(\Delta)<\delta.
\end{equation}
A proof is given in Appendix~\ref{app:initial-upper-scale}.
Since \(b\mapsto\delta_{p,b}(\Delta)\) is nonincreasing, we maintain a lower and upper bracket for \(b(p)\). Set
\[
b_{\mathrm{low}}^{(0)}=0.
\]
For \(p=1\), take
\[
b_{\mathrm{high}}^{(0)}
=
\frac{\|\Delta\|_1}{\varepsilon}.
\]
For \(p>1\), let
\[
\kappa
=
\frac{d}{p}
+
\sqrt{
\frac{2d}{p}\log\frac1\delta
}
+
\log\frac1\delta,
\]
and set
\[
b_{\mathrm{high}}^{(0)}
=
\max\left\{
\frac{p\,2^p}{\varepsilon}
\|\Delta\|_p\,
\kappa^{(p-1)/p},
\left(
\frac{p\,2^p}{\varepsilon}
\right)^{1/p}
\|\Delta\|_p
\right\}.
\]
This choice satisfies
\(\delta_{p,b_{\mathrm{high}}^{(0)}}(\Delta)\le\delta\); the proof is given in Appendix~\ref{app:initial-upper-scale}.
If
$
b_{\mathrm{high}}^{(0)}-b_{\mathrm{low}}^{(0)}\le\tau_b,
$
set \(\widehat b(p)=b_{\mathrm{high}}^{(0)}\) and stop. Otherwise, define
\[
K
=
\left\lceil
\log_{4/3}
\left(
\frac{
b_{\mathrm{high}}^{(0)}-b_{\mathrm{low}}^{(0)}
}{
\tau_b
}
\right)
\right\rceil.
\]
At step \(t\), let
\[
w_t
=
b_{\mathrm{high}}^{(t)}
-
b_{\mathrm{low}}^{(t)},
\]
and consider
\[
b_{1/2}^{(t)}
=
b_{\mathrm{low}}^{(t)}
+\frac12 w_t,
\qquad
b_{3/4}^{(t)}
=
b_{\mathrm{low}}^{(t)}
+\frac34 w_t.
\]
The midpoint is tested first using the empirical Bernstein bounds in Proposition~\ref{prop:certified-fixed-scale}. If it is not resolved, the \(3/4\) point is also tested, and the sample sizes are increased until one of the two comparisons is resolved.

For \(s\in\{1/2,3/4\}\), repeated attempts use confidence budgets
\[
\alpha_{t,s,r}
=
\frac{3\alpha}{\pi^2K(r+1)^2},
\qquad
t=0,\ldots,K-1,\quad r=0,1,2,\ldots,
\]
so that
\[
\sum_{t=0}^{K-1}
\sum_{s\in\{1/2,3/4\}}
\sum_{r=0}^{\infty}
\alpha_{t,s,r}
\le\alpha.
\]

For a tested point \(b_s^{(t)}\), if
\[
U_N(\alpha_{t,s,r};b_s^{(t)})\le\delta,
\]
it is certified feasible and becomes the new upper endpoint. If
\[
L_N(\alpha_{t,s,r};b_s^{(t)})>\delta,
\]
it is certified infeasible and becomes the new lower endpoint. 
After \(K\) refinement steps, the procedure returns
\[
\widehat b(p)=b_{\mathrm{high}}^{(K)}.
\]
\begin{theorem}[Finite-sample upper approximation of \(b(p)\)]
\label{thm:fixed-shape-upper-approximation}
The procedure above terminates after at most \(K\) refinement steps and uses finitely many samples. Moreover, with probability at least \(1-\alpha\),
\[
\delta_{p,\widehat b(p)}(\Delta)\le\delta,
\qquad
0\le\widehat b(p)-b(p)\le\tau_b.
\]
\end{theorem}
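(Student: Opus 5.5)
The proof has two parts. First we show the procedure terminates and uses finitely many samples. Then we prove the bracketing invariant on a single good event of probability at least \(1-\alpha\).

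\emph{Termination and sample finiteness.} At step \(t\) the midpoint \(b_{1/2}^{(t)}\) and the point \(b_{3/4}^{(t)}\) are distinct. So at most one of them can equal \(b(p)\). Take a tested point \(b_s\ne b(p)\). By \eqref{eq:fixed-shape-sign}, \(\delta_{p,b_s}(\Delta)\ne\delta\); put \(\gamma:=|\delta_{p,b_s}(\Delta)-\delta|>0\). The radius \(R_N(\alpha_{t,s,r};b_s)\) is at most
\[
\sqrt{2\log(2/\alpha_{t,s,r})/N}+7\log(2/\alpha_{t,s,r})/(3(N-1)),
\]
because \(\widehat V_N\le 1\). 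If the sample size \(N_r\) grows faster than \(\log(r+1)\) (for instance doubling in \(r\)), this radius tends to zero. By the strong law of large numbers, \(\widehat\delta_N\to\delta_{p,b_s}(\Delta)\) almost surely. Hence, almost surely, after finitely many attempts \(U_N\le\delta\) or \(L_N>\delta\) holds at that point, and the comparison is resolved. One caveat is that this gives almost-sure, not deterministic, finiteness; the statement's ``finitely many samples'' would be read that way.

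Each resolved step replaces \([b_{\mathrm{low}},b_{\mathrm{high}}]\) by \([b_s,b_{\mathrm{high}}]\) or \([b_{\mathrm{low}},b_s]\) with \(s\in\{1/2,3/4\}\). The new width is therefore at most \(\tfrac34 w_t\). After \(K\) steps,
\[
w_K\le (3/4)^K w_0\le\tau_b
\]
by the choice of \(K\). So the procedure stops after at most \(K\) refinement steps.

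\emph{Correctness on the good event.} Let \(G\) be the event that every empirical Bernstein interval actually computed contains the true \(\delta_{p,b}(\Delta)\). Proposition~\ref{prop:certified-fixed-scale} gives each interval a failure probability at most \(\alpha_{t,s,r}\). A union bound over the countable index set \((t,s,r)\), together with the stated summability, gives \(\mathbb P(G)\ge1-\alpha\).

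The sample sizes and tested points are adapted to the past. To handle this, we apply the proposition conditionally on the history, using fresh samples at each test; the failure probability is then still at most \(\alpha_{t,s,r}\). On \(G\) we prove by induction the invariant
\[
\delta_{p,b_{\mathrm{high}}^{(t)}}(\Delta)\le\delta
\quad\text{and}\quad
b_{\mathrm{low}}^{(t)}\le b(p)\le b_{\mathrm{high}}^{(t)}.
\]
At \(t=0\), \(b_{\mathrm{low}}^{(0)}=0\), and the Appendix result gives feasibility of \(b_{\mathrm{high}}^{(0)}\). This forces \(b(p)\le b_{\mathrm{high}}^{(0)}\) by the definition of \(b(p)\) as an infimum.

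For the inductive step there are two cases.
\begin{itemize}
\item A point certified feasible (\(U_N\le\delta\)) satisfies \(\delta_{p,b_s}(\Delta)\le\delta\) on \(G\). Hence \(b_s\ge b(p)\), since \(b_s<b(p)\) would give \(\delta_{p,b_s}(\Delta)>\delta\) by \eqref{eq:fixed-shape-sign}.
\item A point certified infeasible (\(L_N>\delta\)) satisfies \(\delta_{p,b_s}(\Delta)>\delta\). Hence \(b_s\le b(p)\), again by \eqref{eq:fixed-shape-sign}.
\end{itemize}
The invariant is thus preserved. At the end,
\[
\delta_{p,\widehat b(p)}(\Delta)\le\delta
\quad\text{and}\quad
0\le \widehat b(p)-b(p)\le b_{\mathrm{high}}^{(K)}-b_{\mathrm{low}}^{(K)}\le\tau_b .
\]
If the early-stop branch \(w_0\le\tau_b\) is taken, the same argument applies with \(K=0\).

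The main obstacle is the termination and union-bound bookkeeping, not the bracketing logic. One must ensure that an unresolved test cannot occur at both candidate points forever. Two facts handle this: the distinctness of \(b_{1/2}^{(t)}\) and \(b_{3/4}^{(t)}\), which gives a positive gap \(\gamma\) at one of them, and the growth condition on \(N_r\) relative to \(\log(1/\alpha_{t,s,r})\), which drives the radius to zero.
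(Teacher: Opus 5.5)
Your proposal is correct and follows the paper's proof essentially step for step. In both, distinctness of \(b_{1/2}^{(t)}\) and \(b_{3/4}^{(t)}\) guarantees a strict comparison at one test point, and each resolved step gives \(w_{t+1}\le\frac34 w_t\), so that \(w_K\le(3/4)^Kw_0\le\tau_b\). On the simultaneous-validity event, the bracket \(b_{\mathrm{low}}^{(t)}\le b(p)\le b_{\mathrm{high}}^{(t)}\) and the feasibility of \(b_{\mathrm{high}}^{(t)}\) are preserved via \eqref{eq:fixed-shape-sign}. Your extra bookkeeping makes explicit what the paper leaves implicit: requiring \(N_r/\log(r+1)\to\infty\) so the Bernstein radius vanishes, and conditioning on the history to justify the union bound under adaptivity. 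Note also that on the good event, resolution is deterministic once the radius falls below half the gap \(|\delta_{p,b_s}(\Delta)-\delta|\). The strong-law argument, which with fresh samples per attempt would really need Borel--Cantelli, is therefore only needed off that event.
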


\begin{proof}
Let \(E\) be the event on which all empirical Bernstein bounds used by the procedure are valid. By Proposition~\ref{prop:certified-fixed-scale} and the choice of the confidence budgets,
\(\mathbb P(E)\ge1-\alpha\). Work on \(E\).

At each step,
\(b^{(t)}_{1/2}\ne b^{(t)}_{3/4}\), so at most one of them equals
\(b(p)\). By~\eqref{eq:fixed-shape-sign}, at least one satisfies
\(\delta_{p,b}(\Delta)\ne\delta\). For such a strict comparison, the empirical Bernstein bounds resolve the comparison after a sufficiently large but finite sample.
For a resolved test point \(b_s^{(t)}\), if
\(U_N(\alpha_{t,s,r};b_s^{(t)})\le\delta\), then \(b_s^{(t)}\) becomes the new upper endpoint. If
\(L_N(\alpha_{t,s,r};b_s^{(t)})>\delta\), then \(b_s^{(t)}\) becomes the new lower endpoint. Therefore,
$
b_{\mathrm{low}}^{(t)}
\le
b(p)
\le
b_{\mathrm{high}}^{(t)}
$
is preserved.

If \(b_{1/2}^{(t)}\) is used, then \(w_{t+1}\le\frac12w_t\); if
\(b_{3/4}^{(t)}\) is used, then \(w_{t+1}\le\frac34w_t\). Therefore
\[
w_K
\le
\left(\frac34\right)^K w_0
\le
\tau_b.
\]
Since \(\widehat b(p)=b_{\mathrm{high}}^{(K)}\),
\[
0\le\widehat b(p)-b(p)\le w_K\le\tau_b.
\]
The upper endpoint remains privacy-feasible, so
$
\delta_{p,\widehat b(p)}(\Delta)\le\delta.
$
\end{proof}

\begin{corollary}[Unconditional privacy from a high-probability scale guarantee]
\label{cor:mc-scale-delta-alpha}
Let \(\widehat b(p)\) be a random scale satisfying
\[
\mathbb P\left(\delta_{p,\widehat b(p)}(\Delta)\le\delta\right)\ge1-\alpha.
\]
Conditionally on \(\widehat b\), let \(Z\) have density \(f^{(d)}_{p,\widehat b(p)}\), and define
\[
\widetilde M(D)=q(D)+Z .
\]
Then \(\widetilde M\) is \((\varepsilon,\delta+\alpha)\)-differentially private.
\end{corollary}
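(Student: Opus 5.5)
The plan is to condition on the random scale and average over its law. The key point is that \(\widehat b(p)\) is computed from Monte Carlo samples that do not depend on the dataset. It depends only on \(p,\varepsilon,\delta,\Delta,\tau_b,\alpha\) and auxiliary randomness. So the same law of \(\widehat b\) is used for every \(D\), and the mechanism \(\widetilde M(D)\) is a mixture over \(\beta=\widehat b(p)\) of the fixed-scale mechanisms \(M_{p,\beta}(D)=q(D)+Z_{p,\beta}\). Write \(\mu\) for the law of \(\widehat b(p)\) on \((0,\infty)\). Let \(G:=\{\beta:\delta_{p,\beta}(\Delta)\le\delta\}\), so that \(\mu(G)\ge1-\alpha\) by hypothesis. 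Measurability of \(G\) follows from measurability of \(\beta\mapsto\delta_{p,\beta}(\Delta)\), or can be taken from its monotonicity via \eqref{eq:fixed-shape-sign}.

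\emph{Step 1 (fixed-scale guarantee on \(G\)).} Fix neighbouring \(D\sim D'\) and a measurable \(A\subseteq\mathbb R^d\). For \(\beta\in G\), Lemma~\ref{lem:worstcase-box-paper}(iii) together with \(\mathcal H_q\subseteq\mathcal S_\Delta\) and \eqref{eq:dp-sup-shift-paper} shows that \(M_{p,\beta}\) is \((\varepsilon,\delta)\)-DP. Hence
\[
\mathbb P\bigl(q(D)+Z_{p,\beta}\in A\bigr)\le e^\varepsilon\,\mathbb P\bigl(q(D')+Z_{p,\beta}\in A\bigr)+\delta .
\]
Here one has to translate the shift convention: \(P_{p,b,h}\) has density \(f(z+h)\), and one compares the laws of \(M(D)\) and \(M(D')\) after translating by \(q(D)\). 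Translation invariance of the hockey-stick divergence handles this, together with the sign symmetry of Lemma~\ref{lem:worstcase-box-paper}(i) covering \(h\) versus \(-h\). For \(\beta\notin G\) I will use only the trivial bound \(\mathbb P(q(D)+Z_{p,\beta}\in A)\le1\).

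\emph{Step 2 (integrate).} By the law of total probability, with the conditional probabilities \(\beta\mapsto\mathbb P(q(D)+Z_{p,\beta}\in A)\) measurable since the densities are jointly measurable in \((z,\beta)\),
\[
\mathbb P(\widetilde M(D)\in A)=\int_G \mathbb P(q(D)+Z_{p,\beta}\in A)\,\mu(d\beta)+\int_{G^c}\mathbb P(q(D)+Z_{p,\beta}\in A)\,\mu(d\beta).
\]
The first integral is at most \(e^\varepsilon\int_G\mathbb P(q(D')+Z_{p,\beta}\in A)\,\mu(d\beta)+\delta\mu(G)\). This is at most \(e^\varepsilon\,\mathbb P(\widetilde M(D')\in A)+\delta\), because the integrand is nonnegative and may be extended to all of \((0,\infty)\). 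The second integral is at most \(\mu(G^c)\le\alpha\). Combining the two gives \eqref{eq:dp-definition-paper} with \(\delta+\alpha\).

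The only real obstacle is making the independence of \(\widehat b\) from the data explicit. If the scale-calibration randomness depended on \(D\), the mixture weights would differ between \(D\) and \(D'\) and the argument would fail. I will state this as part of the setting: the procedure of Theorem~\ref{thm:fixed-shape-upper-approximation} uses only \(\Delta\) and public parameters, and \(Z\) is drawn conditionally on \(\widehat b\) independently of the data.

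If \(\widehat b\) is also released, the same argument applies to the pair \((\widehat b,\widetilde M(D))\) on product sets \(B\times A\). So the joint release is \((\varepsilon,\delta+\alpha)\)-DP as well.
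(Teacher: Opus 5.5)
Your proof is correct and follows the paper's own argument. Like the paper, you condition on \(\widehat b\), apply the fixed-scale \((\varepsilon,\delta)\) inequality on the good event \(\{\delta_{p,\widehat b(p)}(\Delta)\le\delta\}\), bound the complementary event by its probability \(\le\alpha\), and average. Your explicit requirement that \(\widehat b\) be computed independently of the data, so that the mixing law and the good event are the same for \(D\) and \(D'\), is something the paper's proof uses but leaves unstated.
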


\begin{proof}
Let
\[
\mathcal G
=
\left\{
\delta_{p,\widehat b(p)}(\Delta)\le\delta
\right\}.
\]
Then \(\mathbb P(\mathcal G^c)\le\alpha\). For neighbouring datasets \(D\sim D'\) and any measurable set \(A\subseteq\mathbb R^d\),
\[
\begin{aligned}
\mathbb P\left(\widetilde M(D)\in A\right)
&=
\mathbb E\left[
\mathbb P\left(\widetilde M(D)\in A\mid \widehat b\right)\mathbf 1_{\mathcal G}
\right]
+
\mathbb E\left[
\mathbb P\left(\widetilde M(D)\in A\mid \widehat b\right)\mathbf 1_{\mathcal G^c}
\right] \\
&\le
\mathbb E\left[
\left(
e^\varepsilon
\mathbb P\left(\widetilde M(D')\in A\mid \widehat b\right)
+
\delta
\right)
\mathbf 1_{\mathcal G}
\right]
+
\mathbb P(\mathcal G^c) \\
&\le
e^\varepsilon
\mathbb P\left(\widetilde M(D')\in A\right)
+
\delta
+
\alpha .
\end{aligned}
\]
Therefore \(\widetilde M\) satisfies \((\varepsilon,\delta+\alpha)\)-differential privacy.
\end{proof}

\begin{remark}[Interpretation of the Monte Carlo guarantee]
Theorem~\ref{thm:fixed-shape-upper-approximation} gives a high-probability guarantee for the computed scale:
$
\mathbb P\left(
\delta_{p,\widehat b(p)}(\Delta)\le\delta,
\quad
\widehat b(p)\ge b(p)
\right)
\ge
1-\alpha .
$
Corollary~\ref{cor:mc-scale-delta-alpha} converts the high-probability guarantee for the scale into an unconditional privacy guarantee: the resulting randomized mechanism satisfies \((\varepsilon,\delta+\alpha)\)-differential privacy. Thus, the uncertainty introduced by the Monte Carlo method is absorbed into the failure probability through the additional term \(\alpha\).
\end{remark}
\section{The Scale Map and Shape Selection}
\label{sec:optimization}
This section treats the shape parameter \(p\) as a variable. For each fixed \(p\), let \(b(p)\) denote the minimum privacy-feasible scale determined by the \((\varepsilon,\delta)\)-DP constraint and the coordinate-wise sensitivity vector \(\Delta\). For the \(m\)-th absolute moment,
\[
L_m(p,b)=b^mM_m(p),\qquad
M_m(p)=\frac{\Gamma((m+1)/p)}{\Gamma(1/p)},\qquad 1\le p<\infty,
\]
with \(M_m(\infty)=1/(m+1)\). Define
\begin{equation}
\label{eq:F-objective-section4}
F(p):=\log L_m(p,b(p))=\log M_m(p)+m\log b(p).
\end{equation}
On \([1,P_{\max}]\), the finite-shape problem is
\begin{equation}
\label{eq:truncated-shape-problem}
F^\star:=\inf_{p\in[1,P_{\max}]}F(p),\qquad
p^\star\in\argmin_{p\in[1,P_{\max}]}F(p).
\end{equation}
The case \(p=\infty\), corresponding to uniform noise, is evaluated separately. In multidimensional computations, \(b(p)\) is evaluated using the upper approximation \(\widehat b(p)\) from Section~\ref{subsec:fixed-shape-upper-approximation}. The goal is to construct \(\hat p\in[1,P_{\max}]\) such that
\begin{equation}
\label{eq:section4-target-guarantee}
F(\hat p)\le F^\star+\eta,
\end{equation}
or equivalently,
\[
L_m(\hat p,b(\hat p))\le e^\eta\inf_{p\in[1,P_{\max}]}L_m(p,b(p)).
\]
\subsection{The scale map}
Let \(\Delta\in[0,\infty)^d\setminus\{0\}\), and let \(\delta_\Delta(p,b)\) denote the hockey-stick divergence at the worst-case shift \(\Delta\). By Lemma~\ref{lem:worstcase-box-paper}, for \(p>1\),
\begin{equation}
\label{eq:b-p-section4}
b(p):=\inf\{b>0:\delta_\Delta(p,b)\le\delta\}.
\end{equation}
\begin{theorem}[Existence, uniqueness, differentiability, and endpoint behaviour
of the scale function]
\label{thm:scale-function-properties}
Assume \(d\ge 1\), \(\varepsilon\ge 0\), \(0<\delta<1\), and
\(\Delta\in[0,\infty)^d\setminus\{0\}\).
For every \(p\in(1,\infty)\), the map
\(b\mapsto\delta_\Delta(p,b)\) is continuous and strictly decreasing on
\((0,\infty)\), with
\[
\lim_{b\downarrow0}\delta_\Delta(p,b)=1,
\qquad
\lim_{b\to\infty}\delta_\Delta(p,b)=0.
\]
Hence there is a unique \(b(p)\in(0,\infty)\) such that
\[
\delta_\Delta(p,b(p))=\delta.
\]
Moreover, \(p\mapsto b(p)\) is continuously differentiable on
\((1,\infty)\), and
\[
\lim_{p\downarrow1}b(p)=b(1),
\qquad
\lim_{p\to\infty}b(p)=b(\infty).
\]
If \(u(1)=1/b(1)\) does not belong to the exceptional set
\[
\mathcal E
:=
\left\{
u>0:
u\langle s,\Delta\rangle=\varepsilon
\text{ for some }s\in\{-1,1\}^d
\right\},
\]
then the right derivative \(b'_+(1)\) exists.
More precisely, with \(u(p)=1/b(p)\) and
\[
g_p(z)
:=
\left(\frac{p}{2\Gamma(1/p)}\right)^d
\exp\!\left(-\|z\|_p^p\right),
\]
define
\[
\Phi(p,u)
:=
\int_{\mathbb R^d}
\bigl(g_p(z)-e^\varepsilon g_p(z+u\Delta)\bigr)_+\,dz.
\]
Then
\[
\delta_\Delta(p,b)=\Phi(p,1/b),
\qquad
\Phi(p,u(p))=\delta,
\]
and, for \(p\in(1,\infty)\),
\[
u'(p)
=
-\frac{\partial_p\Phi(p,u)}
        {\partial_u\Phi(p,u)}
\bigg|_{u=u(p)},
\qquad
b'(p)
=
-\frac{u'(p)}{u(p)^2}.
\]
If \(u(1)\notin\mathcal E\), then
\[
u'_+(1)
=
-\frac{\partial_{p+}\Phi(1,u(1))}
        {\partial_u\Phi(1,u(1))},
\qquad
b'_+(1)
=
-\frac{u'_+(1)}{u(1)^2}.
\]
\end{theorem}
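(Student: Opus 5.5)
The plan is to rescale the problem to unit scale, prove that $\Phi$ is $C^1$ with $\partial_u\Phi<0$, and then read every claim off the implicit function theorem together with continuity of $\Phi$ up to the endpoints.

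\textbf{Reduction.} Substitute $z=bx$ in the integral defining $\delta_\Delta(p,b)$. The Jacobian $b^d$ cancels the normalisation $b^{-d}$ in $f^{(d)}_{p,b}$, which gives
\[
\delta_\Delta(p,b)=\Phi(p,1/b).
\]
So it suffices to study $u\mapsto\Phi(p,u)$ for fixed $p\in(1,\infty)$.

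\textbf{Limits in $u$.} $\Phi(p,u)$ is the hockey-stick divergence between $g_p$ and its translate by $-u\Delta$.
\begin{itemize}
\item As $u\to0$, $g_p(\cdot+u\Delta)\to g_p$ in $L^1$, so $\Phi\to\int(1-e^\varepsilon)_+g_p=0$.
\item As $u\to\infty$, the two densities become asymptotically disjoint, so $\Phi\to1$.
\end{itemize}
Continuity in $u$ follows from $L^1$-continuity of translation. Translating back via $u=1/b$ gives the two stated limits in $b$.

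\textbf{Strict monotonicity.} I would differentiate under the integral using the rule
\[
\frac{d}{d\theta}\int (A_\theta-B_\theta)_+=\int_{\{A_\theta>B_\theta\}}(\partial_\theta A_\theta-\partial_\theta B_\theta),
\]
which is valid when the tie set $\{A_\theta=B_\theta\}$ is Lebesgue-null and there is an integrable dominating function.
\begin{itemize}
\item For $p>1$ the tie set is $\{\|z+u\Delta\|_p^p-\|z\|_p^p=\varepsilon\}$. This is a level set of a real-analytic-off-the-axes function whose gradient is nonzero almost everywhere, so it is null.
\item Hence
\[
\partial_u\Phi=-e^\varepsilon\int_{\{g_p(z)>e^\varepsilon g_p(z+u\Delta)\}}\langle\nabla g_p(z+u\Delta),\Delta\rangle\,dz.
\]
\item To sign this, write $\Phi(p,u)=\int_{\mathbb R} P\bigl(\ell(Z)>t\bigr)\ldots$ via the privacy-loss representation. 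Alternatively, use the one-dimensional reduction along the direction $\Delta$. On each line parallel to $\Delta$, Lemma~\ref{lem:phi-monotone} applied coordinatewise shows that the region is a half-line, and $\langle\nabla g_p,\Delta\rangle$ has a definite sign there after integrating.
\item A cleaner fallback is monotonicity of the divergence under increasing shift, as in Lemma~\ref{lem:worstcase-box-paper}(ii), applied to $u\Delta\le u'\Delta$. I would then upgrade to strictness by showing the increment integrand is positive on a set of positive measure.
\end{itemize}
The intermediate value theorem then gives a unique $u(p)$ with $\Phi(p,u(p))=\delta$, and $b(p)=1/u(p)$.

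\textbf{Differentiability in $p$.} The same differentiation rule applies in $p$. The $p$-derivative of $g_p$ involves $\|z\|_p^p\log|z_i|$ terms and the derivative of the normalising constant. All of these are dominated, locally uniformly in $p$ on compact subsets of $(1,\infty)$, by $C(1+\|z\|^{p+1})e^{-c\|z\|_{p_0}^{p_0}}$. This gives joint $C^1$ of $\Phi$ on $(1,\infty)\times(0,\infty)$. The implicit function theorem, with $\partial_u\Phi<0$, then yields $u\in C^1$, the stated formula for $u'$, and $b'=-u'/u^2$.

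\textbf{Endpoints.}
\begin{itemize}
\item As $p\to1$ or $p\to\infty$, $g_p\to g_1$ (respectively the uniform density on $[-1,1]^d$) pointwise and in $L^1$ by Scheffé. Hence $\Phi(p,\cdot)\to\Phi(1,\cdot)$ (respectively $\Phi(\infty,\cdot)$) locally uniformly in $u$.
\item The limiting functions are continuous and strictly monotone near the root. For the uniform case, $\Phi(\infty,u)$ is the non-overlap mass, which is strictly increasing while it is below $1$, and $\delta<1$.
\item The roots therefore converge, $u(p)\to u(1)$ and $u(p)\to u(\infty)$, which gives the limits of $b(p)$.
\end{itemize}

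\textbf{Right derivative at $p=1$ (main obstacle).} At $p=1$ the function $\|z+u\Delta\|_1-\|z\|_1$ is piecewise affine, and it is constant, equal to $u\langle s,\Delta\rangle$, on the orthant-type regions where every $|z_i|$ is large with sign pattern $s$. So the tie set can carry positive measure exactly when $u\in\mathcal E$.

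For $u(1)\notin\mathcal E$ I would argue as follows.
\begin{itemize}
\item The tie set at $(1,u(1))$ is null, and it stays uniformly "thin" nearby.
\item The one-sided difference quotients in $p$ then converge by dominated convergence to $\partial_{p+}\Phi(1,u(1))$.
\item $\partial_u\Phi(1,\cdot)$ exists and is continuous near $u(1)$, because the same non-degeneracy holds on a neighbourhood of $u(1)$ (the set $\mathcal E$ is finite).
\item A one-sided implicit function argument, using the mean value theorem on $\Phi(p,u(p))-\Phi(1,u(1))=0$, gives the formula for $u'_+(1)$.
\end{itemize}
Controlling the measure of the near-tie sets uniformly as $p\downarrow1$ is the delicate step. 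I would handle it by bounding the $p$-perturbation of the privacy loss by $O\bigl((p-1)(1+\|z\|^2)\bigr)$ and using that the $p=1$ privacy loss takes the finitely many values $u\langle s,\Delta\rangle$, none equal to $\varepsilon$, on its flat pieces.
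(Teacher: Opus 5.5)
Your proposal is correct and is essentially the paper's proof. The shared steps are: rescaling to $\Phi(p,1/b)$; the half-line structure of $\{g_p(z)>e^\varepsilon g_p(z+u\Delta)\}$ on lines parallel to $\Delta$ (your second option, which is the one the paper uses), giving $\partial_u\Phi(p,u)=|\det A|\,e^\varepsilon\int_{\mathbb R^{d-1}} g_p(Vy+(t^\ast+u)\Delta)\,dy$; dominated convergence with Lebesgue-null tie sets, followed by the implicit function theorem; convergence of roots at the endpoints; and the flat-piece non-degeneracy argument at $p=1$, where you are more explicit than the paper about the uniformity in $u$ needed in the one-sided mean value step.

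Two small corrections. First, the sign is $\partial_u\Phi>0$, not $<0$; your own limits $\Phi\to0$ as $u\downarrow0$ and $\Phi\to1$ as $u\to\infty$ already show this. Second, to get $u(p)\to u(1)$ you still need strict monotonicity of $\Phi(1,\cdot)$ near $u(1)$. The paper obtains this from convexity of $u\mapsto\|z+u\Delta\|_1$ together with the positive orthant, where the privacy loss equals $u\|\Delta\|_1$.
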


\begin{proof}
See Appendix~\ref{app:proof-scale-function-properties}.
\end{proof}
\begin{remark}[Scale-map regularity and shape selection]
For any scale-homogeneous utility criterion,
\[
\mathcal L(p,b)=b^r\nu(p),\qquad r>0,\qquad \nu(p)>0,\qquad
\mathcal L(p,b(p))=b(p)^r\nu(p).
\]
The only implicit term in the feasible objective is \(b(p)\). Once the regularity of \(b(p)\) is established, the regularity of \(p\mapsto \mathcal L(p,b(p))\) follows from that of \(\nu\).

The \(m\)-th absolute moment corresponds to \(r=m\) and \(\nu(p)=M_m(p)\). More generally, if \(\nu\) is continuous and positive on \([1,P_{\max}]\), then \(p\mapsto b(p)^r\nu(p)\) is continuous on \([1,P_{\max}]\), and
\[
\arg\min_{p\in[1,P_{\max}]}\mathcal L(p,b(p))\ne\varnothing.
\]
\end{remark}
\subsection{Certified interval bounds}
For an interval \(I=[a,c]\subset[1,P_{\max}]\), let
\[
p_I:=\frac{a+c}{2},\qquad h_I:=\sup_{p\in I}|p-p_I|=\frac{c-a}{2}.
\]
The search uses interval lower bounds and midpoint upper bounds,
\[
L_I\le \inf_{p\in I}F(p),\qquad F(p_I)\le U_I.
\]
Define
\begin{equation}
\label{eq:M-lower-section4}
D_M(I):=\sup_{p\in I}\left|\frac{d}{dp}\log M_m(p)\right|,
\qquad
\underline M_I:=\log M_m(p_I)-h_ID_M(I).
\end{equation}
Then
\[
\underline M_I\le \inf_{p\in I}\log M_m(p).
\]
If \(\underline b_I\le b(p)\) for all \(p\in I\), then
\begin{equation}
\label{eq:interval-lower-bound-F}
L_I:=\underline M_I+m\log\underline b_I
\end{equation}
satisfies
\[
L_I\le \inf_{p\in I}F(p).
\]
If \(\overline b_I\ge b(p_I)\), then
\begin{equation}
\label{eq:midpoint-upper-bound-F}
U_I:=\log M_m(p_I)+m\log\overline b_I
\end{equation}
satisfies
\[
F(p_I)\le U_I.
\]
When both bounds are available,
\begin{equation}
\label{eq:local-gap-decomposition-section4}
U_I-L_I=A(I)+B(I),
\end{equation}
where
\begin{equation}
\label{eq:A-B-section4}
A(I):=\log M_m(p_I)-\underline M_I,\qquad
B(I):=m\log\frac{\overline b_I}{\underline b_I}.
\end{equation}
Consequently,
\[
A(I)\le h_ID_M(I),
\qquad
B(I)=m\log\frac{\overline b_I}{\underline b_I}.
\]
Decreasing the length of the sub-interval \(|I|\) will decrease \(A(I)\); tightening \([\underline b_I,\overline b_I]\) will decrease \(B(I)\).
It remains to certify lower and upper bounds for the scale. Let
\[
H(p,b):=\delta_\Delta(p,b)-\delta,
\]
where \(\delta_\Delta(p,b)\) is the hockey-stick divergence for the sensitivity vector \(\Delta\), and \(\delta\) is the prescribed threshold in the \((\varepsilon,\delta)\)-DP condition. For fixed \(p\), \(b\mapsto H(p,b)\) is strictly decreasing, and
\[
H(p,b)=0
\quad\Longleftrightarrow\quad
b=b(p).
\]
Consequently,
\[
H(p,b)\ge0\quad\Longrightarrow\quad b\le b(p),
\qquad
H(p,b)\le0\quad\Longrightarrow\quad b\ge b(p).
\]
\begin{lemma}[Interval bounds for the scale function]
\label{lem:interval-sign-propagation}
Let
\[
I=[a,c]\subset[1,P_{\max}],
\qquad
p_0\in I\cap(1,\infty),
\qquad
b>0,
\]
and define
\[
h_I:=\sup_{p\in I}|p-p_0|.
\]
Let \(S(I,b)<\infty\) satisfy
\[
S(I,b)
\ge
\sup_{p\in I\cap(1,\infty)}
|\partial_p H(p,b)|.
\]
Then
\[
H(p_0,b)\ge h_I S(I,b)
\quad\Longrightarrow\quad
b\le b(p),
\qquad p\in I,
\]
and
\[
H(p_0,b)\le -h_I S(I,b)
\quad\Longrightarrow\quad
b\ge b(p),
\qquad p\in I.
\]
\end{lemma}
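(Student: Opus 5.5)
The plan is to apply the mean value theorem in the variable \(p\) with \(b\) held fixed, and then convert the sign of \(H(p,b)\) into a comparison with \(b(p)\). For that conversion we use the sign rule stated just before the lemma: \(H(p,b)\ge0\Rightarrow b\le b(p)\) and \(H(p,b)\le0\Rightarrow b\ge b(p)\). This rule comes from the strict monotonicity in Theorem~\ref{thm:scale-function-properties} for \(p>1\), and from Lemma~\ref{lem:worstcase-box-paper} together with \eqref{eq:fixed-shape-sign} at \(p=1\).

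First, fix \(p\in I\cap(1,\infty)\). By Theorem~\ref{thm:scale-function-properties}, \(\Phi\) is differentiable in \(p\) on \((1,\infty)\), which is the regularity underlying the implicit-function formula for \(u'(p)\). Hence \(q\mapsto H(q,b)=\Phi(q,1/b)-\delta\) is differentiable on the segment between \(p_0\) and \(p\), a segment contained in \(I\cap(1,\infty)\). The mean value theorem then gives
\[
|H(p,b)-H(p_0,b)|\le |p-p_0|\sup_{q\in I\cap(1,\infty)}|\partial_pH(q,b)|\le h_I S(I,b).
\]
Consequently, if \(H(p_0,b)\ge h_IS(I,b)\), then \(H(p,b)\ge0\) and so \(b\le b(p)\). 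Symmetrically, if \(H(p_0,b)\le -h_IS(I,b)\), then \(H(p,b)\le0\) and so \(b\ge b(p)\).

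Second, I would handle the endpoint \(p=1\), which can lie in \(I\) when \(a=1\). Since \(p_0>1\), the bound above holds for every \(p\in(1,p_0]\). I would then pass to the limit \(p\downarrow1\) using continuity of \(p\mapsto\delta_\Delta(p,b)\) at \(p=1\) for fixed \(b\). This continuity follows by dominated convergence on the integrand \((g_p(z)-e^\varepsilon g_p(z+\Delta/b))_+\): the integrand converges pointwise, and it is dominated locally uniformly in \(p\) near \(1\) by an integrable envelope, for instance a multiple of \(\exp(-c\|z\|_1)\) for some \(c>0\). The two inequalities \(H(1,b)\ge0\) and \(H(1,b)\le0\) are non-strict, so they survive the limit. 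An alternative route is to use \(\lim_{p\downarrow1}b(p)=b(1)\) from Theorem~\ref{thm:scale-function-properties}: the conclusions \(b\le b(p)\) and \(b\ge b(p)\) hold for all \(p\in(1,p_0]\) and pass to \(p=1\) by continuity of \(b(\cdot)\). I would use this second route because it avoids a fresh domination argument.

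The main obstacle is justifying differentiability of \(q\mapsto\Phi(q,1/b)\) on all of \(I\cap(1,\infty)\) at a fixed \(b\) that need not equal \(b(q)\). This is not the same as the \(C^1\) property of \(b(\cdot)\). I would extract it from the proof of Theorem~\ref{thm:scale-function-properties}, where \(\partial_p\Phi(p,u)\) is shown to exist for all \(u>0\) by differentiating under the integral sign. In any case, the hypothesis \(S(I,b)<\infty\) implicitly assumes that this derivative exists.
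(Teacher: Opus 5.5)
Your proposal is correct and matches the paper's proof: the paper bounds $|H(p,b)-H(p_0,b)|\le h_I S(I,b)$ on $I\cap(1,\infty)$ by integrating $\partial_q H$ (you use the mean value theorem instead). It then extends this bound to $p=1$ by continuity of $H(\cdot,b)$ at $p=1$, which is your first route, and reads off both conclusions from the monotonicity of $b\mapsto H(p,b)$ together with $H(p,b(p))=0$. Your preferred alternative, passing $b\le b(p)$ and $b\ge b(p)$ to $p=1$ via $\lim_{p\downarrow1}b(p)=b(1)$, is equally valid and avoids the sign rule at $p=1$, though that limit is itself proved in the paper from the same dominated-convergence continuity of $\Phi(\cdot,u)$ at $p=1$.
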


\begin{proof}
For \(p\in I\cap(1,\infty)\),
\[
|H(p,b)-H(p_0,b)|
\le
\int_{\min\{p,p_0\}}^{\max\{p,p_0\}}
|\partial_q H(q,b)|\,dq
\le
S(I,b)|p-p_0|
\le
h_I S(I,b).
\]
If \(1\in I\), letting \(p\downarrow1\) and using the continuity of
\(H(\cdot,b)\) at \(p=1\) gives
\[
|H(1,b)-H(p_0,b)|
\le
h_I S(I,b).
\]
Thus, for every \(p\in I\),
\[
|H(p,b)-H(p_0,b)|
\le
h_I S(I,b).
\]
Hence
\[
H(p_0,b)\ge h_I S(I,b)
\quad\Longrightarrow\quad
H(p,b)\ge0,
\qquad p\in I,
\]
and
\[
H(p_0,b)\le-h_I S(I,b)
\quad\Longrightarrow\quad
H(p,b)\le0,
\qquad p\in I.
\]
The conclusions follow from the monotonicity of
\(b\mapsto H(p,b)\) and \(H(p,b(p))=0\).
\end{proof}
By Theorem~\ref{thm:scale-function-properties}, for every fixed \(b>0\), the map
\(p\mapsto H(p,b)\) is continuous at \(p=1\) and continuously
differentiable on \((1,\infty)\).
Sections~\ref{app:one-dimensional-tight-bound} and~\ref{app:proof-derivative-bound}  give constructions of \(S(I,b)\) satisfying
$
S(I,b)
\ge
\sup_{p\in I\cap(1,\infty)}
|\partial_p H(p,b)|
$
for compact intervals \(I\subset[1,\infty)\).

When \(\delta_\Delta(p,b)\) is not available in closed form, the sign of \(H(p,b)\) is tested using Proposition~\ref{prop:certified-fixed-scale}. With the shape parameter displayed, write the corresponding empirical Bernstein bounds as
\[
L_N(\alpha_i;p,b)\le \delta_\Delta(p,b)\le U_N(\alpha_i;p,b)
\]
on an event \(E_i\) satisfying \(\mathbb P(E_i)\ge1-\alpha_i\). Hence, on \(E_i\), for any \(\gamma\ge0\),
\begin{equation}
\label{eq:mc-positive-sign-section4}
L_N(\alpha_i;p,b)\ge\delta+\gamma
\quad\Longrightarrow\quad
H(p,b)\ge\gamma,
\end{equation}
and
\begin{equation}
\label{eq:mc-negative-sign-section4}
U_N(\alpha_i;p,b)\le\delta-\gamma
\quad\Longrightarrow\quad
H(p,b)\le-\gamma.
\end{equation}
For an adaptive sequence of tests with budgets \((\alpha_i)_{i\ge1}\),
\[
\sum_{i\ge1}\alpha_i\le\alpha
\quad\Longrightarrow\quad
\mathbb P\left(\bigcap_{i\ge1}E_i\right)\ge1-\alpha.
\]
Define the simultaneous validity event
\[
E
:=
\bigcap_{i\ge1}E_i.
\]
Then \(\mathbb P(E)\ge1-\alpha\), and all sign decisions below are made on \(E\).

\subsection{Adaptive shape selection}

The adaptive search maintains a finite active family \(\mathcal A\) of intervals. Each \(I\in\mathcal A\) carries a lower bound \(L_I\). Some intervals also carry a midpoint upper bound \(U_I\). The global bounds are
\begin{equation}
\label{eq:global-bounds-section4}
L^*:=\min_{I\in\mathcal A}L_I,\qquad
U^*:=\min\{U_I:U_I\ \text{has been computed}\}.
\end{equation}
A midpoint attaining \(U^*\) is denoted by \(\hat p\). The stopping condition is
\begin{equation}
\label{eq:stopping-rule-section4}
U^*-L^*\le\eta.
\end{equation}
The search is initialized by the deterministic upper bound
\begin{equation}
\label{eq:initial-upper-section4}
U^{(0)}(p):=\log M_m(p)+m\log b_{\mathrm{high}}^{(0)}(p),
\end{equation}
where \(b_{\mathrm{high}}^{(0)}(p)\) is the initial fixed-shape upper scale from Section~\ref{sec:calibration}. Since \(b(p)\le b_{\mathrm{high}}^{(0)}(p)\),
\[
F(p)\le U^{(0)}(p).
\]
An initial finite set \(\mathcal P_0\subset[1,P_{\max}]\) provides
\[
U_0^\star:=\min_{p\in\mathcal P_0}U^{(0)}(p),
\qquad
\hat p_0\in\arg\min_{p\in\mathcal P_0}U^{(0)}(p),
\]
so that
\[
F(\hat p_0)\le U_0^\star.
\]
The interval family is then initialized on \([1,P_{\max}]\), and intervals satisfying
\[
L_I>U_0^\star
\]
are discarded.

At each refinement step, the interval with the smallest lower bound is selected. The maintained invariants are
\begin{equation}
\label{eq:invariants-section4}
L_I\le\inf_{p\in I}F(p),\qquad
F(p_I)\le U_I\quad\text{whenever }U_I\text{ is defined}.
\end{equation}
Refinement acts on the decomposition \eqref{eq:local-gap-decomposition-section4}:
\[
U_I-L_I=A(I)+B(I).
\]
If \(A(I)\) is large, the interval is bisected. If \(B(I)\) is large, the scale bracket \([\underline b_I,\overline b_I]\) is tightened by testing intermediate scale values through \eqref{eq:mc-positive-sign-section4}--\eqref{eq:mc-negative-sign-section4} and Lemma~\ref{lem:interval-sign-propagation}. Appendix~\ref{app:derivative-bounds-section4} gives the constructive details. 
\subsection{Validity of the selection rule}

\begin{proposition}[Safe deletion]
\label{prop:safe-deletion}
On \(E\), if an active interval \(J\) satisfies
\[
L_J>U^*,
\]
then \(J\) contains no minimizer of \(F\) over \([1,P_{\max}]\).
\end{proposition}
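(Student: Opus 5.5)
The plan is a direct comparison argument that uses only the two invariants \eqref{eq:invariants-section4}, which hold on \(E\). First, I will confirm that on \(E\) every bound the algorithm has produced is valid. Each lower bound \(L_I\) is built from \eqref{eq:interval-lower-bound-F}, using \(\underline M_I\) together with a certified \(\underline b_I\le b(p)\) for all \(p\in I\). That certificate comes from Lemma~\ref{lem:interval-sign-propagation} combined with the Monte Carlo sign tests \eqref{eq:mc-positive-sign-section4}--\eqref{eq:mc-negative-sign-section4}, or from the deterministic bracket \(b^{(0)}_{\mathrm{low}}\). Each upper bound \(U_I\) is built from \eqref{eq:midpoint-upper-bound-F} with a certified \(\overline b_I\ge b(p_I)\), or from the initial deterministic bound \eqref{eq:initial-upper-section4}. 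Since all sign decisions are made on \(E\), we get \(L_I\le\inf_{p\in I}F(p)\) for every active \(I\), and \(F(p_I)\le U_I\) whenever \(U_I\) is defined. The main point to check carefully is that this holds simultaneously for all intervals, including those created adaptively. This follows because \(E\) is the intersection of the validity events of all tests ever performed, and every bound is derived from those tests deterministically.

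Second, I will take the index attaining \(U^*\) in \eqref{eq:global-bounds-section4}. There is a computed \(U_{I_0}=U^*\) with midpoint \(\hat p=p_{I_0}\in[1,P_{\max}]\), so \(F(\hat p)\le U^*\). If \(U^*\) instead comes from the initialization \(U_0^\star\), the same inequality \(F(\hat p_0)\le U_0^\star\) holds. Hence \(F^\star\le F(\hat p)\le U^*\).

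Third, I will argue by contradiction. Suppose \(J\) is active, \(L_J>U^*\), and some \(p^\star\in J\) minimizes \(F\) over \([1,P_{\max}]\); such minimizers exist by the continuity remark following Theorem~\ref{thm:scale-function-properties}. Then
\[
F(p^\star)\ge\inf_{p\in J}F(p)\ge L_J>U^*\ge F(\hat p)\ge F^\star=F(p^\star),
\]
which is impossible. The strict inequality shows that no point of \(J\) attains \(F^\star\), so \(J\) contains no minimizer.

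I expect no serious analytic obstacle. The only delicate step is the first one. It requires confirming that \(L_J\) remains a valid lower bound after the bracket tightening and the bisections. It also requires confirming that no bound relied on an unresolved or non-simultaneous Monte Carlo test. Both follow from the construction via Lemma~\ref{lem:interval-sign-propagation} on \(E\).
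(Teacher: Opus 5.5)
Your proposal is correct and is essentially the paper's argument. Both chain \(\inf_{p\in J}F(p)\ge L_J>U^*\ge F(\hat p)\), using the invariants \eqref{eq:invariants-section4} on \(E\), to conclude that no point of \(J\) attains the minimum. Your extra bookkeeping is harmless: the simultaneous validity of the invariants on \(E\) is what the paper takes as maintained, and the existence of a minimizer is not needed because the claim is vacuous when none exists.
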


\begin{proof}
For every \(p\in J\),
\[
F(p)\ge\inf_{q\in J}F(q)\ge L_J>U^*.
\]
Since \(U^*\) is attained by an evaluated midpoint \(\hat p\) satisfying \(F(\hat p)\le U^*\), no point in \(J\) can be globally optimal.
\end{proof}

\begin{theorem}[Near-optimality of the returned shape]
\label{thm:certified-near-optimality}
Assume that the construction stops with
\[
U^*-L^*\le\eta
\]
and returns a midpoint \(\hat p\) attaining \(U^*\). Then, on \(E\),
\[
F(\hat p)\le\inf_{p\in[1,P_{\max}]}F(p)+\eta.
\]
Consequently,
\[
L_m(\hat p, b(\hat p))\le e^\eta\inf_{p\in[1,P_{\max}]}L_m(p,b(p)).
\]
In particular,
\[
\mathbb P\left(F(\hat p)\le\inf_{p\in[1,P_{\max}]}F(p)+\eta\right)\ge1-\alpha.
\]
\end{theorem}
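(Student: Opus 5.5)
The plan is to work on the simultaneous validity event \(E\), on which every sign decision made by the search is correct. On \(E\) the invariants \eqref{eq:invariants-section4} hold for every interval that is ever active: \(L_I\le\inf_{p\in I}F(p)\), and \(F(p_I)\le U_I\) whenever \(U_I\) has been computed. I will first check that these invariants are preserved by each operation of the search, by induction on the refinement steps.

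\emph{Initialization.} The bound \(U^{(0)}\) is deterministic and valid because \(b(p)\le b^{(0)}_{\mathrm{high}}(p)\).

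\emph{Bisection.} Each child interval gets \(L_I=\underline M_I+m\log\underline b_I\). Here \(\underline M_I\) is valid by the mean value bound with \(D_M(I)\). The scale bound \(\underline b_I\) is either inherited from the parent (a lower bound on a superset remains a lower bound on the subset) or certified through Lemma~\ref{lem:interval-sign-propagation}.

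\emph{Scale tightening.} On \(E\), the Monte Carlo implications \eqref{eq:mc-positive-sign-section4}--\eqref{eq:mc-negative-sign-section4} combined with Lemma~\ref{lem:interval-sign-propagation} give \(\underline b_I\le b(p)\) for all \(p\in I\) and \(\overline b_I\ge b(p_I)\). Hence \(L_I\) and \(U_I\) from \eqref{eq:interval-lower-bound-F}--\eqref{eq:midpoint-upper-bound-F} remain valid.

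\emph{Discarding.} Intervals are removed only when \(L_J>U_0^\star\) or \(L_J>U^*\). By Proposition~\ref{prop:safe-deletion} (and the identical argument with \(U_0^\star\), using \(F(\hat p_0)\le U_0^\star\)), such intervals contain no point with \(F(p)\le U^*\).

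The core inequality is \(L^*\le F^\star\). Take any \(p\in[1,P_{\max}]\). If \(p\) lies in some active interval \(I\), then \(L^*\le L_I\le F(p)\). Otherwise \(p\) belonged to a discarded interval \(J\), so \(F(p)\ge L_J>U^{\mathrm{then}}\). Since \(U^*\) is a running minimum and is nonincreasing, and each value it takes is attained by an evaluated point \(q\) with \(F(q)\le U^*\), we get \(F(p)>U^*\ge F(\hat p)\). Taking the infimum over \(p\), either \(F^\star\ge L^*\), or the infimum is approached only through discarded points. In the latter case \(F^\star\ge F(\hat p)\), and the conclusion \(F(\hat p)\le F^\star+\eta\) already holds.

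I need to handle the subtlety that the active intervals always cover the complement of the discarded set. This is true because bisection covers the parent and only deletion removes points. With that, in the main case \(F(\hat p)\le U^*\le L^*+\eta\le F^\star+\eta\).

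The multiplicative statement follows by exponentiating, since \(F=\log L_m(p,b(p))\). The probability statement follows from \(\mathbb P(E)\ge1-\alpha\), which holds because \(\sum_i\alpha_i\le\alpha\) and a union bound is applied over the countable adaptive family of tests.

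The main obstacle is the adaptivity. Which tests are run depends on earlier random outcomes, so I must argue that each \(E_i\) is defined for the \(i\)-th test conditionally on the past. Each conditional probability is at least \(1-\alpha_i\), and the union bound then still gives \(\mathbb P(E^c)\le\sum\alpha_i\). I would state this via a filtration, with fresh samples for each test, so that Proposition~\ref{prop:certified-fixed-scale} applies conditionally.
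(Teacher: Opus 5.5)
Your proposal is correct and follows essentially the paper's argument: on \(E\) the invariants give \(F(\hat p)\le U^*\), and safe deletion keeps the optimum inside the active family, so \(L^*\le\inf_{p\in[1,P_{\max}]}F(p)\); chaining with \(U^*-L^*\le\eta\) and exponentiating then gives the result. Your case split on whether the infimum is approached only through discarded points avoids the paper's use of an actual minimizer, which exists by continuity of \(F\) on \([1,P_{\max}]\). Your conditional union bound over the adaptive tests spells out what the paper simply asserts when it states \(\mathbb P(E)\ge1-\alpha\).
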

\begin{proof}
On \(E\), \eqref{eq:invariants-section4} gives
$
F(\hat p)\le U^*.
$
By Proposition~\ref{prop:safe-deletion}, there exists \(I^\star\in\mathcal A\) such that
\[
I^\star\cap\arg\min_{p\in[1,P_{\max}]}F(p)\ne\varnothing .
\]
Hence
\[
L^*
=
\min_{I\in\mathcal A}L_I
\le
L_{I^\star}
\le
\inf_{p\in I^\star}F(p)
=
\inf_{p\in[1,P_{\max}]}F(p).
\]
Together with \(U^*-L^*\le\eta\),
\[
F(\hat p)
\le
U^*
\le
L^*+\eta
\le
\inf_{p\in[1,P_{\max}]}F(p)+\eta .
\]
Since \(F(p)=\log L_m(p,b(p))\),
\[
L_m(\hat p,b(\hat p))
=
e^{F(\hat p)}
\le
e^\eta
\inf_{p\in[1,P_{\max}]}L_m(p,b(p)).
\]
The Monte Carlo certificates are simultaneous with probability at least \(1-\alpha\). The unconditional privacy statement for the random upper scale follows from Corollary~\ref{cor:mc-scale-delta-alpha}.
\end{proof}

\subsection{Arbitrary accuracy with finite partitions}
Theorem~\ref{thm:certified-near-optimality} gives an \(\eta\)-accurate objective value relative to the optimum. 
The following technical assumptions concern the upper and lower scale bounds. Under these assumptions, the adaptive search reaches
\(U^*-L^*\le\eta\) after finitely many refinements on \(E\). Therefore, the optimization error can be arbitrarily small without imposing any additional privacy assumption.
\begin{assumption}[Lower-scale approximability]
\label{ass:lower-scale-approximability}
Let \(J_0\supset J_1\supset J_2\supset\cdots\) be any nested chain generated by repeated bisection in the adaptive search, with \(\bigcap_{n\ge0}J_n=\{p_\infty\}\). For each \(n\), let \(\mathcal C_{J_n}\subset(0,\infty)\) be the finite set of candidate scales \(c\) tested by the refinement procedure to certify \(c\le b(p)\) for all \(p\in J_n\), and hence to update \(\underline b_{J_n}\). For every \(u<b(p_\infty)\), there exists \(N_u\) such that, for all \(n\ge N_u\),
\[
\underline b_{J_n}\ge u
\qquad\text{or}\qquad
\mathcal C_{J_n}\cap\left[u,\frac{u+b(p_\infty)}2\right]\ne\varnothing.
\]
\end{assumption}

\begin{assumption}[Non-degeneracy of geometric scale midpoints]
\label{ass:geometric-midpoint-nondegeneracy}
Let \(I_n\) be any infinite sequence of intervals selected by the adaptive search for geometric scale refinement, and define \(b_{\rm geo}(I_n):=\sqrt{\underline b_{I_n}\overline b_{I_n}}\). Then
\[
b_{\rm geo}(I_n)=b(p_{I_n})
\]
occurs only finitely many times.
\end{assumption}
\begin{theorem}[Finite attainment of arbitrary accuracy]
\label{thm:finite-attainability}
Assume Assumptions~\ref{ass:lower-scale-approximability} and~\ref{ass:geometric-midpoint-nondegeneracy}. Then, on \(E\), for every \(\eta>0\), the adaptive search reaches
\[
U^*-L^*\le\eta
\]
after finitely many refinements. Consequently, the returned \(\hat p\) satisfies
\[
\mathbb P\left(
F(\hat p)\le\inf_{p\in[1,P_{\max}]}F(p)+\eta
\right)\ge1-\alpha.
\]
\end{theorem}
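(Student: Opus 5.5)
The plan is a contradiction argument on the event \(E\) with compactness. Suppose the search never reaches \(U^*-L^*\le\eta\). At every step the search refines the interval with the smallest lower bound \(L_I\), either by bisecting it or by tightening its scale bracket. Only finitely many intervals exist at each bisection depth. So König's lemma (or a direct diagonal argument) yields an infinite nested chain \(J_0\supset J_1\supset\cdots\) of intervals that are each selected infinitely often, with \(\bigcap_n J_n=\{p_\infty\}\). A preliminary step shows that the chain really shrinks to a point: if some interval were selected infinitely often without ever being bisected, then only scale tightening would act on it. By Assumption~\ref{ass:geometric-midpoint-nondegeneracy}, all but finitely many geometric midpoints \(b_{\rm geo}\) differ from \(b(p_I)\). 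Each such test therefore resolves a strict sign in finite sample size, by \eqref{eq:fixed-shape-sign} and the Bernstein bounds \eqref{eq:mc-positive-sign-section4}--\eqref{eq:mc-negative-sign-section4}, and halves \(\log(\overline b_I/\underline b_I)\). Hence \(B(I)\to0\), and \(A(I)\) would eventually dominate and force a bisection.

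Next, along the chain I will show \(U_{J_n}-L_{J_n}\to0\) via \eqref{eq:local-gap-decomposition-section4}. For the \(A\)-term, \(A(J_n)\le h_{J_n}D_M(J_n)\). Here \(D_M\) is bounded on \([1,P_{\max}]\) because \(\log M_m\) is \(C^1\) there (digamma functions at arguments in \([1/P_{\max},m+1]\)), and \(h_{J_n}\to0\). For the \(B\)-term, split \(\log(\overline b_{J_n}/\underline b_{J_n})\) into \(\log(\overline b_{J_n}/b(p_\infty))\) and \(\log(b(p_\infty)/\underline b_{J_n})\). The lower part is handled by Assumption~\ref{ass:lower-scale-approximability}. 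Given \(u<b(p_\infty)\), eventually either \(\underline b_{J_n}\ge u\) already holds, or a candidate \(c\in[u,(u+b(p_\infty))/2]\) is tested. In the latter case, continuity of \(b\) (Theorem~\ref{thm:scale-function-properties}) gives \(b(p)>(u+b(p_\infty))/2\) on \(J_n\) for large \(n\), so \(H(p,c)\) is bounded below by a positive constant on \(J_n\) (compactness plus strict monotonicity). Meanwhile \(h_{J_n}S(J_n,c)\to0\), using that \(S\) is locally bounded, which I take from the constructions referenced in the appendices. Lemma~\ref{lem:interval-sign-propagation} together with a finite-sample resolution then certifies \(\underline b_{J_n}\ge c\ge u\).

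The upper part is symmetric. It uses the midpoint test at \(p_{J_n}\), where a strict inequality suffices because \(U_I\) only needs \(\overline b\ge b(p_I)\), and the non-degeneracy assumption again ensures that tests resolve. Hence \(\liminf_n(U_{J_n}-L_{J_n})=0\).

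To conclude, note that \(J_n\) is selected because it minimizes \(L_I\), so \(L^*=L_{J_n}\) at those times. Since \(U^*\le U_{J_n}\) once that midpoint bound is computed, we get \(U^*-L^*\le U_{J_n}-L_{J_n}<\eta\) for large \(n\), contradicting the assumption that the search never stops. The probability statement then follows from Theorem~\ref{thm:certified-near-optimality} and \(\mathbb P(E)\ge1-\alpha\). I expect the main obstacle to be the upper/lower scale convergence in the \(B\)-term. It requires uniform positivity of \(|H|\) away from \(b(p)\) on shrinking intervals, together with the local boundedness of \(S(I,b)\) near \(p=1\), where only one-sided differentiability is available. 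If needed, I will handle \(p_\infty=1\) separately using the right-continuity of \(H(\cdot,b)\) at \(1\).
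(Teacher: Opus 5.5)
Your overall architecture is the paper's: contradiction on \(E\), an infinite nested bisection chain, \(A(J_n)\le h_{J_n}D_M(J_n)\to0\), lower-scale convergence from Assumption~\ref{ass:lower-scale-approximability} and continuity of \(b\), and \(U^*-L^*\le U_{J_n}-L_{J_n}\) at a selection time. Your lower-scale step is also more careful than the paper's, since you make explicit the margin \(h_{J_n}S(J_n,c)\) required by Lemma~\ref{lem:interval-sign-propagation}.

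\textbf{First gap: the no-stalling argument.} Your preliminary claim that repeated geometric tightening on a fixed interval \(I\) drives \(B(I)\) to \(0\) is false in general. The bound \(\underline b_I\) must be valid on all of \(I\), while \(\overline b_I\ge b(p_I)\). Hence \(B(I)\ge m\log\bigl(b(p_I)/\inf_{p\in I}b(p)\bigr)\), which is strictly positive whenever, for example, \(b\) is strictly monotone on \(I\). The step that fails is ``every resolved test halves \(\log(\overline b_I/\underline b_I)\).'' If \(b_{\rm geo}<b(p_I)\), a certified pointwise sign \(H(p_I,b_{\rm geo})>0\) does not license \(\underline b_I\leftarrow b_{\rm geo}\). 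That update requires \(H(p_I,b_{\rm geo})\ge h_IS(I,b_{\rm geo})\), which cannot hold once \(b_{\rm geo}>\inf_I b\). So ``\(A(I)\) eventually dominates'' does not follow. The paper does not argue this way. It takes non-stalling directly from Assumption~\ref{ass:geometric-midpoint-nondegeneracy} (``the search cannot remain at the same interval indefinitely''), and you should do the same rather than route it through \(B(I)\to0\). A minor related point: König's lemma gives a branch of intervals each of which is bisected, i.e.\ selected at least once, not selected infinitely often. At least once is all the final comparison needs.

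\textbf{Second gap: the upper half of the \(B\)-term.} It is not symmetric to the lower half. There is no upper analogue of Assumption~\ref{ass:lower-scale-approximability}. Non-degeneracy only ensures that a test resolves, not that its outcome moves \(\overline b_{J_n}\) toward \(b(p_\infty)\); as just seen, a resolved test with \(b_{\rm geo}<b(p_{J_n})\) may produce no update at all. The missing idea is the ordering the paper uses:
\begin{enumerate}
\item First establish \(\underline b_{J_n}\to b(p_\infty)\).
\item Fix \(\xi>0\). If \(B(J_n)>\xi\), then \(b_{\rm geo}(J_n)=\underline b_{J_n}e^{B(J_n)/(2m)}>b(p_{J_n})\) for all large \(n\), because \(b(p_{J_n})\to b(p_\infty)\).
\item Hence every geometric test along the chain is eventually an upper-scale test \(H(p_{J_n},b_{\rm geo})<0\). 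Such a test needs only a pointwise certificate at \(p_{J_n}\), with no margin. It resolves by strictness and gives \(B\mapsto B/2\).
\end{enumerate}
This is what yields \(B(J_n)\to0\), and hence the full limit \(U_{J_n}-L_{J_n}\to0\) rather than just a \(\liminf\).
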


\begin{proof}
Let
\[
J_0\supset J_1\supset J_2\supset\cdots,
\qquad
\bigcap_{n\ge0}J_n=\{p_\infty\},
\]
be any infinite nested chain generated by repeated bisection, and set
\[
b_\infty:=b(p_\infty).
\]
We first show that
\[
U_{J_n}-L_{J_n}\to0.
\]
Since
\[
U_{J_n}-L_{J_n}
=
A(J_n)+B(J_n),
\qquad
0\le A(J_n)\le h_{J_n}D_M(J_n),
\]
and \(D_M(J_n)\) is bounded for all sufficiently large \(n\), while \(|J_n|\to0\), it follows that
\[
A(J_n)\to0.
\]
Let
\[
\underline b_n:=\underline b_{J_n},
\qquad
\overline b_n:=\overline b_{J_n}.
\]
By construction,
\[
\underline b_n\le \inf_{p\in J_n}b(p),
\qquad
\overline b_n\ge b(p_{J_n}).
\]
Thus, by continuity of \(b\),
\[
\limsup_{n\to\infty}\underline b_n\le b_\infty,
\qquad
\liminf_{n\to\infty}\overline b_n\ge b_\infty.
\]
Fix \(u<b_\infty\). By Assumption~\ref{ass:lower-scale-approximability}, for all sufficiently large \(n\), either
\[
\underline b_n\ge u,
\]
or there exists \(c_n\in\mathcal C_{J_n}\) such that
\[
u\le c_n\le\frac{u+b_\infty}{2}<b_\infty.
\]
In the second case, continuity of \(b\) gives
\[
c_n<\inf_{p\in J_n}b(p)
\]
for all sufficiently large \(n\). Hence
\[
\inf_{p\in J_n}H(p,c_n)>0.
\]
Since the inequality is strict, increasing the pointwise sample size eventually enables the empirical Bernstein certificate to verify the corresponding low-scale inequality. The lower scale bound can therefore be updated to at least \(c_n\), and hence to at least \(u\). Consequently,
\[
\liminf_{n\to\infty}\underline b_n\ge u.
\]
Letting \(u\uparrow b_\infty\) gives
\[
\underline b_n\to b_\infty.
\]
Now fix \(\xi>0\). If
\[
B(J_n)
=
m\log\frac{\overline b_n}{\underline b_n}
>
\xi,
\]
then
\[
b_{\rm geo}(J_n)
:=
\sqrt{\underline b_n\,\overline b_n}
=
\underline b_n
\exp\left(\frac{B(J_n)}{2m}\right).
\]
Since \(\underline b_n\to b_\infty\) and \(b(p_{J_n})\to b_\infty\), for all sufficiently large \(n\),
\[
B(J_n)>\xi
\quad\Longrightarrow\quad
b_{\rm geo}(J_n)>b(p_{J_n}).
\]
Therefore,
\[
H\bigl(p_{J_n},b_{\rm geo}(J_n)\bigr)<0.
\]
Again, the inequality is strict, so increasing the pointwise sample size eventually allows the empirical Bernstein certificate to verify the corresponding upper-scale inequality. The geometric update is then valid and gives
\[
B(J_n)\longmapsto \frac12 B(J_n).
\]
Since this argument applies for every \(\xi>0\),
\[
B(J_n)\to0.
\]
Therefore,
\[
U_{J_n}-L_{J_n}
=
A(J_n)+B(J_n)
\to0.
\]
The next step is to connect this local convergence with the global stopping rule. Suppose, to the contrary, that for some \(\eta>0\), the adaptive search never reaches
\[
U^*-L^*\le\eta.
\]
By Assumption~\ref{ass:geometric-midpoint-nondegeneracy}, the search cannot remain at the same interval indefinitely. Hence, nontermination would imply infinitely many interval bisections.

Because each bisection creates two child intervals, infinitely many bisections give an infinite nested chain
\[
J_0\supset J_1\supset J_2\supset\cdots
\]
of intervals selected for refinement. For this chain, the result above gives
\[
U_{J_n}-L_{J_n}\to0.
\]
Thus, for all sufficiently large \(n\),
\[
U_{J_n}-L_{J_n}\le\eta.
\]
When \(J_n\) is selected for refinement, it has the smallest lower bound among the active intervals, so
\[
L_{J_n}=L^*.
\]
Moreover, since \(U^*\) is the smallest available upper bound,
\[
U^*\le U_{J_n}.
\]
It follows that
\[
U^*-L^*
\le
U_{J_n}-L_{J_n}
\le\eta,
\]
contradicting the assumption that the stopping condition is never reached. Therefore, on \(E\), the adaptive search reaches
\[
U^*-L^*\le\eta
\]
after finitely many refinements.

Since \(\mathbb P(E)\ge1-\alpha\), the probability statement follows from Theorem~\ref{thm:certified-near-optimality}.
\end{proof}
\begin{remark}[Non-scale homogeneous objectives]
\label{rem:non-scale-homogeneous-search}
The interval-wise argument above does not require the objective to be
scale-homogeneous. In particular, let \(J(p)\) be a task-specific objective
to be maximized. If, for every active interval $I$,
\[
\underline J_I\le J(p)\le\overline J_I,\qquad p\in I,
\]
then any interval satisfying
\[
\overline J_I<J(\widehat p)
\]
can be discarded. Moreover, if
\[
\max_I \overline J_I-J(\widehat p)\le\eta,
\]
then
\[
J(\widehat p)
\ge
\sup_{p\in[1,P_{\max}]}J(p)-\eta.
\]
Section \ref{sec:task-specific-example} applies this conclusion to the one-dimensional example.
\end{remark}

\section{Properties of the Optimal Shape}
\label{sec:optimal-shape-properties}
\subsection{Sensitivity scaling  and active coordinates}
\label{subsec:sensitivity-normalization-active}

Fix \(\varepsilon\ge0\), \(\delta\in[0,1)\), and a scale-homogeneous objective
\[
\mathcal L(p,b)=b^r\nu(p),
\qquad r>0,
\qquad \nu(p)>0 .
\]
For \(\Delta\in[0,\infty)^d\), \(\Delta\ne0\), write
\[
b^{(d)}(p;\Delta)
:=
\inf\left\{
b>0:\delta_{p,b}^{(d)}(\Delta)\le\delta
\right\},
\]
where \(\delta_{p,b}^{(d)}(\Delta)\) is the hockey-stick divergence for the
\(d\)-dimensional generalized Gaussian mechanism with sensitivity vector
\(\Delta\). For later comparison with the classical choices \(p\in\{1,2,\infty\}\), define
the relative improvement of a shape \(\hat p\) by
\[
1-
\frac{
\mathcal L\bigl(\hat p,b^{(d)}(\hat p;\Delta)\bigr)
}{
\min_{p\in\{1,2,\infty\}}
\mathcal L\bigl(p,b^{(d)}(p;\Delta)\bigr)
}.
\]
This quantity is positive when the shape \(\hat p\) is better than the Laplace, Gaussian, and uniform mechanisms under the same privacy constraints.
\begin{proposition}[Sensitivity scaling and active-coordinate reduction]
\label{prop:sensitivity-scaling-active-coordinates}
Let \(\Delta\in[0,\infty)^d\), \(\Delta\ne0\), and 
\[
\mathcal L(p,b)=b^r\nu(p),
\qquad r>0,\qquad \nu(p)>0 .
\]

\begin{enumerate}[label=(\roman*), leftmargin=2em]
\item For every \(c>0\) and every \(p\in[1,\infty]\),
\[
b^{(d)}(p;c\Delta)=c\,b^{(d)}(p;\Delta).
\]
Hence
\[
\mathcal L\bigl(p,b^{(d)}(p;c\Delta)\bigr)
=
c^r\mathcal L\bigl(p,b^{(d)}(p;\Delta)\bigr).
\]
Consequently, for every \(I\subseteq[1,\infty]\),
\[
\operatorname*{arg\,min}_{p\in I}
\mathcal L\bigl(p,b^{(d)}(p;c\Delta)\bigr)
=
\operatorname*{arg\,min}_{p\in I}
\mathcal L\bigl(p,b^{(d)}(p;\Delta)\bigr).
\]

\item 
Let \(A:=\{i:\Delta_i>0\}\), \(d':=|A|\) be its cardinality
and \(\Delta_A\in(0,\infty)^{d'}\) be the vector of nonzero coordinates.
Then, for every \(p\in[1,\infty]\) and \(b>0\),
\[
\delta_{p,b}^{(d)}(\Delta)
=
\delta_{p,b}^{(d')}(\Delta_A).
\]
Therefore,
\[
b^{(d)}(p;\Delta)=b^{(d')}(p;\Delta_A),
\]
and, for every \(I\subseteq[1,\infty]\),
\[
\operatorname*{arg\,min}_{p\in I}
\mathcal L\bigl(p,b^{(d)}(p;\Delta)\bigr)
=
\operatorname*{arg\,min}_{p\in I}
\mathcal L\bigl(p,b^{(d')}(p;\Delta_A)\bigr).
\]
\end{enumerate}
\end{proposition}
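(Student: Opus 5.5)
For part (i), the plan is a change of variables in the hockey-stick integral. Fix \(p\in[1,\infty)\), \(b>0\), \(c>0\). Since \(f^{(d)}_{p,cb}(z)=c^{-d}f^{(d)}_{p,b}(z/c)\), substituting \(z=cy\) in
\[
\delta^{(d)}_{p,cb}(c\Delta)=\int_{\mathbb R^d}\bigl(f^{(d)}_{p,cb}(z)-e^\varepsilon f^{(d)}_{p,cb}(z+c\Delta)\bigr)_+\,dz
\]
gives \(\delta^{(d)}_{p,cb}(c\Delta)=\delta^{(d)}_{p,b}(\Delta)\). The case \(p=\infty\) is identical, because \(g_{\infty,cb}(x)=c^{-1}g_{\infty,b}(x/c)\) as well. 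Hence \(b\) is feasible for \(\Delta\) if and only if \(cb\) is feasible for \(c\Delta\). The feasible sets satisfy \(\{b':\delta_{p,b'}(c\Delta)\le\delta\}=c\{b:\delta_{p,b}(\Delta)\le\delta\}\), and taking infima yields \(b^{(d)}(p;c\Delta)=c\,b^{(d)}(p;\Delta)\).

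The objective identity then follows from homogeneity of \(b^r\):
\[
\mathcal L\bigl(p,b^{(d)}(p;c\Delta)\bigr)=c^r b^{(d)}(p;\Delta)^r\nu(p).
\]
Multiplying a function by the positive constant \(c^r\) does not change its argmin set over any \(I\). I would note that the argmin equality is an equality of sets, possibly empty, so no existence claim is needed.

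For part (ii), I would first permute coordinates so that \(A=\{1,\dots,d'\}\). This is harmless because the product density is permutation invariant, so \(\delta\) is unchanged under simultaneous permutation of \(z\) and \(\Delta\). Then write \(z=(x,y)\) with \(x\in\mathbb R^{d'}\) and \(y\in\mathbb R^{d-d'}\). Because the shift vanishes on the \(y\)-block, both \(f^{(d)}_{p,b}(z)\) and \(f^{(d)}_{p,b}(z+\Delta)\) factor as \(f^{(d-d')}_{p,b}(y)\) times \(f^{(d')}_{p,b}(x)\), respectively \(f^{(d')}_{p,b}(x+\Delta_A)\). Since \(f^{(d-d')}_{p,b}(y)\ge0\), it can be pulled out of the positive part:
\[
(\alpha\,u)_+=\alpha\,u_+\qquad\text{for }\alpha\ge0.
\]
Tonelli's theorem then integrates out \(y\), whose density has total mass one, leaving \(\delta^{(d')}_{p,b}(\Delta_A)\). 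This works for every \(p\in[1,\infty]\), including the uniform case. Equality of the divergences for all \(b\) gives equality of the infima \(b^{(d)}(p;\Delta)=b^{(d')}(p;\Delta_A)\), and hence equality of the objectives pointwise in \(p\) and of the argmin sets.

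There is no serious obstacle here. The only points needing care are bookkeeping ones:
\begin{enumerate}[label=(\alph*)]
\item confirming that the scaling identity also covers the degenerate endpoint \(p=\infty\);
\item checking that the infimum definition of \(b^{(d)}\) commutes with the dilation \(b\mapsto cb\), which holds because this map is a bijection of \((0,\infty)\);
\item treating \(\delta=0\), where the feasible set might be empty, by reading both sides as \(+\infty\) consistently.
\end{enumerate}
No monotonicity in \(b\) from Theorem~\ref{thm:scale-function-properties} is required, since the argument works directly on the feasible sets.
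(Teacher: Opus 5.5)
Your proof is correct and follows the paper's argument essentially step for step. Part (i) uses the dilation $z=cy$ to get $\delta^{(d)}_{p,cb}(c\Delta)=\delta^{(d)}_{p,b}(\Delta)$ and then rescales the infimum; part (ii) permutes coordinates, factors the product density, pulls the nonnegative inactive-coordinate factor out of the positive part, and integrates it out. Your additional bookkeeping, namely the $p=\infty$ check and reading an empty feasible set when $\delta=0$ as $+\infty$ on both sides, is consistent and only makes explicit what the paper leaves implicit.
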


\begin{proof}
See Appendix~\ref{app:sensitivity-scaling-active-proof}.
\end{proof}
\begin{remark}[Rescaling and comparison of sensitivity vectors]
\label{rem:rescaling-comparison-sensitivity-vectors}
Proposition ~\ref{prop:sensitivity-scaling-active-coordinates} shows that multiplying the sensitivity vector by a positive constant and removing coordinates with zero sensitivity do not change the optimal shape. For two nonzero vectors \(\Delta\) and \(\widetilde\Delta\), choosing \(c=\|\Delta\|_q/\|\widetilde\Delta\|_q\), \(1\le q\le\infty\), gives \(\|c\widetilde\Delta\|_q=\|\Delta\|_q\), so the corresponding scales and objective values are compared at fixed \(\ell_q\)-sensitivity. In sensitivity vectors with the same \(\ell_q\) norm, the differences in privacy-feasible scale, objective value, and selected shape stem from the coordinate structure of the sensitivity vectors. According to Proposition ~\ref{prop:gaussian-scalar-coordinate-profile}, only when \(p=2\) are \(\delta_{p,b}^{(d)}(\Delta)\) and \(b^{(d)}(p;\Delta)\) completely determined by the Euclidean norm \(\|\Delta\|_2\).

A \(d'\)-dimensional problem can be embedded into a \(d\)-dimensional problem by adding \(d-d'\) zero coordinates. If only \(d'\) coordinates in \(\Delta\in[0,\infty)^d\) are non-zero, then Proposition ~\ref{prop:sensitivity-scaling-active-coordinates} indicates that retaining only \(d'\) non-zero coordinates does not change the privacy-feasible scale and the selected shape. In particular, a sensitivity vector of the form \((\Delta_1,0,\ldots,0)\), with \(\Delta_1>0\), gives exactly the corresponding one-dimensional problem.

The relative improvement defined above is unchanged under the transformation
\(\Delta\mapsto c\Delta\) and after removing zero-sensitivity coordinates.
\end{remark}
\subsection{Limits as the privacy parameters approach zero}
\label{subsec:high-privacy-limits-optimal-shape}
The following results are stated for the \(m\)-th absolute moment criterion \(L_m(p,b)=M_m(p)b^m\), where \(m>0\), \(M_m(p)=\Gamma((m+1)/p)/\Gamma(1/p)\) for \(1\le p<\infty\), and \(M_m(\infty)=1/(m+1)\).
\begin{theorem}[Convergence to the Laplace shape as \(\delta\downarrow0\)]
\label{thm:small-delta-p-to-one}
Fix \(m>0\), \(\varepsilon>0\), and \(\Delta\ne0\). Let \(b_\delta(p;\Delta)\) be the minimum privacy-feasible scale under \((\varepsilon,\delta)\)-DP. Assume that, for every \(\delta>0\), a minimizer \(p^\star(\delta)\in\operatorname*{arg\,min}_{p\in[1,\infty]} L_m\bigl(p,b_\delta(p;\Delta)\bigr)\) exists. Then \(p^\star(\delta)\to1\) as \(\delta\downarrow0\). Equivalently,
\[
\forall \rho>0,\ \exists \delta_\rho>0\ \text{such that}\quad 0<\delta<\delta_\rho\Longrightarrow p^\star(\delta)<1+\rho .
\]
\end{theorem}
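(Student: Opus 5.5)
The plan is a compactness-and-contradiction argument. It rests on two facts: the Laplace shape has a scale that stays bounded as \(\delta\downarrow0\), while every shape \(p>1\), including the uniform \(p=\infty\), fails pure \(\varepsilon\)-DP.

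\emph{Step 1: bounded benchmark.} By Lemma~\ref{lem:worstcase-box-paper}, the sum of the one-dimensional Laplace privacy losses is at most \(\|\Delta\|_1/b\). So \(b=\|\Delta\|_1/\varepsilon\) gives pure \(\varepsilon\)-DP, as in the initialization \(b_{\mathrm{high}}^{(0)}\) of Section~\ref{subsec:fixed-shape-upper-approximation}. Hence for every \(\delta\),
\[
L_m\bigl(p^\star(\delta),b_\delta(p^\star(\delta);\Delta)\bigr)\le L_m\bigl(1,b_\delta(1;\Delta)\bigr)\le M_m(1)\left(\frac{\|\Delta\|_1}{\varepsilon}\right)^m=:C .
\]
On \([1,\infty]\) the map \(p\mapsto M_m(p)\) is continuous and positive, with \(M_m(p)\to 1/(m+1)\) as \(p\to\infty\). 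Hence \(\mu:=\inf_{p\in[1,\infty]}M_m(p)>0\), and every minimizer has \(b_\delta(p^\star(\delta);\Delta)\le (C/\mu)^{1/m}=:B\).

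\emph{Step 2: contradiction setup.} Suppose the claim fails. Then there are \(\rho>0\) and \(\delta_k\downarrow0\) with \(p_k:=p^\star(\delta_k)\ge1+\rho\). Set \(b_k:=b_{\delta_k}(p_k;\Delta)\); we may assume \(\delta_k<1/2\).

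Since \(\delta\mapsto b_\delta\) is nonincreasing and \(\delta_{p,b}(\Delta)\ge\delta_{p,b}^{(1)}(\Delta_i)\) for any active coordinate \(i\), we can bound \(b_k\) from below. The inequality between the \(d\)- and one-dimensional divergences follows from data processing (project onto coordinate \(i\)), or directly from Proposition~\ref{prop:sensitivity-scaling-active-coordinates}(ii) together with monotonicity. If \(b\to0\), the densities separate and the divergence tends to \(1\), uniformly for \(p\) in a compact subset of \([1+\rho,\infty]\); this needs a short check via the total-variation argument of Step~3. So \(b_k\ge\beta>0\).

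Passing to a subsequence, \(p_k\to\bar p\in[1+\rho,\infty]\) and \(b_k\to\bar b\in[\beta,B]\). Theorem~\ref{thm:scale-function-properties} gives \(\delta_{p_k,b_k}(\Delta)=\delta_k\), or at least \(\le\delta_k\), by definition of the infimum and continuity in \(b\).

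\emph{Step 3: joint continuity.} I will show that \((p,b)\mapsto\delta_{p,b}(\Delta)\) is jointly continuous on \([1+\rho,\infty]\times(0,\infty)\), including the endpoint \(p=\infty\). The tool is the elementary bound
\[
|D_\varepsilon(P\|Q)-D_\varepsilon(P'\|Q')|\le \mathrm{TV}(P,P')+e^\varepsilon\,\mathrm{TV}(Q,Q').
\]
The product densities \(f^{(d)}_{p,b}\) converge pointwise almost everywhere to \(f^{(d)}_{\bar p,\bar b}\), including to the uniform density when \(\bar p=\infty\), because \(\Gamma(1/p)p\to\) the right constant. Scheffé's lemma then gives \(L^1\) convergence, for both the unshifted and the shifted densities. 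It follows that \(\delta_{\bar p,\bar b}(\Delta)=\lim_k\delta_k=0\).

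\emph{Step 4: positivity, the contradiction.} It remains to show \(\delta_{\bar p,\bar b}(\Delta)>0\).
\begin{itemize}
\item For finite \(\bar p>1\), along \(z=t\Delta\) with \(t\to\infty\) the loss \((\|z+\Delta\|_{\bar p}^{\bar p}-\|z\|_{\bar p}^{\bar p})/\bar b^{\bar p}\) grows like \(t^{\bar p-1}\), so it exceeds \(\varepsilon\) on a nonempty open set. There the integrand \((f-e^\varepsilon f(\cdot+\Delta))_+\) is positive, and the divergence is positive.
\item For \(\bar p=\infty\), the set where \(f_{\infty,\bar b}>0\) but \(f_{\infty,\bar b}(\cdot+\Delta)=0\) has positive measure, namely a slab of width \(\min(\Delta_i,2\bar b)\) near the upper face in an active coordinate. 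So the divergence is again positive.
\end{itemize}
Either way this contradicts \(\delta_{\bar p,\bar b}(\Delta)=0\), which proves \(p^\star(\delta)\to1\).

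I expect the main obstacle to be Step~3 together with the lower bound \(b_k\ge\beta\): the joint continuity must be uniform up to and including \(p=\infty\), where the densities lose smoothness. The TV/Scheffé route is the approach I would try first, since it avoids derivative estimates entirely.
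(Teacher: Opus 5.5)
Your proof is correct, but it reaches the contradiction by a different mechanism than the paper.

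\textbf{The paper's route.} The paper first proves, in one dimension, that $\inf_{p\in[1+\rho,\infty]}b^{(1)}_\delta(p;\Delta)\to\infty$. It assumes feasible scales $b_n\le B$ with $p_n\ge1+\rho$ and $\delta_n\downarrow0$. It then takes the set $A_n$ where the likelihood ratio is at least $e^{\varepsilon+1}$ and notes that $(1-e^{-1})\mathbb P(Z_n\in A_n)\le\delta_n$. Finally it exhibits an explicit subset of $A_n$ whose mass is bounded below: a standardized right tail if $p_n\to\bar p<\infty$, and the interval $[\max\{0,b_n-\Delta/4\},b_n]$ if $p_n\to\infty$. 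It passes to $d$ dimensions by projecting onto an active coordinate and compares with the Laplace bound $\Gamma(m+1)(\|\Delta\|_1/\varepsilon)^m$.

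\textbf{Your route.} You bound the scale of a minimizer directly through $\inf_{[1,\infty]}M_m>0$. You then replace the explicit mass estimates by compactness. The two ingredients are joint continuity of $(p,b)\mapsto\delta_{p,b}(\Delta)$, obtained via Scheff\'e and the TV perturbation bound, and strict positivity of the divergence at any limit point $(\bar p,\bar b)$ with $\bar p>1$ and $\bar b>0$.

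\textbf{What each buys.} Your argument is soft and runs natively in $d$ dimensions, so no projection is needed for the main step. It also handles $p=\infty$ by the same continuity argument, needing only the overlap formula for positivity. The cost is an extra step keeping $b_k$ away from $0$. The paper's estimates avoid this because they are stated in standardized units and do not care whether $b_n\to0$. Your argument is also purely qualitative, while the paper identifies the high-loss region and its mass explicitly.

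\textbf{The extra step.} It goes through as you indicate. For each fixed $p$, $\delta_{p,b}(\Delta)\to1$ as $b\downarrow0$. Combining this with continuity in $p$ at fixed $b$ (your Step~3), compactness of $[1+\rho,\infty]$, and monotonicity in $b$ gives a single $b_0>0$ such that $\delta_{p,b}(\Delta)>1/2$ for all $b\le b_0$ and all $p\in[1+\rho,\infty]$.

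\textbf{Minor citation point.} The bound $\sum_i(|z_i+\Delta_i|-|z_i|)\le\|\Delta\|_1$ in Step~1 is just the triangle inequality, not Lemma~\ref{lem:worstcase-box-paper}.
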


\begin{proof}
See Appendix~\ref{app:small-delta-proof}.
\end{proof}
Thus, for a fixed \(\varepsilon>0\), when \(\delta\downarrow0\), the optimal shape converges to the Laplace case \(p=1\).

\begin{theorem}[One-dimensional small-\(\varepsilon\) limit]
\label{thm:small-eps-p-to-infty}
Fix \(m>0\), \(d=1\), \(\Delta>0\), and
$
\delta\in\left(0,\frac{m}{m+1}\right].
$
Let \(b_\varepsilon(p;\Delta)\) be the minimum privacy-feasible scale under \((\varepsilon,\delta)\)-DP. Assume that, for every \(\varepsilon>0\), a minimizer
\[
p^\star(\varepsilon)
\in
\operatorname*{arg\,min}_{p\in[1,\infty]}
L_m\bigl(p,b_\varepsilon(p;\Delta)\bigr)
\]
exists. Then
\[
p^\star(\varepsilon)\to\infty
\qquad
(\varepsilon\downarrow0).
\]
Equivalently, for every finite \(P>1\), there exists \(\varepsilon_P>0\) such that
\[
0<\varepsilon<\varepsilon_P
\quad\Longrightarrow\quad
p^\star(\varepsilon)>P .
\]
\end{theorem}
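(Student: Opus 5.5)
The plan is to compare every finite shape with the uniform shape \(p=\infty\) in the limit \(\varepsilon\downarrow0\), and to make the comparison uniform over \(p\in[1,P]\).

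\emph{Step 1 (uniform benchmark).} For \(p=\infty\) and any \(\varepsilon\ge0\), the density ratio equals \(1\) on the overlap of the two supports, so only the non-overlap region contributes. This gives \(\delta_{\infty,\Delta}(b)=\min\{1,\Delta/(2b)\}\) independently of \(\varepsilon\). Hence \(b_\varepsilon(\infty;\Delta)=\Delta/(2\delta)\) and
\[
L_m(\infty,b_\varepsilon(\infty;\Delta))=\frac{1}{m+1}\Bigl(\frac{\Delta}{2\delta}\Bigr)^m
\]
for all \(\varepsilon\). It therefore suffices to find \(\varepsilon_P>0\) such that \(L_m(p,b_\varepsilon(p;\Delta))\) strictly exceeds this constant for all \(p\in[1,P]\) and all \(0<\varepsilon<\varepsilon_P\).

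\emph{Step 2 (monotone limit in \(\varepsilon\)).} The hockey-stick divergence is nonincreasing in \(\varepsilon\). So \(b_\varepsilon(p)\) is nondecreasing as \(\varepsilon\downarrow0\) and bounded above by the \((0,\delta)\) scale \(b_0(p)\). At \(\varepsilon=0\) the divergence is the total variation between \(\mathrm{GGD}(0,b,p)\) and its \(\Delta\)-shift. For a symmetric unimodal density this equals \(\mathbb P(|Y_p|\le \Delta/(2b))\). Thus \(b_0(p)=\Delta/(2t_p)\), where \(t_p\) solves \(\mathbb P(|Y_p|\le t_p)=\delta\).

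Using the one-dimensional characterization (Theorem~\ref{thm:1d-tight-calibration} for \(p>1\), Proposition~\ref{prop:laplace-calibration} for \(p=1\)) together with joint continuity of \(\delta_{p,\Delta}(b)\) in \((p,b,\varepsilon)\), I would show two things: \(b_\varepsilon(p)\uparrow b_0(p)\) pointwise, and both \(b_\varepsilon\) and \(b_0\) are continuous in \(p\) on \([1,P]\). Dini's theorem then upgrades this to uniform convergence on the compact interval \([1,P]\). Since \(M_m\) is continuous and positive, \(L_m(p,b_\varepsilon(p))\to L_m(p,b_0(p))\) uniformly. It then suffices to prove the strict inequality at \(\varepsilon=0\) for each \(p\in[1,P]\); continuity in \(p\) and compactness yield a positive gap, which survives for small \(\varepsilon\).

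\emph{Step 3 (key inequality at \(\varepsilon=0\), the main obstacle).} Set \(X=|Y_p|/t_p\). Then \(X\) has a nonincreasing density on \([0,\infty)\) with \(\mathbb P(X\le1)=\delta\). The required inequality becomes
\[
\mathbb E X^m>\frac{\delta^{-m}}{m+1},
\]
with equality for \(X\) uniform on \([0,1/\delta]\).

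By Khinchine's representation, \(X\) is a mixture of \(\mathrm{Unif}[0,a]\) with mixing measure \(\mu\). This turns the problem into minimizing \(\int a^m\,d\mu/(m+1)\) subject to \(\int\min(1,1/a)\,d\mu=\delta\).

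First, mass on \(a\le1\) is best collapsed to \(a\to0\): it costs nothing and contributes weight \(w\). Second, on \(a\ge1\), write \(c=1/a\); the cost is \(c^{-m}\), which is convex. Jensen then reduces the remaining mass \(1-w\) to a single atom with \(a=(1-w)/(\delta-w)\). The reduced cost is
\[
\phi(w)=\frac{(1-w)^{m+1}}{(\delta-w)^m},\qquad 0\le w<\delta.
\]
One computes \(\phi'(0)=\delta^{-m}\bigl(m/\delta-(m+1)\bigr)\), which is \(\ge0\) exactly when \(\delta\le m/(m+1)\); this is where the hypothesis enters. I would then check that \(\log\phi\) is convex, so that \(w=0\) is the global minimizer.

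For strictness: at finite \(p\), the GGD density is strictly decreasing, so \(\mu\) is not a point mass and at least one of the Jensen or \(w\)-steps is strict. The endpoint \(\delta=m/(m+1)\) needs extra care, since there \(\phi'(0)=0\) and strictness must come from the Jensen step. The delicate points I expect are the convexity of \(\log\phi\) and the strictness argument at this boundary value of \(\delta\).
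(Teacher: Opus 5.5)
Your proposal is correct and follows the paper's proof essentially step for step. It has the same \(\varepsilon\)-independent uniform benchmark \(\Delta/(2\delta)\), and the same uniform convergence \(b_\varepsilon\to b_0\) on \([1,P]\), which you get from Dini's theorem where the paper uses a subsequence/contradiction argument. The \(\varepsilon=0\) bound is also the paper's: the Khinchine uniform-mixture representation, Jensen applied to \(1/a\), and monotonicity of \((1-w)^{m+1}/(\delta-w)^m\) in the mass \(w\) below the threshold.

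One simplification: \((\log\phi)'(w)=\bigl(m-(m+1)\delta+w\bigr)/\bigl((1-w)(\delta-w)\bigr)\) is nonnegative on \([0,\delta)\) and strictly positive for \(w>0\). So you need neither the log-convexity check nor any special treatment of the endpoint \(\delta=m/(m+1)\).
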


\begin{proof}
See Appendix~\ref{app:small-eps-proof}.
\end{proof}
Theorem~\ref{thm:small-eps-p-to-infty} is specific to one dimension. Appendix~\ref{app:multidim-uniform-counterexample} gives a two-dimensional counterexample.
\subsection{Joint high-privacy asymptotics}
\label{subsec:joint-high-privacy}
Let \(\varepsilon_n\downarrow0\) and \(\delta_n\downarrow0\). For fixed \(\Delta\), let \(b_{\varepsilon,\delta}(p;\Delta)\) be the minimum privacy-feasible scale under \((\varepsilon,\delta)\)-DP. We consider the criterion \(L_m\bigl(p,b_{\varepsilon,\delta}(p;\Delta)\bigr)\), where \(L_m(p,b)=M_m(p)b^m\).
\begin{theorem}[Joint high-privacy asymptotics]
\label{thm:joint-high-privacy}
Let \(\varepsilon_n\downarrow0\) and \(\delta_n\downarrow0\).

\begin{enumerate}[label=(\roman*), leftmargin=2em]
\item Suppose \(\delta_n/\varepsilon_n\to0\). For any \(d\ge1\) and any \(\Delta\ne0\), if
\[
p^\star(\varepsilon_n,\delta_n)
\in
\operatorname*{arg\,min}_{p\in[1,\infty]}
L_m\bigl(p,b_{\varepsilon_n,\delta_n}^{(d)}(p;\Delta)\bigr),
\]
then \(p^\star(\varepsilon_n,\delta_n)\to1\).
\item Suppose \(d=1\), \(\Delta>0\), and \(\delta_n/\varepsilon_n\to\lambda\in(0,\infty)\). If
\[
p^\star(\varepsilon_n,\delta_n)
\in
\operatorname*{arg\,min}_{p\in[1,\infty]}
L_m\bigl(p,b_{\varepsilon_n,\delta_n}^{(1)}(p;\Delta)\bigr),
\]
then any subsequential limit of \(p^\star(\varepsilon_n,\delta_n)\) belongs to \(\operatorname*{arg\,min}_{p\in[1,\infty]}K_{m,\lambda}(p)\). Here
\[
K_{m,\lambda}(1)=\frac{\Gamma(m+1)}{(1+2\lambda)^m},
\qquad
K_{m,\lambda}(\infty)=\frac{1}{(m+1)(2\lambda)^m},
\]
and, for \(1<p<\infty\),
\[
K_{m,\lambda}(p)=M_m(p)\rho_{p,\lambda}^m,
\]
where \(\rho_{p,\lambda}\) is the unique solution of \(A_p(\rho)=\lambda\rho\), with
\[
A_p(\rho)
=
\int_{(\rho/p)^{1/(p-1)}}^\infty
\left(px^{p-1}-\rho\right)
\frac{p}{2\Gamma(1/p)}e^{-x^p}\,dx .
\]
If \(K_{m,\lambda}\) has a unique minimizer \(p_\lambda\), then \(p^\star(\varepsilon_n,\delta_n)\to p_\lambda\).

\item Suppose \(d=1\), \(\Delta>0\), and \(\delta_n/\varepsilon_n\to\infty\). If
\[
p^\star(\varepsilon_n,\delta_n)
\in
\operatorname*{arg\,min}_{p\in[1,\infty]}
L_m\bigl(p,b_{\varepsilon_n,\delta_n}^{(1)}(p;\Delta)\bigr),
\]
then \(p^\star(\varepsilon_n,\delta_n)\to\infty\).
\end{enumerate}
\end{theorem}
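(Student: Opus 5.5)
The strategy is to rescale the scale map so that it has a finite, nonzero limit in the joint regime. We then show that the rescaled objective converges uniformly (or at least in a $\Gamma$-convergence sense) to an explicit limit functional on $[1,\infty]$, and deduce convergence of minimizers. By Proposition~\ref{prop:sensitivity-scaling-active-coordinates}(i) we may normalise $\Delta$. The natural normalisation is $b=\Delta/(\varepsilon\,\cdot)$, i.e.\ we write $u=\Delta/b$ and expect $u\asymp\varepsilon$ in cases (i)--(ii).

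\textbf{Case (ii).} Set $d=1$ and $\rho:=\Delta/(\varepsilon b)$. Using the tail representation behind Theorem~\ref{thm:1d-tight-calibration}, write
\[
\delta_{p,\Delta}(b)=\int\bigl(g_p(x)-e^{\varepsilon}g_p(x+\Delta/b)\bigr)_+\,dx ,
\]
with $g_p=g_{p,1}$. As $\Delta/b=\varepsilon\rho\to0$, expand $g_p(x+t)=g_p(x)\bigl(1-ptx^{p-1}\operatorname{sgn}(x)\,|x|^{0}\cdots\bigr)$ and $e^\varepsilon=1+\varepsilon$. This gives
\[
\delta/\varepsilon\to\int\bigl(p\rho\,x^{p-1}_{\mathrm{sgn}}-1\bigr)_+g_p\,dx .
\]
After substituting and rescaling $\rho$ this is exactly $A_p(\rho)/\rho$ with the paper's convention, so the first-order equation is $A_p(\rho)=\lambda\rho$. (I would verify the constant bookkeeping against the stated endpoint values $K(1)=\Gamma(m+1)/(1+2\lambda)^m$, which comes from Proposition~\ref{prop:laplace-calibration}: there $b=\Delta/(\varepsilon-2\log(1-\delta))\sim\Delta/(\varepsilon(1+2\lambda))$. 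The value $K(\infty)$ comes from $b=\Delta/(2\delta)\sim\Delta/(2\lambda\varepsilon)$.) Monotonicity of $\rho\mapsto A_p(\rho)/\rho$, which follows from strict decrease of $b\mapsto\delta$ in Theorem~\ref{thm:scale-function-properties}, gives uniqueness of $\rho_{p,\lambda}$.

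Hence $\varepsilon_n^m L_m(p,b_n(p))\to\Delta^m K_{m,\lambda}(p)$ pointwise. The main obstacle is upgrading this to convergence that is uniform on $[1,\infty]$, including near $p=1$, where the privacy-loss boundary changes character, and near $p=\infty$, where $M_m$ and the tail change. My first attempt would be a monotone sandwich: for fixed $\rho$, the map $(p,\varepsilon)\mapsto\delta/\varepsilon$ is jointly continuous on $[1,\infty]\times[0,\varepsilon_0]$. This should be provable by dominated convergence, since the integrand is bounded by $|g_p(x)-g_p(x+t)|/\varepsilon+g_p$ and the first term has $L^1$ norm $O(t)$ uniformly in $p$ by total variation of $g_p$, which is $\le 2\max g_p$ and bounded on $[1,\infty]$. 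Together with strict monotonicity in $\rho$, joint continuity yields $\rho_n(p)\to\rho_{p,\lambda}$ uniformly by a Dini/compactness argument. The standard argmin lemma (a continuous limit on a compact set, with uniform convergence) then gives that subsequential limits of $p^\star$ lie in $\arg\min K_{m,\lambda}$, and convergence follows if the minimizer is unique.

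\textbf{Case (i).} Here the argument works in any dimension. Laplace gives $b_n(1)\le\|\Delta\|_1/\varepsilon_n$ by the initial bracket in Section~\ref{subsec:fixed-shape-upper-approximation}, so $\varepsilon_n^mL_m(1,b_n)=O(1)$. For any $p\ge1+\rho$ I would show $\varepsilon_n b_n(p)\to\infty$ uniformly. The argument is a lower bound on $\delta$ obtained by restricting to the first active coordinate via Lemma~\ref{lem:worstcase-box-paper}(ii), together with data processing: a projection to one coordinate can only decrease the hockey-stick divergence. In one dimension, the limit analysis of case (ii) with $\lambda\to0$ shows $\rho_{p,\lambda}\to\infty$ for $p>1$: indeed $A_p(\rho)\approx c_p\rho^{-1/(p-1)}$-type decay forces $\rho\to\infty$ as $\lambda\to0$, while for $p=1$, $\rho\to1$. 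Uniformity on $[1+\rho,\infty]$ comes again from compactness and monotonicity, so $p^\star\to1$.

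\textbf{Case (iii).} Now $\delta_n\gg\varepsilon_n$. For uniform noise $b=\Delta/(2\delta_n)$, so $L_m\asymp\delta_n^{-m}/(m+1)$. For any finite $p\le P$, the $\varepsilon=0$ bound $\delta\le\mathrm{TV}(g_{p,b},g_{p,b}(\cdot+\Delta))\sim c_p\Delta/b$ gives $b_n(p)\ge c'_p\Delta/\delta_n$, uniformly on $[1,P]$. The key is then the comparison $M_m(p)(c_p/2)^m>1/(m+1)$ in the limit, using the explicit $\mathrm{TV}=2\int_0^{\Delta/2}g_{p,b}\approx g_p(0)\Delta/b$. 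I expect this comparison to reduce to the one-dimensional inequality already underlying Theorem~\ref{thm:small-eps-p-to-infty}, which I would invoke, with uniformity over $[1,P]$ by continuity.
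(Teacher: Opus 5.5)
Your architecture matches the paper's: normalise the scale by $\varepsilon_n$ in (i)--(ii) and by $\delta_n$ in (iii), prove a joint first-order limit of the standardized hockey-stick divergence along $(p_n,\rho_n)\to(p_0,\rho)$, sandwich the scale, and run the argmin argument. Your route to (i), comparing with $\delta_n'=\lambda\varepsilon_n$ and letting $\lambda\downarrow0$, is a valid alternative to the paper's direct contradiction argument, once that lemma is available. There are, however, two genuine gaps.

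\textbf{First gap: the joint limit at $p_0=\infty$.} Dominated convergence works for $p_0\in[1,\infty)$, provided you use a genuine envelope such as $\rho\sup_{|y-x|\le t}|g_p'(y)|$. That envelope is integrable uniformly for $p$ in compact subsets of $[1,\infty)$, and at $p_0=1$ it even avoids the paper's case split. It fails at $p_0=\infty$. For $x>0$ the difference quotient is approximately $\rho\,p x^{p-1}g_p(x)$, whose integral $\rho\,g_p(0)\to\rho/2$ concentrates at $x=1$ as $p\to\infty$. So the integrand tends to $0$ pointwise while its integral tends to $\rho/2$, and neither dominated convergence nor uniform integrability applies. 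A bound on the total variation of $g_p$ gives only a uniform $L^1$ bound, not convergence.

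You need a separate argument at this endpoint, for instance the paper's crossing-point representation. With $t_n=\Delta/b_n$ and $a_n$ the root of $|a+t_n|^{p_n}-|a|^{p_n}=\varepsilon_n$,
\[
\int_{\mathbb R}\bigl(g_{p_n}(x)-e^{\varepsilon_n}g_{p_n}(x+t_n)\bigr)_+\,dx=\int_{a_n}^{a_n+t_n}g_{p_n}(x)\,dx-(e^{\varepsilon_n}-1)\int_{a_n+t_n}^{\infty}g_{p_n}(x)\,dx .
\]
The mean value identity $p_n\xi_n^{p_n-1}=\varepsilon_n/t_n$ (bounded) forces $\xi_n\to1$ and $\xi_n^{p_n}\to0$. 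Hence $g_{p_n}\approx\tfrac12$ on the crossing interval and the tail term vanishes.

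This endpoint is not optional. In (ii) it is exactly what you need when a subsequence of $p^\star$ escapes to $\infty$. In (i) it is needed for the uniform divergence of $\varepsilon_n b_n(p)$ on $[1+\kappa,\infty]$. Separately, uniqueness of $\rho_{p,\lambda}$, and the strict inequalities your sandwich needs, do not follow from strict decrease of $b\mapsto\delta$ at $\varepsilon>0$: pointwise limits of strictly monotone functions are only weakly monotone. Argue directly that $A_p$ is positive and strictly decreasing.

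\textbf{Second gap: case (iii).} The inequality points the wrong way, and the hypothesis $\delta_n/\varepsilon_n\to\infty$ never enters. At $b=b_n(p)$ you have $\delta_n\ge D_{\varepsilon_n}$, and $D_{\varepsilon_n}\le\mathrm{TV}\approx c_p\Delta/b$ yields an \emph{upper} bound on $b_n(p)$, not a lower one.

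The lower bound requires the reverse comparison $D_\varepsilon\ge\mathrm{TV}-(e^\varepsilon-1)$. Feasibility then forces $T_p(\Delta/b_n(p))\le\delta_n+e^{\varepsilon_n}-1=\delta_n(1+o(1))$, where $T_p(s)=\int_{-s/2}^{s/2}g_p$; this is exactly where $\varepsilon_n=o(\delta_n)$ is used. Combined with $T_p(s)=c_ps+o(s)$ uniformly on $[1,P]$, it gives $\delta_n b_n(p)/\Delta\ge c_p(1+o(1))$ uniformly, with the sharp constant $c_p=g_p(0)$. An unspecified $c_p'$ is not enough, because the comparison with the uniform mechanism is quantitative. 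Without the hypothesis the conclusion is false: in regime (ii) with small $\lambda$, $K_{m,\lambda}(1)<K_{m,\lambda}(\infty)$.

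Your target comparison also has the wrong constant. Since $b_n(\infty)=\Delta/(2\delta_n)$, the required inequality is $M_m(p)c_p^m>1/(2^m(m+1))$, i.e.\ $M_m(p)(2c_p)^m>1/(m+1)$. Your version $M_m(p)(c_p/2)^m>1/(m+1)$ is false, e.g.\ for $p=1$, $m=2$, where the left side is $\Gamma(3)/16=1/8<1/3$.

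The correct strict inequality follows directly from $g_p\le c_p$, which gives $\mathbb P(|X_p|>t)\ge(1-2c_pt)_+$ and hence $M_m(p)>1/((m+1)(2c_p)^m)$. If you prefer to invoke Proposition~\ref{prop:eps0-uniform-moment}, apply it at a fixed $\delta$ and use $T_p(s)\le c_ps$ to keep the inequality strict. Letting $\delta\to0$ alone only gives $\ge$.
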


\begin{proof}
See Appendix~\ref{app:joint-high-privacy-proof}.
\end{proof}
Theorem~\ref{thm:joint-high-privacy} describes the joint high-privacy limit in terms of the ratio \(\delta/\varepsilon\). If \(\delta/\varepsilon\to0\), the optimal shape converges to \(p=1\) in any dimension. In the one-dimensional case, if \(\delta/\varepsilon\to\lambda\in(0,\infty)\), then the limiting behaviour is determined by \(K_{m,\lambda}\); if \(\delta/\varepsilon\to\infty\), then the optimal shape converges to \(p=\infty\). The conclusion in part~(iii) is specific to one dimension; see Appendix~\ref{app:multidim-uniform-counterexample}.
\section{Computational Experiments}
\label{sec:Computational Experiments}
Sections~\ref{subsec:numerics-1d}--\ref{subsec:five-dimensional-certified-search} use the second absolute moment criterion (variance minimization), while Section~\ref{sec:task-specific-example} considers a one-dimensional threshold-decision example with a task-specific criterion. Three numerical methods are used for the variance-based experiments. The first is the certified interval-wise search method proposed in Section ~\ref{sec:optimization}. The second is a full-grid search for \(p=1, 1.01, 1.02, \ldots\). At each grid point, the fixed-\(p\) algorithm described in Section~\ref{subsec:fixed-shape-upper-approximation} is applied, and the grid point with the smallest variance is selected. To reduce the computation, the calculation at a grid point is terminated when the lower bound of the objective function value at that point exceeds the best upper bound found so far. The third method is a fast two-stage grid search algorithm used to generate heatmaps that illustrate how the selected shape and relative improvement vary across different privacy levels. This method first performs a search using a coarse grid with a step size of 0.1, and then conducts a finer-grid search using a step size of 0.01 around the optimal point identified in the first stage.

In Section~6.1, all results are obtained using the certified interval-wise search. At each evaluated value of \(p\), the privacy-feasible scale is computed by solving the system in \eqref{eq:1d-tight-system-paper}, so no grid search or Monte Carlo approximation is used.  In the multidimensional case, the specific examples use the full-grid search over \(p=1,1.01,1.02,\ldots,11\), followed by larger sample calculations for the selected shapes. The heatmaps instead use the fast two-stage grid search and smaller samples because they cover \(5000\) privacy-parameter settings. The heatmaps use smaller sample sizes, and the two-stage search may miss the best point on the full grid. More refined calculations may therefore give larger improvements.

Throughout Sections~\ref{subsec:numerics-1d}--\ref{subsec:numerics-improvement-exceedance}, the selected shape is compared with the three benchmark mechanisms: the Laplace, Gaussian, and uniform mechanisms, corresponding to \(p=1,2,\infty\), respectively. The objective function is \(L_m\!\left(p,\widehat b(p)\right)\). Define
\[
R_m
=
100\%
\frac{
L_m\bigl(\widehat p,\widehat b(\widehat p)\bigr)
}{
\min\left\{
L_m\bigl(1,\widehat b(1)\bigr),
L_m\bigl(2,\widehat b(2)\bigr),
L_m\bigl(\infty,\widehat b(\infty)\bigr)
\right\}
},
\qquad
\mathrm{Improvement}_m=100\%-R_m.
\]
Thus, \(\operatorname{Improvement}_m\) is the percentage reduction in the \(m\)-th absolute moment relative to the best of the three benchmark mechanisms. 
\subsection{One-dimensional scale determination and shape selection}
\label{subsec:numerics-1d}
Consider \(d=1\), \(\Delta=\Delta_1\), and \(m=2\). The search is performed on \(p\in[1,1001]\cup\{\infty\}\). With \(F(p)=\log L_2(p,b(p))\), the stopping rule gives
\[
L_2(\hat p,b(\hat p))
\le
e^\eta
\inf_{p\in[1,1001]\cup\{\infty\}}
L_2(p,b(p)).
\]
The interval search uses \(\eta=10^{-4}\), so the selected variance is at most \(100(e^{10^{-4}}-1)\%\approx0.01\%\) larger than the minimum over \(p\in[1,1001]\cup\{\infty\}\). Table~\ref{tab:numerics-1d-fixed-p-comparison} compares the selected shape with the choices \(p=1,2,\infty\) for a representative case.
\begin{table}[!htbp]
\centering
\small
\setlength{\tabcolsep}{6pt}
\renewcommand{\arraystretch}{1.08}
\caption{One-dimensional shape comparison for \((\varepsilon,\delta,\Delta_1)=(1,0.01,3)\).}
\label{tab:numerics-1d-fixed-p-comparison}
\begin{tabular}{lccc}
\toprule
Shape & \(p\) & \(b\) & \(L_2(p,b)\) \\
\midrule
Selected & \(1.0649\) & \(3.19\) & \(16.71\) \\
Laplace & \(1\) & \(2.94\) & \(17.30\) \\
Gaussian & \(2\) & \(7.97\) & \(31.74\) \\
Uniform & \(\infty\) & \(150.00\) & \(7500.00\) \\
\bottomrule
\end{tabular}
\end{table}
For \(p=\infty\), the scale is independent of \(\varepsilon\); hence the reported uniform noise already satisfies \((0,0.01)\)-DP. In this representative case, among the three fixed shapes \(p=1,2,\infty\), the Laplace mechanism \(p=1\) gives the smallest value of \(L_2\). Relative to this choice, the selected shape \(\hat p=1.0649\) reduces the second moment by
\[
100\left(1-\frac{L_2(\hat p,b(\hat p))}{L_2(1,b(1))}\right)\%
=
100\left(1-\frac{16.71}{17.30}\right)\%
=
3.40\%.
\]
For the twelve cases in Table~\ref{tab:numerics-1d-all-results}, set
\[
\varepsilon\in\{0.25,0.5,1,2\},
\qquad
\delta\in\{0.1,0.01,0.001\},
\qquad
\Delta_1=3.
\]
\begin{table}[!htbp]
\centering
\small
\setlength{\tabcolsep}{5pt}
\renewcommand{\arraystretch}{1.08}
\caption{One-dimensional shape comparison across twelve \((\varepsilon,\delta)\) pairs, with \(\Delta_1=3\) and \(\eta=10^{-4}\).}
\label{tab:numerics-1d-all-results}
\begin{tabular}{cccccc}
\toprule
\(\varepsilon\) & \(\delta\) & \(\hat p\) & \(L_2(\hat p)\) & Best benchmark \(L_2\) & Improvement \\
\midrule
\(0.25\) & \(0.1\) & \(2.7790\) & \(38.10\) & \(40.19\) & \(5.21\%\) \\
\(0.25\) & \(0.01\) & \(1.3038\) & \(200.05\) & \(241.92\) & \(17.31\%\) \\
\(0.25\) & \(0.001\) & \(1.0384\) & \(275.92\) & \(283.44\) & \(2.65\%\) \\
\(0.5\) & \(0.1\) & \(1.9852\) & \(21.80\) & \(21.80\) & \(0.00\%\) \\
\(0.5\) & \(0.01\) & \(1.1520\) & \(60.32\) & \(66.54\) & \(9.35\%\) \\
\(0.5\) & \(0.001\) & \(1.0174\) & \(70.63\) & \(71.43\) & \(1.12\%\) \\
\(1\) & \(0.1\) & \(1.4884\) & \(9.86\) & \(10.61\) & \(7.09\%\) \\
\(1\) & \(0.01\) & \(1.0649\) & \(16.71\) & \(17.30\) & \(3.40\%\) \\
\(1\) & \(0.001\) & \(1.0069\) & \(17.86\) & \(17.93\) & \(0.37\%\) \\
\(2\) & \(0.1\) & \(1.1847\) & \(3.55\) & \(3.68\) & \(3.62\%\) \\
\(2\) & \(0.01\) & \(1.0209\) & \(4.39\) & \(4.41\) & \(0.52\%\) \\
\(2\) & \(0.001\) & \(1.0021\) & \(4.49\) & \(4.49\) & \(0.05\%\) \\
\bottomrule
\end{tabular}
\end{table}

\FloatBarrier

The selected shape moves closer to \(p=1\) as \(\delta\) decreases or \(\varepsilon\) increases. The largest improvement in Table~\ref{tab:numerics-1d-all-results} occurs at \((\varepsilon,\delta)=(0.25,0.01)\), where the second moment is reduced by \(17.31\%\) relative to the best fixed choice.

For visualization, the certified interval-wise search is applied with
\[
d=1,\qquad \Delta_1=1,\qquad m=2,
\]
over
\[
\varepsilon\in\{0.1,0.2,\ldots,5.0\},\qquad \delta\in\{0.001,0.002,\ldots,0.1\}.
\]
By Proposition ~\ref{prop:sensitivity-scaling-active-coordinates}, choosing \(\Delta_1=1\) does not affect the selected shape and the relative improvement. Different values of \(\Delta_1\) only change the privacy-feasible scale.

\begin{figure}[!htbp]
\centering

\begin{minipage}{0.49\textwidth}
\centering
\includegraphics[
  width=\linewidth,
  height=0.30\textheight,
  keepaspectratio
]{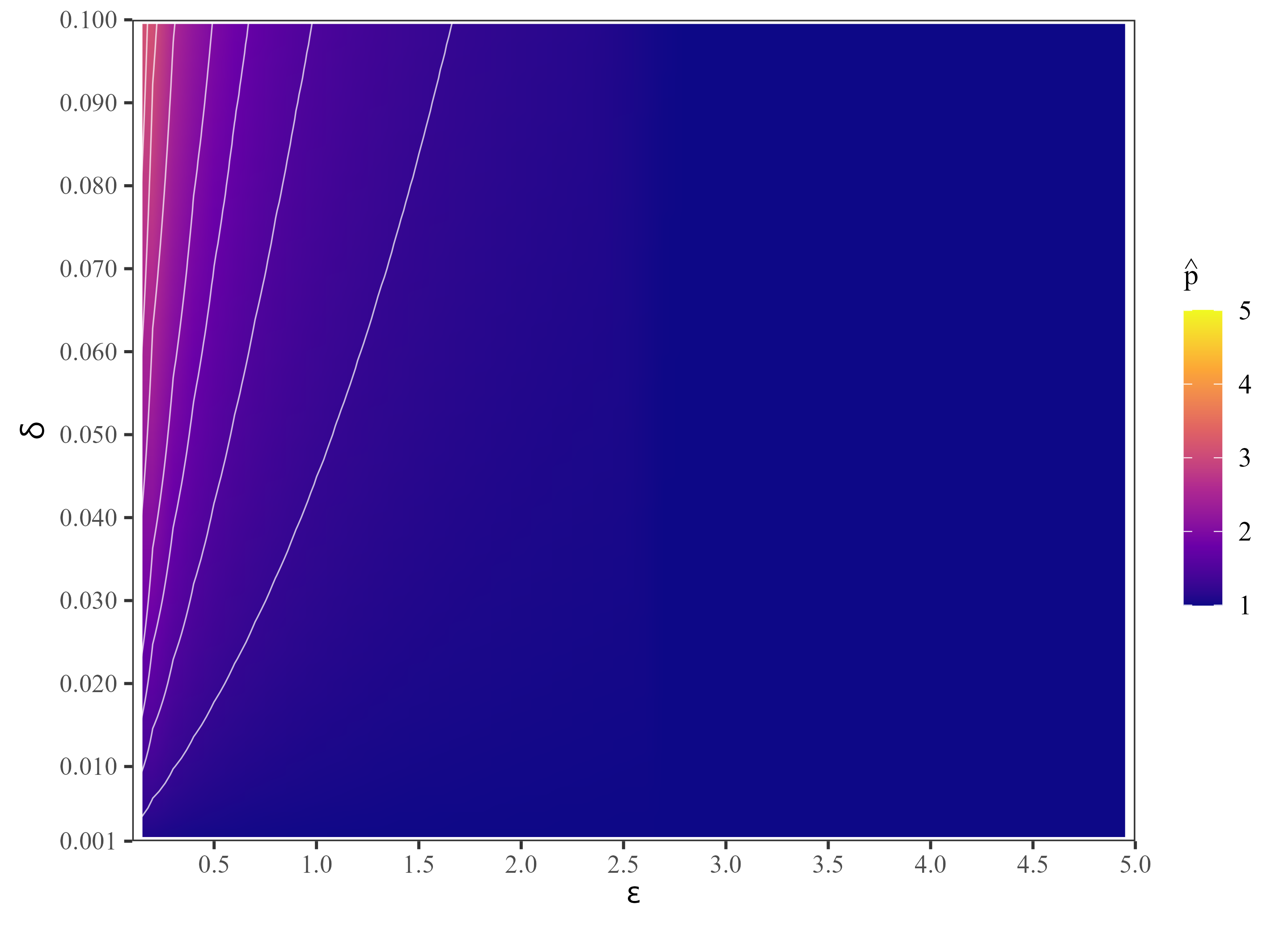}

\vspace{0.1em}
\((a)\) Selected shape \(\hat p\)
\end{minipage}
\hfill
\begin{minipage}{0.49\textwidth}
\centering
\includegraphics[
  width=\linewidth,
  height=0.30\textheight,
  keepaspectratio
]{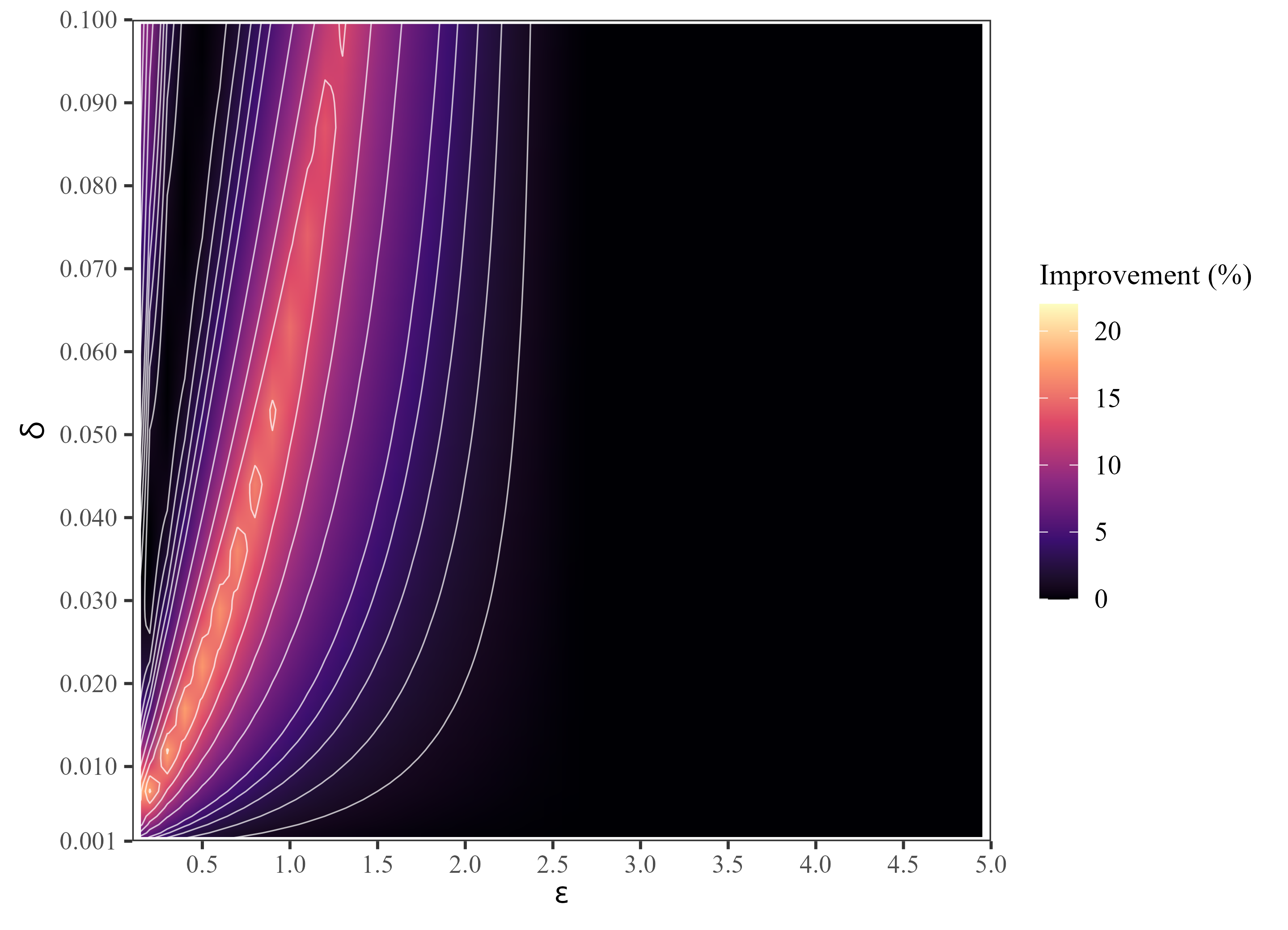}

\vspace{0.1em}
\((b)\) Improvement
\end{minipage}

\caption{One-dimensional certified interval-wise search results for \(5000\) privacy-parameter pairs. The left panel shows the selected shape \(\hat p\), and the right panel shows the improvement relative to the best fixed choice among \(p=1,2,\infty\).}
\label{fig:numerics-1d-dense-grid}
\end{figure}

\FloatBarrier
Figure ~\ref{fig:numerics-1d-dense-grid} shows that significant improvements are mainly concentrated in a relatively narrow region where \(\varepsilon\) is small and \(\delta\) is relatively large. Out of \(5000\) grid points, \(2337\) selected \(p=1\). At \(1036\) grid points, the improvement exceeded \(5\%\); at \(450\) grid points, the improvement exceeded \(10\%\). The largest shape and the greatest improvement both occurred at \((\varepsilon,\delta)=(0.1,0.1)\), where \(\hat p=4.6316\) and the variance was reduced by \(20.53\%\).
\subsection{Multidimensional setup and sensitivity vectors}
\label{subsec:numerics-imbalance-design}
Throughout the multidimensional calculations, \(\delta\) denotes the privacy failure probability used in the scale computation, and \(\alpha\) is the Monte Carlo failure budget. We set \(\alpha\) to \(1\%\) of the overall privacy failure probability \(\delta+\alpha\), so that
\[
\alpha=0.01(\delta+\alpha),
\qquad
\delta=0.99(\delta+\alpha).
\]
The computed mechanism satisfies \((\varepsilon,\delta)\)-DP with probability at least \(1-\alpha\). By Corollary~\ref{cor:mc-scale-delta-alpha}, the resulting mechanism unconditionally satisfies \((\varepsilon,\delta+\alpha)\)-differential privacy.

To compare the degree of imbalance among sensitivity vectors, we use the normalized Herfindahl--Hirschman index of  \cite{cracau2016normalized}. For
\[
w_i=\frac{\Delta_i}{\|\Delta\|_1},
\qquad
\operatorname{HHI}(\Delta)
=
\sum_{i=1}^d w_i^2
=
\frac{\|\Delta\|_2^2}{\|\Delta\|_1^2},
\]
the normalized index is
\[
\operatorname{HHI}_{\mathrm{norm}}(\Delta)
=
\frac{\operatorname{HHI}(\Delta)-1/d}{1-1/d}.
\]
When all coordinates of \(\Delta\) are equal, this index equals \(0\); such a sensitivity vector is called balanced. As the sensitivity becomes increasingly concentrated on a single coordinate, the index approaches \(1\).

For the multidimensional numerical study, we consider the following sensitivity vectors, grouped by their normalized HHI values:
\[
\begin{array}{c|c|c}
\operatorname{HHI}_{\mathrm{norm}} & d=2 & d=5\\
\hline
0 & (1,1) & (1,1,1,1,1)\\
1/9 & (1,2) &
\begin{array}{c}
(3.5,1,1,1,1)\\
(12,9,5,3,1)
\end{array}\\
16/25 & (1,9) & (21,1,1,1,1)
\end{array}
\]
These two five-dimensional vectors, both with a normalized HHI of \(1/9\), have different coordinate distributions, allowing us to compare whether vectors with the same imbalance index produce similar numerical results.

This design distinguishes between two types of comparisons: increasing the degree of imbalance within a fixed dimension, and changing the dimension while keeping the normalized HHI constant. This index is used solely to select comparable examples; it does not imply that the chosen shapes or improvements are determined solely by the HHI.

All heatmaps below use the same \((\varepsilon,\delta)\) grid, color scale, color bar values, and contour levels. Therefore, in each subfigure, the same colors and contour lines represent the same numerical values.
\subsection{Two-dimensional two-stage grid-search heatmaps}
\label{subsec:numerics-2d-comparisons}

Next consider
\[
d=2,\qquad
\Delta=(1,1),\ (1,2),\ \text{and}\ (1,9),
\qquad
m=2.
\]
The two-stage grid search is performed over
\[
\varepsilon\in\{0.1,0.2,\ldots,5.0\},
\qquad
\delta\in\{0.001,0.002,\ldots,0.1\}.
\]
This gives \(50\times100=5000\) privacy-parameter pairs.
\begin{figure}[!htbp]
\centering
\begin{minipage}{0.48\textwidth}
\centering
\includegraphics[width=\linewidth,height=0.24\textheight,keepaspectratio]{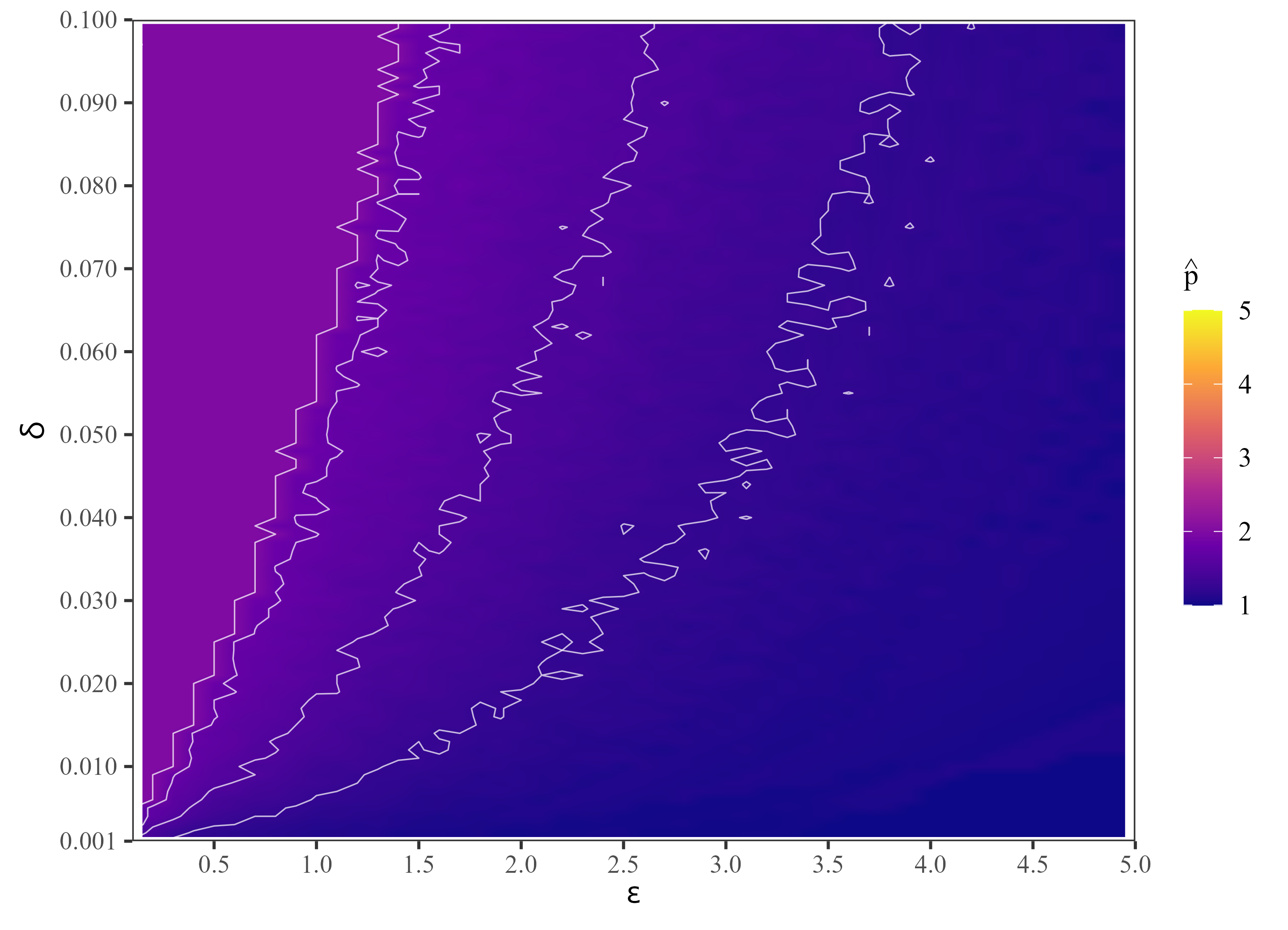}

\smallskip
\((a)\) \(\Delta=(1,1)\): selected shape
\end{minipage}
\hfill
\begin{minipage}{0.48\textwidth}
\centering
\includegraphics[width=\linewidth,height=0.24\textheight,keepaspectratio]{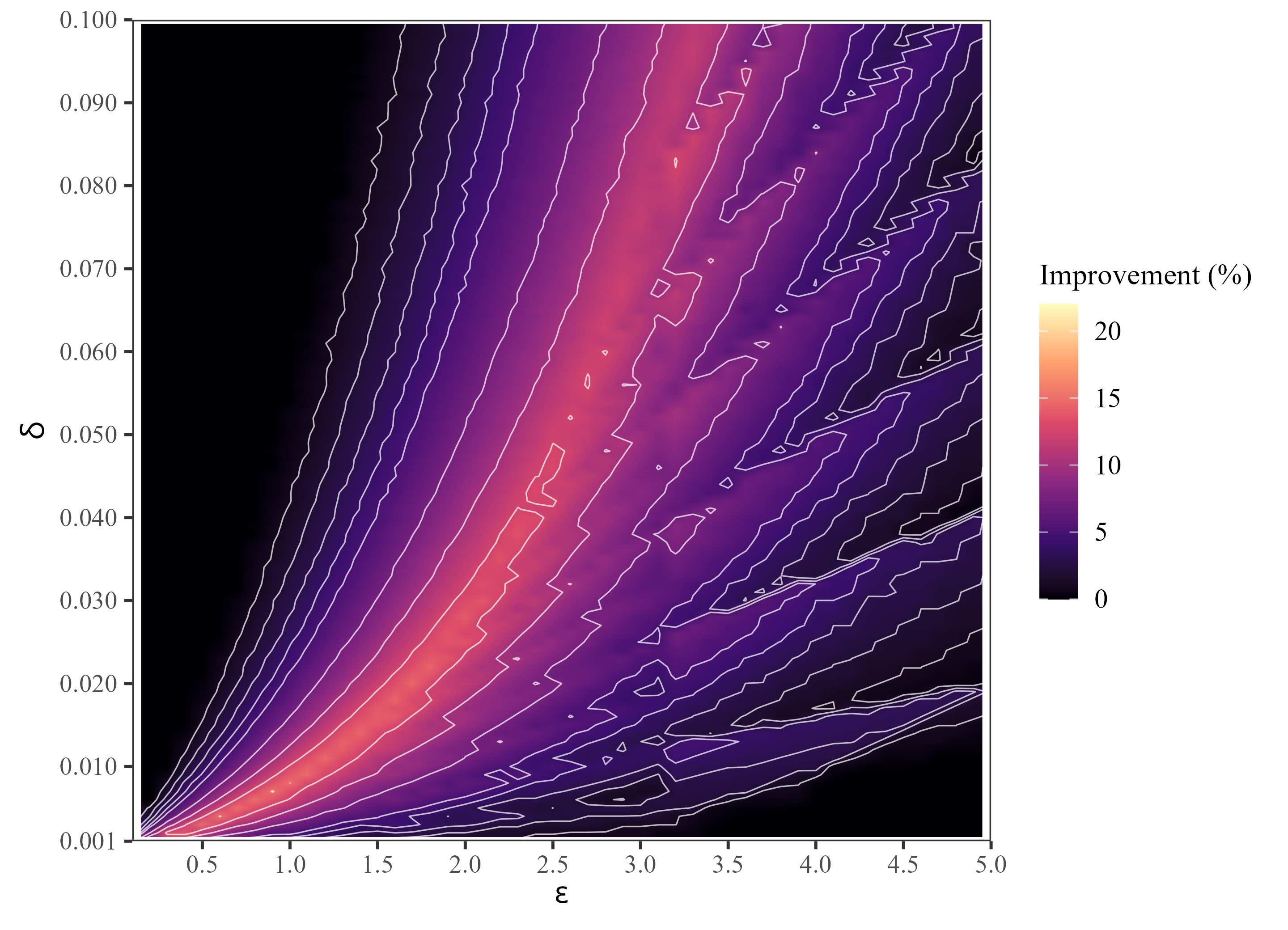}

\smallskip
\((b)\) \(\Delta=(1,1)\): improvement
\end{minipage}
\end{figure}

\begin{figure}[!htbp]
\centering
\begin{minipage}{0.48\textwidth}
\centering
\includegraphics[width=\linewidth,height=0.24\textheight,keepaspectratio]{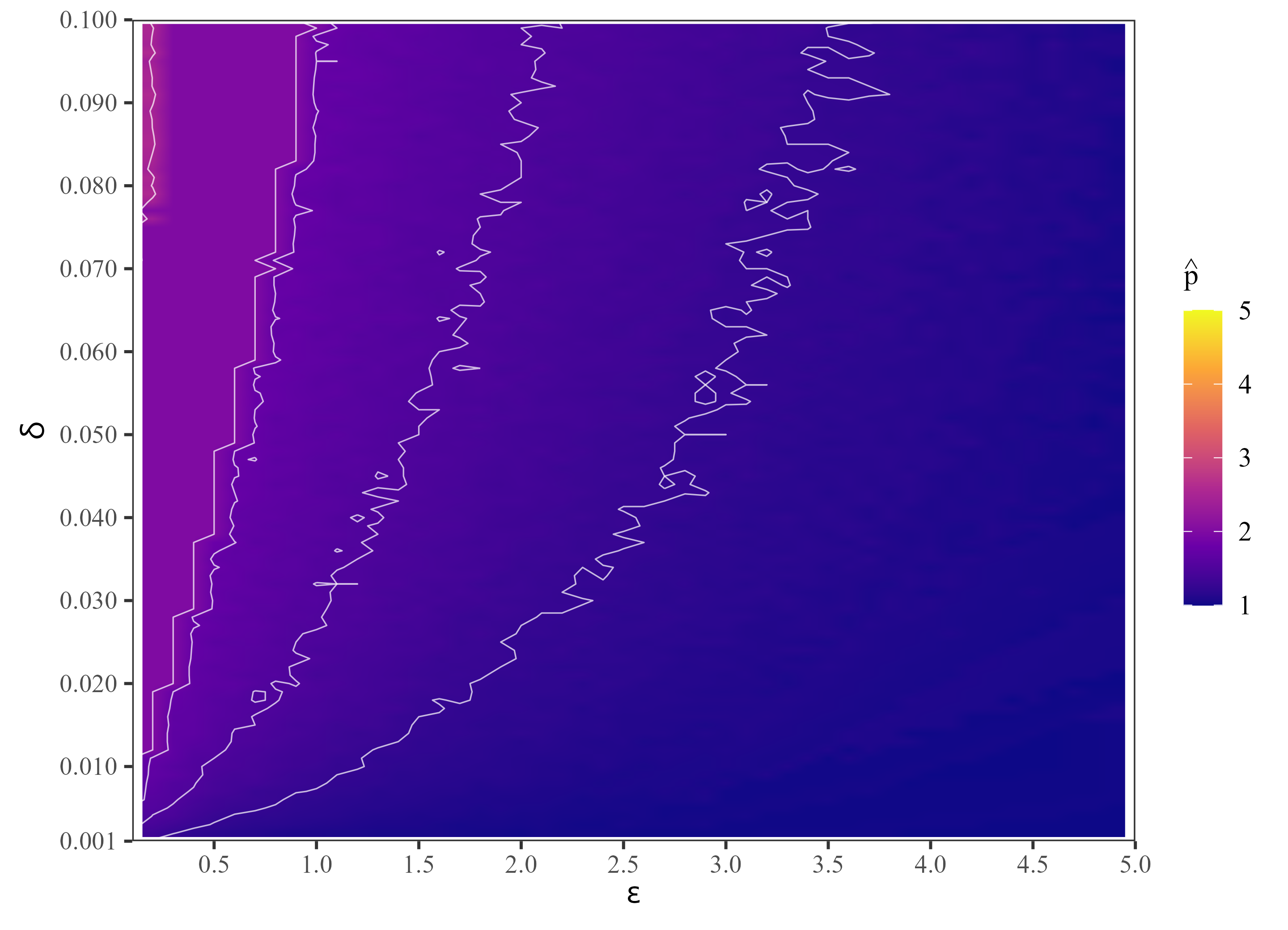}

\smallskip
\((c)\) \(\Delta=(1,2)\): selected shape
\end{minipage}
\hfill
\begin{minipage}{0.48\textwidth}
\centering
\includegraphics[width=\linewidth,height=0.24\textheight,keepaspectratio]{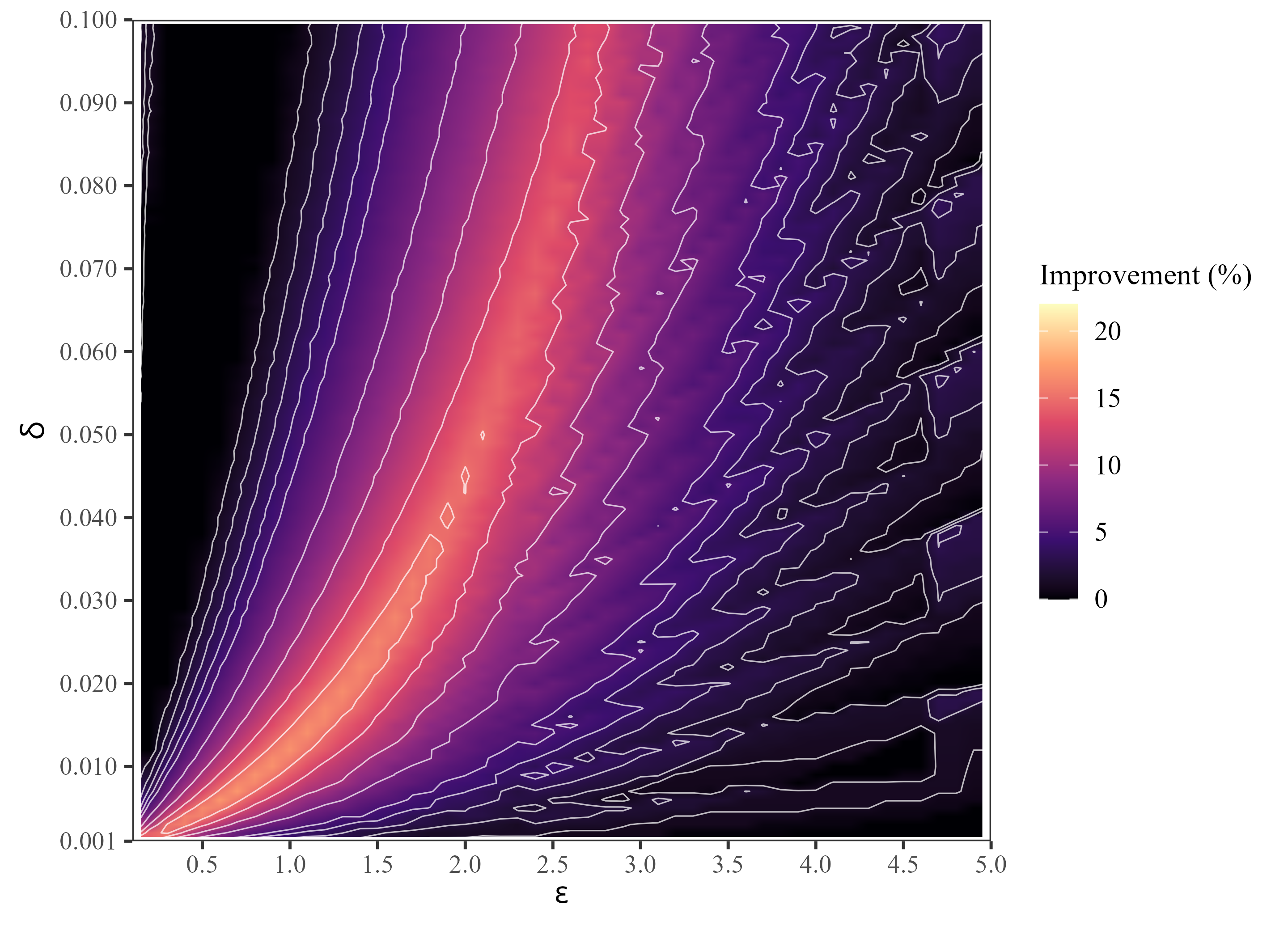}

\smallskip
\((d)\) \(\Delta=(1,2)\): improvement
\end{minipage}
\end{figure}

\begin{figure}[!htbp]
\centering
\begin{minipage}{0.48\textwidth}
\centering
\includegraphics[width=\linewidth,height=0.24\textheight,keepaspectratio]{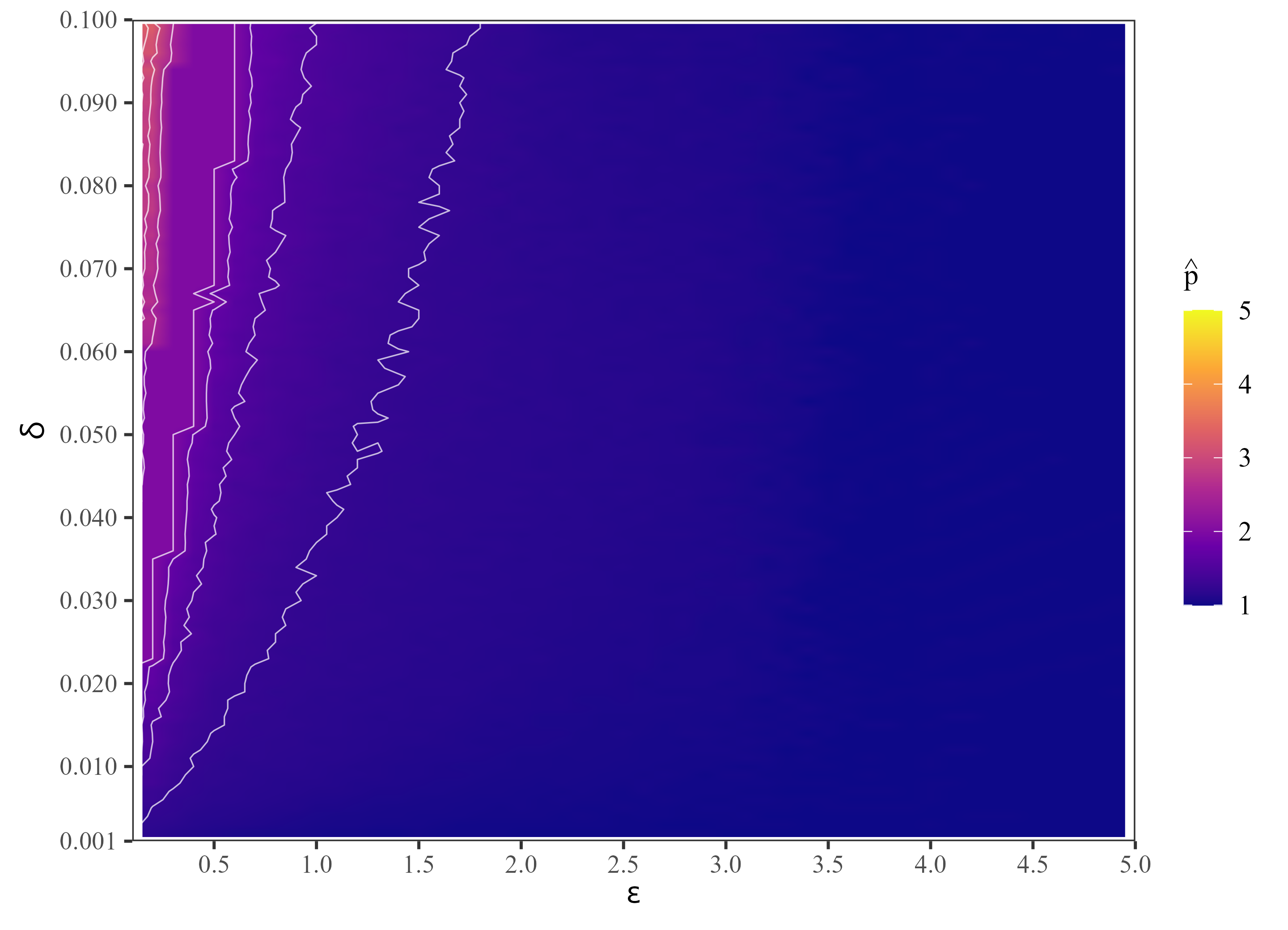}

\smallskip
\((e)\) \(\Delta=(1,9)\): selected shape
\end{minipage}
\hfill
\begin{minipage}{0.48\textwidth}
\centering
\includegraphics[width=\linewidth,height=0.24\textheight,keepaspectratio]{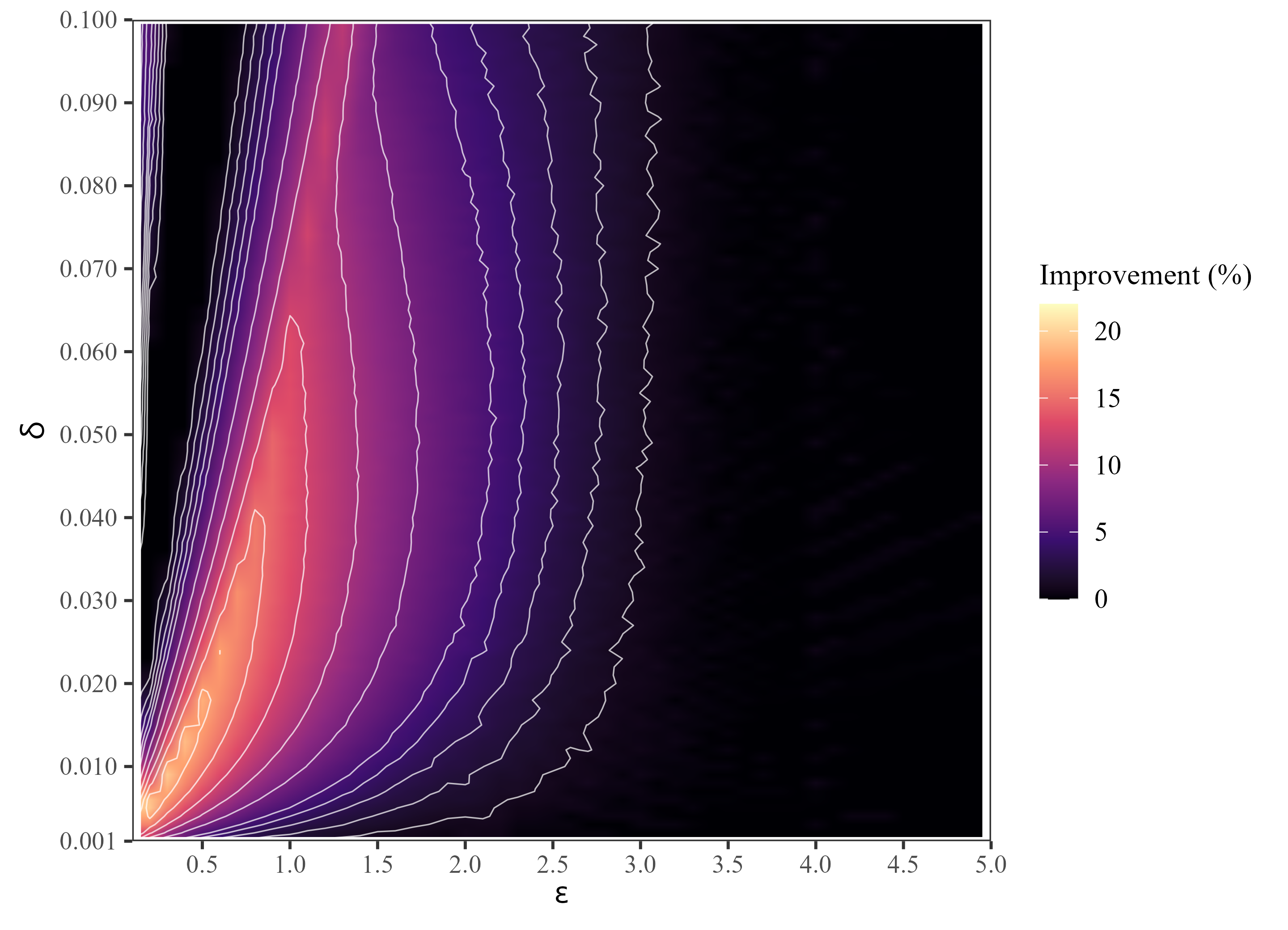}

\smallskip
\((f)\) \(\Delta=(1,9)\): improvement
\end{minipage}

\caption{Two-stage grid-search results for \(d=2\) over \(5000\) \((\varepsilon,\delta)\) pairs. The rows correspond to \(\Delta=(1,1)\), \((1,2)\), and \((1,9)\). The left column shows the selected shape \(\hat p\), and the right column shows the improvement.}
\label{fig:numerics-2d-grid-search}
\end{figure}
\FloatBarrier
In two-dimensional queries, the selected shape and the degree of improvement vary significantly as the sensitivity vector changes. For the balanced sensitivity vector \(\Delta=(1,1)\), the selected shape is close to \(p=2\) over a relatively large range of the grid. The vector \(\Delta=(1,2)\) produces the largest region of significant improvement, while the highly unbalanced sensitivity vector \(\Delta=(1,9)\) more frequently selects \(p=1\). Among the \(5000\) grid points, \(824\), \(458\), and \(219\) grid points selected \(p=2\) for \(\Delta=(1,1)\), \(\Delta=(1,2)\), and \(\Delta=(1,9)\), respectively. The corresponding numbers of grid points selecting \(p=1\) were \(129\), \(82\), and \(1181\), respectively. At \(2011\), \(2143\), and \(1400\) grid points, the improvement was at least \(5\%\); at \(592\), \(934\), and \(541\) grid points, the improvement was at least \(10\%\). The maximum improvement rates are \(15.13\%\), \(17.14\%\), and \(19.52\%\), respectively. Therefore, \(\Delta=(1,9)\) yields the greatest improvement, while \(\Delta=(1,2)\) yields significant improvement across the widest portions of the grid. Consequently, the degree of improvement does not vary monotonically with the normalized HHI. The jagged contours reflect Monte Carlo variability and the discretization used in the two-stage grid search.

\subsection{Two-dimensional certified interval-wise search}
\label{subsec:numerics-2d}

Consider the two-dimensional setting
\[
d=2,\qquad
\Delta=(1,2),
\qquad
m=2.
\]
For the representative case \((\varepsilon,\delta+\alpha)=(1,0.01)\), the search is run on \([1,11]\) with tolerance \(\eta=0.05\). The Monte Carlo failure budget is \(\alpha=10^{-4}\), so the effective target is \(\delta=0.0099\). Table~\ref{tab:numerics-2d-representative-summary} summarizes the output.
\begin{table}[!htbp]
\centering
\small
\setlength{\tabcolsep}{6pt}
\renewcommand{\arraystretch}{1.08}
\caption{Input settings and output of the certified interval-wise search.}
\label{tab:numerics-2d-representative-summary}
\begin{tabular}{lc}
\toprule
Quantity & Value \\
\midrule
Query output dimension \(d\) & \(2\) \\
Sensitivity vector \(\Delta\) & \((1,2)\) \\
Overall privacy failure probability \(\delta+\alpha\) & \(0.01\) \\
Monte Carlo failure budget \(\alpha\) & \(10^{-4}\) \\
Privacy failure probability \(\delta\) & \(0.0099\) \\
Absolute-moment order \(m\) & \(2\) \\
Search interval for \(p\) & \([1,11]\) \\
Search tolerance \(\eta\) & \(0.05\) \\
Selected shape \(\hat p\) & \(1.2947\) \\
Scale upper bound \(\widehat b(\hat p)\) & \(3.81\) \\
Second moment upper bound \(L_2\!\left(\hat p,\widehat b(\hat p)\right)\) & \(14.12\) \\
Final certified gap & \(0.05\) \\
\bottomrule
\end{tabular}
\end{table}

The scale and second absolute moment in Table~\ref{tab:numerics-2d-representative-summary} are the upper bounds obtained during the interval-wise search. After \(\hat p\) is selected, the selected shape and the three benchmark shapes \(p=1,2,\infty\) are evaluated separately. The selected shape and the Laplace mechanism are evaluated using Monte Carlo calculations with empirical Bernstein bounds (MC--EB), whereas the Gaussian and uniform scales are obtained by solving their corresponding privacy equations. Table~\ref{tab:numerics-2d-fixed-p-comparison} reports the resulting scales and second absolute moments for the selected shape and the three benchmark mechanisms.
\begin{table}[!htbp]
\centering
\small
\setlength{\tabcolsep}{6pt}
\renewcommand{\arraystretch}{1.08}
\caption{Shape comparison for \(\Delta=(1,2)\) at \((\varepsilon,\delta+\alpha)=(1,0.01)\).}
\label{tab:numerics-2d-fixed-p-comparison}
\begin{tabular}{lcccc}
\toprule
Shape & \(p\) & Method & \(\widehat b(p)\) & \(L_2\!\left(p,\widehat b(p)\right)\) \\
\midrule
Selected & \(1.2947\) & MC--EB & \(3.81\) & \(14.12\) \\
Laplace & \(1\) & MC--EB & \(2.89\) & \(16.66\) \\
Gaussian & \(2\) & Equation solving & \(5.94\) & \(17.63\) \\
Uniform & \(\infty\) & Equation solving & \(149.67\) & \(7466.63\) \\
\bottomrule
\end{tabular}
\end{table}

In this case, the Laplace mechanism has the smallest second  moment among the three benchmark mechanisms, and
\[
\operatorname{Improvement}_2(1,0.01)
=
100\%
\left(
1-
\frac{L_2\bigl(\widehat p,\widehat b(\widehat p)\bigr)}
{\min\!\left\{
L_2\bigl(1,\widehat b(1)\bigr),
L_2\bigl(2,b(2)\bigr),
L_2\bigl(\infty,b(\infty)\bigr)
\right\}}
\right)
=
15.25\%.
\]
Thus, the selected shape reduces the second moment by \(15.25\%\) relative to the Laplace mechanism and by \(19.91\%\) relative to the Gaussian mechanism.

The certified interval-wise search is also run for
\[
\varepsilon\in\{0.25,0.5,1,2\},
\qquad
\delta\in\{0.1,0.01,0.001\}.
\]

\begin{table}[!htbp]
\centering
\small
\setlength{\tabcolsep}{5pt}
\renewcommand{\arraystretch}{1.08}
\caption{Shape selection and improvement across twelve privacy settings for \(\Delta=(1,2)\).}
\label{tab:numerics-2d-all-results}
\begin{tabular}{cccccc}
\toprule
\(\varepsilon\) & \(\delta+\alpha\) & \(\hat p\) & \(L_2(\hat p)\) & Best benchmark \(L_2\) & Improvement \\
\midrule
\(0.25\) & \(0.1\) & \(2.4590\) & \(22.03\) & \(22.33\) & \(1.32\%\) \\
\(0.25\) & \(0.01\) & \(1.5969\) & \(129.49\) & \(134.40\) & \(3.66\%\) \\
\(0.25\) & \(0.001\) & \(1.1903\) & \(247.58\) & \(279.10\) & \(11.29\%\) \\
\(0.5\) & \(0.1\) & \(2.0000\) & \(12.11\) & \(12.11\) & \(0.00\%\) \\
\(0.5\) & \(0.01\) & \(1.5081\) & \(44.50\) & \(49.52\) & \(10.13\%\) \\
\(0.5\) & \(0.001\) & \(1.0727\) & \(67.06\) & \(70.88\) & \(5.38\%\) \\
\(1\) & \(0.1\) & \(1.7572\) & \(5.85\) & \(5.90\) & \(0.70\%\) \\
\(1\) & \(0.01\) & \(1.2947\) & \(14.12\) & \(16.66\) & \(15.25\%\) \\
\(1\) & \(0.001\) & \(1.0458\) & \(17.45\) & \(17.86\) & \(2.29\%\) \\
\(2\) & \(0.1\) & \(1.5163\) & \(2.47\) & \(2.68\) & \(7.89\%\) \\
\(2\) & \(0.01\) & \(1.1294\) & \(4.10\) & \(4.33\) & \(5.41\%\) \\
\(2\) & \(0.001\) & \(1.0125\) & \(4.46\) & \(4.48\) & \(0.59\%\) \\
\bottomrule
\end{tabular}
\end{table}
\FloatBarrier
The one-dimensional experiment uses \(\Delta_1=3\), while the two-dimensional sensitivity vector is \(\Delta=(1,2)\); the \(\ell_1\) norm of both sensitivity vectors is \(3\). At the privacy level of \((1,0.01)\), the selected shape changes from \(1.0649\) to \(1.2947\), while the second absolute moment decreases from \(16.71\) to \(14.12\). The corresponding improvement rate increases from \(3.40\%\) to \(15.25\%\). This comparison demonstrates that sensitivity vectors with the same \(\ell_1\) norm may yield different selected shapes and second absolute moments when the distribution of sensitivity across the coordinates differs. The evolution of the certified gap \(U^*-L^*\) and the best shape found so far during the iterations for the twelve two-dimensional cases is shown in Appendix~\ref{app:additional-numerical-results}.
\subsection{Five-dimensional grid-search results}
\label{subsec:numerics-5d-grid-search}
Consider four five-dimensional sensitivity vectors,
\[
d=5,\qquad
\Delta=(1,1,1,1,1),\quad
(12,9,5,3,1),\quad
(3.5,1,1,1,1),\quad
(21,1,1,1,1),
\qquad
m=2.
\]
Their normalized HHI values are \(0\), \(1/9\), \(1/9\), and \(16/25\), respectively. In particular, \((12,9,5,3,1)\) and \((3.5,1,1,1,1)\) have the same normalized HHI but different coordinate structures. This allows us to examine whether the imbalance index alone is sufficient to describe the effect of the sensitivity vector. By Proposition~\ref{prop:sensitivity-scaling-active-coordinates}, rescaling \(\Delta\) does not change the selected shape or the relative improvement, so the comparison focuses on how sensitivity is distributed across coordinates.

The two-stage grid search used for the heatmaps is performed over
\[
\varepsilon\in\{0.1,0.2,\ldots,5.0\},
\qquad
\delta+\alpha\in\{0.001,0.002,\ldots,0.1\}.
\]
The resulting heatmaps are shown below:
\FloatBarrier

\begin{center}

\noindent
\begin{minipage}{0.48\textwidth}
\centering
\includegraphics[width=\linewidth]{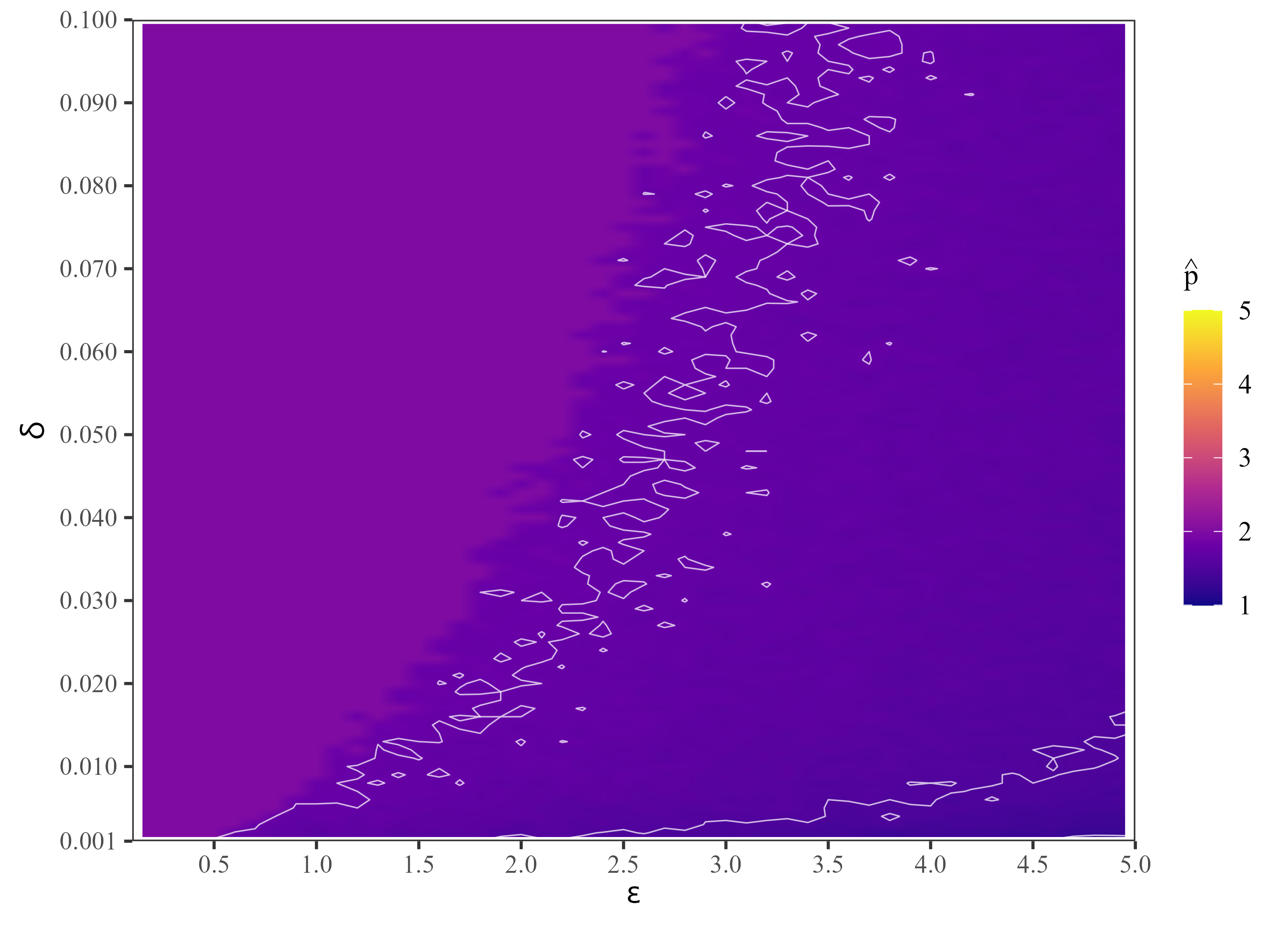}

\smallskip
\((a)\) \(\Delta=(1,1,1,1,1)\): selected shape
\end{minipage}
\hfill
\begin{minipage}{0.48\textwidth}
\centering
\includegraphics[width=\linewidth]{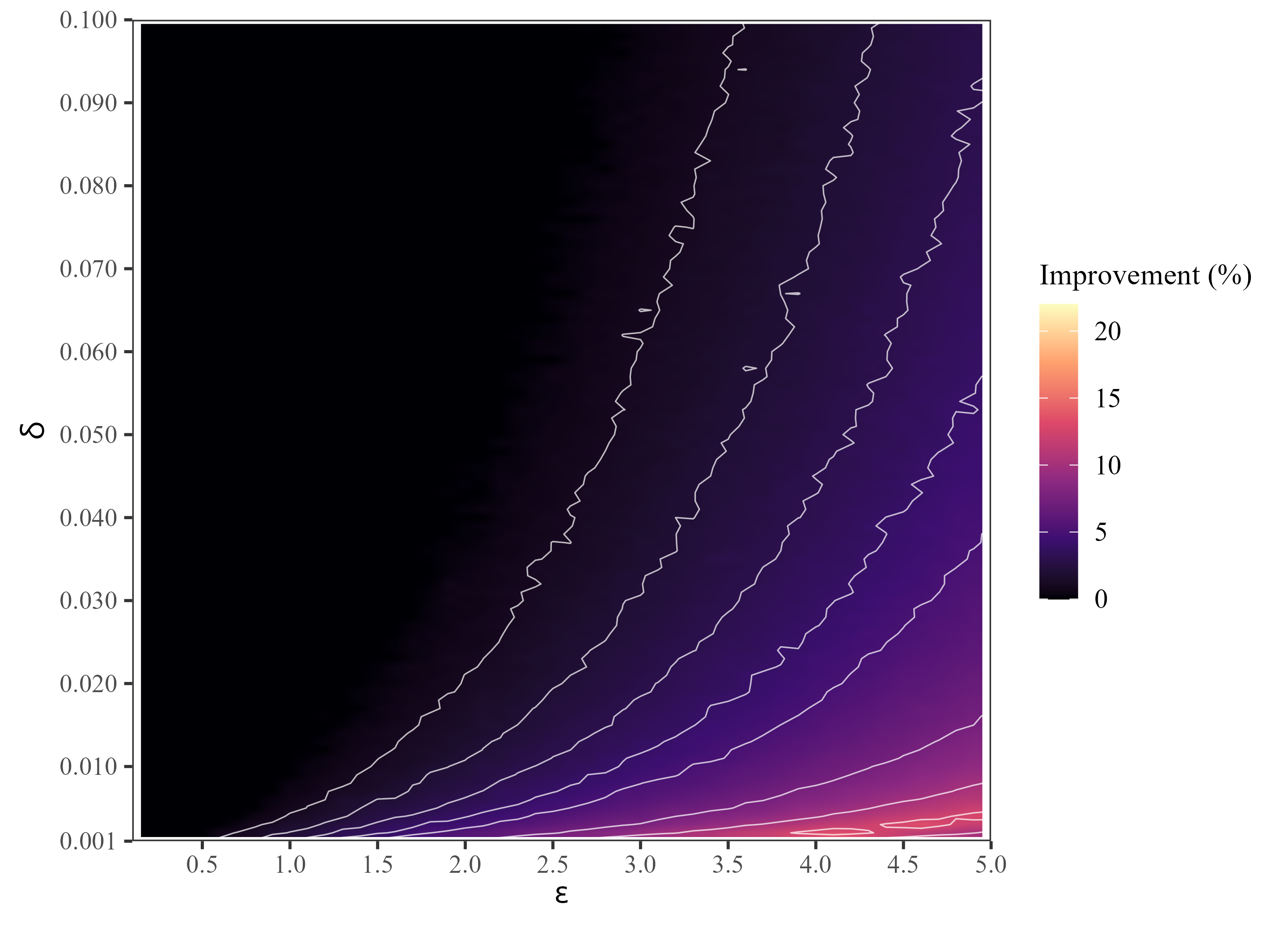}

\smallskip
\((b)\) \(\Delta=(1,1,1,1,1)\): improvement
\end{minipage}

\par\medskip

\noindent
\begin{minipage}{0.48\textwidth}
\centering
\includegraphics[width=\linewidth]{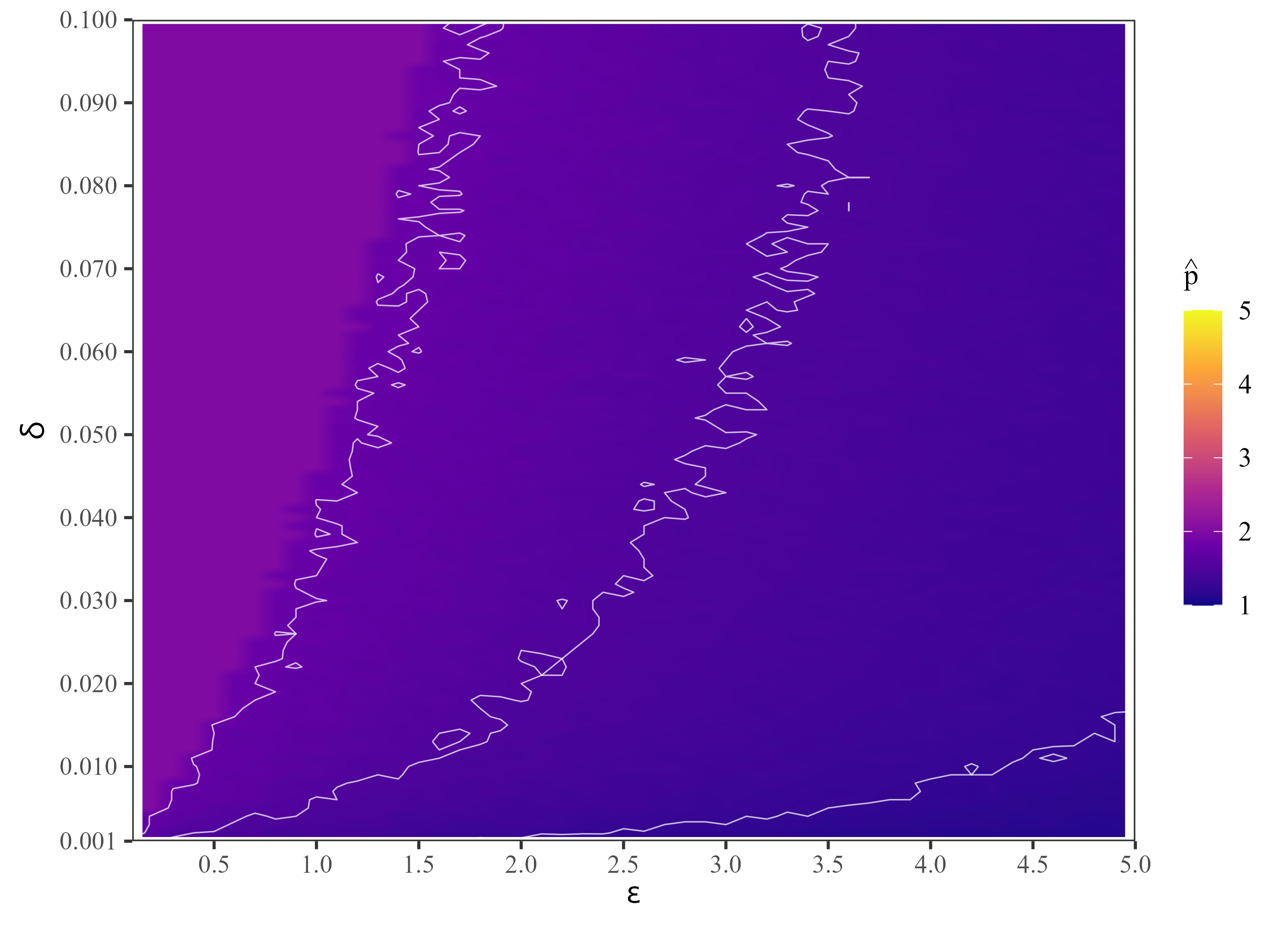}

\smallskip
\((c)\) \(\Delta=(12,9,5,3,1)\): selected shape
\end{minipage}
\hfill
\begin{minipage}{0.48\textwidth}
\centering
\includegraphics[width=\linewidth]{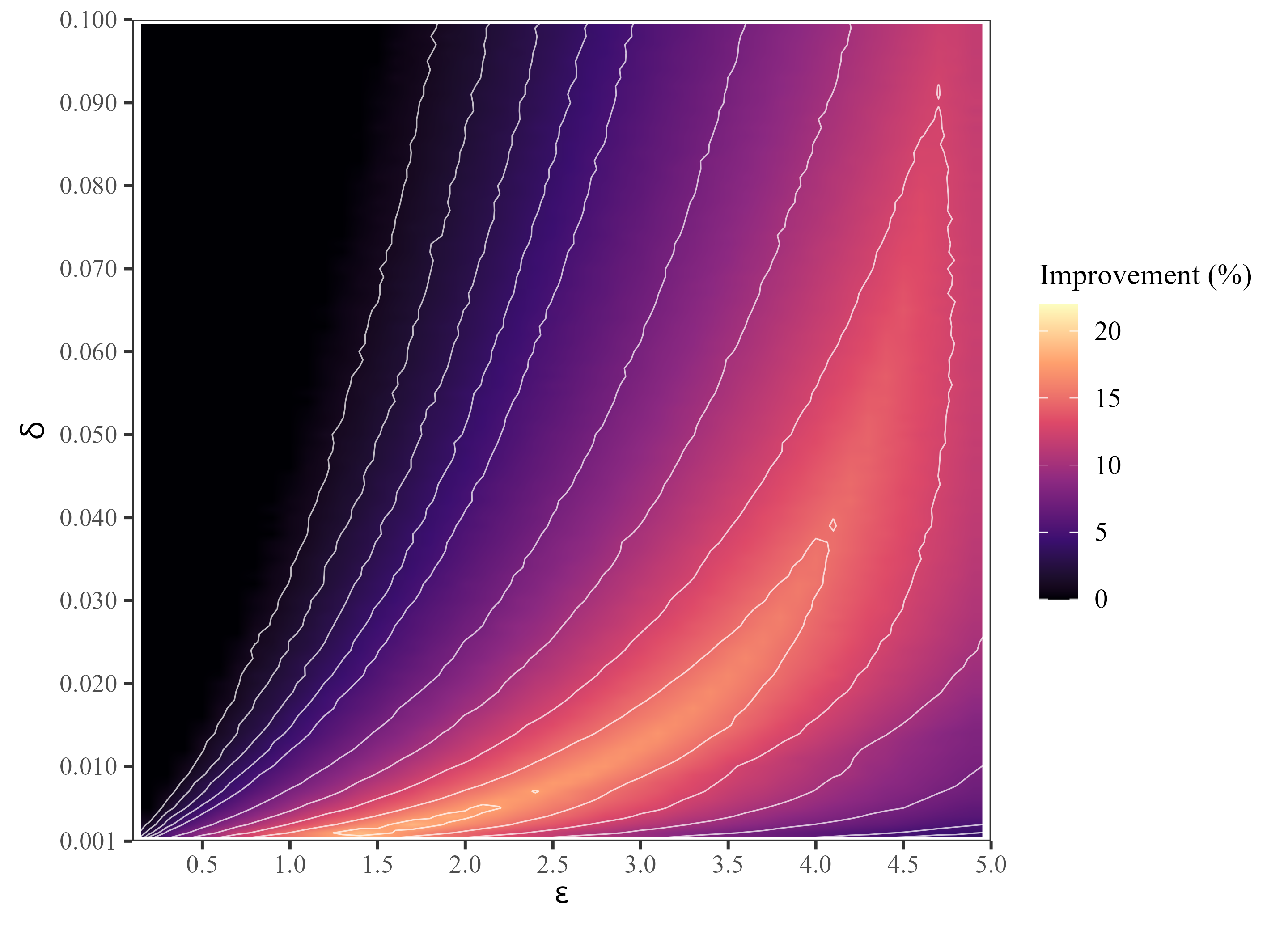}

\smallskip
\((d)\) \(\Delta=(12,9,5,3,1)\): improvement
\end{minipage}

\par\medskip

\noindent
\begin{minipage}{0.48\textwidth}
\centering
\includegraphics[width=\linewidth]{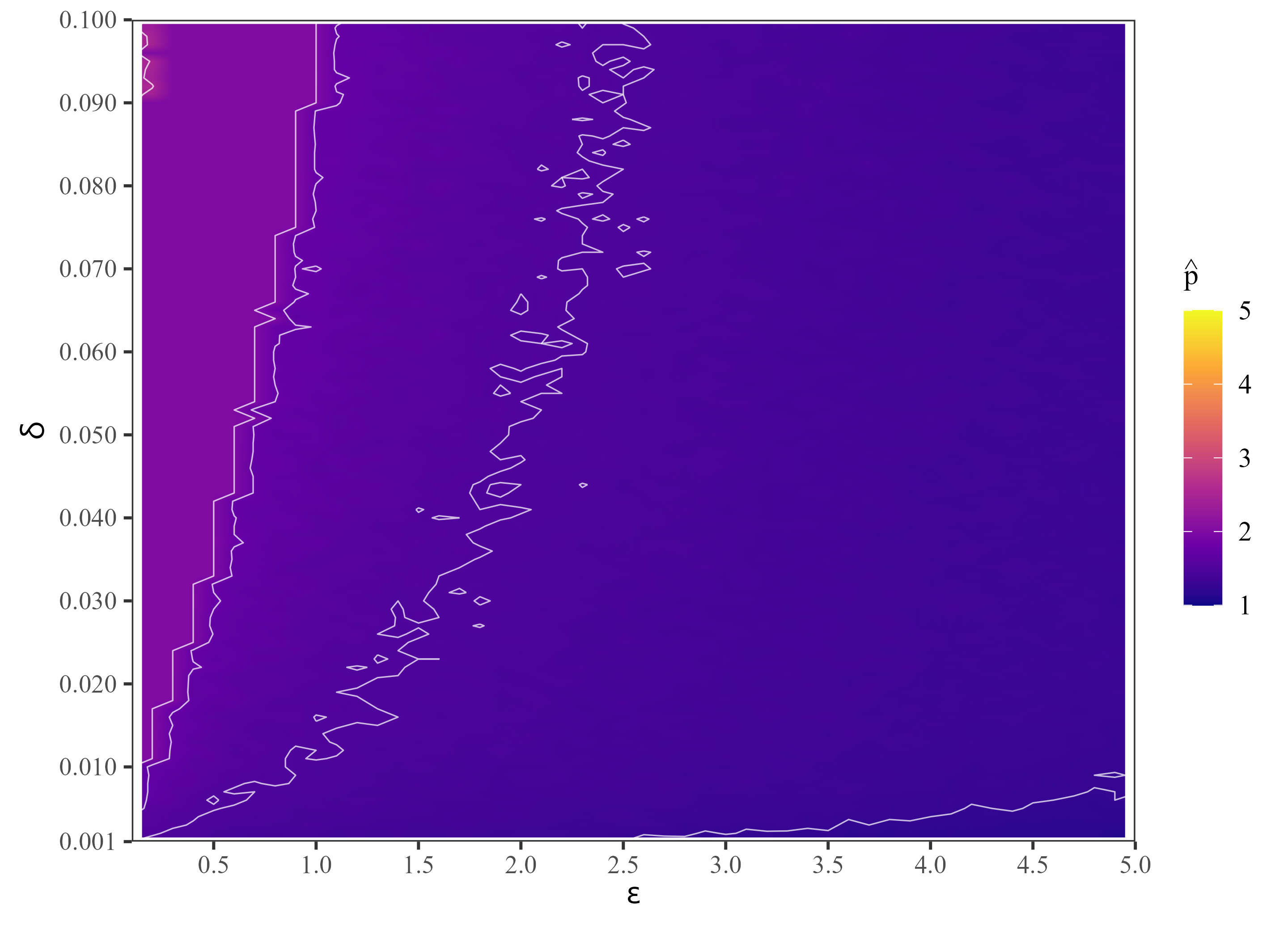}

\smallskip
\((e)\) \(\Delta=(3.5,1,1,1,1)\): selected shape
\end{minipage}
\hfill
\begin{minipage}{0.48\textwidth}
\centering
\includegraphics[width=\linewidth]{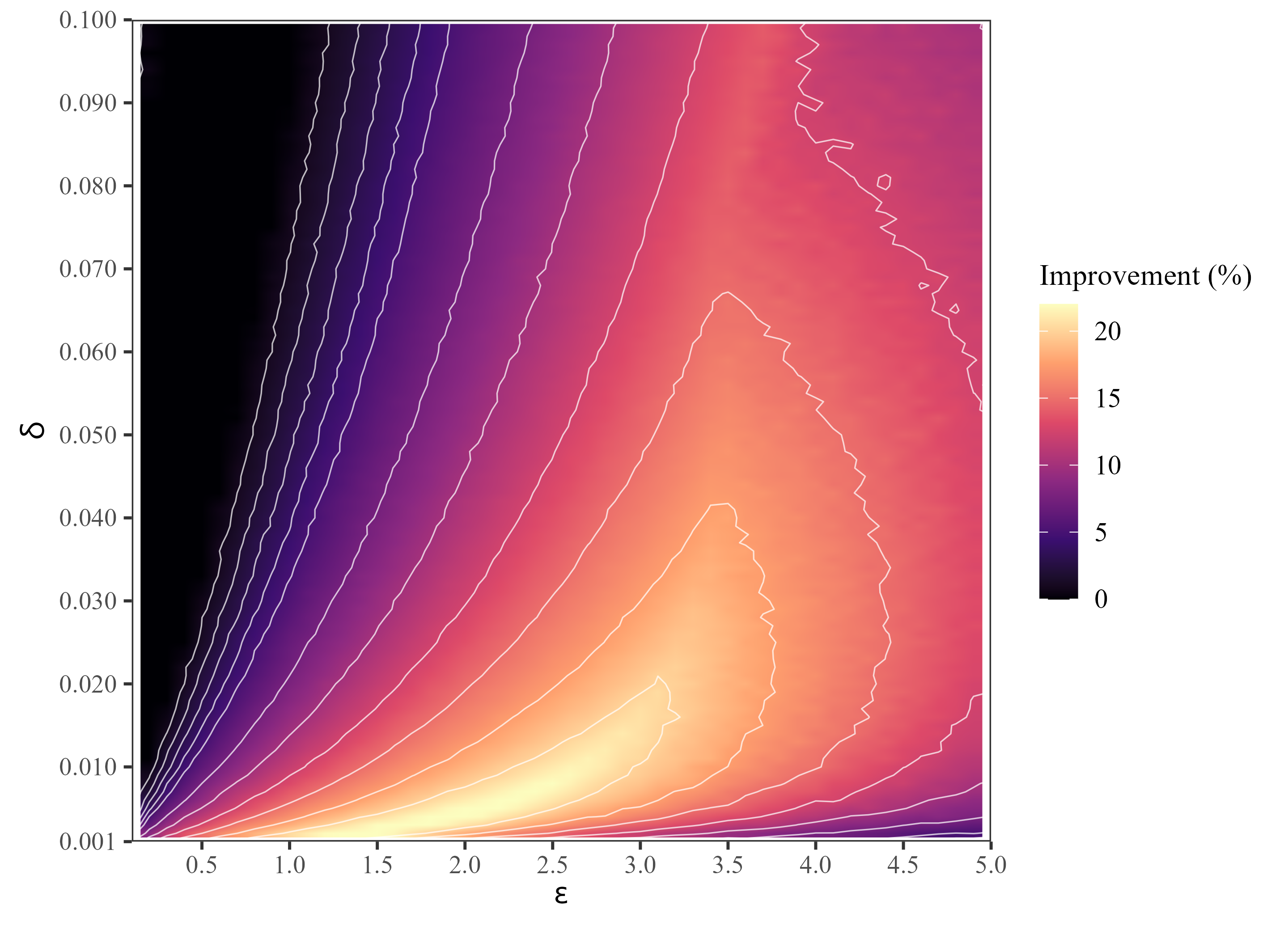}

\smallskip
\((f)\) \(\Delta=(3.5,1,1,1,1)\): improvement
\end{minipage}

\par\medskip

\noindent
\begin{minipage}{0.48\textwidth}
\centering
\includegraphics[width=\linewidth]{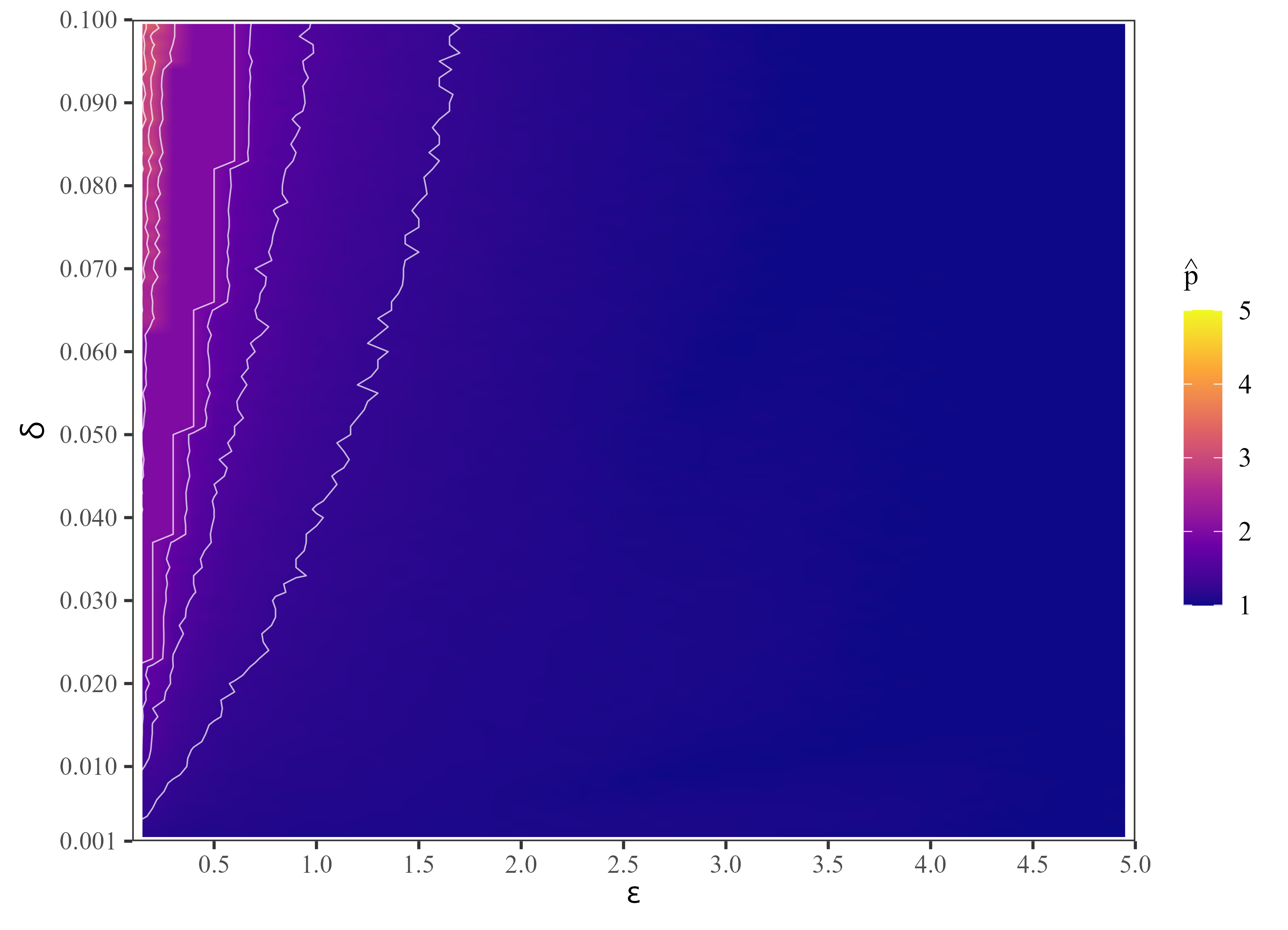}

\smallskip
\((g)\) \(\Delta=(21,1,1,1,1)\): selected shape
\end{minipage}
\hfill
\begin{minipage}{0.48\textwidth}
\centering
\includegraphics[width=\linewidth]{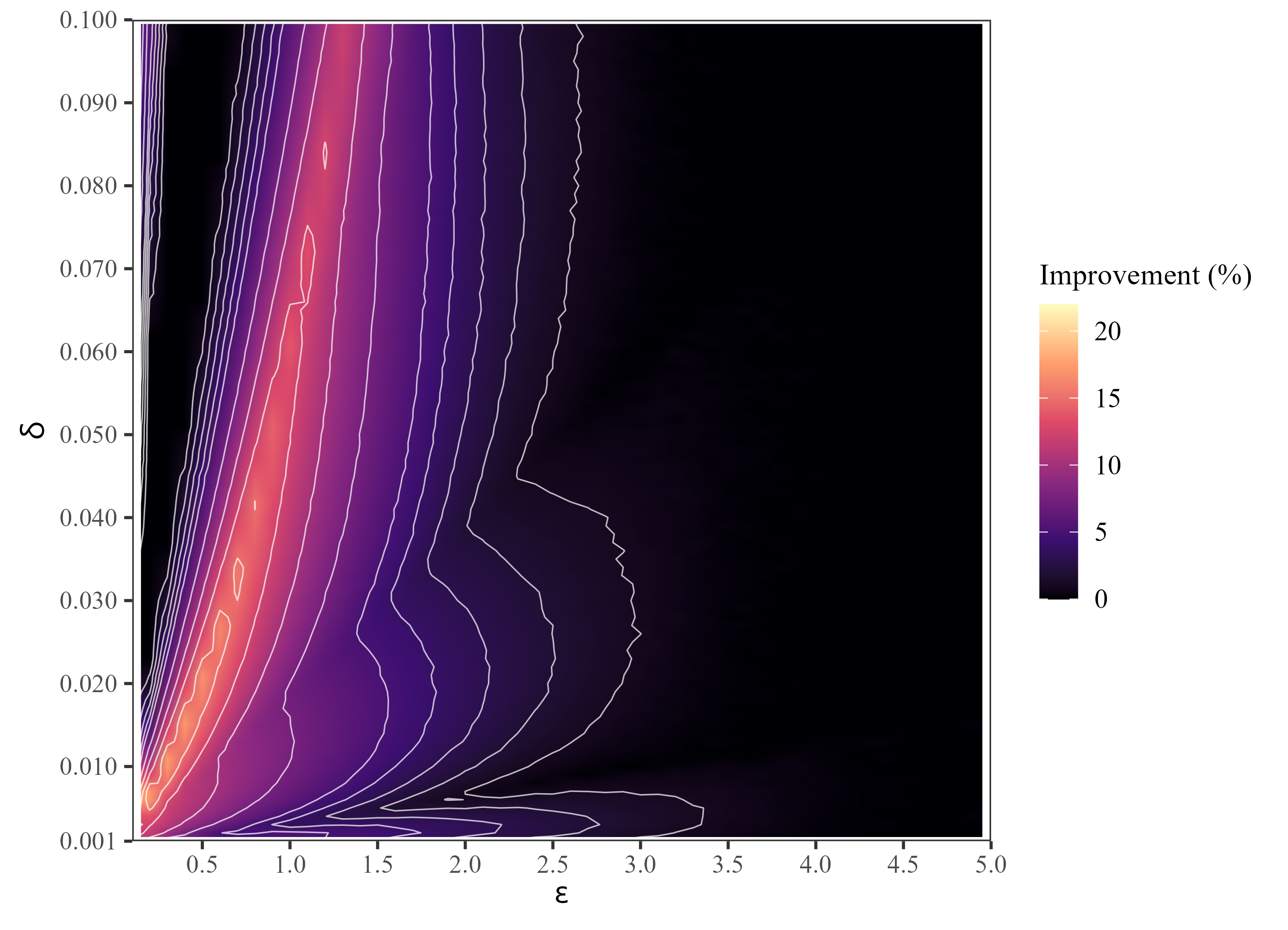}

\smallskip
\((h)\) \(\Delta=(21,1,1,1,1)\): improvement
\end{minipage}

\captionof{figure}{Five-dimensional heatmaps from the two-stage grid search. From top to bottom, the rows correspond to \(\Delta=(1,1,1,1,1)\), \((12,9,5,3,1)\), \((3.5,1,1,1,1)\), and \((21,1,1,1,1)\). The left column shows the selected shape \(\hat p\), and the right column shows the percentage improvement relative to the best of the three benchmark mechanisms \(p=1,2,\infty\).}
\label{fig:numerics-5d-grid-search}

\end{center}

\FloatBarrier

The heatmap shows significant differences in the results for the four sensitivity vectors. For the balanced vector \(\Delta=(1,1,1,1,1)\), \(\hat p=2\) is selected over most of the upper-left region, and the area of significant improvement is small. The other two moderately imbalanced cases, \(\Delta=(12,9,5,3,1)\) and \(\Delta=(3.5,1,1,1,1)\), have much wider areas of improvement. Although the normalized HHI is the same for both vectors, the latter performs best over most of the grid. For the highly imbalanced vector \(\Delta=(21,1,1,1,1)\), \(p=1\) is selected over most of the lower right region, and the significant improvement is concentrated in a relatively narrow band.

Among the 5000 privacy levels, for the four cases of \(\Delta=(1,1,1,1,1)\), \(\Delta=(12,9,5,3,1)\), \(\Delta=(3.5,1,1,1,1)\), and \(\Delta=(21,1,1,1,1)\), \(p=2\) was selected at \(1991\), \(930\), \(529\), and \(220\) grid points, respectively. The corresponding numbers of grid points selecting \(p=1\) were \(0\), \(0\), \(0\), and \(1401\), respectively. The corresponding numbers of grid points with an improvement of at least \(5\%\)  were \(455\), \(2984\), \(3820\), and \(1127\). The corresponding numbers of grid points with an improvement of at least \(10\%\) were \(71\), \(1783\), \(3033\), and \(420\), respectively. The maximum improvement rates are \(13.51\%\), \(18.83\%\), \(23.13\%\), and \(18.27\%\), respectively. Although the normalized Herfindahl-Hirschman Index (HHI) for \(\Delta=(12,9,5,3,1)\) and \(\Delta=(3.5,1,1,1,1)\) is the same, the shapes and improvement regions they select are significantly different. Therefore, in the multivariate case, the optimal shape and degree of improvement depend on the structure of the sensitivity vector.

\begin{remark}
In the one-dimensional, two-dimensional, and five-dimensional examples, as the privacy parameter grid moves from the top-left corner (smaller \(\varepsilon\) and larger \(\delta\)) to the bottom-right corner (larger \(\varepsilon\) and smaller \(\delta\)), the selected shapes generally become smaller. The \(\hat p\) values in the upper-left region tend to be large, typically close to or greater than \(2\), while the \(\hat p\) values in the lower-right region tend to be close to or equal to \(1\). The sizes of these regions vary significantly as the sensitivity vector changes. Balanced vectors tend to produce larger regions where \(\hat p\) equals \(2\), while highly unbalanced vectors tend to produce larger regions where \(\hat p\) equals \(1\).
\end{remark}
The heatmaps illustrate the overall patterns but do not provide exact values for each privacy setting. Therefore, Table ~\ref{tab:numerics-5d-full-grid-comparison} lists the selected shapes and their improvement rates corresponding to twelve representative  privacy-parameter pairs. These results were obtained using the full-grid search over \(p=1, 1.01, 1.02, \ldots, 11\).
\begin{table}[!htbp]
\centering
\small
\setlength{\tabcolsep}{2.7pt}
\renewcommand{\arraystretch}{1.08}
\caption{Shape selection and improvement for twelve privacy settings.}
\label{tab:numerics-5d-full-grid-comparison}

\begin{tabular}{cc|cc|cc|cc|cc}
\toprule
& &
\multicolumn{2}{c|}{\(\Delta=(1,1,1,1,1)\)} &
\multicolumn{2}{c|}{\(\Delta=(12,9,5,3,1)\)} &
\multicolumn{2}{c|}{\(\Delta=(3.5,1,1,1,1)\)} &
\multicolumn{2}{c}{\(\Delta=(21,1,1,1,1)\)} \\
\(\varepsilon\) & \(\delta+\alpha\) &
\(\hat p\) & Improvement &
\(\hat p\) & Improvement &
\(\hat p\) & Improvement &
\(\hat p\) & Improvement \\
\midrule

\multirow{3}{*}{\(0.25\)}
& \(0.1\)
& \(2.00\) & \(0.00\%\)
& \(2.00\) & \(0.00\%\)
& \(2.44\) & \(0.72\%\)
& \(2.77\) & \(3.89\%\)\\

&
\(0.01\)
& \(2.00\) & \(0.00\%\)
& \(1.83\) & \(0.31\%\)
& \(1.70\) & \(2.63\%\)
& \(1.33\) & \(15.82\%\) \\

&
\(0.001\)
& \(1.75\) & \(1.80\%\)
& \(1.50\) & \(9.56\%\)
& \(1.46\) & \(15.63\%\)
& \(1.11\) & \(7.73\%\) \\

\midrule

\multirow{3}{*}{\(0.5\)}
& \(0.1\)
& \(2.00\) & \(0.00\%\)
& \(2.00\) & \(0.00\%\)
& \(2.00\) & \(0.00\%\)
& \(2.00\) & \(0.00\%\) \\

&
\(0.01\)
& \(1.84\) & \(0.05\%\)
& \(1.72\) & \(2.56\%\)
& \(1.57\) & \(7.08\%\)
& \(1.20\) & \(11.23\%\) \\

&
\(0.001\)
& \(1.70\) & \(3.29\%\)
& \(1.42\) & \(14.58\%\)
& \(1.40\) & \(20.10\%\)
& \(1.09\) & \(6.47\%\) \\

\midrule

\multirow{3}{*}{\(1\)}
& \(0.1\)
& \(2.00\) & \(0.00\%\)
& \(2.00\) & \(0.00\%\)
& \(1.78\) & \(0.34\%\)
& \(1.50\) & \(6.10\%\) \\

&
\(0.01\)
& \(1.79\) & \(1.08\%\)
& \(1.59\) & \(6.64\%\)
& \(1.49\) & \(12.54\%\)
& \(1.13\) & \(7.46\%\) \\

&
\(0.001\)
& \(1.62\) & \(5.56\%\)
& \(1.34\) & \(19.44\%\)
& \(1.37\) & \(23.66\%\)
& \(1.07\) & \(5.78\%\) \\

\midrule

\multirow{3}{*}{\(2\)}
& \(0.1\)
& \(2.00\) & \(0.00\%\)
& \(1.73\) & \(2.00\%\)
& \(1.54\) & \(5.88\%\)
& \(1.18\) & \(3.56\%\) \\

&
\(0.01\)
& \(1.68\) & \(3.01\%\)
& \(1.44\) & \(13.42\%\)
& \(1.42\) & \(19.11\%\)
& \(1.07\) & \(2.29\%\) \\

&
\(0.001\)
& \(1.52\) & \(9.46\%\)
& \(1.22\) & \(11.60\%\)
& \(1.28\) & \(14.22\%\)
& \(1.06\) & \(2.71\%\) \\

\bottomrule
\end{tabular}
\end{table}
\FloatBarrier
In these examples, the greatest improvements were observed for the two moderately imbalanced vectors, particularly \(\Delta=(3.5,1,1,1,1)\). The balanced vector yielded the smallest gains, while the highly imbalanced vector achieved significant improvements only at a few privacy levels.

\subsection{Improvement exceedance curves}
\label{subsec:numerics-improvement-exceedance}

Using the two-stage grid-search results, Figure~\ref{fig:numerics-improvement-exceedance} reports, for each threshold on the horizontal axis, the percentage of the \(5000\) privacy-parameter pairs whose improvement is at least that threshold.

\begin{figure}[!htbp]
\centering
\includegraphics[width=0.98\linewidth]{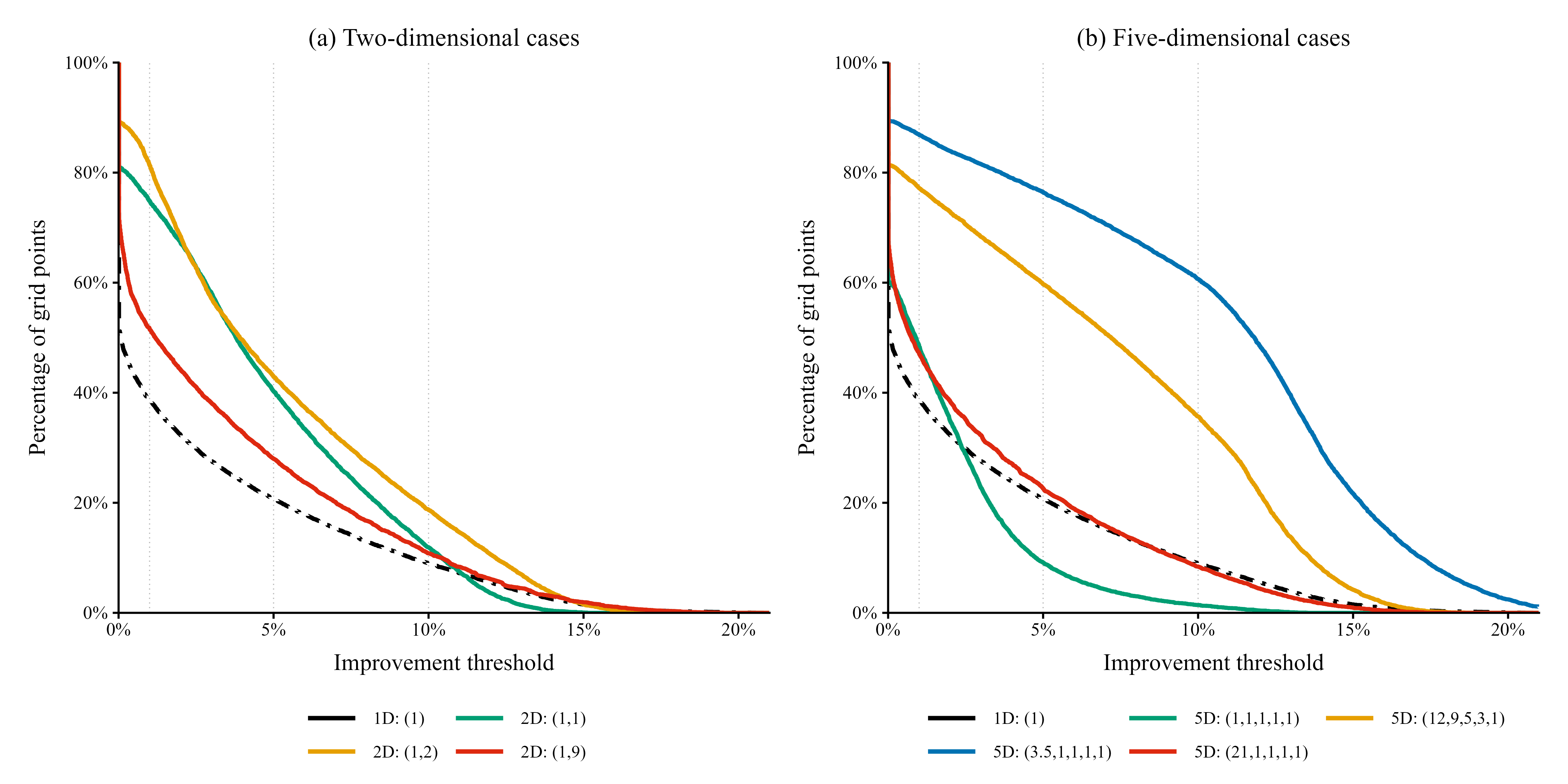}
\caption{Percentage of the \(5000\) privacy-parameter pairs for which the improvement is at least the value shown on the horizontal axis. Panel (a) compares the one-dimensional reference with the three two-dimensional vectors. Panel (b) compares the same reference with the four five-dimensional vectors.}
\label{fig:numerics-improvement-exceedance}
\end{figure}

\FloatBarrier

Figure ~\ref{fig:numerics-improvement-exceedance} summarizes the results of these heatmaps. In the two-dimensional case, \(\Delta=(1,2)\) yields the highest curve for most thresholds, while \(\Delta=(1,9)\) is close to the one-dimensional curve. In the five-dimensional case, the differences are more pronounced: \(\Delta=(3.5,1,1,1,1)\) yields the highest curve, followed by \(\Delta=(12,9,5,3,1)\), while the curve for the balanced case declines much more rapidly. The curves for \(\Delta=(1,9)\) and \(\Delta=(21,1,1,1,1)\) are close to the one-dimensional curve, which is consistent with the fact that most of their sensitivity is concentrated on a single coordinate. Overall, the performance improvement does not increase monotonically with increasing imbalance.
\subsection{Representative five-dimensional certified interval-wise shape search}
\label{subsec:five-dimensional-certified-search}

We also apply the certified interval-wise search to the five-dimensional
sensitivity vector
\[
\Delta=(21,1,1,1,1),
\]
with
\[
d=5,
\qquad
m=2,
\qquad
(\varepsilon,\delta+\alpha)=(1,0.01),
\qquad
\alpha=10^{-4}.
\]
Thus, the effective target is \(\delta=0.0099\), the search interval is
\([1,2]\), and the target gap is \(\eta=0.05\).

In this five-dimensional example, the upper bound for \(S(I,b)\) is looser
than in the lower-dimensional examples, and the certified gap decreases more
slowly. The search reaches \(U^*-L^*\leq\eta\) after 5195 search steps, with
\[
\widehat p=1.129824,
\qquad
U^*-L^*=0.049998.
\]

Figure~\ref{fig:five-dimensional-search-diagnostics} shows the evolution of
the certified gap and the best shape parameter during the search.

\begin{figure}[H]
\centering

\begin{minipage}[t]{0.48\textwidth}
    \centering
    \includegraphics[width=\linewidth]
    {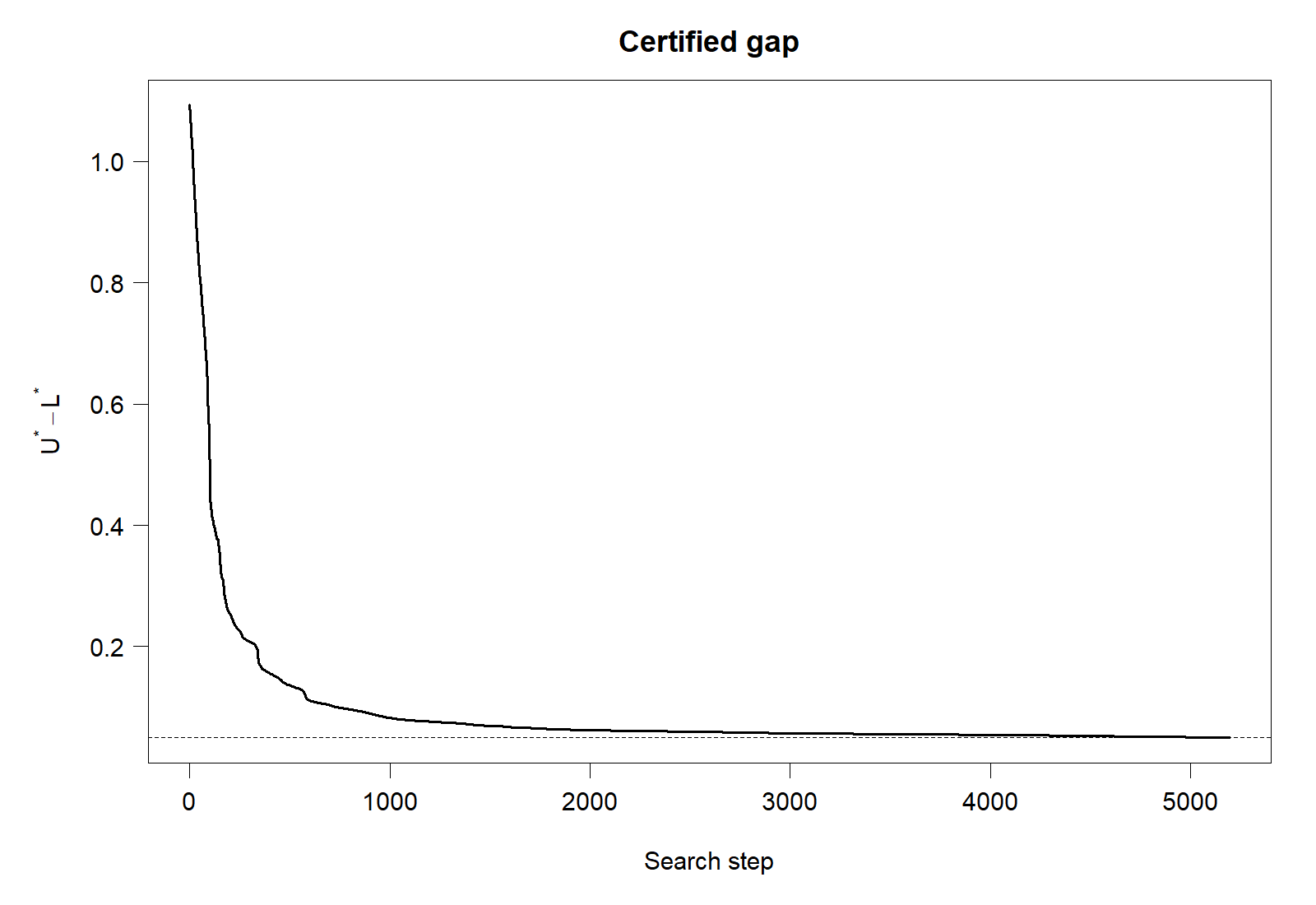}

    \vspace{2pt}
    \small
    (a) Certified gap \(U^*-L^*\).
\end{minipage}
\hfill
\begin{minipage}[t]{0.48\textwidth}
    \centering
    \includegraphics[width=\linewidth]
    {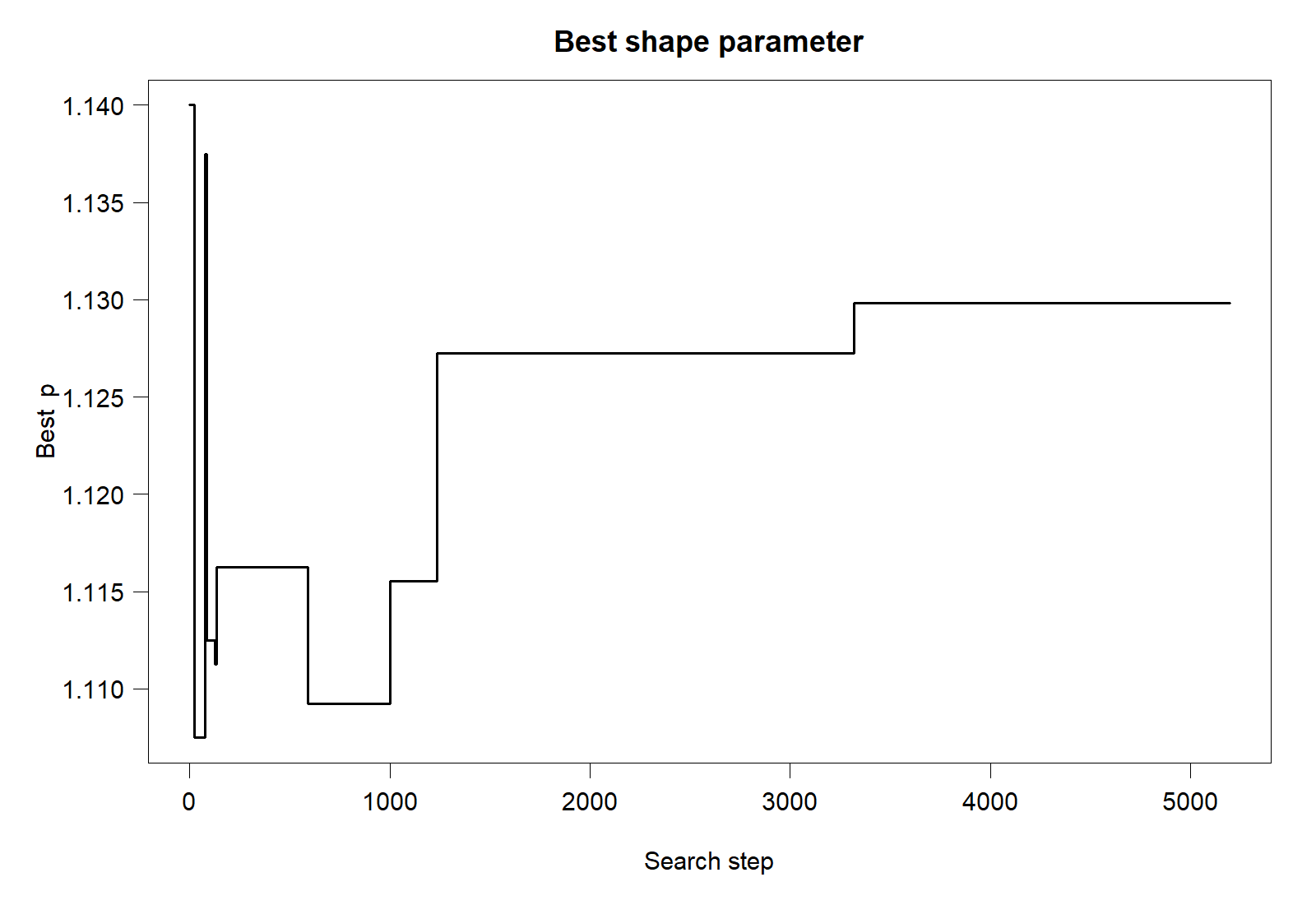}

    \vspace{2pt}
    \small
    (b) Best shape parameter.
\end{minipage}

\caption{Diagnostics for the five-dimensional certified interval-wise search
with \(\Delta=(21,1,1,1,1)\).
(a) Certified gap \(U^*-L^*\), with the dashed line marking the target
\(\eta=0.05\).
(b) Best shape parameter found during the search.}
\label{fig:five-dimensional-search-diagnostics}

\end{figure}

\FloatBarrier
\subsection{Shape selection under task-specific criteria}
\label{sec:task-specific-example}

The criteria considered in this subsection are not scale-homogeneous.
For \(s>0\),
write
\[
H_s(p):=\mathbb P\{|Z_{p,b(p)}|\le s\}.
\]
Let \(I=[p_L,p_R]\subset(1,P_{\max}]\), with midpoint
\(p_I=(p_L+p_R)/2\) and half-width \(h_I=(p_R-p_L)/2\), and suppose
\(\underline b_I\le b(p)\le\overline b_I\) for \(p\in I\).
The endpoints \(p=1\) and \(p=\infty\) are evaluated separately.
For finite \(p\),
\[
H_s(p)
=
1-
Q\left(
\frac1p,
\left(\frac{s}{b(p)}\right)^p
\right),
\]
where \(Q\) is the regularized upper incomplete gamma function.
Appendix~\ref{app:task-specific-bound} shows that there exists a quantity
\(V(I)\), depending only on \(I\), such that
\[
\sup_{p\in I}
\left|
\frac{d}{dp}
Q\left(\frac1p,c^p\right)
\right|
\le V(I),
\qquad c>0.
\]
Since \(\mathbb P\{|Z_{p,b}|\le s\}\) is decreasing in \(b\),
\[
\begin{aligned}
\underline H_s(I)
&:=
\max\left\{
0,\,
\mathbb P\{|Z_{p_I,\overline b_I}|\le s\}-h_I V(I)
\right\}
\\
&\le H_s(p)
\\
&\le
\min\left\{
1,\,
\mathbb P\{|Z_{p_I,\underline b_I}|\le s\}+h_I V(I)
\right\}
=:\overline H_s(I),
\qquad p\in I.
\end{aligned}
\]

We consider a counting query. Let
\(D=(X_1,\ldots,X_n)\in\{0,1\}^n\) and
\(K=q(D)=\sum_{i=1}^n X_i\), so the sensitivity is \(\Delta=1\).
For fixed \((\varepsilon,\delta)\), release
\[
Y=K+Z_{p,b(p)}.
\]
For a tolerance \(t>0\),
\[
H_t(p)
=
\mathbb P\{|Y-K|\le t\}
=
\mathbb P\{|Z_{p,b(p)}|\le t\},
\]
which is the probability that the released count differs from the true count
by at most \(t\).

The first criterion maximizes tolerance accuracy. Since the classical
benchmark is fixed for each \(t\), maximizing the utility gain relative to
that benchmark is equivalent to maximizing \(H_t(p)\). Thus,
\[
p_U^\star(t)
\in
\arg\max_{p\in[1,\infty]} H_t(p),
\qquad
p_{B,U}(t)
\in
\arg\max_{p\in\{1,2,\infty\}} H_t(p).
\]

We also consider disclosure between two neighbouring datasets.
Let \(D\sim D'\) with \(q(D')=q(D)+1\), and assume that \(D\) and \(D'\)
are equally likely under the prior. Given the release \(Y\), an attacker
attempts to identify which dataset generated it. Under \(D\) and \(D'\),
the release distributions are
\(q(D)+Z_{p,b(p)}\) and \(q(D)+1+Z_{p,b(p)}\), respectively.

By symmetry, the Bayes decision threshold is \(q(D)+1/2\). Let
\(F_{p,b}\) denote the CDF of \(GGD(0,b,p)\). Then the
attack-success probability is
\[
S(p)
=
F_{p,b(p)}\left(\frac12\right)
=
\frac{1+H_{1/2}(p)}{2},
\qquad
\frac12\le S(p)\le1.
\]
Thus, \(S(p)=1/2\) corresponds to random guessing, while larger values
indicate greater distinguishability between the neighbouring datasets.
For any shape \(p\) and benchmark \(p_B\), define the utility gain and
attacker reduction as
\[
U(p;p_B)
:=
H_t(p)-H_t(p_B),
\qquad
A(p;p_B)
:=
S(p_B)-S(p).
\]
The total gain is then
\[
U(p;p_B)+A(p;p_B)
=
[H_t(p)-H_t(p_B)]
+
[S(p_B)-S(p)].
\]
Since the benchmark terms are constant in \(p\), maximizing total gain is
equivalent to maximizing \(H_t(p)-S(p)\). The second criterion therefore
selects
\[
\begin{aligned}
p_T^\star(t)
&\in
\arg\max_{p\in[1,\infty]}
\{H_t(p)-S(p)\}
\\
&=
\arg\max_{p\in[1,\infty]}
\left\{
H_t(p)-\frac12H_{1/2}(p)
\right\},
\end{aligned}
\]
with classical benchmark
\[
p_{B,T}(t)
\in
\arg\max_{p\in\{1,2,\infty\}}
\left\{
H_t(p)-\frac12H_{1/2}(p)
\right\}.
\]
For \(p\in I\),
\[
\underline H_t(I)
-\frac12\overline H_{1/2}(I)
\le
H_t(p)-\frac12H_{1/2}(p)
\le
\overline H_t(I)
-\frac12\underline H_{1/2}(I).
\]
Hence, both \(p_U^\star(t)\) and \(p_T^\star(t)\) can be obtained by the
certified interval-wise search rather than by a grid search.

For each criterion, with its corresponding benchmark \(p_B(t)\), we report
\[
U(t)=H_t(p^\star(t))-H_t(p_B(t)),
\qquad
A(t)=S(p_B(t))-S(p^\star(t)),
\]
together with the total gain \(U(t)+A(t)\).

We use
\[
\varepsilon\in\{0.1,0.5,1,2\},
\qquad
\delta\in\{0.001,0.01,0.05,0.10\},
\]
and
\[
t\in\{0.50,0.51,\ldots,10.00\},
\]
giving \(15{,}216\) fixed-\(t\) problems for each criterion.

Figures~\ref{fig:task-shape-utility} and \ref{fig:task-shape-total} compare the best classical benchmark with the selected generalized Gaussian shape. The selected shape changes with the optimization criterion. Under the utility criterion, shapes close to Laplace are common. Under the total-gain criterion, larger finite shapes are selected more often, and the uniform case \(p=\infty\) is selected for some tolerance ranges. The best classical benchmark also changes with \(t\) and the privacy parameters.

\par\medskip
\noindent
\begin{minipage}{\linewidth}
\centering
\includegraphics[width=0.99\linewidth]
{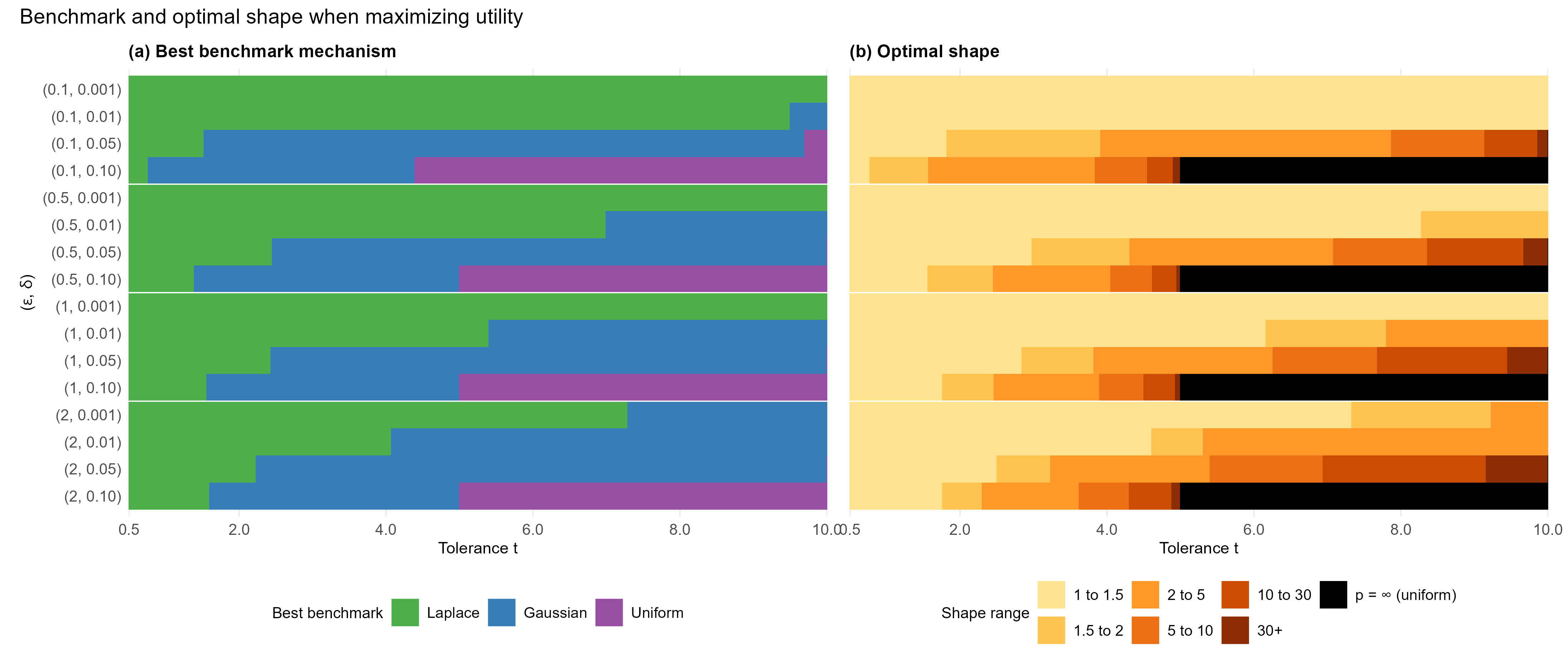}
\captionof{figure}{Best classical benchmark and selected generalized Gaussian
shape under the utility criterion.}
\label{fig:task-shape-utility}
\end{minipage}
\par\medskip

\noindent
\begin{minipage}{\linewidth}
\centering
\includegraphics[width=0.99\linewidth]
{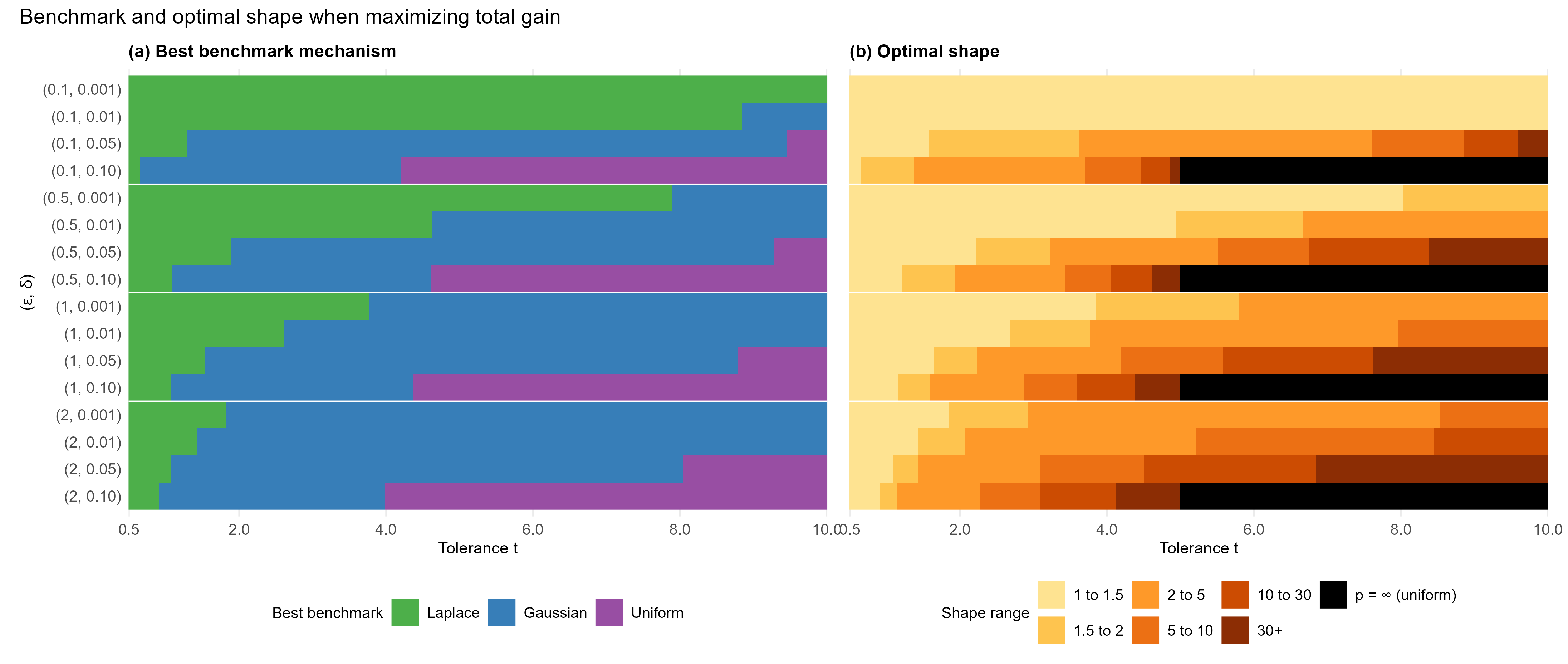}
\captionof{figure}{Best classical benchmark and selected generalized Gaussian
shape under the total-gain criterion.}
\label{fig:task-shape-total}
\end{minipage}
\par\medskip

Figures~\ref{fig:task-gain-utility} and
\ref{fig:task-gain-total} show the corresponding utility gain, attacker
reduction, and total gain. All three quantities are measured relative to the best classical benchmark among the Laplace, Gaussian, and uniform mechanisms under the same criterion.
The utility criterion produces substantial tolerance-accuracy improvements
for several privacy settings while often reducing attacker success at the
same time. The largest utility increase is \(7.01\%\), attained at
$
(\varepsilon,\delta,t)=(0.1,0.10,4.38),
$
where \(H_t\) increases from \(87.61\%\) to \(94.62\%\) and attacker success
decreases from \(56.97\%\) to \(55.76\%\).

\par\medskip
\noindent
\begin{minipage}{\linewidth}
\centering
\includegraphics[width=0.99\linewidth]
{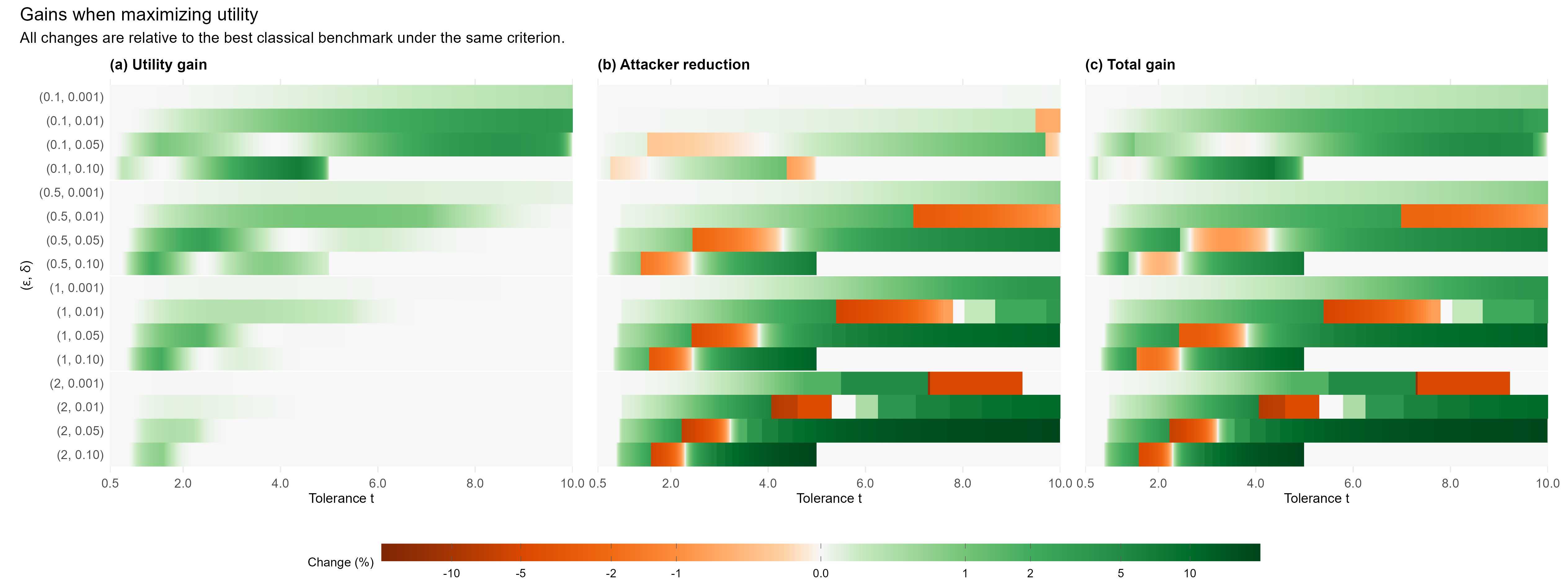}
\captionof{figure}{Utility gain, attacker reduction, and total gain under the
utility criterion. All changes are measured relative to the best classical
benchmark among \(p=1\) (Laplace), \(p=2\) (Gaussian), and \(p=\infty\)
(uniform), and are expressed as percentages.}
\label{fig:task-gain-utility}
\end{minipage}
\par\medskip

Under the total-gain criterion, larger improvements mainly come from reducing
attacker success. For example, at
\((\varepsilon,\delta)=(1,0.05)\) and \((2,0.05)\), the largest total gains
are \(11.69\%\) and \(18.85\%\), while the utility changes are only
\(-0.01\%\) and \(-0.02\%\). The selected shapes are
\(p^\star=69.71\) and \(58.18\), showing that large finite shapes can
substantially improve total gain beyond the classical Laplace, Gaussian, and
uniform choices.

\par\medskip
\noindent
\begin{minipage}{\linewidth}
\centering
\includegraphics[width=0.99\linewidth]
{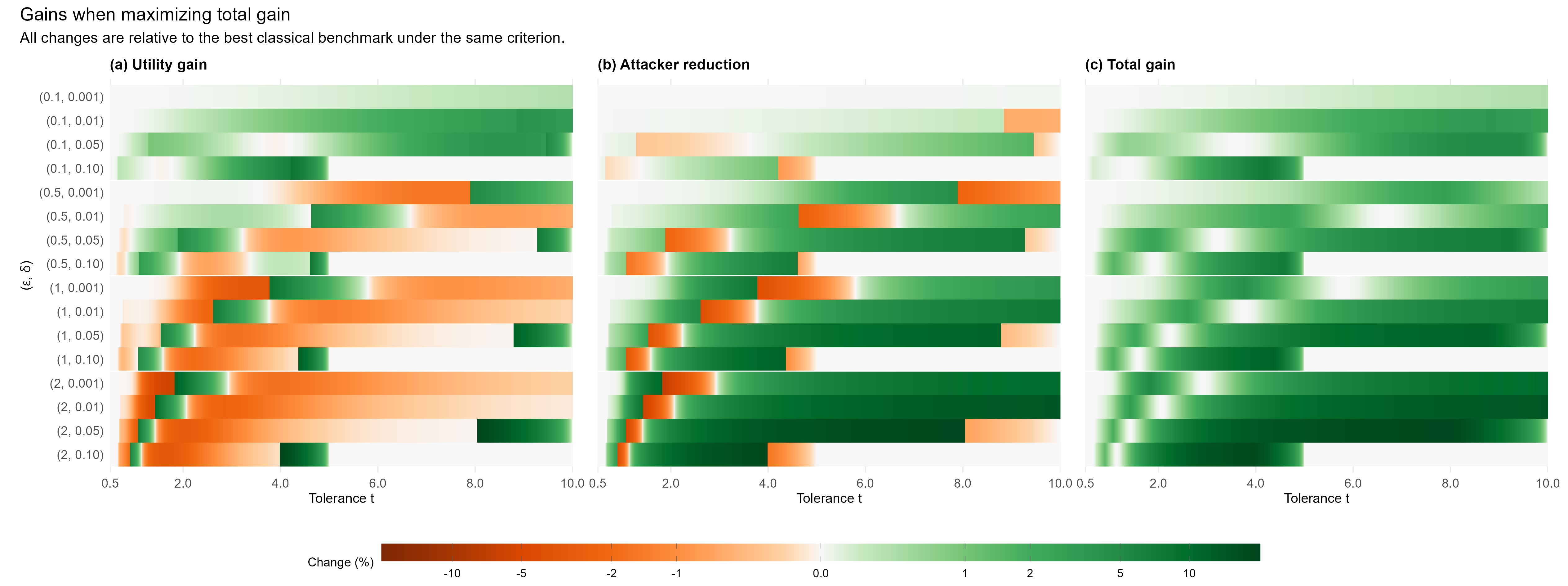}
\captionof{figure}{Utility gain, attacker reduction, and total gain under the total-gain criterion.}
\label{fig:task-gain-total}
\end{minipage}
\par\medskip
We classify each case as win--win, trade-off, or no improvement. A case is
win--win if both the utility gain and attacker reduction are positive, and it
is a trade-off if they have opposite signs. Across all 15216
privacy--tolerance combinations, the utility criterion gives \(64.43\%\)
win--win points, \(13.22\%\) trade-offs, and \(22.34\%\) no-improvement
points. Under the total-gain criterion, the corresponding proportions are
\(22.26\%\), \(62.49\%\), and \(15.25\%\). The larger proportion of
trade-offs under the total-gain criterion mainly reflects cases where a small
loss in utility is offset by a much larger reduction in attacker success.

\par\medskip
\noindent
\begin{minipage}{\linewidth}
\centering
\includegraphics[width=0.98\linewidth]
{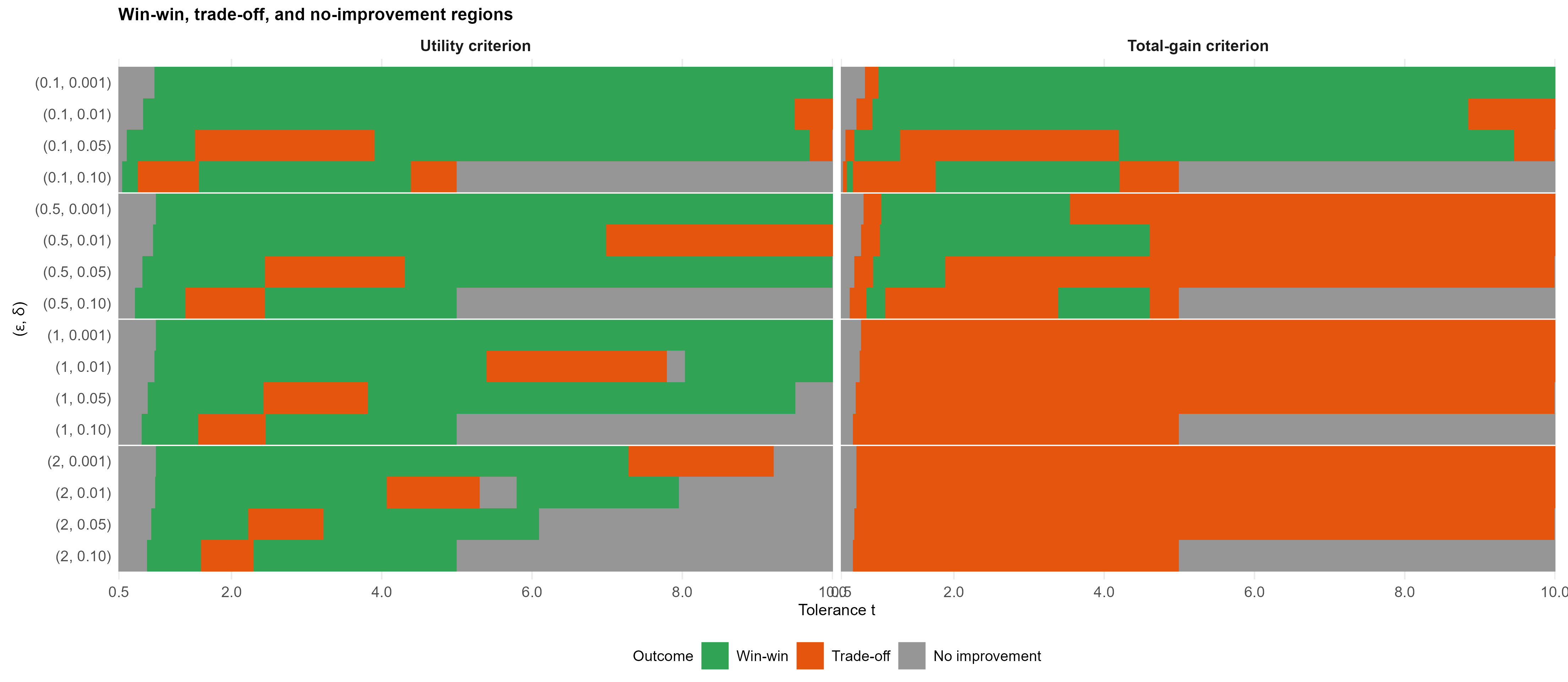}
\captionof{figure}{Win--win, trade-off, and no-improvement regions under the
two criteria. Win--win denotes \(U>0\) and \(A>0\), while trade-off denotes
opposite signs of \(U\) and \(A\).}
\label{fig:task-outcome}
\end{minipage}
\par\medskip
Table~\ref{tab:task-representative} reports representative cases for both
criteria across the four values of \(\varepsilon\), together with the selected
shape and the resulting gains.
\par\medskip
\noindent
\begin{minipage}{\linewidth}
\centering
\small
\setlength{\tabcolsep}{3.4pt}
\renewcommand{\arraystretch}{1.08}
\captionof{table}{Representative cases for the two criteria across the four
values of \(\varepsilon\). For each \(\varepsilon\), the utility rows maximize
\(U\), while the total-gain rows maximize \(U+A\), over the evaluated
\((\delta,t)\) values. The \(H_t\) and \(S\) columns report the percentages for the selected shape and the corresponding classical benchmark.}
\label{tab:task-representative}

\small
\setlength{\tabcolsep}{2.5pt}
\renewcommand{\arraystretch}{1.10}

\begin{tabular}{lcccccccccc}
\toprule
Criterion
& \(\varepsilon\)
& \(\delta\)
& \(t\)
& \(p^\star\)
& \(p_B\)
& \(H_t(p^\star)/H_t(p_B)\) (\%)
& \(S(p^\star)/S(p_B)\) (\%)
& \(U\)
& \(A\)
& \(U+A\) \\
\midrule
Utility
& 0.1 & 0.10 & 4.38 & 7.99 & 2
& 94.62/87.61 & 55.76/56.97
& 7.01 & 1.21 & 8.22 \\
Utility
& 0.5 & 0.05 & 2.44 & 1.37 & 1
& 79.83/77.01 & 62.02/63.01
& 2.81 & 0.99 & 3.80 \\
Utility
& 1 & 0.10 & 1.55 & 1.40 & 1
& 86.64/84.69 & 71.08/72.71
& 1.95 & 1.63 & 3.58 \\
Utility
& 2 & 0.10 & 1.59 & 1.39 & 1
& 97.73/97.03 & 80.73/83.45
& 0.70 & 2.71 & 3.41 \\
\midrule
Total gain
& 0.1 & 0.10 & 4.20 & 7.37 & 2
& 92.34/85.99 & 55.82/56.97
& 6.35 & 1.15 & 7.51 \\
Total gain
& 0.5 & 0.10 & 4.60 & 28.37 & 2
& 99.87/99.69 & 55.70/62.60
& 0.18 & 6.90 & 7.08 \\
Total gain
& 1 & 0.05 & 8.78 & 69.71 & 2
& 99.99/100.00 & 52.92/64.62
& -0.01 & 11.70 & 11.69 \\
Total gain
& 2 & 0.05 & 8.04 & 58.18 & 2
& 99.98/100.00 & 53.20/72.07
& -0.02 & 18.87 & 18.85 \\
\bottomrule
\end{tabular}

\end{minipage}
\par\medskip

Figure~\ref{fig:task-representative} shows representative cases under the two
criteria. Under the utility criterion, both gains increase as \(t\) approaches
\(4.38\); for \(t>4.38\), the attacker reduction becomes negative and the
total gain decreases. Under the total-gain criterion, the gain for
\(t\leq 8.04\) is driven mainly by attacker reduction, whereas for
\(t>8.04\) it is driven mainly by utility gain.

\par\medskip
\noindent
\begin{minipage}{\linewidth}
\centering
\includegraphics[width=0.98\linewidth]
{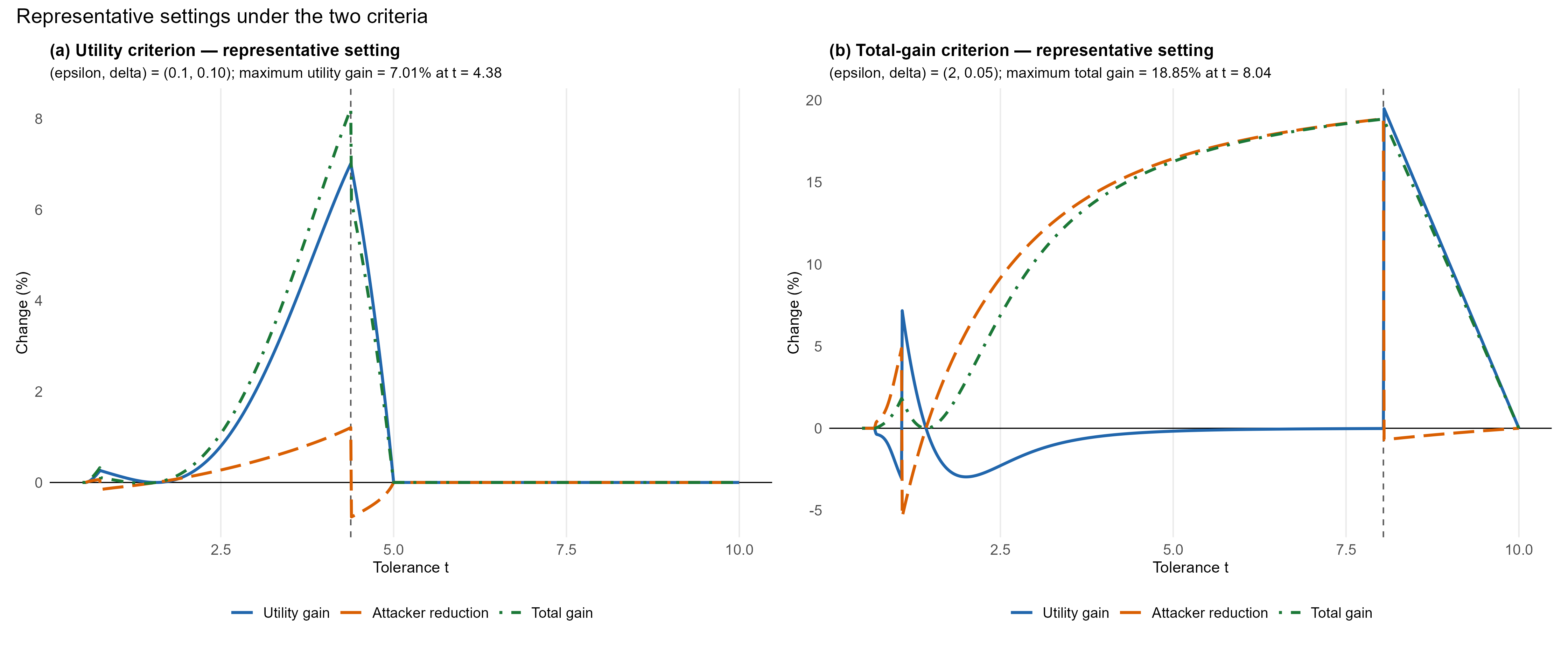}
\captionof{figure}{Representative utility, attacker-reduction, and total-gain
curves under the two criteria. The dashed vertical lines mark the tolerance
values maximizing utility gain in panel (a) and total gain in panel (b).}
\label{fig:task-representative}
\end{minipage}
\par\medskip
Overall, continuous shape optimization can outperform the fixed choices
$p\in\{1,2,\infty\}$ under the same privacy guarantee. The preferred shape
depends on the objective and the privacy parameters, and shape optimization
can be used for task-level criteria involving utility, attacker success, or both.
\section{Conclusion}
\label{sec:Conclusion}
The properties of \(p \mapsto b(p)\) provide the theoretical basis for a certified interval-wise shape search algorithm. We also prove several properties of the optimal shape, including its invariance under rescaling of the sensitivity vector and its limit behaviour as one or both privacy parameters approach zero.

The main conclusion is that, under appropriate utility criteria, the same differential privacy guarantees can be achieved with lower noise by selecting the shape together with its corresponding scale. For the second absolute moment, numerical results show that for some of the \((\varepsilon,\delta)\) pairs, the variance is reduced by approximately 5\% to 20\%, and in some cases, the reduction exceeds 20\%. Experiments on task-specific objectives further show that the value of shape optimization is not limited to variance reduction. The same approach can be used for broader utility and privacy objectives, including objectives that directly reflect the statistical task or the risk of information leakage. Together, these results show that the choice of noise shape is itself part of the privacy-utility design problem, rather than a fixed choice made in advance.

Future research could focus on reducing the computational cost of the certified interval-wise shape search in high-dimensional cases, where the certified gap may decrease more slowly. Possible extensions include broader classes of additive noise, such as truncated noise distributions. Another direction is to consider more general definitions of differential privacy, such as Rényi differential privacy and \(f\)-differential privacy. Applying generalized Gaussian mechanisms to differentially private machine learning is another potential direction for future research.

\clearpage
\bibliographystyle{plainnat}
\bibliography{references}

\makeatletter
\ifx\@empty\addresses\else
  \@setaddresses
  \global\let\addresses\@empty
\fi
\makeatother

\clearpage
\appendix

\section{Additional Details for the Problem Setup}
\label{app:setup-details}

This appendix records the supplementary information omitted from section~\ref{sec:setup}.
\subsection{Density representation of the hockey-stick divergence}
\label{app:hockey-density}

If \(P\) and \(Q\) have densities \(p(x)\) and \(q(x)\), then
\begin{equation}
\label{eq:hockey-density-paper}
D_\varepsilon(P\|Q)
=
\int_{\mathbb R^d}
\bigl(p(x)-e^\varepsilon q(x)\bigr)_+
\,dx.
\end{equation}

\subsection{Privacy-loss representation}
\label{app:privacy-loss-representation}

For \(1\le p<\infty\), define the privacy-loss function
\begin{equation}
\label{eq:Lambda-paper}
\Lambda_{p,b,h}(z)
:=
\frac{1}{b^p}
\sum_{i=1}^d
\left(
|z_i+h_i|^p-|z_i|^p
\right)
=
\log
\frac{f^{(d)}_{p,b}(z)}
     {f^{(d)}_{p,b}(z+h)}.
\end{equation}
Using \eqref{eq:hockey-density-paper},
\begin{equation}
\label{eq:delta-pbh-integral-paper}
\delta_{p,b}(h)
=
\int_{\mathbb R^d}
\left(
1-\exp\{\varepsilon-\Lambda_{p,b,h}(z)\}
\right)_+
f^{(d)}_{p,b}(z)
\,dz.
\end{equation}

\subsection{Coordinate-wise and scalar sensitivity sets}
\label{app:sensitivity-set-comparison}

For \(p\in[1,\infty]\), the \(\ell_p\)-sensitivity set associated with \(\Delta\) is
\begin{equation}
\label{eq:lp-sensitivity-set-paper}
\mathcal S_p(\Delta)
:=
\{h\in\mathbb R^d:\|h\|_p\le\|\Delta\|_p\}.
\end{equation}
Since
\[
\mathcal S_\Delta
\subseteq
\mathcal S_p(\Delta),
\]
we have
\[
\sup_{h\in\mathcal S_\Delta}
\delta_{p,b}(h)
\le
\sup_{h\in\mathcal S_p(\Delta)}
\delta_{p,b}(h).
\]
If
\[
b_{\mathcal S_p(\Delta)}(p)
:=
\inf\left\{
b>0:
\sup_{h\in\mathcal S_p(\Delta)}
\delta_{p,b}(h)
\le
\delta
\right\},
\]
then
\begin{equation}
\label{eq:coordinate-vs-scalar-scale-paper}
b_{\mathcal S_\Delta}(p)
\le
b_{\mathcal S_p(\Delta)}(p).
\end{equation}
Thus the coordinate-wise set gives a privacy-feasible scale no larger than the scale obtained from the \(\ell_p\)-sensitivity set. In this sense, it gives a tighter conservative bound whenever coordinate-wise sensitivity information is available.

\subsection{Expected-cost formulation}
\label{app:expected-cost-formulation}

For a nonnegative cost function \(\ell:\mathbb R\to[0,\infty)\), define
\begin{equation}
\label{eq:general-expected-cost-paper}
\mathcal C_\ell(p,b)
:=
\mathbb E\ell(Z_1),
\qquad
Z_1\sim GGD(0,b,p).
\end{equation}
The corresponding shape-selection problem is
\begin{equation}
\label{eq:general-cost-shape-selection-paper}
p^\star
\in
\arg\min_{p\in[1,\infty]}
\mathcal C_\ell\bigl(p,b(p)\bigr).
\end{equation}
The scale-homogeneous criteria in Section~\ref{subsec:utility-shape-selection} form the class used in the main theoretical results.

\subsection{Other scale-homogeneous utility criteria}
\label{app:other-utility-criteria}

Let \(Y_p\sim GGD(0,1,p)\).

\noindent\textbf{Absolute-error quantiles.}
For \(\tau\in(0,1)\), let \(q_\tau(p)\) denote the \(\tau\)-quantile of \(|Y_p|\), defined by
\[
\mathbb P\bigl(|Y_p|\le q_\tau(p)\bigr)
=
\tau.
\]
Since \(Z_1=bY_p\), the \(\tau\)-quantile of \(|Z_1|\) is
$
bq_\tau(p).
$
Thus, the \(\tau\)-quantile absolute error is scale-homogeneous with \(r=1\). For \(1\le p<\infty\),
\begin{equation}
\label{eq:q-tau-formula-paper}
q_\tau(p)
=
\left[
P^{-1}\left(1/p,\tau\right)
\right]^{1/p},
\end{equation}
where \(P(a,x)\) is the regularized lower incomplete gamma function and \(P^{-1}(a,\cdot)\) denotes its inverse with respect to the second argument. At \(p=\infty\),
\[
q_\tau(\infty)
=
\tau.
\]
\noindent\textbf{Expected tail error.}
Define
\[
\operatorname{TE}_\tau(p,b)
:=
\mathbb E\bigl[|Z_1|\mid |Z_1|>bq_\tau(p)\bigr].
\]
Then
\[
\operatorname{TE}_\tau(p,b)
=
bs_\tau(p),
\]
where, for \(1\le p<\infty\),
\begin{equation}
\label{eq:s-tau-formula-paper}
s_\tau(p)
=
\frac{
\Gamma\left(2/p,q_\tau(p)^p\right)
}{
(1-\tau)\Gamma(1/p)
}.
\end{equation}
At \(p=\infty\),
\[
s_\tau(\infty)
=
\frac{1+\tau}{2}.
\]
Hence the expected tail error also has the form
\[
\mathcal L(p,b)
=
b\nu(p).
\]

\section{Proofs for Section~\ref{sec:calibration}}
\label{ch:scale-bounds}

\subsection{Proof of Lemma~\ref{lem:phi-monotone}}
\label{lem:unique-zstar}

\begin{proof}
Let
\[
\phi_p(z)=|z+\Delta|^p-|z|^p .
\]
The function is continuous on \(\mathbb R\). If \(z\ge0\), then
\[
\phi_p'(z)=p\bigl[(z+\Delta)^{p-1}-z^{p-1}\bigr]>0.
\]
If \(-\Delta<z<0\), then
\[
\phi_p'(z)=p\bigl[(z+\Delta)^{p-1}+(-z)^{p-1}\bigr]>0.
\]
If \(z\le-\Delta\), write \(s=-z\ge\Delta\). Then
\[
\phi_p(z)=(s-\Delta)^p-s^p,
\]
and
\[
\phi_p'(z)=-p\bigl[(s-\Delta)^{p-1}-s^{p-1}\bigr]>0.
\]
Thus \(\phi_p\) is increasing on \(\mathbb R\). Moreover,
\[
\phi_p(-\Delta/2)=0,
\qquad
\lim_{z\to-\infty}\phi_p(z)=-\infty,
\qquad
\lim_{z\to+\infty}\phi_p(z)=+\infty .
\]
Therefore, for every \(b>0\), the equation
\[
\phi_p(z)=\varepsilon b^p
\]
has a unique solution \(z^\star(b)\in\mathbb R\).
\end{proof}

\subsection{Proof of Theorem~\ref{thm:1d-tight-calibration}}
\label{thm:1d-ggm-tight}

\begin{proof}
For \(p>1\), Lemma~\ref{lem:phi-monotone} implies that, for every \(b>0\), the equation
\[
|z+\Delta|^p-|z|^p=\varepsilon b^p
\]
has a unique solution \(z^\star(b)\). Since
\[
\log\frac{f_{p,b}(z)}{f_{p,b}(z+\Delta)}
=
\frac{|z+\Delta|^p-|z|^p}{b^p},
\]
the positive part in the hockey-stick divergence is supported on \([z^\star(b),\infty)\). Hence
\[
\delta_{p,\Delta}(b)
=
\int_{z^\star(b)}^\infty
\bigl[
f_{p,b}(z)-e^\varepsilon f_{p,b}(z+\Delta)
\bigr]\,dz .
\]
Since \(z^\star(b)>-\Delta/2\), one has \(z^\star(b)+\Delta>0\). Therefore
\[
\int_{z^\star(b)}^\infty f_{p,b}(z+\Delta)\,dz
=
\frac{1}{2\Gamma(1/p)}
\Gamma\left(
\frac1p,
\left(\frac{|z^\star(b)+\Delta|}{b}\right)^p
\right).
\]
Also,
\[
\int_{z^\star(b)}^\infty f_{p,b}(z)\,dz
=
\mathbf 1_{\{z^\star(b)<0\}}
+
\frac{1-2\mathbf 1_{\{z^\star(b)<0\}}}{2\Gamma(1/p)}
\Gamma\left(
\frac1p,
\left(\frac{|z^\star(b)|}{b}\right)^p
\right).
\]
Combining the two tail expressions gives the second equation in \eqref{eq:1d-tight-system-paper} once \(b=b^\star\), \(z^\star=z^\star(b^\star)\), and \(s^\star=\mathbf 1_{\{z^\star<0\}}\).

It remains to prove uniqueness. By the implicit function theorem, \(b\mapsto z^\star(b)\) is continuously differentiable. Since the integrand vanishes at \(z=z^\star(b)\), differentiation under the integral sign gives
\[
\frac{d}{db}\delta_{p,\Delta}(b)
=
\int_{z^\star(b)}^\infty
\partial_b
\bigl[
f_{p,b}(z)-e^\varepsilon f_{p,b}(z+\Delta)
\bigr]\,dz .
\]
For the one-dimensional generalized Gaussian density,
\[
\partial_b f_{p,b}(t)
=
-\frac1b\frac{d}{dt}\bigl[t f_{p,b}(t)\bigr].
\]
Hence
\[
\frac{d}{db}\delta_{p,\Delta}(b)
=
\frac1b
\left[
e^\varepsilon (z+\Delta)f_{p,b}(z+\Delta)-z f_{p,b}(z)
\right]_{z=z^\star(b)}^\infty .
\]
The upper boundary is zero. At \(z=z^\star(b)\), the boundary equation gives
\[
f_{p,b}(z^\star(b))
=
e^\varepsilon f_{p,b}(z^\star(b)+\Delta),
\]
and therefore
\[
\frac{d}{db}\delta_{p,\Delta}(b)
=
-\frac{\Delta}{b}f_{p,b}(z^\star(b))<0 .
\]
Thus \(b\mapsto\delta_{p,\Delta}(b)\) is strictly decreasing and continuous on \((0,\infty)\). Moreover,
\[
\lim_{b\downarrow0}\delta_{p,\Delta}(b)=1,
\qquad
\lim_{b\to\infty}\delta_{p,\Delta}(b)=0 .
\]
For every \(0<\delta<1\), there is a unique \(b^\star>0\) satisfying
\[
\delta_{p,\Delta}(b^\star)=\delta .
\]
Let \(z^\star=z^\star(b^\star)\). Then \((z^\star,b^\star)\) is the unique solution of \eqref{eq:1d-tight-system-paper}. Since \(\delta_{p,\Delta}(b)\) is strictly decreasing, the mechanism is \((\varepsilon,\delta)\)-differentially private if and only if \(b\ge b^\star\).
\end{proof}

\subsection{Proof of Proposition~\ref{prop:laplace-calibration}}
\label{app:proof-laplace-scale}

\begin{proof}
For \(p=1\),
\[
f_{1,b}(z)=\frac{1}{2b}e^{-|z|/b},
\qquad
\log\frac{f_{1,b}(z)}{f_{1,b}(z+\Delta)}
=
\frac{|z+\Delta|-|z|}{b}.
\]
If \(b\ge\Delta/\varepsilon\), then the privacy loss is at most \(\varepsilon\) for all \(z\), so \(\delta_{1,\Delta}(b)=0\). If \(b<\Delta/\varepsilon\), the boundary equation
\[
|z+\Delta|-|z|=\varepsilon b
\]
has the solution
\[
z^\star=\frac{\varepsilon b-\Delta}{2}\in(-\Delta,0).
\]
Thus
\[
\delta_{1,\Delta}(b)
=
\mathbb P(Z\ge z^\star)
-
e^\varepsilon \mathbb P(Z\ge z^\star+\Delta).
\]
Since \(z^\star<0<z^\star+\Delta\),
\[
\mathbb P(Z\ge z^\star)=1-\frac12 e^{z^\star/b},
\qquad
\mathbb P(Z\ge z^\star+\Delta)=\frac12 e^{-(z^\star+\Delta)/b}.
\]
Using
\[
e^\varepsilon e^{-(z^\star+\Delta)/b}=e^{z^\star/b},
\]
we obtain
\[
\delta_{1,\Delta}(b)
=
1-\exp\left(\frac{\varepsilon b-\Delta}{2b}\right).
\]
Combining the two cases,
\[
\delta_{1,\Delta}(b)
=
\left[
1-\exp\left(\frac{\varepsilon b-\Delta}{2b}\right)
\right]_+ .
\]
Solving \(\delta_{1,\Delta}(b)\le\delta\) gives
\[
b
\ge
\frac{\Delta}{\varepsilon-2\log(1-\delta)}.
\]
For \(\delta=0\), this reduces to \(b\ge\Delta/\varepsilon\).
\end{proof}

\subsection{Proof of Proposition~\ref{prop:gaussian-scalar-coordinate-profile}}
\label{app:sensitivity-geometry}
\begin{proof}
For \(p=2\),
\[
f^{(d)}_{2,b}(z)
=
C_{2,b,d}\exp\left(-\frac{\|z\|_2^2}{b^2}\right).
\]
This density is invariant under orthogonal transformations. Choose an orthogonal matrix \(R\) such that
\[
R\Delta=\|\Delta\|_2e_1 .
\]
The change of variables \(z=R^{-1}y\) gives
\[
\delta^{(d)}_{2,b}(\varepsilon;\Delta)
=
\delta^{(d)}_{2,b}(\varepsilon;\|\Delta\|_2e_1).
\]
Let \(g_{2,b}=f^{(1)}_{2,b}\) denote the one-dimensional marginal density. Since the shift \(\|\Delta\|_2e_1\) affects only the first coordinate, the remaining coordinates integrate to one. Hence
\[
\delta^{(d)}_{2,b}(\varepsilon;\Delta)
=
\int_{\mathbb R}
\left(
g_{2,b}(z)-e^\varepsilon g_{2,b}(z+\|\Delta\|_2)
\right)_+\,dz .
\]
Therefore the Gaussian expression depends on \(\Delta\) only through \(\|\Delta\|_2/b\). Solving the privacy constraint in \(b\) gives
\[
b_2(\varepsilon,\delta;\Delta)
=
\|\Delta\|_2 b_2(\varepsilon,\delta;e_1),
\]
where \(e_1=(1,0,\ldots,0)\) represents a unit shift in one coordinate.

We now show that this reduction is special to \(p=2\). Let \(f\) be a positive density on \(\mathbb R^d\) that is even in each coordinate, and define
\[
\ell_\Delta(z)=\frac{f(z)}{f(z+\Delta)} .
\]
Suppose that two shifts \(\Delta\) and \(\Delta'\) give the same hockey-stick divergence for every \(r\ge0\):
\[
\int_{\mathbb R^d}
\bigl(f(z)-e^r f(z+\Delta)\bigr)_+\,dz
=
\int_{\mathbb R^d}
\bigl(f(z)-e^r f(z+\Delta')\bigr)_+\,dz .
\]
Let \(\gamma=e^r\ge1\). Under the density \(f(z+\Delta)\),
\[
\int_{\mathbb R^d}
\bigl(f(z)-e^r f(z+\Delta)\bigr)_+\,dz
=
\mathbb E_\Delta\bigl[(\ell_\Delta-\gamma)_+\bigr],
\]
where \(\mathbb E_\Delta\) denotes expectation with respect to \(f(z+\Delta)\). Hence equality of the hockey-stick divergences for all \(r\ge0\) determines the upper tail of \(\ell_\Delta\) above \(1\).

The lower tail is also determined. For \(0<a\le1\), coordinate-wise evenness of \(f\) and the change of variables \(y=-z-\Delta\) give
\[
\mathbb E_\Delta\bigl[(a-\ell_\Delta)_+\bigr]
=
a\int_{\mathbb R^d}
\bigl(f(y)-a^{-1}f(y+\Delta)\bigr)_+\,dy .
\]
Therefore \(\ell_\Delta\) and \(\ell_{\Delta'}\) have the same distribution under their respective shifted densities. In particular,
\[
\mathbb E_\Delta\bigl[\ell_\Delta\log\ell_\Delta\bigr]
=
\mathbb E_{\Delta'}\bigl[\ell_{\Delta'}\log\ell_{\Delta'}\bigr].
\]
Now fix \(1\le p<\infty\), \(p\ne2\), and take \(f=f^{(d)}_{p,1}\). Since \(\mathbb E_\Delta\) is taken under the density \(f(z+\Delta)\), we have
\[
\mathbb E_\Delta\bigl[\ell_\Delta\log\ell_\Delta\bigr]
=
\int_{\mathbb R^d}
\ell_\Delta(z)\log\ell_\Delta(z) f(z+\Delta)\,dz
=
\int_{\mathbb R^d}
f(z)\log\ell_\Delta(z)\,dz .
\]
Thus equality of the hockey-stick divergences for all \(r\ge0\) implies equality of
\[
\mathbb E\bigl[\log\ell_\Delta(Z)\bigr],
\qquad
Z\sim f^{(d)}_{p,1}.
\]
For this density,
\[
\log\ell_\Delta(Z)
=
\sum_{i=1}^d
\bigl(|Z_i+\Delta_i|^p-|Z_i|^p\bigr).
\]
Therefore equality of the hockey-stick divergences implies equality of
\[
\sum_{i=1}^d h_p(|\Delta_i|),
\]
where
\[
h_p(u)=\mathbb E|X+u|^p-\mathbb E|X|^p,
\qquad u\ge0,
\]
and \(X\) has density proportional to \(\exp(-|x|^p)\).
By symmetry,
\[
h_p(u)=c_pu^2+o(u^2),
\qquad u\downarrow0,
\]
for some \(c_p>0\). Also,
\[
h_p(u)=u^p(1+o(1)),
\qquad u\to\infty .
\]
Fix \(q\in[1,\infty)\). For \(t>0\), let
\[
\Delta_t=te_1,
\qquad
\Delta_t'=t\,2^{-1/q}(e_1+e_2).
\]
Then
\[
\|\Delta_t\|_q=\|\Delta_t'\|_q=t .
\]
If the hockey-stick divergence were determined only by the scalar \(\ell_q\)-norm of the shift, then
\[
h_p(t)=2h_p(t\,2^{-1/q})
\qquad
\text{for all }t>0.
\]
Letting \(t\downarrow0\) gives
\[
1=2^{1-2/q},
\]
so \(q=2\). Letting \(t\to\infty\) gives
\[
1=2^{1-p/q},
\]
so \(q=p\). These two identities are compatible only when \(p=2\), contradicting \(p\ne2\).
For \(q=\infty\), take
\[
\Delta_t=te_1,
\qquad
\Delta_t'=t(e_1+e_2).
\]
Then
\[
\|\Delta_t\|_\infty=\|\Delta_t'\|_\infty=t,
\]
but equality of the hockey-stick divergences would imply
\[
h_p(t)=2h_p(t),
\]
which is impossible for \(t>0\).

It remains to consider \(p=\infty\). In this case the density is uniform on \([-b,b]^d\). A direct overlap calculation gives
\[
\delta^{(d)}_{\infty,b}(\varepsilon;\Delta)
=
1-
\prod_{i=1}^d
\left(1-\frac{|\Delta_i|}{2b}\right)_+ .
\]
This expression depends on how the coordinates of \(\Delta\) are arranged. For finite \(q\), the shifts
\[
te_1
\quad\text{and}\quad
t\,2^{-1/q}(e_1+e_2)
\]
have the same \(\ell_q\)-norm but give different overlap volumes for \(0<t<2b\). Similarly, \(te_1\) and \(t(e_1+e_2)\) have the same \(\ell_\infty\)-norm but give different overlap volumes.

Therefore, among generalized Gaussian shapes, \(p=2\) is the only case in which the dependence on \(\Delta\) can always be summarized by \(\|\Delta\|_2\). For \(p\ne2\), the coordinate-wise allocation of sensitivity generally affects the hockey-stick divergence and the corresponding privacy-feasible scale.
\end{proof}
\subsection{Proof of Lemma~\ref{lem:worstcase-box-paper}}
\label{app:worstcase-box-proof}

\begin{proof}
Fix \(p\), \(b\), and \(\varepsilon\). For \(\xi=(\xi_1,\ldots,\xi_d)\in\{\pm1\}^d\), set \(\tilde h=(\xi_1h_1,\ldots,\xi_dh_d)\). Since \(f^{(d)}_{p,b}\) is even in each coordinate, the change of variables \(z_i=\xi_i u_i\) gives
\[
\delta_{p,b}(h)=\delta_{p,b}(\tilde h).
\]
Taking the signs so that \(\tilde h_i=|h_i|\), we obtain
\[
\delta_{p,b}(h)=\delta_{p,b}(|h_1|,\ldots,|h_d|).
\]
It remains to prove coordinate-wise monotonicity on \([0,\infty)^d\). Suppose \(h,t\in[0,\infty)^d\) differ only in coordinate \(j\), with \(0\le h_j\le t_j\) and \(h_i=t_i\) for \(i\ne j\). Write \(z=(x,y)\), where \(x=z_j\) and \(y=(z_i)_{i\ne j}\). For fixed \(y\), the inner integral in coordinate \(j\) has the form
\[
J(u)=\int_{\mathbb R}\left(a f_{p,b}(x)-c f_{p,b}(x+u)\right)_+\,dx,
\qquad u\ge0,
\]
where \(a=\prod_{i\ne j}f_{p,b}(y_i)\) and \(c=e^\varepsilon\prod_{i\ne j}f_{p,b}(y_i+h_i)\).
Since \((r-s)_+=r-\min\{r,s\}\), we have
\[
J(u)=a-\int_{\mathbb R}\min\{a f_{p,b}(x),c f_{p,b}(x+u)\}\,dx.
\]
The density \(f_{p,b}\) is even and unimodal. By the layer-cake representation, the integral of the minimum can be written in terms of overlaps of centered intervals. These overlaps are nonincreasing as the separation \(u\) increases on \([0,\infty)\). Therefore
\[
u\mapsto \int_{\mathbb R}\min\{a f_{p,b}(x),c f_{p,b}(x+u)\}\,dx
\]
is nonincreasing, and hence \(J(u)\) is nondecreasing.

Applying this argument to one coordinate at a time gives
\[
0\le h_i\le t_i\quad\text{for all }i
\qquad\Longrightarrow\qquad
\delta_{p,b}(h)\le\delta_{p,b}(t).
\]
Together with sign invariance,
\[
|h_i|\le |t_i|\quad\text{for all }i
\qquad\Longrightarrow\qquad
\delta_{p,b}(h)\le\delta_{p,b}(t).
\]
Thus, for every \(h\in\mathcal S_\Delta\),
\[
\delta_{p,b}(h)\le\delta_{p,b}(\Delta).
\]
Since \(\Delta\in\mathcal S_\Delta\), it follows that
\[
\sup_{h\in\mathcal S_\Delta}\delta_{p,b}(h)=\delta_{p,b}(\Delta).
\]
\end{proof}
\subsection{Feasibility of the initial upper scale}
\label{app:initial-upper-scale}

We first verify \eqref{eq:fixed-shape-sign}. For \(p>1\), the map
\(b\mapsto\delta_{p,b}(\Delta)\) is continuous and decreasing by the argument in Appendix~\ref{app:proof-scale-function-properties}, so
\[
b<b(p)
\Longrightarrow
\delta_{p,b}(\Delta)>\delta,
\qquad
b>b(p)
\Longrightarrow
\delta_{p,b}(\Delta)<\delta.
\]
For \(p=1\), let \(Z\sim f_{1,1}^{(d)}\). A change of variables gives
\[
\delta_{1,b}(\Delta)
=
\mathbb E\left[
\left(
1-\exp\left\{
\varepsilon-
\left(
\left\|Z+\frac{\Delta}{b}\right\|_1-\|Z\|_1
\right)
\right\}
\right)_+
\right].
\]
For fixed \(Z\), the map
\[
u\longmapsto \|Z+u\Delta\|_1-\|Z\|_1
\]
is convex and equals zero at \(u=0\). Hence \(b\mapsto\delta_{1,b}(\Delta)\) is nonincreasing. On \(\{Z\in[0,\infty)^d\}\), which has probability \(2^{-d}>0\),
\[
\left\|Z+\frac{\Delta}{b}\right\|_1-\|Z\|_1
=
\frac{\|\Delta\|_1}{b}.
\]
Therefore \(b\mapsto\delta_{1,b}(\Delta)\) is strictly decreasing for \(b<\|\Delta\|_1/\varepsilon\). Since
\[
\delta_{1,b}(\Delta)=0
\qquad
\text{for }
b\ge\frac{\|\Delta\|_1}{\varepsilon},
\]
and \(0<\delta<1\), \(b(1)<\|\Delta\|_1/\varepsilon\). Thus \eqref{eq:fixed-shape-sign} also holds for \(p=1\).

We now verify the feasibility of the initial upper scale. For \(p=1\), the choice
\[
b_{\mathrm{high}}^{(0)}
=
\frac{\|\Delta\|_1}{\varepsilon}
\]
is feasible because the privacy loss is bounded above by
\(\|\Delta\|_1/b_{\mathrm{high}}^{(0)}=\varepsilon\). Hence the corresponding hockey-stick divergence is zero.

Assume now that \(p>1\), and let \(Z\sim f^{(d)}_{p,b}\). Then
\[
\frac{\|Z\|_p^p}{b^p}
\sim
\mathrm{Gamma}\left(\frac{d}{p},1\right).
\]
The standard gamma-tail bound gives, with
\[
\kappa
=
\frac{d}{p}
+
\sqrt{\frac{2d}{p}\log\frac1\delta}
+
\log\frac1\delta,
\]
that
\[
\mathbb P\left\{
\frac{\|Z\|_p^p}{b^p}>\kappa
\right\}
\le
\delta.
\]
On the event \(\|Z\|_p^p/b^p\le\kappa\), the inequality
\[
|x+y|^p-|x|^p
\le
p2^{p-1}\left(|x|^{p-1}|y|+|y|^p\right)
\]
and Hölder's inequality imply
\[
\frac{\|Z+\Delta\|_p^p-\|Z\|_p^p}{b^p}
\le
p2^{p-1}
\left(
\frac{\|\Delta\|_p\kappa^{(p-1)/p}}{b}
+
\frac{\|\Delta\|_p^p}{b^p}
\right).
\]
Thus, for
\[
b
\ge
\max\left\{
\frac{p\,2^p}{\varepsilon}
\|\Delta\|_p\,\kappa^{(p-1)/p},
\left(
\frac{p\,2^p}{\varepsilon}
\right)^{1/p}
\|\Delta\|_p
\right\},
\]
the privacy loss
\[
\Lambda_{p,b,\Delta}(Z)
=
\frac{\|Z+\Delta\|_p^p-\|Z\|_p^p}{b^p}
\]
satisfies \(\Lambda_{p,b,\Delta}(Z)\le\varepsilon\) whenever
\(\|Z\|_p^p/b^p\le\kappa\). Hence
\[
\mathbb P\{\Lambda_{p,b,\Delta}(Z)>\varepsilon\}
\le
\mathbb P\left\{
\frac{\|Z\|_p^p}{b^p}>\kappa
\right\}
\le
\delta.
\]
Since
\[
\delta_{p,b}(\Delta)
=
\mathbb E\left[
\left(
1-\exp\{\varepsilon-\Lambda_{p,b,\Delta}(Z)\}
\right)_+
\right]
\le
\mathbb P\{\Lambda_{p,b,\Delta}(Z)>\varepsilon\},
\]
we obtain
\[
\delta_{p,b}(\Delta)\le\delta.
\]
In particular, the endpoint \(b_{\mathrm{high}}^{(0)}\) used in Section~\ref{subsec:fixed-shape-upper-approximation} is privacy-feasible.
\section{Proofs for Section~\ref{sec:optimization}}

\subsection{Differentiability and limiting behaviour of \(b(p)\)}
\label{app:proof-scale-function-properties}

\begin{proof}[Proof of Theorem~\ref{thm:scale-function-properties}]
For \(p>1\), set \(\phi_p(z):=\|z\|_p^p\). Then \(\phi_p\in C^1(\mathbb R^d)\) is strictly convex and
\[
\nabla\phi_p(z)=\left(p\,\operatorname{sgn}(z_i)|z_i|^{p-1}\right)_{i=1}^d.
\]
Hence, for \(x\ne y\),
\[
\langle \nabla\phi_p(x)-\nabla\phi_p(y),x-y\rangle>0.
\]
Let
\[
g_p(z):=\left(\frac{p}{2\Gamma(1/p)}\right)^d e^{-\|z\|_p^p},
\qquad
\Phi(p,u):=\int_{\mathbb R^d}\left(g_p(z)-e^\varepsilon g_p(z+u\Delta)\right)_+\,dz .
\]
The change of variables \(x=bz\) gives
\[
\delta_\Delta(p,b)=\Phi(p,1/b).
\]
Fix \(p>1\) and \(u>0\). Choose \(V\in\mathbb R^{d\times(d-1)}\) such that \(A=[\Delta\ V]\) is invertible, and write \(z=t\Delta+Vy\). For fixed \(y\), define
\[
R_{p,u,y}(t):=\phi_p(Vy+(t+u)\Delta)-\phi_p(Vy+t\Delta)-\varepsilon .
\]
Then
\[
R'_{p,u,y}(t)=\langle \nabla\phi_p(Vy+(t+u)\Delta)-\nabla\phi_p(Vy+t\Delta),\Delta\rangle>0,
\]
and
\[
\lim_{t\to-\infty}R_{p,u,y}(t)=-\infty,\qquad
\lim_{t\to+\infty}R_{p,u,y}(t)=+\infty.
\]
Thus there is a unique \(t^\ast=t^\ast(p,u,y)\) such that \(R_{p,u,y}(t^\ast)=0\), and
\[
g_p(Vy+t\Delta)>e^\varepsilon g_p(Vy+(t+u)\Delta)
\quad\Longleftrightarrow\quad
t>t^\ast .
\]
Therefore
\[
\Phi(p,u)=|\det A|\int_{\mathbb R^{d-1}}\int_{t^\ast(p,u,y)}^\infty
\left(g_p(Vy+t\Delta)-e^\varepsilon g_p(Vy+(t+u)\Delta)\right)\,dt\,dy .
\]
Differentiating in \(u\), the boundary term vanishes, and dominated convergence gives
\[
\partial_u\Phi(p,u)=|\det A|e^\varepsilon\int_{\mathbb R^{d-1}}g_p(Vy+(t^\ast(p,u,y)+u)\Delta)\,dy>0.
\]
Hence \(u\mapsto\Phi(p,u)\) is increasing, and \(b\mapsto\delta_\Delta(p,b)=\Phi(p,1/b)\) is strictly decreasing.
Moreover,
\[
0\le\left(g_p(z)-e^\varepsilon g_p(z+u\Delta)\right)_+\le g_p(z).
\]
As \(u\downarrow0\),
\[
\left(g_p(z)-e^\varepsilon g_p(z+u\Delta)\right)_+\to0,
\]
and as \(u\to\infty\),
\[
g_p(z+u\Delta)\to0,\qquad
\left(g_p(z)-e^\varepsilon g_p(z+u\Delta)\right)_+\to g_p(z).
\]
Thus, by dominated convergence,
\[
\lim_{u\downarrow0}\Phi(p,u)=0,\qquad
\lim_{u\to\infty}\Phi(p,u)=1.
\]
Equivalently,
\[
\lim_{b\to\infty}\delta_\Delta(p,b)=0,\qquad
\lim_{b\downarrow0}\delta_\Delta(p,b)=1.
\]
Therefore, for each \(p\in(1,\infty)\), there is a unique \(b(p)\in(0,\infty)\) satisfying
\[
\delta_\Delta(p,b(p))=\delta.
\]
It remains to justify differentiability in \(p\). On compact subsets of \((1,\infty)\times(0,\infty)\), the functions
\[
g_p(z),\quad g_p(z+u\Delta),\quad \partial_p g_p(z),\quad \partial_p g_p(z+u\Delta),\quad \nabla g_p(z+u\Delta)
\]
are dominated by an integrable envelope. For each \(y\in\mathbb R^{d-1}\),
\[
\{t\in\mathbb R:g_p(Vy+t\Delta)=e^\varepsilon g_p(Vy+(t+u)\Delta)\}
\]
contains at most one point. Therefore,
\[
\mathcal L^d\{z:g_p(z)=e^\varepsilon g_p(z+u\Delta)\}=0.
\]
This implies \(\Phi\in C^1((1,\infty)\times(0,\infty))\), with
\[
\partial_p\Phi(p,u)=\int_{\{g_p(z)>e^\varepsilon g_p(z+u\Delta)\}}
\left(\partial_p g_p(z)-e^\varepsilon\partial_p g_p(z+u\Delta)\right)\,dz.
\]
Since \(\partial_u\Phi(p,u)>0\), the implicit function theorem applied to
\[
\Phi(p,u(p))=\delta,\qquad u(p)=1/b(p),
\]
gives
\[
u'(p)=-\frac{\partial_p\Phi(p,u)}{\partial_u\Phi(p,u)}\bigg|_{u=u(p)},\qquad
b'(p)=-\frac{u'(p)}{u(p)^2}.
\]
This proves the assertion on \(p\in(1,\infty)\). The behaviour when \(p=1\) and the limit when \(p\to\infty\) are proved in Propositions~\ref{prop:right-diff-p1} and~\ref{prop:bp-to-binfty}.
\end{proof}
\begin{proposition}[Continuity and right differentiability at \(p=1\)]
\label{prop:right-diff-p1}
Let \(u(p)=1/b(p)\), and define
\[
\mathcal E
:=
\left\{
u>0:
u\langle s,\Delta\rangle=\varepsilon
\text{ for some }s\in\{-1,1\}^d
\right\}.
\]
Then
\[
u(p)\longrightarrow u(1),
\qquad
b(p)\longrightarrow b(1),
\qquad p\downarrow1.
\]
If \(u(1)\notin\mathcal E\), then
\[
u'_+(1)
=
-\frac{\partial_{p+}\Phi(1,u(1))}
        {\partial_u\Phi(1,u(1))},
\qquad
b'_+(1)
=
-\frac{u'_+(1)}{u(1)^2}.
\]
\end{proposition}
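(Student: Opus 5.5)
The plan is to treat $\Phi$ as a function on $[1,\infty)\times(0,\infty)$ and run a one-sided version of the implicit-function argument used in the proof of Theorem~\ref{thm:scale-function-properties}.

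\emph{Step 1: joint continuity.} I first show that $\Phi$ is jointly continuous at every point $(1,u)$. Since $p\mapsto \frac{p}{2\Gamma(1/p)}$ is continuous, $g_p(z)\to g_1(z)$ pointwise as $p\downarrow1$. For $p\in[1,2]$ one has $|z_i|^p\ge |z_i|-1$, so $g_p(z)\le C e^{-\|z\|_1}$, which gives a uniform integrable envelope. The integrand $(g_p(z)-e^\varepsilon g_p(z+u\Delta))_+$ is bounded by $g_p(z)$, so dominated convergence yields $\Phi(p_n,u_n)\to\Phi(1,u)$ whenever $(p_n,u_n)\to(1,u)$.

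\emph{Step 2: strict monotonicity at $p=1$ and continuity of $u(p)$.} Appendix~\ref{app:initial-upper-scale} shows that $b\mapsto\delta_{1,b}(\Delta)$ is strictly decreasing for $b<\|\Delta\|_1/\varepsilon$, and that $b(1)$ lies in this range. Hence $u\mapsto\Phi(1,u)$ is strictly increasing near $u(1)$. Fix $\eta>0$ small. Then
\[
\Phi(1,u(1)-\eta)<\delta<\Phi(1,u(1)+\eta).
\]
By Step 1, both strict inequalities persist for $\Phi(p,\cdot)$ when $p$ is close to $1$. Since $\Phi(p,\cdot)$ is increasing and $\Phi(p,u(p))=\delta$ for $p>1$ (Theorem~\ref{thm:scale-function-properties}), this forces $|u(p)-u(1)|<\eta$. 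Therefore $u(p)\to u(1)$, and consequently $b(p)\to b(1)$.

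\emph{Step 3: partial derivatives at $p=1$.} At $p=1$ the privacy loss $\ell(z)=\|z+u\Delta\|_1-\|z\|_1$ is piecewise affine. On each region where the signs of $z$ and of $z+u\Delta$ are fixed, it equals either a constant $u\langle s,\Delta\rangle$, or an affine function with nonzero gradient. A region of the first kind has nonzero length along the lines $t\mapsto z+t\Delta$.

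\begin{itemize}
\item If $u\notin\mathcal E$, then none of these constants equals $\varepsilon$, so the level set $\{\ell=\varepsilon\}$ is Lebesgue-null. The same holds for nearby $u$.
\item Using this, I differentiate under the integral. This gives
\[
\partial_u\Phi(1,u)=e^\varepsilon\int_{\{\ell>\varepsilon\}}\langle \operatorname{sgn}(z+u\Delta),\Delta\rangle\, g_1(z+u\Delta)\,dz .
\]
Positivity of $\partial_u\Phi(1,u)$ comes from the orthant region $\{z\ge0\}$, exactly as in Appendix~\ref{app:initial-upper-scale}.
\item Similarly, I obtain
\[
\partial_{p+}\Phi(1,u)=\int_{\{\ell>\varepsilon\}}\bigl(\partial_p g_p(z)-e^\varepsilon\partial_p g_p(z+u\Delta)\bigr)\big|_{p=1}\,dz .
\]
The difference quotients in $p$ are dominated by $C(1+\|z\|_1\log(1+\|z\|_1))e^{-\|z\|_1}$, and the moving boundary contributes nothing because the level set is null.
\end{itemize}

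\emph{Step 4: one-sided implicit-function argument.} I then show that the formulas for $\partial_p\Phi$ and $\partial_u\Phi$ on $(1,\infty)$, taken from the proof of Theorem~\ref{thm:scale-function-properties}, converge to the formulas of Step 3 as $(p,u)\to(1,u(1))$. The key input is that the indicator of $\{g_p(z)>e^\varepsilon g_p(z+u\Delta)\}$ converges a.e.\ to the indicator of $\{\ell>\varepsilon\}$ off the null set $\{\ell=\varepsilon\}$, which together with dominated convergence gives the limits. This convergence is the step I expect to be the main obstacle, since it is exactly where $u(1)\notin\mathcal E$ is needed.

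With this continuity in hand, I write
\[
0=\Phi(p,u(p))-\Phi(1,u(1)).
\]
Applying the mean value theorem along the segment from $(1,u(1))$ to $(p,u(p))$ gives
\[
0=(p-1)\,\partial_p\Phi(\xi)+(u(p)-u(1))\,\partial_u\Phi(\xi)
\]
for some intermediate point $\xi$; at $p=1$ the $p$-derivative is interpreted as the right derivative. Dividing by $p-1$ and letting $p\downarrow1$, using Step 2 and the continuity of the partials, gives
\[
u'_+(1)=-\frac{\partial_{p+}\Phi(1,u(1))}{\partial_u\Phi(1,u(1))}.
\]
The formula for $b'_+(1)$ then follows from $b=1/u$ by the chain rule.
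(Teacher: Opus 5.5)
Your proposal is correct and follows essentially the paper's route. Dominated convergence plus strict monotonicity of $u\mapsto\Phi(p,u)$ traps $u(p)$ near $u(1)$; for $u(1)\notin\mathcal E$, the Lebesgue-null level set $\{z:\|z+u(1)\Delta\|_1-\|z\|_1=\varepsilon\}$ justifies differentiating under the integral, and a mean-value argument then gives $u'_+(1)$. Your Step~4 (continuity of $\partial_p\Phi$ and $\partial_u\Phi$ as $(p,u)\to(1^+,u(1))$, then the mean value theorem along a segment whose interior lies in $\{p>1\}$) spells out what the paper compresses into ``the mean value theorem in $u$''. So does your positivity check for $\partial_u\Phi(1,u(1))$, which the paper never states. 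That check also needs convexity of $u\mapsto\|z+u\Delta\|_1$, which makes $\langle\operatorname{sgn}(z+u\Delta),\Delta\rangle$ positive on the whole region where the privacy loss exceeds $\varepsilon$, not only on the orthant.
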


\begin{proof}
We first prove continuity at \(p=1\), without the exceptional-set
condition. 
For each fixed \(u>0\),
$
g_p(z)\to g_1(z),
\
g_p(z+u\Delta)\to g_1(z+u\Delta)
$
as \(p\downarrow1\).
These densities are bounded by an integrable function for \(p\) sufficiently
close to \(1\). Hence, by dominated convergence,
$
\Phi(p,u)\to\Phi(1,u),
\ p\downarrow1.
$
Let \(u_1:=u(1)\). Since \(u_1\) is the unique solution of
\[
\Phi(1,u)=\delta,
\]
choose
\[
0<u_-<u_1<u_+
\]
such that
\[
\Phi(1,u_-)<\delta<\Phi(1,u_+).
\]
By the continuity established above, for all \(p>1\) sufficiently close
to \(1\),
\[
\Phi(p,u_-)<\delta<\Phi(p,u_+).
\]
Since \(u\mapsto\Phi(p,u)\) is strictly increasing for \(p>1\), it follows
that
\[
u_-<u(p)<u_+.
\]
If \(p_n\downarrow1\) and \(u(p_n)\to u_\ast\) along a subsequence, then
the same dominated convergence argument gives
\[
\delta
=
\lim_{n\to\infty}\Phi(p_n,u(p_n))
=
\Phi(1,u_\ast).
\]
By uniqueness of the solution at \(p=1\), \(u_\ast=u_1\). Therefore
\(u(p)\to u(1)\), and hence \(b(p)=1/u(p)\to b(1)\).
We now assume \(u(1)\notin\mathcal E\). At \(p=1\),
\[
g_1(z)=c_1e^{-\|z\|_1},
\]
and
\[
g_1(z)=e^\varepsilon g_1(z+u\Delta)
\quad\Longleftrightarrow\quad
\|z+u\Delta\|_1-\|z\|_1=\varepsilon.
\]
On each region with fixed signs of \(z\) and \(z+u\Delta\), the boundary
equation is affine in \(z\). It can have nonempty interior only if
\[
u\langle s,\Delta\rangle=\varepsilon
\]
for some \(s\in\{-1,1\}^d\). Hence \(u(1)\notin\mathcal E\) implies
\[
\mathcal L^d
\left\{
z:
g_1(z)=e^\varepsilon g_1(z+u(1)\Delta)
\right\}
=0.
\]

On compact neighborhoods of \((1,u(1))\), the relevant densities and
their right \(p\)-derivatives are dominated by an integrable envelope of
the form
\[
C\left(
1+\|z\|_1^{1+\eta}\log(2+\|z\|_1)
\right)e^{-a\|z\|_1}.
\]
Therefore differentiation under the integral sign gives
\[
\partial_{p+}\Phi(1,u(1))
=
\int_{\{g_1(z)>e^\varepsilon g_1(z+u(1)\Delta)\}}
\left[
\partial_{p+}g_1(z)
-
e^\varepsilon\partial_{p+}g_1(z+u(1)\Delta)
\right]dz.
\]
Using
\[
\Phi(p,u(p))=\Phi(1,u(1))=\delta
\]
and the mean value theorem in \(u\), we obtain
\[
\partial_{p+}\Phi(1,u(1))
+
\partial_u\Phi(1,u(1))u'_+(1)
=0.
\]
Hence
\[
u'_+(1)
=
-\frac{\partial_{p+}\Phi(1,u(1))}
        {\partial_u\Phi(1,u(1))}.
\]
The formula for \(b'_+(1)\) follows from \(b(p)=1/u(p)\).
\end{proof}

\begin{proposition}[Limit as \(p\to\infty\)]
\label{prop:bp-to-binfty}
Let \(b(\infty)\) be defined by uniform noise on \([-b,b]^d\). Then
\[
\lim_{p\to\infty}b(p)=b(\infty),
\]
and
\[
\delta_\Delta(\infty,b)=1-\prod_{i=1}^d\left(1-\frac{\Delta_i}{2b}\right)_+.
\]
\end{proposition}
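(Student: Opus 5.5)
First compute $\delta_\Delta(\infty,b)$ directly. For uniform noise on $[-b,b]^d$ the density $f^{(d)}_{\infty,b}$ takes only the values $(2b)^{-d}$ and $0$. Hence for $\varepsilon\ge0$ the integrand $(f(z)-e^\varepsilon f(z+\Delta))_+$ equals $(2b)^{-d}$ on $[-b,b]^d\setminus([-b,b]^d-\Delta)$ and vanishes elsewhere. This holds regardless of $\varepsilon$, because on the overlap the difference is $(1-e^\varepsilon)(2b)^{-d}\le0$. So $\delta_\Delta(\infty,b)=1-\mathrm{vol}([-b,b]^d\cap([-b,b]^d-\Delta))/(2b)^d$. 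The overlap is a product of intervals of lengths $(2b-\Delta_i)_+$, which gives the stated product formula. That formula is continuous and strictly decreasing in $b$ on the range where it lies in $(0,1)$, and tends to $0$ as $b\to\infty$. Therefore $b(\infty)$ is the unique root of $\delta_\Delta(\infty,b)=\delta$, and for every $\eta>0$ we have $\delta_\Delta(\infty,b(\infty)-\eta)>\delta>\delta_\Delta(\infty,b(\infty)+\eta)$.

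Next show that $\delta_\Delta(p,b)\to\delta_\Delta(\infty,b)$ as $p\to\infty$ for each fixed $b>0$. Pointwise, $g_{p,b}(x)\to(2b)^{-1}\mathbf 1_{[-b,b]}(x)$ for $|x|\ne b$, since $p/(2\Gamma(1/p))\to1/2$ and $e^{-|x/b|^p}\to\mathbf 1_{|x|<b}$. Hence $f^{(d)}_{p,b}(z)\to f^{(d)}_{\infty,b}(z)$ off a null set, and similarly for the shifted density. The map $(x,y)\mapsto(x-e^\varepsilon y)_+$ is continuous, so the integrand converges a.e. It is dominated by $f^{(d)}_{p,b}(z)$. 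For $p\ge2$ this is in turn dominated by $C\prod_i\min\{1,e^{-(|z_i|/b)^2+1}\}$-type envelopes, using $|x|^p\ge|x|^2$ for $|x|\ge1$ and a uniform bound on the normalizing constant. Dominated convergence then gives the limit. An alternative is Scheffé's lemma: $f_p\to f_\infty$ in $L^1$, and the hockey-stick divergence is $1$-Lipschitz in $L^1$ in each argument.

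Finally, conclude with a sandwich argument, mirroring the proof of Proposition~\ref{prop:right-diff-p1}. Fix $\eta>0$. By the previous step, for all sufficiently large $p$ we have $\delta_\Delta(p,b(\infty)-\eta)>\delta$ and $\delta_\Delta(p,b(\infty)+\eta)<\delta$. For finite $p>1$, Theorem~\ref{thm:scale-function-properties} says $b\mapsto\delta_\Delta(p,b)$ is strictly decreasing with $\delta_\Delta(p,b(p))=\delta$. Hence $b(\infty)-\eta<b(p)<b(\infty)+\eta$, which proves $b(p)\to b(\infty)$.

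The main obstacle I expect is the strict inequalities at $b(\infty)\pm\eta$ in the limit problem. If some $\Delta_i\ge 2b$, the product formula is flat, equal to $1$, so I must check that $b(\infty)$ lies where the formula is strictly decreasing. This holds because $\delta<1$ forces $b(\infty)>\max_i\Delta_i/2$, so a small enough $\eta$ keeps us in the strictly decreasing region. A secondary point is the a.e. convergence on the boundary faces $|z_i|=b$, which have measure zero. The $L^1$/Scheffé route avoids fussing over the domination envelope.
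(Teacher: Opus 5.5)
Your proposal is correct and follows essentially the same route as the paper: the explicit overlap computation giving the product formula, dominated convergence (your $p\ge2$ envelope plays the role of the paper's $p\ge p_0$ envelope) to get $\delta_\Delta(p,b)\to\delta_\Delta(\infty,b)$ at fixed $b$, and a sandwich at $b(\infty)\pm\eta$ using strict monotonicity in $b$ for finite $p$, including the observation that $b(\infty)>\max_i\Delta_i/2$. One small slip concerns only your optional Scheffé remark: the Lipschitz constant of the hockey-stick divergence in its second argument carries a factor $e^\varepsilon$, so ``$1$-Lipschitz in each argument'' is not right as stated, although any finite constant suffices for your purpose.
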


\begin{proof}
For \(Q_b=[-b,b]^d\),
\[
f_{\infty,b}^{(d)}(z)=\frac1{(2b)^d}\mathbf 1_{Q_b}(z).
\]
Since \(e^\varepsilon\ge1\),
\[
\left(f_{\infty,b}^{(d)}(z)-e^\varepsilon f_{\infty,b}^{(d)}(z+\Delta)\right)_+
=
\frac1{(2b)^d}\mathbf 1_{Q_b\setminus(Q_b-\Delta)}(z).
\]
Therefore
\[
\delta_\Delta(\infty,b)=1-\frac{\operatorname{Vol}(Q_b\cap(Q_b-\Delta))}{(2b)^d}
=
1-\prod_{i=1}^d\left(1-\frac{\Delta_i}{2b}\right)_+.
\]
Thus \(b(\infty)\) is the unique solution of
\[
\delta_\Delta(\infty,b)=\delta
\]
on \((\max_i\Delta_i/2,\infty)\).
For fixed \(b>0\),
\[
f_{p,b}^{(d)}(z)\to f_{\infty,b}^{(d)}(z)
\quad\text{for a.e. }z.
\]
Fix \(p_0>1\). For \(p\ge p_0\), the normalizing constants are bounded and
\[
e^{-|x/b|^p}
\le
\mathbf 1_{\{|x|\le2b\}}
+
e^{-(|x|/b)^{p_0}}\mathbf 1_{\{|x|>2b\}}.
\]
This gives an integrable dominating function, also for the shifted density. Hence
\[
\delta_\Delta(p,b)\to\delta_\Delta(\infty,b)
\qquad(p\to\infty).
\]
Let \(\eta>0\) satisfy \(b(\infty)-\eta>\max_i\Delta_i/2\). Since \(b\mapsto\delta_\Delta(\infty,b)\) is strictly decreasing at \(b(\infty)\),
\[
\delta_\Delta(\infty,b(\infty)-\eta)>\delta>\delta_\Delta(\infty,b(\infty)+\eta).
\]
By pointwise convergence, the same inequalities hold for \(\delta_\Delta(p,\cdot)\) for all sufficiently large \(p\). Since \(b\mapsto\delta_\Delta(p,b)\) is strictly decreasing,
\[
b(\infty)-\eta<b(p)<b(\infty)+\eta.
\]
Thus \(b(p)\to b(\infty)\).
\end{proof}

\subsection{Derivative bounds for the scale equation}
\label{app:derivative-bounds-section4}

\subsubsection{Integral representation of \(\partial_p H(p,b)\)}
\label{app:derivative-representation}
Let
\[
a:=\frac{\Delta}{b},
\qquad
h_p^{(d)}(x)
:=
\left(\frac{p}{2\Gamma(1/p)}\right)^d
\exp\left(-\sum_{i=1}^d |x_i|^p\right).
\]
For \(x\in\mathbb R^d\), define
\[
\Lambda_{p,a}(x)
:=
\sum_{i=1}^d\left(|x_i+a_i|^p-|x_i|^p\right),
\qquad
w_{p,a}(x)
:=
\left(1-e^{\varepsilon-\Lambda_{p,a}(x)}\right)_+ .
\]
Then
\[
H(p,b)+\delta
=
\int_{\mathbb R^d}
w_{p,a}(x)h_p^{(d)}(x)\,dx .
\]
Let
\[
\varphi_p(x)
:=
\begin{cases}
|x|^p\log |x|, & x\ne0,\\
0, & x=0.
\end{cases}
\]
Then
\[
\partial_p\Lambda_{p,a}(x)
=
\sum_{i=1}^d
\left(\varphi_p(x_i+a_i)-\varphi_p(x_i)\right),
\]
and
\[
\partial_p h_p^{(d)}(x)
=
S_p(x)h_p^{(d)}(x),
\]
where
\[
S_p(x)
=
d\left(\frac1p+\frac{\psi(1/p)}{p^2}\right)
-
\sum_{i=1}^d\varphi_p(x_i).
\]
Set
\[
G_{p,a}(x)
:=
e^{\varepsilon-\Lambda_{p,a}(x)}
\mathbf 1_{\{\Lambda_{p,a}(x)>\varepsilon\}}
\partial_p\Lambda_{p,a}(x)
+
w_{p,a}(x)S_p(x).
\]
\begin{lemma}
\label{lem:chap4-derivative-new}
For \(p\in(1,\infty)\),
\[
\partial_pH(p,b)
=
\int_{\mathbb R^d}
G_{p,a}(x)h_p^{(d)}(x)\,dx .
\]
\end{lemma}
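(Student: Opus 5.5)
The plan is to differentiate under the integral sign in the representation $H(p,b)+\delta=\int w_{p,a}(x)h_p^{(d)}(x)\,dx$ with $a=\Delta/b$ held fixed. Formally the product rule gives
\[
\partial_p\bigl[w_{p,a}h_p^{(d)}\bigr]=(\partial_p w_{p,a})\,h_p^{(d)}+w_{p,a}\,\partial_p h_p^{(d)}.
\]
The second term equals $w_{p,a}S_ph_p^{(d)}$ by the stated formula for $\partial_p h_p^{(d)}$. That formula comes from differentiating $\log h_p^{(d)}=d\log p-d\log 2-d\log\Gamma(1/p)-\sum_i|x_i|^p$ and using $\partial_p|x|^p=\varphi_p(x)$. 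For the first term, on the open set $\{\Lambda_{p,a}>\varepsilon\}$ we have $w_{p,a}=1-e^{\varepsilon-\Lambda_{p,a}}$, so $\partial_pw_{p,a}=e^{\varepsilon-\Lambda_{p,a}}\partial_p\Lambda_{p,a}$. On $\{\Lambda_{p,a}<\varepsilon\}$ the function $w_{p,a}$ vanishes identically near the given $p$, so $\partial_pw_{p,a}=0$ there. Together these give the integrand $G_{p,a}h_p^{(d)}$ off the boundary set $\{\Lambda_{p,a}=\varepsilon\}$.

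To justify this rigorously, I will note that $s\mapsto(1-e^{\varepsilon-s})_+$ is Lipschitz with constant $1$ in $s$, and that $p\mapsto\Lambda_{p,a}(x)$ is $C^1$ for each $x$. Hence $p\mapsto w_{p,a}(x)$ is locally Lipschitz. Its derivative exists and equals the expression above at every $p$ where $\Lambda_{p,a}(x)\neq\varepsilon$. The boundary set has Lebesgue measure zero for each fixed $p>1$: this is the same null-set fact shown in the proof of Theorem~\ref{thm:scale-function-properties}, where along each line $t\mapsto Vy+t\Delta$ the map is strictly monotone, and it applies since $a$ is a positive multiple of $\Delta$. Since $|\partial_p(1-e^{\varepsilon-s})_+|\le 1$ wherever it is defined, the difference quotients satisfy
\[
\Bigl|\frac{w_{p',a}h_{p'}^{(d)}-w_{p,a}h_p^{(d)}}{p'-p}\Bigr|
\le \sup_{q\in[p,p']}\bigl(|\partial_q\Lambda_{q,a}|+|S_q|\bigr)h_q^{(d)}(x).
\]
This bound follows from the mean value theorem applied to the Lipschitz function $q\mapsto w_{q,a}(x)h_q^{(d)}(x)$, which is absolutely continuous in $q$.

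The main obstacle is an integrable envelope for this supremum, uniformly over $q$ in a compact subinterval $[p_1,p_2]\subset(1,\infty)$. I will bound $|\varphi_q(y)|\le C(1+|y|^{p_2+1})$ for such $q$, using $|y|^q|\log|y||\le 1/(eq)$ for $|y|\le1$ and $|y|^q\log|y|\le|y|^{p_2+1}$ for $|y|\ge1$. Then $|\varphi_q(x_i+a_i)|\le C'(1+|x_i|^{p_2+1})$, and $\psi(1/q)/q^2$ is bounded on the interval, so $|\partial_q\Lambda_{q,a}|+|S_q|\le C''(1+\|x\|_1^{p_2+1})$. Next, $h_q^{(d)}(x)\le C\exp(-\sum_i|x_i|^{p_1})\cdot\mathbf 1\{\max_i|x_i|\ge1\}+C\,\mathbf 1\{\max_i|x_i|<1\}$, because $|x_i|^q\ge|x_i|^{p_1}$ when $|x_i|\ge1$. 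This yields the envelope $C(1+\|x\|_1^{p_2+1})\,\bigl(\mathbf 1_{[-1,1]^d}+\prod_i\max\{e^{-|x_i|^{p_1}},\mathbf 1_{|x_i|<1}\}\bigr)$, which is integrable. Dominated convergence then lets the limit $p'\to p$ pass inside the integral, using the a.e. derivative identified above, and gives $\partial_pH(p,b)=\int G_{p,a}h_p^{(d)}\,dx$. Here the indicator $\mathbf 1_{\{\Lambda_{p,a}>\varepsilon\}}$ correctly records the a.e. value of $\partial_pw_{p,a}$.
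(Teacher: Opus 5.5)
Your proposal is correct and follows the paper's route: you differentiate $\int w_{p,a}h_p^{(d)}\,dx$ under the integral sign, using that the boundary set $\{\Lambda_{p,a}=\varepsilon\}$ is Lebesgue-null (by strict monotonicity along the lines $Vy+ta$) together with an integrable envelope on compact $p$-intervals. The difference is that you spell out what the paper only asserts, namely the Lipschitz/absolute-continuity control of the difference quotients and an explicit polynomial-times-$e^{-|x_i|^{p_1}}$ envelope. That envelope is coordinatewise, and for coordinates with $|x_i|<1$ a constant factor $e$ must be absorbed, which your final product form already handles.
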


\begin{proof}
It remains to justify differentiating
\[
\int_{\mathbb R^d}w_{p,a}(x)h_p^{(d)}(x)\,dx
\]
with respect to \(p\). Let \(V\in\mathbb R^{d\times(d-1)}\) be such that
\(A=[a\ V]\) is invertible, and write
\[
x=Vy+ta,
\qquad
y\in\mathbb R^{d-1},
\quad
t\in\mathbb R.
\]
For fixed \(y\), define
\[
R_y(t)
:=
\Lambda_{p,a}(Vy+ta)
=
\|Vy+(t+1)a\|_p^p-\|Vy+ta\|_p^p.
\]
Since \(p>1\), \(x\mapsto\|x\|_p^p\) is strictly convex. Hence
\[
R_y'(t)
=
\left\langle
\nabla\|Vy+(t+1)a\|_p^p
-
\nabla\|Vy+ta\|_p^p,
a
\right\rangle
>0.
\]
Thus
\[
\mathcal L^1\{t\in\mathbb R:R_y(t)=\varepsilon\}=0,
\qquad y\in\mathbb R^{d-1}.
\]
Consequently,
\[
\mathcal L^d\{x:\Lambda_{p,a}(x)=\varepsilon\}
=
|\det A|
\int_{\mathbb R^{d-1}}
\mathcal L^1\{t:R_y(t)=\varepsilon\}\,dy
=
0.
\]
On compact subsets of \((1,\infty)\), the functions
\[
w_{p,a}(x)h_p^{(d)}(x),
\qquad
\partial_p\{w_{p,a}(x)h_p^{(d)}(x)\}
\]
are dominated by an integrable envelope. Therefore differentiation under the integral is valid. Since
\[
\partial_p h_p^{(d)}(x)=S_p(x)h_p^{(d)}(x),
\]
and, outside the null set \(\{\Lambda_{p,a}=\varepsilon\}\),
\[
\partial_p w_{p,a}(x)
=
e^{\varepsilon-\Lambda_{p,a}(x)}
\mathbf 1_{\{\Lambda_{p,a}(x)>\varepsilon\}}
\partial_p\Lambda_{p,a}(x),
\]
we obtain
\[
\partial_p\{w_{p,a}(x)h_p^{(d)}(x)\}
=
G_{p,a}(x)h_p^{(d)}(x).
\]
Thus
\[
\partial_pH(p,b)
=
\int_{\mathbb R^d}
G_{p,a}(x)h_p^{(d)}(x)\,dx .
\]
\end{proof}
\subsubsection{One-dimensional tight bound}
\label{app:one-dimensional-tight-bound}

Assume \(d=1\), \(\Delta>0\), and set \(a:=\Delta/b\). Then
\[
\Lambda_{p,a}(x)=|x+a|^p-|x|^p,\qquad x\in\mathbb R .
\]
For \(p>1\), \(\Lambda_{p,a}\) is increasing, and hence there is a unique
\(x^\star_{p,a}\in\mathbb R\) such that
\[
\Lambda_{p,a}(x^\star_{p,a})=\varepsilon .
\]
Set
\[
A_{p,a}:=|x^\star_{p,a}|^p,
\qquad
s_{p,a}:=\mathbf 1_{\{x^\star_{p,a}<0\}}.
\]
Then
\[
1-2s_{p,a}
=
\begin{cases}
1,&x^\star_{p,a}\ge0,\\
-1,&x^\star_{p,a}<0.
\end{cases}
\]
For \(\varepsilon>0\),
\[
x^\star_{p,a}\ge0\Longleftrightarrow a^p\le\varepsilon,
\qquad
x^\star_{p,a}<0\Longleftrightarrow a^p>\varepsilon,
\]
while for \(\varepsilon=0\), \(x^\star_{p,a}<0\).
Let
\[
Q(q,t):=\frac{\Gamma(q,t)}{\Gamma(q)}
\]
be the regularized upper incomplete gamma function. With \(q=1/p\),
\begin{equation}
\label{eq:one-dimensional-delta-gamma}
\delta_\Delta(p,b)
=
s_{p,a}
+
\frac{1-2s_{p,a}}2 Q(q,A_{p,a})
-
\frac{e^\varepsilon}{2}Q(q,A_{p,a}+\varepsilon).
\end{equation}
Define
\[
D_q(v)
:=
\int_v^\infty
\frac{q}{2\Gamma(q)}y^{q-1}e^{-y}
\left(1+q\psi(q)-y\log y\right)\,dy .
\]
Differentiating \eqref{eq:one-dimensional-delta-gamma} in \(p\) gives
\begin{equation}
\label{eq:one-d-H-derivative}
\partial_pH(p,b)
=
(1-2s_{p,a})D_{1/p}(A_{p,a})
-
e^\varepsilon D_{1/p}(A_{p,a}+\varepsilon).
\end{equation}
The boundary terms cancel because
\[
|x^\star_{p,a}+a|^p-|x^\star_{p,a}|^p=\varepsilon,
\qquad
|x^\star_{p,a}+a|^p=A_{p,a}+\varepsilon .
\]
Let \(D_q'\) denote the derivative of \(D_q\) with respect to its argument.
Then
\[
\left\|D_q'\right\|_{L^1(0,\infty)}
=
\int_0^\infty
\left|
\frac{q}{2\Gamma(q)}y^{q-1}e^{-y}
\left(1+q\psi(q)-y\log y\right)
\right|\,dy .
\]
From~\eqref{eq:one-d-H-derivative},
\[
|\partial_pH(p,b)|
\le
C_{p,a,\varepsilon}
\left\|D_{1/p}'\right\|_{L^1(0,\infty)},
\]
where
\[
C_{p,a,\varepsilon}
:=
\begin{cases}
\max\{1,e^\varepsilon-1\},&a^p\le\varepsilon,\\
1+e^\varepsilon,&a^p>\varepsilon.
\end{cases}
\]
For \(J=[p_-,p_+]\subset(1,\infty)\), put
\[
q_-:=\frac1{p_+},
\qquad
q_+:=\frac1{p_-}.
\]
Since
\[
1+q\psi(q)=q\psi(q+1),
\]
and, for \(Y\sim\Gamma(q,1)\),
\[
\mathbb E\{Y^2(\log Y)^2\}
=
q(q+1)\{\psi_1(q+2)+\psi(q+2)^2\},
\]
Cauchy--Schwarz gives
\[
\left\|D_q'\right\|_{L^1(0,\infty)}
\le
\frac{q^2}{2}|\psi(q+1)|
+
\frac q2
\left[
q(q+1)\{\psi_1(q+2)+\psi(q+2)^2\}
\right]^{1/2}.
\]
Hence, uniformly for \(p\in J\),
\[
\left\|D_{1/p}'\right\|_{L^1(0,\infty)}
\le V(J),
\]
where
\[
V(J)
:=
\frac{q_+^2}{2}
\max\{|\psi(q_-+1)|,|\psi(q_++1)|\}
+
\frac{q_+}{2}
\left[
q_+(q_++1)
\left\{
\psi_1(q_-+2)
+
\max\{|\psi(q_-+2)|,|\psi(q_++2)|\}^2
\right\}
\right]^{1/2}.
\]
Let
\[
I=[p_-,p_+]\subset[1,\infty),
\qquad
p_-<p_+.
\]
First suppose \(p_->1\). If \(\varepsilon>0\), \(a\neq1\), and
\[
p_a:=\frac{\log\varepsilon}{\log a}\in(p_-,p_+),
\]
split \(I\) at \(p_a\); otherwise keep \(I\) unsplit.
Let \(\mathcal P(I,a)\) be the resulting collection of one or two
subintervals. Define
\[
C(J):=
\begin{cases}
\max\{1,e^\varepsilon-1\},
& a^p\le\varepsilon \text{ for all }p\in J,\\[1mm]
1+e^\varepsilon,
& \text{otherwise}.
\end{cases}
\]
Then set
\[
S_{\mathrm{1D}}(I,b)
:=
\max_{J\in\mathcal P(I,a)}C(J)V(J).
\]
Now suppose \(p_-=1\). Put
\[
q_-:=\frac{1}{p_+},
\qquad
q_+:=1,
\]
and define \(V(I)\) by the same expression above with these values of
\(q_-\) and \(q_+\). Since \(q_->0\), \(V(I)\)
is finite, and the preceding bound remains valid for
\(q\in[q_-,1)\). Hence
\[
|\partial_pH(p,b)|
\le
(1+e^\varepsilon)
\|D'_{1/p}\|_{L^1(0,\infty)}
\le
(1+e^\varepsilon)V(I),
\qquad 1<p\le p_+.
\]
For intervals with \(p_-=1\), set
\[
S_{\mathrm{1D}}(I,b):=(1+e^\varepsilon)V(I).
\]
In either case,
\[
S_{\mathrm{1D}}(I,b)
\ge
\sup_{p\in I\cap(1,\infty)}
|\partial_pH(p,b)|.
\]
\subsubsection{Multidimensional upper bound}
\label{app:proof-derivative-bound}
Let
\[
I=[p_-,p_+]\subset[1,P_{\max}],
\qquad
p_-<p_+,
\]
and 
\[
a:=\frac{\Delta}{b},
\qquad
t:=\frac{R}{b},
\qquad
Q_t:=[-t,t]^d.
\]
For \(p\in I\cap(1,\infty)\),
\begin{equation}
\label{eq:bulk-tail-split-derivative}
|\partial_pH(p,b)|
\le
\int_{Q_t}|G_{p,a}(x)|h_p^{(d)}(x)\,dx
+
\int_{\mathbb R^d\setminus Q_t}
|G_{p,a}(x)|h_p^{(d)}(x)\,dx .
\end{equation}
For \(s\ge0\), define
\[
\pi_p(s)
:=
\frac{p}{\Gamma(1/p)}
\int_s^\infty e^{-x^p}\,dx,
\qquad
\pi_I(s)
:=
\sup_{p\in I}
\left\{1-(1-\pi_p(s))^d\right\},
\]
\[
I_{q,\nu}(s)
:=
\int_s^\infty x^qe^{-x^\nu}\,dx,
\qquad
J_{q,\nu}(s)
:=
\int_s^\infty x^q\log x\,e^{-x^\nu}\,dx,
\]
\[
\alpha_I
:=
\sup_{p\in I}\frac{p}{\Gamma(1/p)},
\qquad
\kappa_I
:=
\sup_{p\in I}
\left|
\frac1p+\frac{\psi(1/p)}{p^2}
\right|,
\]
and
\[
\Psi_I(s)
:=
\sup_{p\in I}
\sup_{0\le x\le s}
x^p|\log x|,
\]
where \(x^p|\log x|=0\) at \(x=0\). For \(a_i=\Delta_i/b\), set
\[
B_i(I,s)
:=
\Psi_I(s+a_i)+\Psi_I(s),
\qquad
\mathcal S_I(s)
:=
\alpha_IJ_{p_+,p_-}(s),
\]
and, for \(s>0\),
\[
\mathcal D_{I,i}(s)
:=
\alpha_I
\left(1+\frac{a_i}{s}\right)^{p_+-1}
\left[
\left(1+p_+\log2\right)I_{p_+-1,p_-}(s)
+
p_+J_{p_+-1,p_-}(s)
\right].
\]
Assume
\[
R\ge\max\{b,2\|\Delta\|_\infty\}.
\]
Then \(t\ge1\) and \(t\ge2a_i\) for all \(i\).
Since
\[
0\le w_{p,a}(x)\le1,
\qquad
e^{\varepsilon-\Lambda_{p,a}(x)}
\mathbf 1_{\{\Lambda_{p,a}(x)>\varepsilon\}}
\le1,
\]
we have
\begin{equation}
\label{eq:G-simple-envelope-section4}
|G_{p,a}(x)|
\le
|\partial_p\Lambda_{p,a}(x)|+|S_p(x)|.
\end{equation}
Moreover,
\[
|S_p(x)|
\le
d\kappa_I+\sum_{i=1}^d|\varphi_p(x_i)|,
\]
and
\[
|\partial_p\Lambda_{p,a}(x)|
\le
\sum_{i=1}^d
|\varphi_p(x_i+a_i)-\varphi_p(x_i)|.
\]
Let \(X=(X_1,\ldots,X_d)\sim h_p^{(d)}\). Then the coordinates are independent and
\[
\mathbb P_p(|X_i|>s)=\pi_p(s).
\]
Thus
\[
\mathbb P_p(X\notin Q_s)
=
1-(1-\pi_p(s))^d
\le
\pi_I(s),
\qquad p\in I.
\]
For \(s\ge1\),
\[
\int_{\{|X_i|>s\}}
|\varphi_p(X_i)|\,d\mathbb P_p
=
\frac{p}{\Gamma(1/p)}
\int_s^\infty x^p\log x\,e^{-x^p}\,dx,
\]
and therefore
\[
\sup_{p\in I}
\int_{\{|X_i|>s\}}
|\varphi_p(X_i)|\,d\mathbb P_p
\le
\alpha_IJ_{p_+,p_-}(s)
=
\mathcal S_I(s).
\]
For \(|x|>s\), \(s\ge1\), and \(s\ge2a_i\), the mean value theorem gives
\[
|\varphi_p(x+a_i)-\varphi_p(x)|
\le
a_i
\sup_{0\le r\le a_i}
|x+r|^{p-1}\left|1+p\log|x+r|\right|.
\]
Since
\[
|x+r|\le |x|+a_i\le \left(1+\frac{a_i}{s}\right)|x|,
\qquad
|x+r|\ge |x|-a_i\ge \frac{|x|}{2},
\]
we have
\[
|x+r|^{p-1}
\le
\left(1+\frac{a_i}{s}\right)^{p_+-1}|x|^{p_+-1}
\]
and
\[
\left|1+p\log|x+r|\right|
\le
1+p_+\log2+p_+\log|x|.
\]
Consequently,
\[
|\varphi_p(x+a_i)-\varphi_p(x)|
\le
a_i
\left(1+\frac{a_i}{s}\right)^{p_+-1}
|x|^{p_+-1}
\left(1+p_+\log2+p_+\log|x|\right).
\]
Hence
\[
\sup_{p\in I}
\int_{\{|X_i|>s\}}
|\varphi_p(X_i+a_i)-\varphi_p(X_i)|\,d\mathbb P_p
\le
a_i\mathcal D_{I,i}(s).
\]
On \(\mathbb R^d\setminus Q_s\), split
\[
|\varphi_p(X_i)|
=
|\varphi_p(X_i)|\mathbf 1_{\{|X_i|\le s\}}
+
|\varphi_p(X_i)|\mathbf 1_{\{|X_i|>s\}}.
\]
Thus
\[
\sup_{p\in I}
\int_{\mathbb R^d\setminus Q_s}
|\varphi_p(x_i)|h_p^{(d)}(x)\,dx
\le
\Psi_I(s)\pi_I(s)+\mathcal S_I(s).
\]
Similarly, if \(|x_i|\le s\), then
\[
|\varphi_p(x_i+a_i)-\varphi_p(x_i)|
\le
B_i(I,s),
\]
and hence
\[
\sup_{p\in I}
\int_{\mathbb R^d\setminus Q_s}
|\varphi_p(x_i+a_i)-\varphi_p(x_i)|h_p^{(d)}(x)\,dx
\le
B_i(I,s)\pi_I(s)+a_i\mathcal D_{I,i}(s).
\]
Combining these inequalities with \eqref{eq:G-simple-envelope-section4}, at \(s=t\),
\begin{equation}
\label{eq:tail-envelope-section4}
\sup_{p\in I}
\int_{\mathbb R^d\setminus Q_t}
|G_{p,a}(x)|h_p^{(d)}(x)\,dx
\le
T(I,b,R),
\end{equation}
where
\[
T(I,b,R)
:=
\left(
d\kappa_I
+
d\Psi_I(t)
+
\sum_{i=1}^dB_i(I,t)
\right)\pi_I(t)
+
d\mathcal S_I(t)
+
\sum_{i=1}^d a_i\mathcal D_{I,i}(t).
\]
It remains to bound the integral over \(Q_t\). Let \(\mathcal Q\) be a finite partition of \(Q_t\) into boxes
\[
Q=\prod_{k=1}^dJ_k.
\]
For \(Q\in\mathcal Q\), define
\[
\underline\Lambda_Q(p)
:=
\sum_{k=1}^d
\inf_{x\in J_k}
\left(|x+a_k|^p-|x|^p\right),
\]
\[
\overline\Lambda_Q(p)
:=
\sum_{k=1}^d
\sup_{x\in J_k}
\left(|x+a_k|^p-|x|^p\right),
\]
\[
L_\Lambda(Q;p)
:=
\sup_{x\in Q}
|\partial_p\Lambda_{p,a}(x)|,
\qquad
L_S(Q;p)
:=
\sup_{x\in Q}|S_p(x)|,
\]
and
\[
M_Q(p)
:=
\int_Qh_p^{(d)}(x)\,dx.
\]
Set
\[
\Xi_Q(p)
:=
\begin{cases}
0,
&
\overline\Lambda_Q(p)\le\varepsilon,
\\
e^{\varepsilon-\underline\Lambda_Q(p)},
&
\underline\Lambda_Q(p)>\varepsilon,
\\
1,
&
\underline\Lambda_Q(p)\le\varepsilon<\overline\Lambda_Q(p).
\end{cases}
\]
For \(x\in Q\),
\[
|\partial_pw_{p,a}(x)|
\le
\Xi_Q(p)L_\Lambda(Q;p),
\]
and
\[
0\le w_{p,a}(x)
\le
\left(1-e^{\varepsilon-\overline\Lambda_Q(p)}\right)_+.
\]
Indeed, if \(\overline\Lambda_Q(p)\le\varepsilon\), then \(w_{p,a}=0\) on \(Q\). If
\(\underline\Lambda_Q(p)>\varepsilon\), then
\[
|\partial_pw_{p,a}(x)|
=
e^{\varepsilon-\Lambda_{p,a}(x)}
|\partial_p\Lambda_{p,a}(x)|
\le
e^{\varepsilon-\underline\Lambda_Q(p)}L_\Lambda(Q;p).
\]
In the remaining case,
\[
e^{\varepsilon-\Lambda_{p,a}(x)}
\mathbf 1_{\{\Lambda_{p,a}(x)>\varepsilon\}}
\le1.
\]
Therefore
\[
|\partial_pw_{p,a}(x)|
\le
\Xi_Q(p)L_\Lambda(Q;p).
\]
Also, since \(\Lambda_{p,a}(x)\le\overline\Lambda_Q(p)\),
\[
w_{p,a}(x)
=
\left(1-e^{\varepsilon-\Lambda_{p,a}(x)}\right)_+
\le
\left(1-e^{\varepsilon-\overline\Lambda_Q(p)}\right)_+.
\]
Thus
\begin{equation}
\label{eq:boxwise-bulk-bound-section4}
\int_Q|G_{p,a}(x)|h_p^{(d)}(x)\,dx
\le
\left[
\Xi_Q(p)L_\Lambda(Q;p)
+
\left(1-e^{\varepsilon-\overline\Lambda_Q(p)}\right)_+L_S(Q;p)
\right]M_Q(p).
\end{equation}
Define
\[
S_{\rm bulk}(\mathcal Q;I,b,R)
:=
\sup_{p\in I}
\sum_{Q\in\mathcal Q}
\left[
\Xi_Q(p)L_\Lambda(Q;p)
+
\left(1-e^{\varepsilon-\overline\Lambda_Q(p)}\right)_+L_S(Q;p)
\right]M_Q(p).
\]
Then
\[
\sup_{p\in I}
\int_{Q_t}|G_{p,a}(x)|h_p^{(d)}(x)\,dx
\le
S_{\rm bulk}(\mathcal Q;I,b,R).
\]
Combining \eqref{eq:bulk-tail-split-derivative},
\eqref{eq:tail-envelope-section4}, and
\eqref{eq:boxwise-bulk-bound-section4}, define
\[
S_{\rm multi}(I,b;\mathcal Q,R)
:=
S_{\rm bulk}(\mathcal Q;I,b,R)+T(I,b,R).
\]
Then
\[
S_{\mathrm{multi}}(I,b;\mathcal Q,R)
\ge
\sup_{p\in I\cap(1,\infty)}|\partial_pH(p,b)|.
\]
For any finite admissible family
\(\mathcal A\subset\{(\mathcal Q,R):\mathcal Q \text{ is a finite partition of }Q_{R/b},\ R\ge\max\{b,2\|\Delta\|_\infty\}\}\), define
\[
S_{\mathrm{multi},\mathcal A}(I,b)
:=\min_{(\mathcal Q,R)\in\mathcal A}S_{\mathrm{multi}}(I,b;\mathcal Q,R).
\]
Then \(S_{\mathrm{multi},\mathcal A}(I,b)\ge\sup_{p\in I\cap(1,\infty)}|\partial_pH(p,b)|\). Hence, in Section~4, one may take \(S(I,b)=S_{\mathrm{1D}}(I,b)\) for \(d=1\) and \(S(I,b)=S_{\mathrm{multi},\mathcal A}(I,b)\) for \(d\ge2\).
\section{Proofs for Section~\ref{sec:optimal-shape-properties}}
\label{app:optimal-shape-properties-proofs}
\subsection{Proof of Proposition~\ref{prop:sensitivity-scaling-active-coordinates}}
\label{app:sensitivity-scaling-active-proof}
For \(k\ge1\), let \(f_{p,b}^{(k)}\) denote the \(k\)-dimensional generalized Gaussian density. For every \(p\in[1,\infty]\), \(b>0\), and
\(z\in\mathbb R^k\),
\[
f_{p,b}^{(k)}(z)
=
b^{-k}f_{p,1}^{(k)}(z/b).
\]
First let \(c>0\). For \(z=cx\), \(dz=c^d\,dx\), and
\[
f_{p,cb}^{(d)}(cx)=c^{-d}f_{p,b}^{(d)}(x),
\qquad
f_{p,cb}^{(d)}(c(x+\Delta))=c^{-d}f_{p,b}^{(d)}(x+\Delta).
\]
Hence
\[
\begin{aligned}
\delta_{p,cb}^{(d)}(c\Delta)
&=
\int_{\mathbb R^d}
\left(
f_{p,cb}^{(d)}(z)
-
e^\varepsilon f_{p,cb}^{(d)}(z+c\Delta)
\right)_+\,dz \\
&=
\int_{\mathbb R^d}
\left(
c^{-d}f_{p,b}^{(d)}(x)
-
e^\varepsilon c^{-d}f_{p,b}^{(d)}(x+\Delta)
\right)_+
c^d\,dx \\
&=
\int_{\mathbb R^d}
\left(
f_{p,b}^{(d)}(x)
-
e^\varepsilon f_{p,b}^{(d)}(x+\Delta)
\right)_+\,dx \\
&=
\delta_{p,b}^{(d)}(\Delta).
\end{aligned}
\]
Thus, for every \(b>0\),
\[
\delta_{p,cb}^{(d)}(c\Delta)
=
\delta_{p,b}^{(d)}(\Delta).
\]
Hence
\[
\begin{aligned}
b^{(d)}(p;c\Delta)
&=
\inf\left\{s>0:\delta_{p,s}^{(d)}(c\Delta)\le\delta\right\} \\
&=
\inf\left\{cb>0:\delta_{p,cb}^{(d)}(c\Delta)\le\delta\right\} \\
&=
c\inf\left\{b>0:\delta_{p,b}^{(d)}(\Delta)\le\delta\right\} \\
&=
c\,b^{(d)}(p;\Delta).
\end{aligned}
\]
Since \(\mathcal L(p,b)=b^r\nu(p)\),
\[
\mathcal L\bigl(p,b^{(d)}(p;c\Delta)\bigr)
=
c^r
\mathcal L\bigl(p,b^{(d)}(p;\Delta)\bigr).
\]
The factor \(c^r\) is positive and independent of \(p\). Hence, for every
\(I\subseteq[1,\infty]\),
\[
\operatorname*{arg\,min}_{p\in I}
\mathcal L\bigl(p,b^{(d)}(p;c\Delta)\bigr)
=
\operatorname*{arg\,min}_{p\in I}
\mathcal L\bigl(p,b^{(d)}(p;\Delta)\bigr).
\]
It remains to prove that zero-sensitivity coordinates do not affect the
hockey-stick divergence. Let
$
A:=\{i:\Delta_i>0\}, 
d':=|A|,
$
and let \(\Delta_A\in(0,\infty)^{d'}\) be the vector of nonzero coordinates.
After a permutation of coordinates, write
$
\Delta=(\Delta_A,0),
z=(z_A,z_0)\in\mathbb R^{d'}\times\mathbb R^{d-d'}.
$
The product density factorizes as
\[
f_{p,b}^{(d)}(z)
=
f_{p,b}^{(d')}(z_A)f_{p,b}^{(d-d')}(z_0),
\]
and, because the inactive coordinates have zero shift,
\[
f_{p,b}^{(d)}(z+\Delta)
=
f_{p,b}^{(d')}(z_A+\Delta_A)f_{p,b}^{(d-d')}(z_0).
\]
Thus
\[
\begin{aligned}
\delta_{p,b}^{(d)}(\Delta)
&=
\int_{\mathbb R^{d'}}
\int_{\mathbb R^{d-d'}}
\Big[
\bigl(
f_{p,b}^{(d')}(z_A)
-
e^\varepsilon f_{p,b}^{(d')}(z_A+\Delta_A)
\bigr)
f_{p,b}^{(d-d')}(z_0)
\Big]_+
\,dz_0\,dz_A .
\end{aligned}
\]
Since \(f_{p,b}^{(d-d')}(z_0)\ge0\),
\[
\Big[
\bigl(
f_{p,b}^{(d')}(z_A)
-
e^\varepsilon f_{p,b}^{(d')}(z_A+\Delta_A)
\bigr)
f_{p,b}^{(d-d')}(z_0)
\Big]_+
=
\bigl(
f_{p,b}^{(d')}(z_A)
-
e^\varepsilon f_{p,b}^{(d')}(z_A+\Delta_A)
\bigr)_+
f_{p,b}^{(d-d')}(z_0).
\]
Therefore,
\[
\begin{aligned}
\delta_{p,b}^{(d)}(\Delta)
&=
\int_{\mathbb R^{d'}}
\bigl(
f_{p,b}^{(d')}(z_A)
-
e^\varepsilon f_{p,b}^{(d')}(z_A+\Delta_A)
\bigr)_+
\left[
\int_{\mathbb R^{d-d'}}
f_{p,b}^{(d-d')}(z_0)\,dz_0
\right]dz_A \\
&=
\int_{\mathbb R^{d'}}
\bigl(
f_{p,b}^{(d')}(z_A)
-
e^\varepsilon f_{p,b}^{(d')}(z_A+\Delta_A)
\bigr)_+
\,dz_A \\
&=
\delta_{p,b}^{(d')}(\Delta_A).
\end{aligned}
\]
Hence
\[
\delta_{p,b}^{(d)}(\Delta)\le\delta
\quad\Longleftrightarrow\quad
\delta_{p,b}^{(d')}(\Delta_A)\le\delta,
\]
Taking infima over \(b>0\) yields
\[
b^{(d)}(p;\Delta)=b^{(d')}(p;\Delta_A).
\]
Consequently, for every \(I\subseteq[1,\infty]\),
\[
\operatorname*{arg\,min}_{p\in I}
\mathcal L\bigl(p,b^{(d)}(p;\Delta)\bigr)
=
\operatorname*{arg\,min}_{p\in I}
\mathcal L\bigl(p,b^{(d')}(p;\Delta_A)\bigr).
\]
This proves the proposition.
\subsection{Proof of Theorem~\ref{thm:small-delta-p-to-one}}
\label{app:small-delta-proof}
For any \(\rho>0\), it suffices to prove that \(\inf_{p\in[1+\rho,\infty]} L_m\bigl(p,b_\delta(p;\Delta)\bigr)\to\infty\) as \(\delta\downarrow0\), while \(L_m\bigl(1,b_\delta(1;\Delta)\bigr)\) remains bounded.

We first consider the one-dimensional case. For \(\Delta>0\), \(1\le p<\infty\), let
\[
f_{p,b}^{(1)}(z)=\frac{p}{2b\Gamma(1/p)}\exp\left\{-\left|\frac{z}{b}\right|^p\right\},
\]
and \(f_{\infty,b}^{(1)}\) be the uniform density on \([-b,b]\). Set
\[
\delta_{p,b}^{(1)}(\Delta)=\int_{\mathbb R}\left(f_{p,b}^{(1)}(z)-e^\varepsilon f_{p,b}^{(1)}(z+\Delta)\right)_+\,dz,
\qquad
b_\delta^{(1)}(p;\Delta)=\inf\{b>0:\delta_{p,b}^{(1)}(\Delta)\le\delta\}.
\]
We claim that
\begin{equation}
\label{eq:small-delta-1d-scale-diverges}
\inf_{p\in[1+\rho,\infty]} b_\delta^{(1)}(p;\Delta)\to\infty
\qquad
(\delta\downarrow0).
\end{equation}
Assume that \eqref{eq:small-delta-1d-scale-diverges} is false. Then there exist \(B<\infty\), \(\delta_n\downarrow0\), \(p_n\in[1+\rho,\infty]\), and \(b_n\le B\) such that \(\delta_{p_n,b_n}^{(1)}(\Delta)\le\delta_n\). Define
\[
A_n=\left\{z:f_{p_n,b_n}^{(1)}(z)\ge e^{\varepsilon+1}f_{p_n,b_n}^{(1)}(z+\Delta)\right\}.
\]
On \(A_n\),
\[
e^\varepsilon f_{p_n,b_n}^{(1)}(z+\Delta)\le e^{-1}f_{p_n,b_n}^{(1)}(z).
\]
Thus, with \(Z_n\sim f_{p_n,b_n}^{(1)}\),
\[
\begin{aligned}
\delta_n
&\ge
\delta_{p_n,b_n}^{(1)}(\Delta) \\
&\ge
\int_{A_n}
\left(f_{p_n,b_n}^{(1)}(z)-e^\varepsilon f_{p_n,b_n}^{(1)}(z+\Delta)\right)\,dz \\
&\ge
(1-e^{-1})\int_{A_n}f_{p_n,b_n}^{(1)}(z)\,dz \\
&=
(1-e^{-1})\mathbb P(Z_n\in A_n).
\end{aligned}
\]
Hence \(\mathbb P(Z_n\in A_n)\to0\).
Passing to a subsequence, either
\[
\text{(I)}\quad p_n\to\bar p\in[1+\rho,\infty),
\qquad\text{or}\qquad
\text{(II)}\quad p_n\to\infty .
\]
In case (I), for \(z\ge0\) and \(p<\infty\),
\[
\log\frac{f_{p,b}^{(1)}(z)}{f_{p,b}^{(1)}(z+\Delta)}
=
\frac{(z+\Delta)^p-z^p}{b^p}
\ge
\frac{p\Delta z^{p-1}}{b^p}.
\]
Hence \([t_n,\infty)\subseteq A_n\), where
\[
t_n=\left(\frac{(\varepsilon+1)b_n^{p_n}}{p_n\Delta}\right)^{1/(p_n-1)}.
\]
Since \(p_n\ge1+\rho\) and \(b_n\le B\), the ratios \(t_n/b_n\) are bounded. Therefore, for some \(T<\infty\),
\[
\mathbb P(Z_n\in A_n)\ge \mathbb P(Y_n\ge T),
\qquad
Y_n\sim f_{p_n,1}^{(1)}.
\]
By compactness and positivity of \(f_{p,1}^{(1)}\) near \(\bar p\), the right-hand side is bounded below by a positive constant, contradicting \(\mathbb P(Z_n\in A_n)\to0\).\\
In case (II), let
\[
J_n=\left[\max\{0,b_n-\Delta/4\},\,b_n\right].
\]
For finite \(p_n\) and \(z\in J_n\),
\[
\log\frac{f_{p_n,b_n}^{(1)}(z)}{f_{p_n,b_n}^{(1)}(z+\Delta)}
=
\frac{(z+\Delta)^{p_n}-z^{p_n}}{b_n^{p_n}}.
\]
If \(b_n\ge\Delta/4\), then
\[
\frac{(z+\Delta)^{p_n}-z^{p_n}}{b_n^{p_n}}
\ge
\left(1+\frac{3\Delta}{4b_n}\right)^{p_n}-1
\ge
\left(1+\frac{3\Delta}{4B}\right)^{p_n}-1\to\infty.
\]
If \(b_n<\Delta/4\), then
\[
\frac{(z+\Delta)^{p_n}-z^{p_n}}{b_n^{p_n}}
\ge
\left(\frac{\Delta}{b_n}\right)^{p_n}-1\to\infty.
\]
Thus \(J_n\subseteq A_n\) for all large \(n\); for \(p_n=\infty\), the inclusion is immediate. Let
\[
m_0:=\inf_{p\ge1}\frac{p}{\Gamma(1/p)}>0.
\]
For \(0\le z\le b\),
\[
f_{p,b}^{(1)}(z)\ge\frac{m_0e^{-1}}{2b},
\qquad 1\le p\le\infty.
\]
Therefore
\[
\mathbb P(Z_n\in A_n)
\ge
\mathbb P(Z_n\in J_n)
\ge
\frac{m_0e^{-1}}{2}\frac{|J_n|}{b_n}
\ge
\frac{m_0e^{-1}}{2}\min\left\{\frac{\Delta}{4B},1\right\}>0,
\]
again contradicting \(\mathbb P(Z_n\in A_n)\to0\). This proves \eqref{eq:small-delta-1d-scale-diverges}.

Now let \(\Delta\in[0,\infty)^d\setminus\{0\}\). Choose \(j\) with \(\Delta_j>0\). The one-dimensional divergence is obtained by restricting the \(d\)-dimensional supremum to sets depending only on coordinate \(j\). Hence
\[
\delta_{p,b}^{(1)}(\Delta_j)\le \delta_{p,b}^{(d)}(\Delta).
\]
Thus
\[
b_\delta(p;\Delta)\ge b_\delta^{(1)}(p;\Delta_j).
\]
Equation \eqref{eq:small-delta-1d-scale-diverges} then gives
\[
\inf_{p\in[1+\rho,\infty]} b_\delta(p;\Delta)\to\infty.
\]
Since \(M_m\) is continuous and positive on \([1+\rho,\infty]\),
\[
\inf_{p\in[1+\rho,\infty]} L_m\bigl(p,b_\delta(p;\Delta)\bigr)
=
\inf_{p\in[1+\rho,\infty]}M_m(p)b_\delta(p;\Delta)^m
\to\infty.
\]
For \(p=1\), the mechanism with scale \(\|\Delta\|_1/\varepsilon\) satisfies pure \(\varepsilon\)-DP. Hence
\[
b_\delta(1;\Delta)\le\frac{\|\Delta\|_1}{\varepsilon},
\]
and
\[
L_m\bigl(1,b_\delta(1;\Delta)\bigr)
=
M_m(1)b_\delta(1;\Delta)^m
\le
\Gamma(m+1)\left(\frac{\|\Delta\|_1}{\varepsilon}\right)^m.
\]
Thus, for sufficiently small \(\delta\), no minimizer lies in \([1+\rho,\infty]\). Since \(\rho>0\) is arbitrary, \(p^\star(\delta)\to1\) as \(\delta\downarrow0\).
\subsection{Proof of Theorem~\ref{thm:small-eps-p-to-infty}}
\label{app:small-eps-proof}

We first consider the \(\varepsilon=0\) variational bound for the \(m\)-th absolute moment.

\begin{proposition}
\label{prop:eps0-uniform-moment}
Fix \(m>0\), \(\Delta>0\), and
\(\delta\in(0,m/(m+1)]\). Let \(Z\) have an even density \(f\) on
\(\mathbb R\), nonincreasing on \([0,\infty)\), with
\(\mathbb E|Z|^m<\infty\). Let \(P\) be the law of \(Z\), and let
\(P_\Delta\) be the law of \(Z+\Delta\). Among all such densities satisfying
$
\|P-P_\Delta\|_{\mathrm{TV}}\le\delta,
$
the quantity \(\mathbb E|Z|^m\) is uniquely minimized by
\[
Z\sim
\mathrm{Unif}\left[
-\frac{\Delta}{2\delta},
\frac{\Delta}{2\delta}
\right].
\]
The minimum value is
\[
\frac{1}{m+1}
\left(
\frac{\Delta}{2\delta}
\right)^m.
\]
\end{proposition}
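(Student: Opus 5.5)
The plan is to turn the total-variation constraint into a constraint on the profile of \(f\), and then solve a low-dimensional extremal problem for \(\mathbb E|Z|^m\).

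\textbf{Step 1: compute the total variation.} Since \(f\) is even and nonincreasing on \([0,\infty)\), we have \(f(z)\ge f(z+\Delta)\) whenever \(|z|\le|z+\Delta|\), that is, whenever \(z\ge-\Delta/2\). Hence, exactly as in the proof of Theorem~\ref{thm:1d-tight-calibration} with \(\varepsilon=0\),
\[
\|P-P_\Delta\|_{\mathrm{TV}}=\int_{-\Delta/2}^{\infty}\bigl(f(z)-f(z+\Delta)\bigr)\,dz=\mathbb P\bigl(|Z|\le \Delta/2\bigr).
\]
So the constraint becomes \(\mu:=\mathbb P(|Z|\le\Delta/2)\le\delta\). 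Set \(c:=f(\Delta/2)\), taking the right limit if needed. Monotonicity then gives two facts:
\[
c\,\Delta\le\mu\le\delta, \qquad f(x)\le c \text{ for } |x|\ge\Delta/2 .
\]

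\textbf{Step 2: reduce to two parameters by a bathtub argument.} Write \(\mathbb E|Z|^m=\int_0^\infty m s^{m-1}\bigl(1-G(s)\bigr)\,ds\) with \(G(s)=\mathbb P(|Z|\le s)\). For fixed \((\mu,c)\), Step 1 gives
\[
G(s)\le \mu \ \ (s\le\Delta/2), \qquad G(s)\le \min\{1,\ \mu+2c(s-\Delta/2)\} \ \ (s\ge\Delta/2).
\]
Both bounds are attained in the limit by the following configuration: an atom of mass \(\mu-c\Delta\) at \(0\), density \(c\) on \([-\Delta/2,\Delta/2]\), and density \(c\) beyond \(\Delta/2\) until the mass is exhausted. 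So \(\mathbb E|Z|^m\ge \Psi(\mu,c)\), where \(\Psi\) is an explicit function. It is essentially the \(m\)-th moment of the mixture "atom of mass \(\mu-c\Delta\) plus uniform of density \(c\) on \([-R,R]\)", with \(2cR=1-\mu+c\Delta\).

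Note the trade-off this exposes. A larger inner atom forces a smaller \(c\), and hence a longer tail. So one cannot simply concentrate mass near \(0\); this is exactly why the hypothesis \(\delta\le m/(m+1)\) is needed.

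\textbf{Step 3: minimize \(\Psi\) over the constraint region (the main obstacle).} The region is \(\{0<c\le\mu/\Delta,\ \mu\le\delta\}\). Increasing \(\mu\) with \(c\) fixed only adds mass closer to the origin, so the minimum lies at \(\mu=\delta\). It then remains to show that \(\Psi(\delta,\cdot)\) is strictly decreasing on \((0,\delta/\Delta]\), so the optimum is \(c=\delta/\Delta\), which makes the atom vanish.

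For this I would write \(\Psi(\delta,c)=\frac{c\,2R^{m+1}}{m+1}\) with \(R=(1-\delta+c\Delta)/(2c)\), plus the zero contribution of the atom, and differentiate in \(c\). The sign of \(\partial_c\Psi\) reduces, after clearing positive factors, to an inequality of the form \(m(1-\delta+c\Delta)\ge (m+1)c\Delta\)-type. At the endpoint \(c\Delta=\delta\) this is precisely \(\delta\le m/(m+1)\), and I expect monotonicity in \(c\) to make it hold on the whole range. Checking this inequality carefully, including the boundary case \(\delta=m/(m+1)\), is where I expect the real work to be.

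\textbf{Step 4: uniqueness and the value.} At \(\mu=\delta\), \(c=\delta/\Delta\) the extremal configuration is the uniform law on \([-\Delta/(2\delta),\Delta/(2\delta)]\). Its \(m\)-th absolute moment is \(\frac{1}{m+1}\bigl(\frac{\Delta}{2\delta}\bigr)^m\), which gives the stated minimum value. For uniqueness, equality in the bathtub bounds forces \(G\) to coincide with the extremal profile for almost every \(s\). Together with the strict monotonicity from Step 3, this forces \(f\) to equal this uniform density almost everywhere.
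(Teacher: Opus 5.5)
\textbf{Verdict.} Your proposal is correct once one bound in Step~2 is sharpened (see the last paragraph). It reaches the same one-variable problem as the paper by a different route.

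\textbf{Comparison with the paper.} The paper writes the even unimodal density as a mixture of uniforms, $Z\mid T=t\sim\mathrm{Unif}[-t,t]$ with mixing law $G(dt)=-2t\,df(t)$. Both the constraint $\mathbb E\min\{1,\Delta/(2T)\}$ and the objective $\mathbb E T^m/(m+1)$ are then linear in the law of $T$. The paper first reduces to $\|P-P_\Delta\|_{\mathrm{TV}}=\delta$ by rescaling $Z$. It then applies Jensen to $T^{-1}$ conditionally on $\{T>\Delta/2\}$, leaving the minimization of $(1-\theta)^{m+1}/(\delta-\theta)^m$ over $\theta=\mathbb P(T\le\Delta/2)$.

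You instead work with the distribution function of $|Z|$ and pointwise density bounds at $\Delta/2$. With $x=c\Delta$ you get $\Psi(\delta,c)\propto(1-\delta+x)^{m+1}/x^m$, which is exactly the paper's function under $\theta=\delta-x$. Your atom of mass $\mu-c\Delta$ is the paper's $\theta$, since $f(\Delta/2+)=\tfrac12\mathbb E[T^{-1}\mathbf 1_{\{T>\Delta/2\}}]$.

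What each route buys:
\begin{itemize}
\item Yours is more elementary: no Stieltjes mixing measure and no Jensen. It also handles $\mu<\delta$ by monotonicity in $\mu$ rather than by rescaling.
\item The paper's makes the single-uniform extremizer fall out of the Jensen equality case, and it extends to any objective that is linear in the law of $T$.
\end{itemize}

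\textbf{Step 3 is immediate.} The step you flagged as the main obstacle needs no further work. One has
\[
\partial_c\log\Psi(\delta,c)=\frac{(m+1)\Delta}{1-\delta+c\Delta}-\frac mc ,
\]
which is $\le0$ iff $c\Delta\le m(1-\delta)$. On your range $c\Delta\le\delta\le m(1-\delta)$, with equality only at $c\Delta=\delta=m/(m+1)$. So strict monotonicity on $(0,\delta/\Delta]$, which you need for uniqueness, survives even the boundary case.

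\textbf{Correction to Step 2.} For $s\le\Delta/2$ you state only $G(s)\le\mu$. Your configuration does not attain this bound, and the bound is too weak to prove the claim. On its own it gives the configuration ``atom of mass $\mu$ at $0$ plus density $c$ on $\Delta/2\le|x|\le\Delta/2+(1-\mu)/(2c)$''. At $\mu=\delta$, $c=\delta/\Delta$ its $m$-th moment is
\[
\tfrac{1}{m+1}\bigl(\tfrac{\Delta}{2\delta}\bigr)^m-\tfrac{\delta}{m+1}\bigl(\tfrac{\Delta}{2}\bigr)^m ,
\]
which is strictly below the target value.

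What you need is the sharper bound $G(s)\le\mu-c(\Delta-2s)$ for $0\le s\le\Delta/2$. This follows from $f\ge c$ on $[-\Delta/2,\Delta/2]$, the same fact you already used to get $c\Delta\le\mu$. Combined with your tail bound, it gives
\[
G(s)\le\min\{1,\ \mu-c\Delta+2cs\},\qquad s\ge0 .
\]
The right-hand side is exactly the distribution function of $|Y|$ for your atom-plus-uniform mixture, so it yields your formula for $\Psi$. The rest of your argument then goes through, including uniqueness.
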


\begin{proof}
For an even density nonincreasing on \([0,\infty)\),
\[
\|P-P_\Delta\|_{\mathrm{TV}}
=
\int_{-\Delta/2}^{\Delta/2}f(z)\,dz
=
\mathbb P(|Z|\le\Delta/2).
\]
Take a right-continuous version of \(f\) on \([0,\infty)\), and define a measure \(G\) on \((0,\infty)\) by
\[
G(dt)=-2t\,df(t).
\]
Since \(tf(t)\to0\) as \(t\downarrow0\) and \(t\to\infty\),
\[
G((0,\infty))
=
-2\int_{(0,\infty)}t\,df(t)
=
2\int_0^\infty f(t)\,dt
=
1.
\]
Thus \(G\) is a probability measure. Let \(T\sim G\), and conditionally on \(T=t\), let
\[
Z\mid T=t\sim\mathrm{Unif}[-t,t].
\]
For almost every \(z\),
\[
\int_{[|z|,\infty)}
\frac{1}{2t}\,G(dt)
=
-\int_{[|z|,\infty)}df(t)
=
f(z),
\]
so this mixture has density \(f\). Hence
\[
\mathbb P(|Z|\le\Delta/2)
=
\mathbb E
\min\left\{
1,\frac{\Delta}{2T}
\right\},
\qquad
\mathbb E|Z|^m
=
\frac{1}{m+1}\mathbb ET^m.
\]
Then it is enough to consider
\[
\|P-P_\Delta\|_{\mathrm{TV}}=\delta.
\]
Indeed, if the inequality were strict, then for \(Z_c=cZ\), \(0<c<1\),
\[
\|P_c-(P_c)_\Delta\|_{\mathrm{TV}}
=
\mathbb P\left(
|Z|\le\frac{\Delta}{2c}
\right)
\longrightarrow1
\qquad(c\downarrow0),
\]
while
\[
\mathbb E|Z_c|^m=c^m\mathbb E|Z|^m.
\]
By continuity, some \(c<1\) would satisfy the constraint with equality and have a smaller \(m\)-th absolute moment.
Let
\[
\theta:=\mathbb P(T\le\Delta/2).
\]
Then \(0\le\theta<\delta\) and
\[
\delta
=
\theta
+
\frac{\Delta}{2}
\mathbb E\left[
T^{-1}\mathbf 1_{\{T>\Delta/2\}}
\right],
\]
so
\[
\mathbb E\left[
T^{-1}\mid T>\Delta/2
\right]
=
\frac{\delta-\theta}
{(\Delta/2)(1-\theta)}.
\]
By Jensen's inequality,
\[
\mathbb E\left[
T^m\mid T>\Delta/2
\right]
=
\mathbb E\left[
(T^{-1})^{-m}\mid T>\Delta/2
\right]
\ge
\left(
\mathbb E[T^{-1}\mid T>\Delta/2]
\right)^{-m}.
\]
Therefore
\[
\mathbb ET^m
\ge
\left(\frac{\Delta}{2}\right)^m
\frac{(1-\theta)^{m+1}}{(\delta-\theta)^m}.
\]
Moreover,
\[
\frac{d}{d\theta}
\log
\frac{(1-\theta)^{m+1}}{(\delta-\theta)^m}
=
\frac{m-(m+1)\delta+\theta}
{(1-\theta)(\delta-\theta)}
\ge0,
\]
because \(\delta\le m/(m+1)\). Hence
\[
\mathbb ET^m
\ge
\left(
\frac{\Delta}{2\delta}
\right)^m,
\]
and therefore
\[
\mathbb E|Z|^m
\ge
\frac{1}{m+1}
\left(
\frac{\Delta}{2\delta}
\right)^m.
\]
Equality requires \(\theta=0\) and equality in Jensen's inequality. Thus
\[
T=\frac{\Delta}{2\delta}
\qquad\text{a.s.},
\]
and hence
\[
Z\sim
\mathrm{Unif}\left[
-\frac{\Delta}{2\delta},
\frac{\Delta}{2\delta}
\right].
\]
\end{proof}
For \(1\le p\le\infty\), let \(P_{p,b}\) be the one-dimensional generalized Gaussian law with scale \(b\), and let \(P_{p,b,\Delta}\) be its shift by \(\Delta\). At \(\varepsilon=0\),
\[
D_0(P_{p,b}\|P_{p,b,\Delta})
=
\|P_{p,b}-P_{p,b,\Delta}\|_{\mathrm{TV}}.
\]
By Proposition~\ref{prop:eps0-uniform-moment}, for every finite \(p\),
\[
L_m\bigl(p,b_0(p;\Delta)\bigr)
>
L_m\bigl(\infty,b_0(\infty;\Delta)\bigr)
=
\frac{1}{m+1}
\left(
\frac{\Delta}{2\delta}
\right)^m.
\]
Fix \(P>1\). The map
\[
(p,b,\varepsilon)
\longmapsto
D_\varepsilon(P_{p,b}\|P_{p,b,\Delta})
\]
is continuous on compact subsets of
\([1,P]\times(0,\infty)\times[0,\infty)\). For fixed \(p\) and \(\varepsilon\), it is nonincreasing in \(b\), with
\[
\lim_{b\downarrow0}
D_\varepsilon(P_{p,b}\|P_{p,b,\Delta})
=
1,
\qquad
\lim_{b\to\infty}
D_\varepsilon(P_{p,b}\|P_{p,b,\Delta})
=
0.
\]
Thus
\[
D_\varepsilon
\left(
P_{p,b_\varepsilon(p;\Delta)}
\middle\|
P_{p,b_\varepsilon(p;\Delta),\Delta}
\right)
=
\delta.
\]
At \(\varepsilon=0\), the map is strictly decreasing, so \(b_0(p;\Delta)\) is the unique solution of
\[
D_0(P_{p,b}\|P_{p,b,\Delta})=\delta.
\]
We next show
\[
\sup_{p\in[1,P]}
\left|
b_\varepsilon(p;\Delta)-b_0(p;\Delta)
\right|
\longrightarrow0
\qquad
(\varepsilon\downarrow0).
\]
By continuity of \(p\mapsto b_0(p;\Delta)\), choose \(0<b_-<b_+\) such that
\[
b_-<b_0(p;\Delta)<b_+,
\qquad
p\in[1,P].
\]
Then
\[
D_0(P_{p,b_-}\|P_{p,b_-,\Delta})>\delta,
\qquad
D_0(P_{p,b_+}\|P_{p,b_+,\Delta})<\delta,
\]
uniformly for \(p\in[1,P]\). By continuity, the same inequalities hold for all sufficiently small \(\varepsilon>0\). Hence
\[
b_\varepsilon(p;\Delta)\in[b_-,b_+],
\qquad
p\in[1,P].
\]
Suppose the uniform convergence fails. Then, for some \(\gamma>0\), there exist
\(\varepsilon_n\downarrow0\) and \(p_n\in[1,P]\) such that
\[
\left|
b_{\varepsilon_n}(p_n;\Delta)
-
b_0(p_n;\Delta)
\right|
\ge\gamma.
\]
Passing to a subsequence,
\[
p_n\to p_0\in[1,P],
\qquad
b_{\varepsilon_n}(p_n;\Delta)\to b^\ast\in[b_-,b_+].
\]
Since
\[
D_{\varepsilon_n}
\left(
P_{p_n,b_{\varepsilon_n}(p_n;\Delta)}
\middle\|
P_{p_n,b_{\varepsilon_n}(p_n;\Delta),\Delta}
\right)
=
\delta,
\]
continuity gives
\[
D_0(P_{p_0,b^\ast}\|P_{p_0,b^\ast,\Delta})
=
\delta.
\]
By uniqueness,
\[
b^\ast=b_0(p_0;\Delta).
\]
Also
\[
b_0(p_n;\Delta)\to b_0(p_0;\Delta),
\]
a contradiction. Therefore
\[
\sup_{p\in[1,P]}
\left|
b_\varepsilon(p;\Delta)-b_0(p;\Delta)
\right|
\to0.
\]
Since
\[
L_m(p,b)=M_m(p)b^m
\]
and \(M_m\) is continuous on \([1,P]\),
\[
\sup_{p\in[1,P]}
\left|
L_m\bigl(p,b_\varepsilon(p;\Delta)\bigr)
-
L_m\bigl(p,b_0(p;\Delta)\bigr)
\right|
\longrightarrow0.
\]
For \(p=\infty\),
\[
D_\varepsilon(P_{\infty,b}\|P_{\infty,b,\Delta})
=
\min\left\{1,\frac{\Delta}{2b}\right\},
\]
so
\[
b_\varepsilon(\infty;\Delta)
=
\frac{\Delta}{2\delta},
\qquad
L_m\bigl(\infty,b_\varepsilon(\infty;\Delta)\bigr)
=
L_m\bigl(\infty,b_0(\infty;\Delta)\bigr).
\]
Define
\[
g_P
:=
\min_{p\in[1,P]}
\left\{
L_m\bigl(p,b_0(p;\Delta)\bigr)
-
L_m\bigl(\infty,b_0(\infty;\Delta)\bigr)
\right\}.
\]
By Proposition~\ref{prop:eps0-uniform-moment} and continuity,
\[
g_P>0.
\]
Choose \(\varepsilon_P>0\) such that, for \(0<\varepsilon<\varepsilon_P\),
\[
\sup_{p\in[1,P]}
\left|
L_m\bigl(p,b_\varepsilon(p;\Delta)\bigr)
-
L_m\bigl(p,b_0(p;\Delta)\bigr)
\right|
\le
\frac{g_P}{2}.
\]
Then, for every \(p\in[1,P]\),
\[
\begin{aligned}
L_m\bigl(p,b_\varepsilon(p;\Delta)\bigr)
-
L_m\bigl(\infty,b_\varepsilon(\infty;\Delta)\bigr)
&\ge
L_m\bigl(p,b_0(p;\Delta)\bigr)
-
L_m\bigl(\infty,b_0(\infty;\Delta)\bigr)
-
\frac{g_P}{2}
\ge
\frac{g_P}{2}
>
0.
\end{aligned}
\]
Thus
\[
p^\star(\varepsilon)>P,
\qquad
0<\varepsilon<\varepsilon_P.
\]
Since \(P>1\) is arbitrary,
\[
p^\star(\varepsilon)\longrightarrow\infty
\qquad
(\varepsilon\downarrow0).
\]

\subsection{Proof of Theorem~\ref{thm:joint-high-privacy}}
\label{app:joint-high-privacy-proof}
For \(1\le p<\infty\), let
\[
g_p(x):=\frac{p}{2\Gamma(1/p)}e^{-|x|^p},
\qquad
f_{p,b}(z)=\frac{1}{b}g_p\left(\frac{z}{b}\right),
\]
and let
\[
g_\infty(x):=\frac12\mathbf 1_{\{|x|\le1\}}.
\]
Recall that
\[
L_m(p,b)=M_m(p)b^m,
\]
where
\[
M_m(p)=\frac{\Gamma((m+1)/p)}{\Gamma(1/p)},
\qquad 1\le p<\infty,
\]
and
\[
M_m(\infty)=\frac{1}{m+1}.
\]
In particular, \(M_m\) extends to a positive continuous function on
the interval \([1,\infty]\).
For \(s\ge0\), define
\[
H_{\varepsilon,p}(s)
:=
\int_{\mathbb R}
\bigl(g_p(x)-e^\varepsilon g_p(x+s)\bigr)_+\,dx.
\]
A change of variables gives
\begin{equation}
\label{eq:standardized-hockey-stick}
\delta_{\varepsilon,p,b}^{(1)}(\Delta)
=
H_{\varepsilon,p}\left(\frac{\Delta}{b}\right).
\end{equation}
We first establish the joint first-order behaviour of
\(H_{\varepsilon,p}\) as \(\varepsilon\downarrow0\).

\begin{lemma}
\label{lem:joint-first-order-hockey-stick}
Suppose that
\[
\varepsilon_n\downarrow0,
\qquad
p_n\to p_0\in[1,\infty],
\qquad
\rho_n\to\rho\in(0,\infty).
\]
Then
\begin{equation}
\label{eq:joint-first-order-hockey-stick}
\frac{
H_{\varepsilon_n,p_n}(\varepsilon_n/\rho_n)
}{
\varepsilon_n
}
\longrightarrow h_{p_0}(\rho),
\end{equation}
where
\[
h_1(\rho)=\frac{(1-\rho)_+}{2\rho},
\qquad
h_\infty(\rho)=\frac{1}{2\rho},
\]
and, for \(1<p<\infty\),
\[
h_p(\rho)=\frac{A_p(\rho)}{\rho},
\]
with
\[
A_p(\rho)
=
\int_{(\rho/p)^{1/(p-1)}}^\infty
\left(px^{p-1}-\rho\right)
\frac{p}{2\Gamma(1/p)}e^{-x^p}\,dx.
\]
\end{lemma}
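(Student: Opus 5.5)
The plan is to work directly with the one-dimensional tail representation from Lemma~\ref{lem:phi-monotone} and the proof of Theorem~\ref{thm:1d-tight-calibration}. This gives a formula that covers all limits $p_0\in(1,\infty]$ at once, with $p_0=1$ handled separately.

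\textbf{Reduction to a boundary point.} Put $s_n:=\varepsilon_n/\rho_n\downarrow0$ and write $g_n:=g_{p_n}$. For $p_n>1$, let $x_n^\star$ be the unique solution of $|x+s_n|^{p_n}-|x|^{p_n}=\varepsilon_n=\rho_n s_n$. Then
\[
H_{\varepsilon_n,p_n}(s_n)=\int_{x_n^\star}^\infty g_n(x)\,dx-e^{\varepsilon_n}\int_{x_n^\star+s_n}^\infty g_n(x)\,dx .
\]
Writing $T_n(y):=\int_y^\infty g_n$, this becomes
\[
\frac{H_{\varepsilon_n,p_n}(s_n)}{\varepsilon_n}
=\frac1{\rho_n}\Bigl[\frac1{s_n}\int_{x_n^\star}^{x_n^\star+s_n}g_n(x)\,dx-\frac{e^{\varepsilon_n}-1}{s_n}\,T_n(x_n^\star+s_n)\Bigr].
\]
By the mean value theorem, $\rho_n=p_n\,\mathrm{sgn}(\xi_n)|\xi_n|^{p_n-1}$ for some $\xi_n\in(x_n^\star,x_n^\star+s_n)$. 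So $\xi_n=(\rho_n/p_n)^{1/(p_n-1)}>0$, and $|x_n^\star-\xi_n|\le s_n\to0$.

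\textbf{Case $1<p_0<\infty$.} Here $\xi_n\to\xi_0:=(\rho/p_0)^{1/(p_0-1)}$. Both $g_n\to g_{p_0}$ and $T_n\to T_{p_0}$ converge locally uniformly, because the normalizing constants and the maps $p\mapsto e^{-|x|^p}$ are jointly continuous. Therefore the first bracketed term tends to $g_{p_0}(\xi_0)$, and the second tends to $\rho\,T_{p_0}(\xi_0)$, using $(e^{\varepsilon_n}-1)/s_n\to\rho$. Since $-g_p'(x)=px^{p-1}g_p(x)$ for $x>0$, integration by parts gives
\[
A_{p_0}(\rho)=\int_{\xi_0}^\infty(px^{p-1}-\rho)g_{p_0}\,dx=g_{p_0}(\xi_0)-\rho T_{p_0}(\xi_0),
\]
so the limit is $A_{p_0}(\rho)/\rho=h_{p_0}(\rho)$.

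\textbf{Case $p_0=\infty$ (with $p_n<\infty$).} The same formula is used, with the following estimates.
\begin{itemize}
\item $\xi_n\to1$, because $(\rho_n/p_n)^{1/(p_n-1)}\to1$. Hence $x_n^\star\to1$.
\item $\xi_n^{p_n}=\xi_n\rho_n/p_n\to0$.
\item $(x_n^\star+s_n)^{p_n}\le \xi_n^{p_n}+\rho_n s_n+o(1)\to0$, obtained by adding the boundary equation and using $x_n^\star\le\xi_n$.
\end{itemize}
It follows that $g_n$ is uniformly within $o(1)$ of $p_n/(2\Gamma(1/p_n))\to\tfrac12$ on $[x_n^\star,x_n^\star+s_n]$. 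So the first term tends to $\tfrac12$. For the second term, $T_n(x_n^\star+s_n)\to0$: the generalized Gaussian mass beyond $1-\eta$ tends to $\eta/2$ for every fixed $\eta>0$. The limit is therefore $1/(2\rho)=h_\infty(\rho)$. Terms with $p_n=\infty$ are computed directly: $H_{\varepsilon,\infty}(s)=s/2$ for $s\le2$, giving exactly $1/(2\rho_n)$.

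\textbf{Case $p_0=1$.} Terms with $p_n=1$ follow from the explicit Laplace formula of Proposition~\ref{prop:laplace-calibration}:
\[
H=\bigl[1-\exp\bigl((\varepsilon-s)/2\bigr)\bigr]_+ ,
\]
so $H/\varepsilon=(1-\rho_n)_+/(2\rho_n)+o(1)$. For $p_n\downarrow1$ I would use the same representation. If $\rho<1$, then $\xi_n=(\rho_n/p_n)^{1/(p_n-1)}\to0$, and the limit is $g_1(0)-\rho T_1(0)=\tfrac12(1-\rho)$, matching $h_1$.

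The delicate case is $\rho\ge1$, where $\xi_n$ may be large or may drift. One route is to verify directly that $A_{p_n}(\rho_n)\to\tfrac12(1-\rho)_+$, using that $g_n(\xi_n)-\rho_nT_n(\xi_n)\ge0$ tends to $g_1(\xi)-\rho T_1(\xi)$, which is $\le0$ when $\xi$ stays bounded. A cleaner alternative is to replace the exact formula by dominated convergence in $x$. For this, write the integrand divided by $s_n$ as
\[
g_n(x)\,\frac{\bigl(1-e^{-s_n(D_n(x)-\rho_n)}\bigr)_+}{s_n},\qquad D_n(x):=\frac{|x+s_n|^{p_n}-|x|^{p_n}}{s_n}.
\]
This is bounded by $g_n(x)(D_n(x)-\rho_n)_+\le C(1+|x|)^{P}e^{-c|x|}$, and converges pointwise to $g_1(x)(\mathbf 1_{\{x>0\}}-\rho)_+$.

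I expect the main obstacle to be the uniformity in the non-compact regime $p_0=\infty$, in particular the estimate $(x_n^\star+s_n)^{p_n}\to0$, which prevents $g_n$ from collapsing on the boundary window. The case $p_0=1$ with $\rho\ge1$ needs similar care.
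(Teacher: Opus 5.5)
Your proposal is correct, and for $p_0\in(1,\infty]$ it follows the paper's argument. You use the same crossing-point representation, the mean-value identity $\xi_n=(\rho_n/p_n)^{1/(p_n-1)}$, and integration by parts to recognise $A_{p_0}(\rho)$. For $p_n\to\infty$ you use the same window estimate $\sup_{[x_n^\star,x_n^\star+s_n]}x^{p_n}\to0$ together with $T_n(x_n^\star+s_n)\to0$. Your direct computation $H_{\varepsilon,\infty}(s)=s/2$ also covers terms with $p_n=\infty$, which the paper's proof passes over.

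The real difference is at $p_0=1$. The paper splits into $\rho<1$, $\rho=1$ and $\rho>1$ and follows subsequences of the crossing point. It excludes bounded limits for $\rho>1$ because they would give the negative value $(1/\rho-1)g_1(a)$.

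Your first route is the same sign argument, phrased through $A_{p_n}(\rho_n)\ge0$. Two points are left implicit. First, it needs the bracket to equal $A_{p_n}(\rho_n)+o(1)$, which follows from a uniform Lipschitz bound on $g_{p_n}$ for $p_n$ near $1$. Second, for $\rho>1$ the closed form already forces $\xi_n\to\infty$, since $\log(\rho_n/p_n)/(p_n-1)\to+\infty$, so drift matters only at $\rho=1$.

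Your dominated-convergence alternative is genuinely different and cleaner. The envelope $g_{p_n}(D_n-\rho_n)_+\le C(1+|x|)e^{-|x|}$ handles every $\rho$ and the terms with $p_n=1$ at once, with no case split and no appeal to the explicit Laplace formula. The same computation also reproves the case $1<p_0<\infty$, with limit $\int_0^\infty(p_0x^{p_0-1}-\rho)_+g_{p_0}\,dx=A_{p_0}(\rho)$.

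It cannot reach $p_0=\infty$. There the normalized integrand concentrates a mass of order one at $x=1$ and admits no integrable envelope. In that regime the boundary-point representation, which both you and the paper use, is essential.
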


\begin{proof}
We first consider \(1<p_n<\infty\). Put
\[
s_n:=\frac{\varepsilon_n}{\rho_n}.
\]
The function
\[
x\longmapsto |x+s_n|^{p_n}-|x|^{p_n}
\]
is increasing from \(-\infty\) to \(\infty\). Hence there is
a unique \(a_n\in\mathbb R\) such that
\begin{equation}
\label{eq:crossing-point}
|a_n+s_n|^{p_n}-|a_n|^{p_n}=\varepsilon_n.
\end{equation}
The integrand defining \(H_{\varepsilon_n,p_n}(s_n)\) is positive
exactly on \((a_n,\infty)\). Writing
\[
\overline G_p(t):=\int_t^\infty g_p(x)\,dx,
\]
we obtain
\begin{align}
H_{\varepsilon_n,p_n}(s_n)
&=
\overline G_{p_n}(a_n)
-
e^{\varepsilon_n}\overline G_{p_n}(a_n+s_n)
\nonumber\\
&=
\int_{a_n}^{a_n+s_n}g_{p_n}(x)\,dx
-
(e^{\varepsilon_n}-1)
\overline G_{p_n}(a_n+s_n).
\label{eq:hockey-stick-crossing-representation}
\end{align}
Suppose first that \(1<p_0<\infty\). For all sufficiently large \(n\),
the crossing point is positive. By the mean value theorem, there exists
\(\xi_n\in(a_n,a_n+s_n)\) such that
\[
p_n\xi_n^{p_n-1}
=
\frac{(a_n+s_n)^{p_n}-a_n^{p_n}}{s_n}
=
\rho_n.
\]
Therefore
\[
\xi_n=
\left(\frac{\rho_n}{p_n}\right)^{1/(p_n-1)}
\longrightarrow
r_{p_0}(\rho)
:=
\left(\frac{\rho}{p_0}\right)^{1/(p_0-1)}.
\]
Since \(s_n\to0\),
\[
a_n\to r_{p_0}(\rho),
\qquad
a_n+s_n\to r_{p_0}(\rho).
\]
Dividing \eqref{eq:hockey-stick-crossing-representation} by
\(\varepsilon_n=\rho_ns_n\) gives
\[
\frac{H_{\varepsilon_n,p_n}(s_n)}{\varepsilon_n}
=
\frac{1}{\rho_n}
\frac{1}{s_n}
\int_{a_n}^{a_n+s_n}g_{p_n}(x)\,dx
-
\frac{e^{\varepsilon_n}-1}{\varepsilon_n}
\overline G_{p_n}(a_n+s_n).
\]
It follows that
\begin{align*}
\frac{H_{\varepsilon_n,p_n}(s_n)}{\varepsilon_n}
&\longrightarrow
\frac{g_{p_0}(r_{p_0}(\rho))}{\rho}
-
\overline G_{p_0}(r_{p_0}(\rho)).
\end{align*}
Since
\[
g_p'(x)=-px^{p-1}g_p(x),
\qquad x>0,
\]
integration by parts yields
\[
A_p(\rho)
=
g_p(r_p(\rho))
-
\rho\,\overline G_p(r_p(\rho)).
\]
Thus the preceding limit is \(A_{p_0}(\rho)/\rho\).
Suppose next that \(p_n\to\infty\). The mean value argument again gives
\[
\xi_n
=
\left(\frac{\rho_n}{p_n}\right)^{1/(p_n-1)}
\longrightarrow1.
\]
Moreover,
\[
\xi_n^{p_n}
=
\frac{\rho_n\xi_n}{p_n}
\longrightarrow0.
\]
Because \(a_n\le\xi_n\le a_n+s_n\), the crossing equation implies
\[
\sup_{x\in[a_n,a_n+s_n]}x^{p_n}\longrightarrow0.
\]
Also,
\[
\frac{p_n}{2\Gamma(1/p_n)}
=
\frac{1}{2\Gamma(1+1/p_n)}
\longrightarrow\frac12.
\]
Consequently,
\[
\frac{1}{s_n}
\int_{a_n}^{a_n+s_n}g_{p_n}(x)\,dx
\longrightarrow\frac12.
\]
Furthermore, \(a_n+s_n\to1\), and
\[
\overline G_{p_n}(a_n+s_n)\longrightarrow0.
\]
Indeed,
\[
\overline G_{p_n}(1)
\le
\frac{p_n}{2\Gamma(1/p_n)}
\int_1^\infty e^{-1-p_n(x-1)}\,dx
=
\frac{e^{-1}}{2\Gamma(1/p_n)}
\longrightarrow0,
\]
while the contribution between \(a_n+s_n\) and \(1\), when
\(a_n+s_n<1\), also tends to zero. Hence
\[
\frac{H_{\varepsilon_n,p_n}(s_n)}{\varepsilon_n}
\longrightarrow\frac{1}{2\rho}.
\]
It remains to consider \(p_n\to1\). For \(p=1\), direct calculation
gives
\begin{equation}
\label{eq:laplace-standardized-hockey-stick}
H_{\varepsilon,1}(s)
=
\begin{cases}
0, & s\le\varepsilon,\\[1mm]
1-\exp\bigl((\varepsilon-s)/2\bigr),
& s>\varepsilon.
\end{cases}
\end{equation}
Thus
\[
\frac{H_{\varepsilon_n,1}(\varepsilon_n/\rho_n)}
{\varepsilon_n}
\longrightarrow
\frac{(1-\rho)_+}{2\rho}.
\]
We therefore only need to consider \(p_n>1\). Since
\[
|{-s_n/2+s_n}|^{p_n}-|{-s_n/2}|^{p_n}=0,
\]
the crossing point satisfies
\[
a_n>-\frac{s_n}{2}.
\]
Hence every finite subsequential limit of \(a_n\) is nonnegative.

If \(0<\rho<1\), then \(a_n\to0\). Indeed, if \(a_n\ge\eta>0\)
along a subsequence, the mean value theorem would give
\[
\rho_n=p_n\xi_n^{p_n-1}
\ge p_n\eta^{p_n-1}\longrightarrow1,
\]
a contradiction. Therefore
\[
a_n\to0,
\qquad
a_n+s_n\to0.
\]
Using \eqref{eq:hockey-stick-crossing-representation} and the local
convergence \(g_{p_n}\to g_1\),
\[
\frac{H_{\varepsilon_n,p_n}(s_n)}{\varepsilon_n}
\longrightarrow
\frac{g_1(0)}{\rho}-\overline G_1(0)
=
\frac{1-\rho}{2\rho}.
\]
If \(\rho=1\), every subsequence of \(a_n\) has a further subsequence
that either converges to some \(a\in[0,\infty)\) or tends to infinity.
In the first case,
\[
\frac{H_{\varepsilon_n,p_n}(s_n)}{\varepsilon_n}
\longrightarrow
g_1(a)-\overline G_1(a)=0,
\]
because
\[
g_1(a)=\overline G_1(a)=\frac12e^{-a},
\qquad a\ge0.
\]
In the second case both terms in
\eqref{eq:hockey-stick-crossing-representation}, after division by
\(\varepsilon_n\), tend to zero. Thus the limit is again zero.
Finally, suppose \(\rho>1\). If \(a_n\) had a bounded subsequence
converging to \(a\ge0\), then
\[
\frac{H_{\varepsilon_n,p_n}(s_n)}{\varepsilon_n}
\longrightarrow
\frac{g_1(a)}{\rho}-\overline G_1(a)
=
\left(\frac1\rho-1\right)g_1(a)<0,
\]
which is impossible. Hence \(a_n\to\infty\), and the limit is zero.
This proves \eqref{eq:joint-first-order-hockey-stick}.
\end{proof}
For \(1<p<\infty\), the function \(A_p\) may equivalently be written as
\[
A_p(\rho)
=
\int_0^\infty
\left(px^{p-1}-\rho\right)_+g_p(x)\,dx.
\]
It is continuous and strictly decreasing in \(\rho\), with
\[
A_p(0)>0,
\qquad
A_p(\rho)\longrightarrow0
\quad\text{as }\rho\to\infty.
\]
Consequently, for every \(\lambda>0\), the equation
\[
A_p(\rho)=\lambda\rho
\]
has a unique solution, denoted by \(\rho_{p,\lambda}\). We also set
\[
\rho_{1,\lambda}:=\frac{1}{1+2\lambda},
\qquad
\rho_{\infty,\lambda}:=\frac{1}{2\lambda}.
\]
The preceding lemma implies the following joint convergence. If
\[
\frac{\delta_n}{\varepsilon_n}\longrightarrow\lambda\in(0,\infty),
\qquad
p_n\longrightarrow p_0\in[1,\infty],
\]
then
\begin{equation}
\label{eq:joint-normalized-scale-limit}
\frac{
\varepsilon_n
b_{\varepsilon_n,\delta_n}^{(1)}(p_n;\Delta)
}{\Delta}
\longrightarrow
\rho_{p_0,\lambda}.
\end{equation}
To verify this, choose
\[
\rho_-<\rho_{p_0,\lambda}<\rho_+
\]
such that
\[
h_{p_0}(\rho_-)>\lambda>h_{p_0}(\rho_+).
\]
Lemma~\ref{lem:joint-first-order-hockey-stick} and the monotonicity of
\(H_{\varepsilon,p}(s)\) in \(s\) imply, for all sufficiently large
\(n\),
\[
\rho_-
<
\frac{
\varepsilon_n
b_{\varepsilon_n,\delta_n}^{(1)}(p_n;\Delta)
}{\Delta}
<
\rho_+.
\]
Letting \(\rho_-\uparrow\rho_{p_0,\lambda}\) and
\(\rho_+\downarrow\rho_{p_0,\lambda}\) gives
\eqref{eq:joint-normalized-scale-limit}. Therefore,
\begin{equation}
\label{eq:joint-normalized-objective-limit}
\frac{\varepsilon_n^m}{\Delta^m}
L_m\left(
p_n,
b_{\varepsilon_n,\delta_n}^{(1)}(p_n;\Delta)
\right)
\longrightarrow
M_m(p_0)\rho_{p_0,\lambda}^m.
\end{equation}

\medskip
\noindent
\emph{Proof of (i).}
First suppose \(d=1\) and \(\Delta>0\). At \(p=1\),
\[
b_{\varepsilon,\delta}^{(1)}(1;\Delta)
=
\frac{\Delta}{\varepsilon-2\log(1-\delta)}.
\]
Since \(\delta_n/\varepsilon_n\to0\),
\[
-2\log(1-\delta_n)
=
2\delta_n+o(\delta_n)
=
o(\varepsilon_n),
\]
and hence
\begin{equation}
\label{eq:part-i-laplace-scale}
\frac{
\varepsilon_n
b_{\varepsilon_n,\delta_n}^{(1)}(1;\Delta)
}{\Delta}
\longrightarrow1.
\end{equation}
In particular,
\begin{equation}
\label{eq:part-i-laplace-objective}
\varepsilon_n^m
L_m\left(
1,b_{\varepsilon_n,\delta_n}^{(1)}(1;\Delta)
\right)
\longrightarrow
\Gamma(m+1)\Delta^m.
\end{equation}
We claim that, for every \(\kappa>0\),
\begin{equation}
\label{eq:part-i-uniform-scale-divergence}
\inf_{p\in[1+\kappa,\infty]}
\frac{
\varepsilon_n
b_{\varepsilon_n,\delta_n}^{(1)}(p;\Delta)
}{\Delta}
\longrightarrow\infty.
\end{equation}
Suppose otherwise. Then there exist \(M<\infty\), a subsequence, and
\(p_n\in[1+\kappa,\infty]\) such that
\[
\rho_n
:=
\frac{
\varepsilon_n
b_{\varepsilon_n,\delta_n}^{(1)}(p_n;\Delta)
}{\Delta}
\le M.
\]
Passing to a further subsequence, assume that
\[
p_n\to p_0\in[1+\kappa,\infty],
\qquad
\rho_n\to\rho_0\in[0,M].
\]
If \(\rho_0>0\), Lemma~\ref{lem:joint-first-order-hockey-stick}
gives
\[
\frac{
H_{\varepsilon_n,p_n}(\varepsilon_n/\rho_n)
}{\varepsilon_n}
\longrightarrow
h_{p_0}(\rho_0)>0.
\]
This contradicts feasibility, because
\[
H_{\varepsilon_n,p_n}(\varepsilon_n/\rho_n)
\le\delta_n
\]
and \(\delta_n/\varepsilon_n\to0\).
If \(\rho_0=0\), fix any \(r>0\). For all sufficiently large \(n\),
\(\rho_n<r\), and hence
\[
H_{\varepsilon_n,p_n}(\varepsilon_n/\rho_n)
\ge
H_{\varepsilon_n,p_n}(\varepsilon_n/r).
\]
Lemma~\ref{lem:joint-first-order-hockey-stick} then gives
\[
\liminf_{n\to\infty}
\frac{
H_{\varepsilon_n,p_n}(\varepsilon_n/\rho_n)
}{\varepsilon_n}
\ge h_{p_0}(r)>0,
\]
which is again incompatible with
\(\delta_n/\varepsilon_n\to0\). This proves
\eqref{eq:part-i-uniform-scale-divergence}.
Since \(M_m\) is positive on \([1+\kappa,\infty]\),
\[
\inf_{p\in[1+\kappa,\infty]}
\varepsilon_n^m
L_m\left(
p,b_{\varepsilon_n,\delta_n}^{(1)}(p;\Delta)
\right)
\longrightarrow\infty.
\]
Together with \eqref{eq:part-i-laplace-objective}, this shows that no
minimizer lies in \([1+\kappa,\infty]\) for all sufficiently large
\(n\). Since \(\kappa>0\) is arbitrary,
\[
p^\star(\varepsilon_n,\delta_n)\longrightarrow1.
\]
Now let \(d\ge1\), and choose \(j\) such that \(\Delta_j\neq0\).
Restricting the \(d\)-dimensional hockey-stick supremum to measurable
sets depending only on coordinate \(j\) gives
\[
\delta_{\varepsilon,p,b}^{(1)}(|\Delta_j|)
\le
\delta_{\varepsilon,p,b}^{(d)}(\Delta).
\]
Consequently,
\[
b_{\varepsilon,\delta}^{(d)}(p;\Delta)
\ge
b_{\varepsilon,\delta}^{(1)}(p;|\Delta_j|).
\]
It follows from the one-dimensional result that, for every
\(\kappa>0\),
\[
\inf_{p\in[1+\kappa,\infty]}
\varepsilon_n
b_{\varepsilon_n,\delta_n}^{(d)}(p;\Delta)
\longrightarrow\infty.
\]
On the other hand, the product Laplace mechanism with
\[
b=\frac{\|\Delta\|_1}{\varepsilon_n}
\]
satisfies \(\varepsilon_n\)-DP, and therefore
\[
b_{\varepsilon_n,\delta_n}^{(d)}(1;\Delta)
\le
\frac{\|\Delta\|_1}{\varepsilon_n}.
\]
Thus the objective at \(p=1\), after multiplication by
\(\varepsilon_n^m\), remains bounded, whereas the corresponding
infimum over \([1+\kappa,\infty]\) diverges. Hence
\[
p^\star(\varepsilon_n,\delta_n)\longrightarrow1.
\]

\medskip
\noindent
\emph{Proof of (ii).}
Suppose \(d=1\), \(\Delta>0\), and
\[
\frac{\delta_n}{\varepsilon_n}\longrightarrow
\lambda\in(0,\infty).
\]
Define
\[
F_n(p)
:=
\frac{\varepsilon_n^m}{\Delta^m}
L_m\left(
p,b_{\varepsilon_n,\delta_n}^{(1)}(p;\Delta)
\right).
\]
By \eqref{eq:joint-normalized-objective-limit}, for every sequence
\(p_n\to p_0\in[1,\infty]\),
\begin{equation}
\label{eq:sequential-objective-convergence}
F_n(p_n)\longrightarrow K_{m,\lambda}(p_0),
\end{equation}
where, for \(1<p<\infty\),
\[
K_{m,\lambda}(p)
=
M_m(p)\rho_{p,\lambda}^m,
\]
and
\[
K_{m,\lambda}(1)
=
\frac{\Gamma(m+1)}{(1+2\lambda)^m},
\qquad
K_{m,\lambda}(\infty)
=
\frac{1}{(m+1)(2\lambda)^m}.
\]
Let
\[
p_n^\star:=p^\star(\varepsilon_n,\delta_n)
\]
and consider any subsequence such that
\[
p_{n_k}^\star\longrightarrow p_0\in[1,\infty].
\]
For every fixed \(q\in[1,\infty]\), optimality gives
\[
F_{n_k}(p_{n_k}^\star)\le F_{n_k}(q).
\]
Passing to the limit by
\eqref{eq:sequential-objective-convergence} yields
\[
K_{m,\lambda}(p_0)
\le
K_{m,\lambda}(q).
\]
Since \(q\) is arbitrary,
\[
p_0\in
\operatorname*{arg\,min}_{p\in[1,\infty]}
K_{m,\lambda}(p).
\]
Thus every subsequential limit of
\(p^\star(\varepsilon_n,\delta_n)\) belongs to the minimizer set of
\(K_{m,\lambda}\).
If \(K_{m,\lambda}\) has the unique minimizer \(p_\lambda\), then every
convergent subsequence of \(p_n^\star\) has limit \(p_\lambda\).
Compactness of \([1,\infty]\) therefore implies
\[
p^\star(\varepsilon_n,\delta_n)\longrightarrow p_\lambda.
\]

\medskip
\noindent
\emph{Proof of (iii).}
Suppose \(d=1\), \(\Delta>0\), and
\[
\frac{\delta_n}{\varepsilon_n}\longrightarrow\infty.
\]
Then
\[
\varepsilon_n=o(\delta_n).
\]
For any two densities \(f\) and \(g\),
\begin{equation}
\label{eq:hockey-stick-tv-comparison}
\operatorname{TV}(f,g)-(e^\varepsilon-1)
\le
\int_{\mathbb R}(f-e^\varepsilon g)_+
\le
\operatorname{TV}(f,g),
\end{equation}
where
\[
\operatorname{TV}(f,g)
=
\int_{\mathbb R}(f-g)_+.
\]
Since
\[
e^{\varepsilon_n}-1
=
\varepsilon_n+o(\varepsilon_n)
=
o(\delta_n),
\]
the hockey-stick constraint is asymptotically equivalent to the
total-variation constraint at level \(\delta_n\).
For \(1\le p<\infty\), define
\[
T_p(s)
:=
\operatorname{TV}\bigl(g_p(\cdot),g_p(\cdot+s)\bigr).
\]
By symmetry and unimodality of \(g_p\),
\[
T_p(s)
=
\int_{-s/2}^{s/2}g_p(x)\,dx.
\]
Let
\[
c_p:=g_p(0)
=
\frac{p}{2\Gamma(1/p)}
=
\frac{1}{2\Gamma(1+1/p)}.
\]
For every fixed \(P<\infty\),
\begin{equation}
\label{eq:uniform-tv-expansion}
T_p(s)=c_ps+o(s),
\qquad s\downarrow0,
\end{equation}
uniformly for \(p\in[1,P]\).
Let
\[
s_{n,p}
:=
\frac{\Delta}
{b_{\varepsilon_n,\delta_n}^{(1)}(p;\Delta)}.
\]
The comparison \eqref{eq:hockey-stick-tv-comparison} and
\eqref{eq:uniform-tv-expansion} imply
\[
s_{n,p}
=
\frac{\delta_n}{c_p}\bigl(1+o(1)\bigr)
\]
uniformly for \(p\in[1,P]\). Equivalently,
\begin{equation}
\label{eq:part-iii-scale-limit}
\frac{
\delta_n
b_{\varepsilon_n,\delta_n}^{(1)}(p;\Delta)
}{\Delta}
\longrightarrow c_p
\end{equation}
uniformly for \(p\in[1,P]\). Hence
\begin{equation}
\label{eq:part-iii-objective-limit}
\frac{\delta_n^m}{\Delta^m}
L_m\left(
p,b_{\varepsilon_n,\delta_n}^{(1)}(p;\Delta)
\right)
\longrightarrow
J_m(p):=M_m(p)c_p^m
\end{equation}
uniformly on \([1,P]\).

We claim that
\begin{equation}
\label{eq:finite-p-strict-tv-bound}
J_m(p)>
\frac{1}{2^m(m+1)}
\qquad
\text{for every }p<\infty.
\end{equation}
Indeed, if \(X_p\) has density \(g_p\), then
\[
g_p(x)\le c_p
\]
and therefore
\[
\mathbb P(|X_p|\le t)\le2c_pt.
\]
It follows that
\[
\mathbb P(|X_p|>t)\ge(1-2c_pt)_+.
\]
Using the tail representation of the \(m\)-th moment,
\begin{align*}
M_m(p)
&=
m\int_0^\infty
t^{m-1}\mathbb P(|X_p|>t)\,dt\\
&>
m\int_0^{1/(2c_p)}
t^{m-1}(1-2c_pt)\,dt\\
&=
\frac{1}{(m+1)(2c_p)^m}.
\end{align*}
The inequality is strict because, for finite \(p\), \(g_p\) is not
constant on any interval of positive length. Multiplying by \(c_p^m\)
proves \eqref{eq:finite-p-strict-tv-bound}.

Since \(J_m\) is continuous, for every \(P<\infty\),
\[
\eta_P
:=
\min_{p\in[1,P]}
\left\{
J_m(p)-\frac{1}{2^m(m+1)}
\right\}
>0.
\]
The uniform convergence in
\eqref{eq:part-iii-objective-limit} therefore implies that, for all
sufficiently large \(n\),
\[
\frac{\delta_n^m}{\Delta^m}
L_m\left(
p,b_{\varepsilon_n,\delta_n}^{(1)}(p;\Delta)
\right)
\ge
\frac{1}{2^m(m+1)}+\frac{\eta_P}{2}
\]
for every \(p\in[1,P]\).

For \(p=\infty\), the positive part arises only on the nonoverlapping
portion of the two uniform supports. Thus, for sufficiently small
\(\delta\),
\[
H_{\varepsilon,\infty}(s)=\frac{s}{2},
\]
and hence
\[
b_{\varepsilon_n,\delta_n}^{(1)}(\infty;\Delta)
=
\frac{\Delta}{2\delta_n}.
\]
Consequently,
\[
\frac{\delta_n^m}{\Delta^m}
L_m\left(
\infty,
b_{\varepsilon_n,\delta_n}^{(1)}(\infty;\Delta)
\right)
=
\frac{1}{2^m(m+1)}.
\]
It follows that no minimizer belongs to \([1,P]\) for all sufficiently
large \(n\). Since \(P<\infty\) is arbitrary,
\[
p^\star(\varepsilon_n,\delta_n)\longrightarrow\infty.
\]
This completes the proof.
\subsection{A two-dimensional counterexample to the uniform-shape limit}
\label{app:multidim-uniform-counterexample}
The conclusions involving convergence to the uniform shape in
Theorems~\ref{thm:small-eps-p-to-infty} and
\ref{thm:joint-high-privacy}(iii) are specific to one dimension.
Consider
\[
d=2,\qquad
\Delta=(1,1),\qquad
m=2.
\]
Let \(Z\sim\operatorname{Unif}([-b,b]^2)\), where \(b>1/2\).
For every \(\varepsilon\ge0\), the integrand in the hockey-stick
divergence is nonpositive on the overlap of the two squares. Therefore,
\[
D_\varepsilon(Z,Z+\Delta)
=
1-\frac{(2b-1)^2}{(2b)^2}
=
\frac1b-\frac{1}{4b^2}.
\]
Thus \(D_\varepsilon(Z,Z+\Delta)\le\delta\) implies
\[
b\ge
\frac{1+\sqrt{1-\delta}}{2\delta},
\]
and every feasible uniform mechanism satisfies
\[
\mathbb E Z_1^2
=
\frac{b^2}{3}
\ge
\frac{(1+\sqrt{1-\delta})^2}{12\delta^2}.
\]
Now let \(Z\sim N(0,\sigma^2I_2)\). Since
\[
D_\varepsilon(Z,Z+\Delta)
\le
D_0(Z,Z+\Delta)
=
2\Phi\left(\frac{\|\Delta\|_2}{2\sigma}\right)-1
\le
\frac{1}{\sigma\sqrt{\pi}},
\]
the choice
\[
\sigma=\frac{1}{\delta\sqrt{\pi}}
\]
is privacy feasible and gives
\[
\mathbb E Z_1^2
=
\frac{1}{\pi\delta^2}.
\]
Hence the Gaussian mechanism has a smaller per-coordinate second moment
than every feasible uniform mechanism whenever
\[
\frac1\pi
<
\frac{(1+\sqrt{1-\delta})^2}{12},
\]
or equivalently,
\[
0<\delta<
\delta_0
:=
1-\left(\sqrt{\frac{12}{\pi}}-1\right)^2
\approx0.0891.
\]
Thus, for \(d=2\) and \(0<\delta<\delta_0\), the uniform mechanism is
not optimal, even as \(\varepsilon\downarrow0\). The same comparison
applies along any sequence satisfying
\[
\delta_n\downarrow0,
\qquad
\varepsilon_n=o(\delta_n),
\]
and therefore the uniform-shape conclusions in
Theorems~\ref{thm:small-eps-p-to-infty} and
\ref{thm:joint-high-privacy}(iii) do not extend directly to
multidimensional queries.
\section{Additional Results for Section 6}
\label{app:section6}
\subsection{Additional numerical results}
\label{app:additional-numerical-results}
Figures~\ref{fig:appendix-gap-grid} and~\ref{fig:appendix-bestp-grid} collect the Phase II trajectories over the twelve \((\varepsilon,\delta)\) pairs. Rows correspond to \(\varepsilon\in\{0.25,0.5,1,2\}\), and columns correspond to \(\delta\in\{0.1,0.01,0.001\}\).
\begin{figure}[!htbp]
\centering
\small
\setlength{\tabcolsep}{1pt}
\renewcommand{\arraystretch}{1.00}
\begin{tabular}{@{}>{\centering\arraybackslash}m{0.04\textwidth}|>{\centering\arraybackslash}m{0.35\textwidth}>{\centering\arraybackslash}m{0.35\textwidth}>{\centering\arraybackslash}m{0.35\textwidth}@{}}
\(\varepsilon\backslash\delta\) & \(0.1\) & \(0.01\) & \(0.001\) \\
\hline
\(0.25\) &
\includegraphics[width=\linewidth,draft=false]{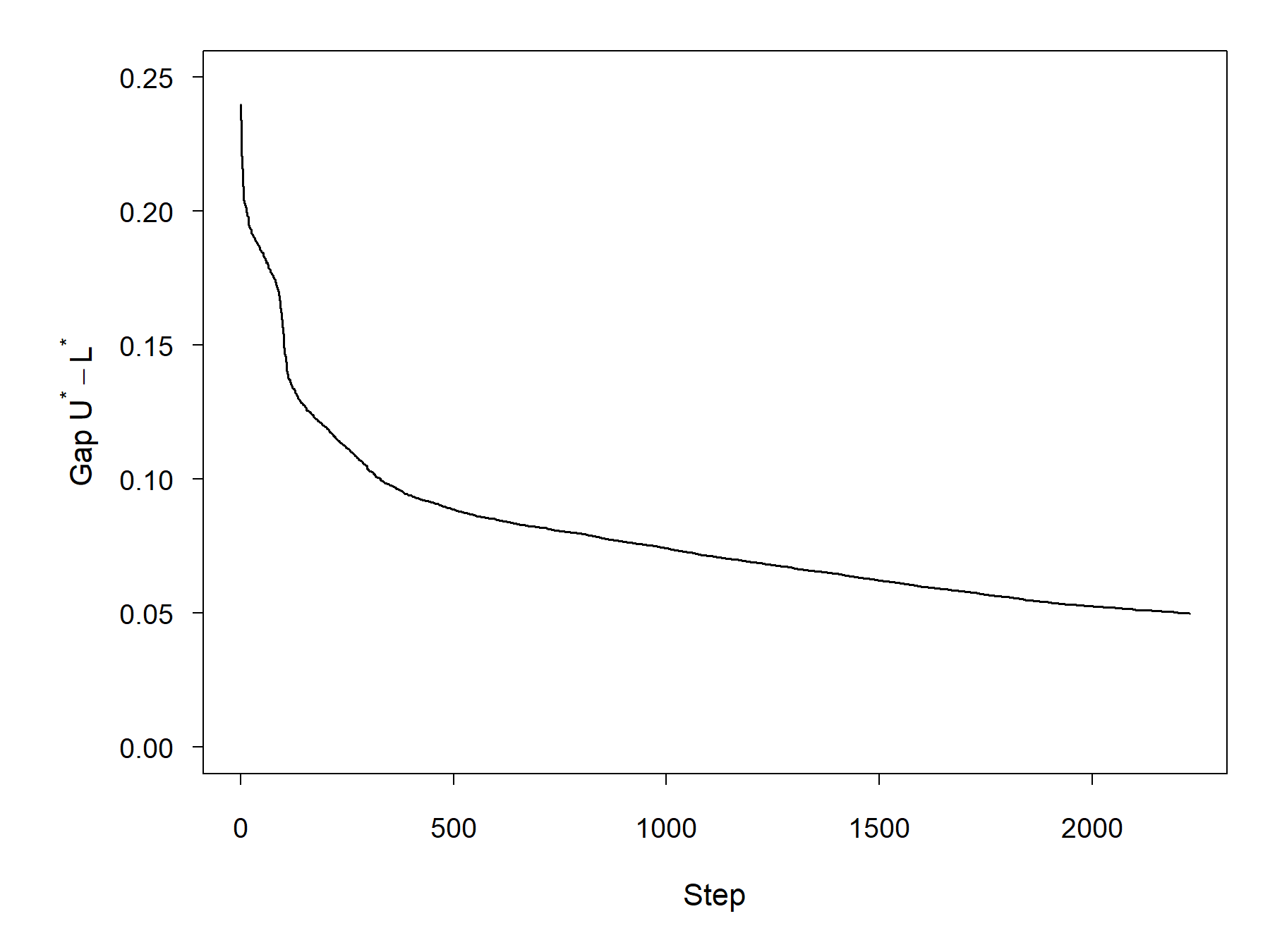} &
\includegraphics[width=\linewidth,draft=false]{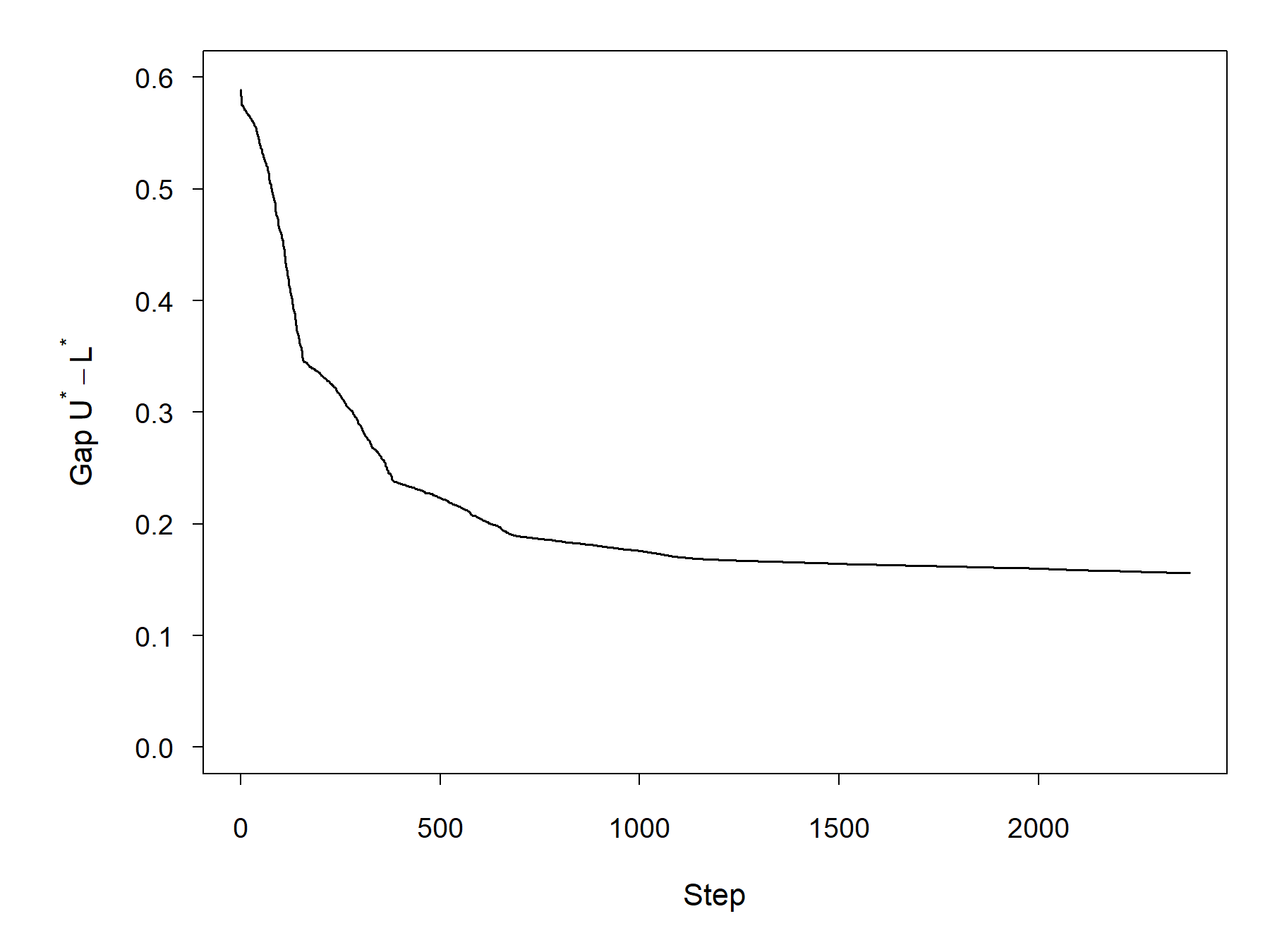} &
\includegraphics[width=\linewidth,draft=false]{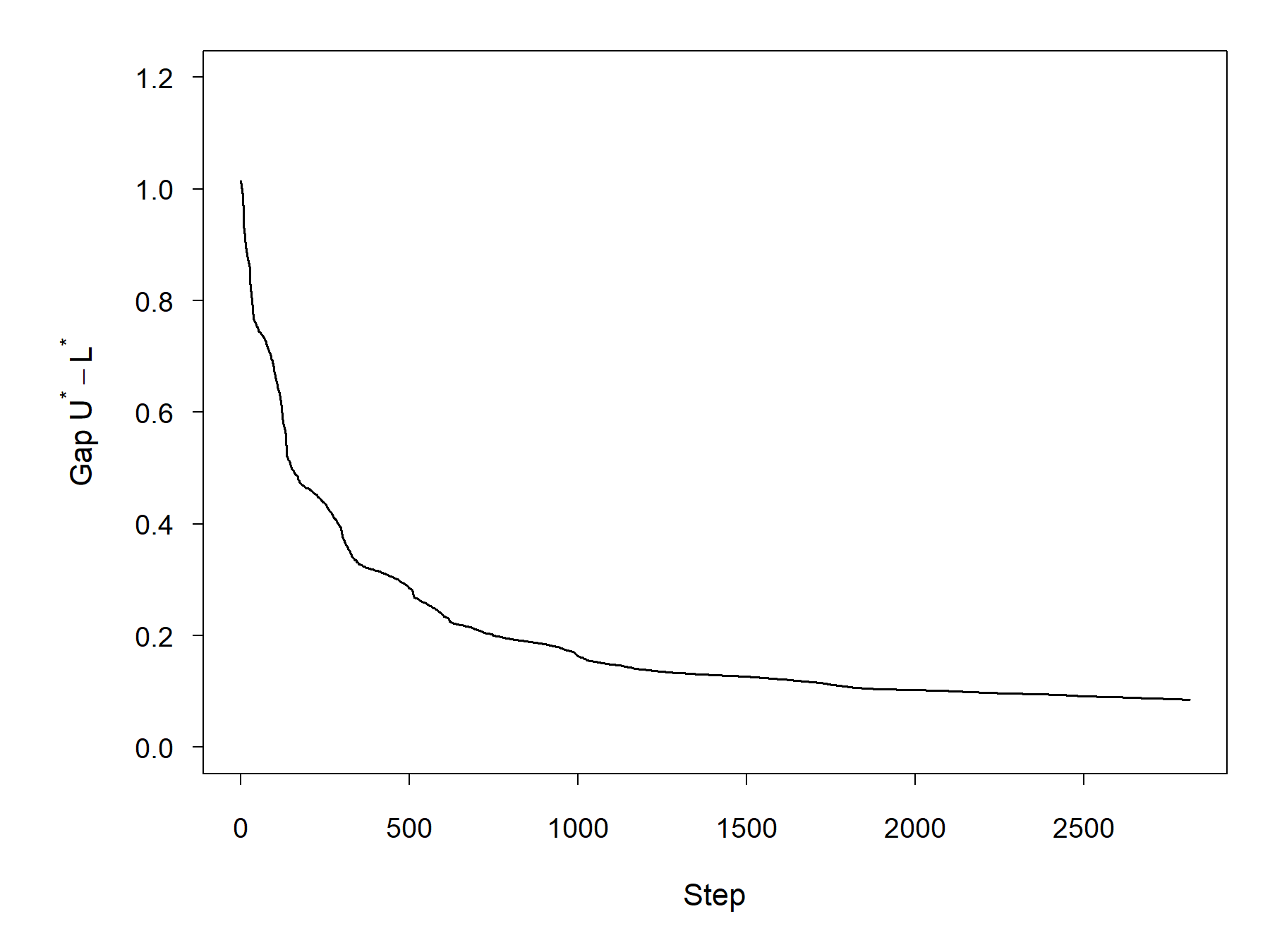} \\
\(0.5\) &
\includegraphics[width=\linewidth,draft=false]{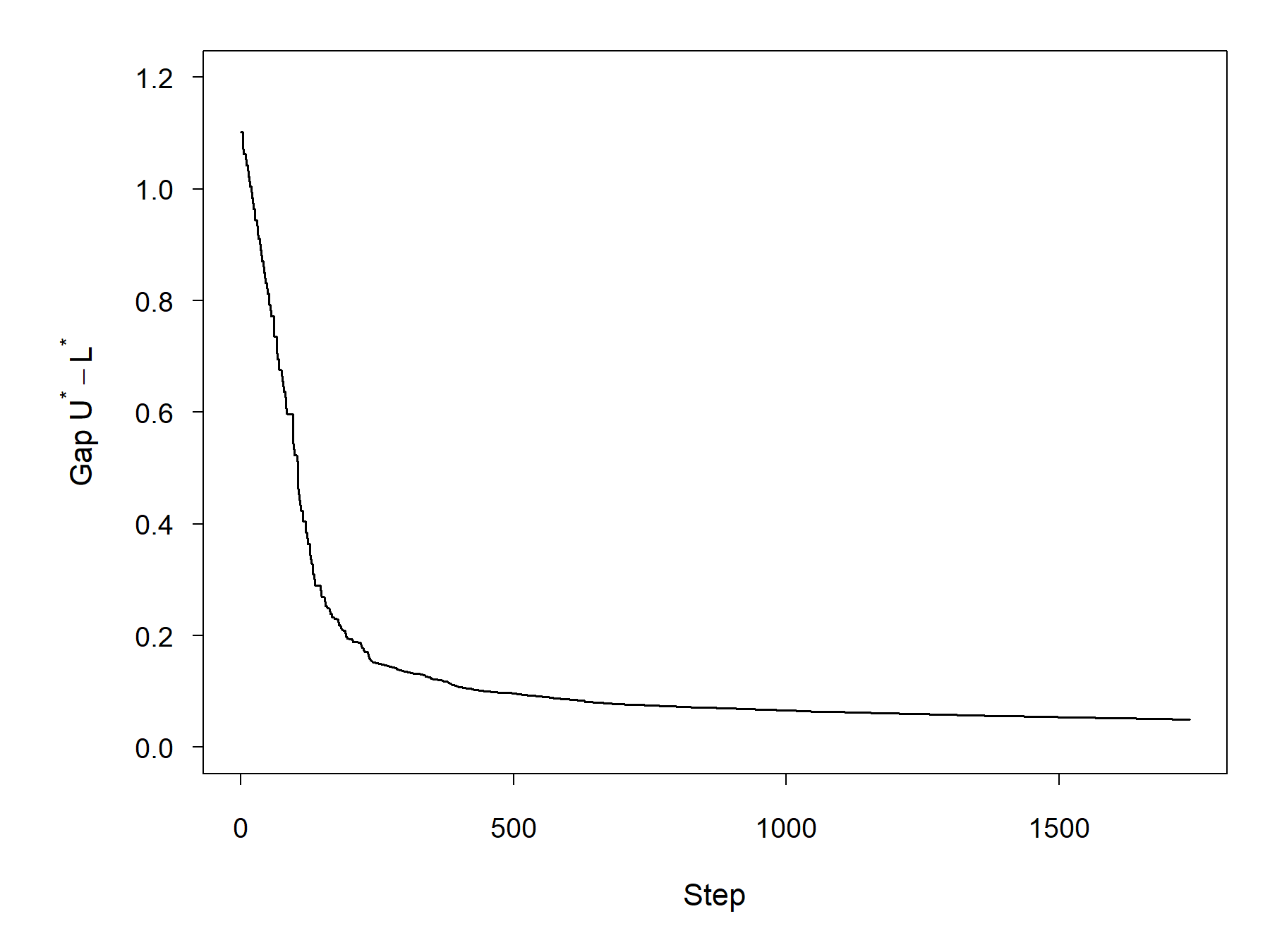} &
\includegraphics[width=\linewidth,draft=false]{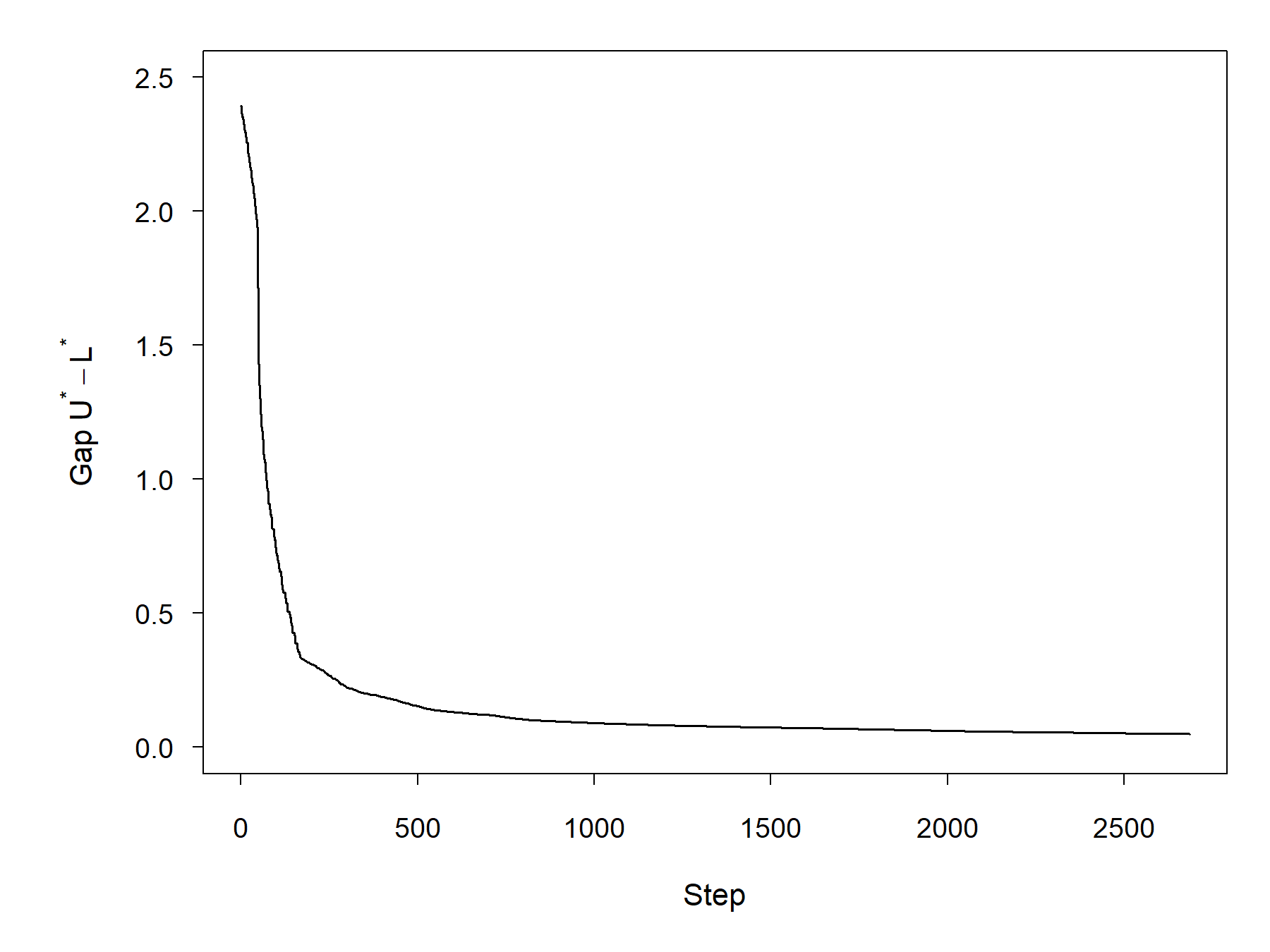} &
\includegraphics[width=\linewidth,draft=false]{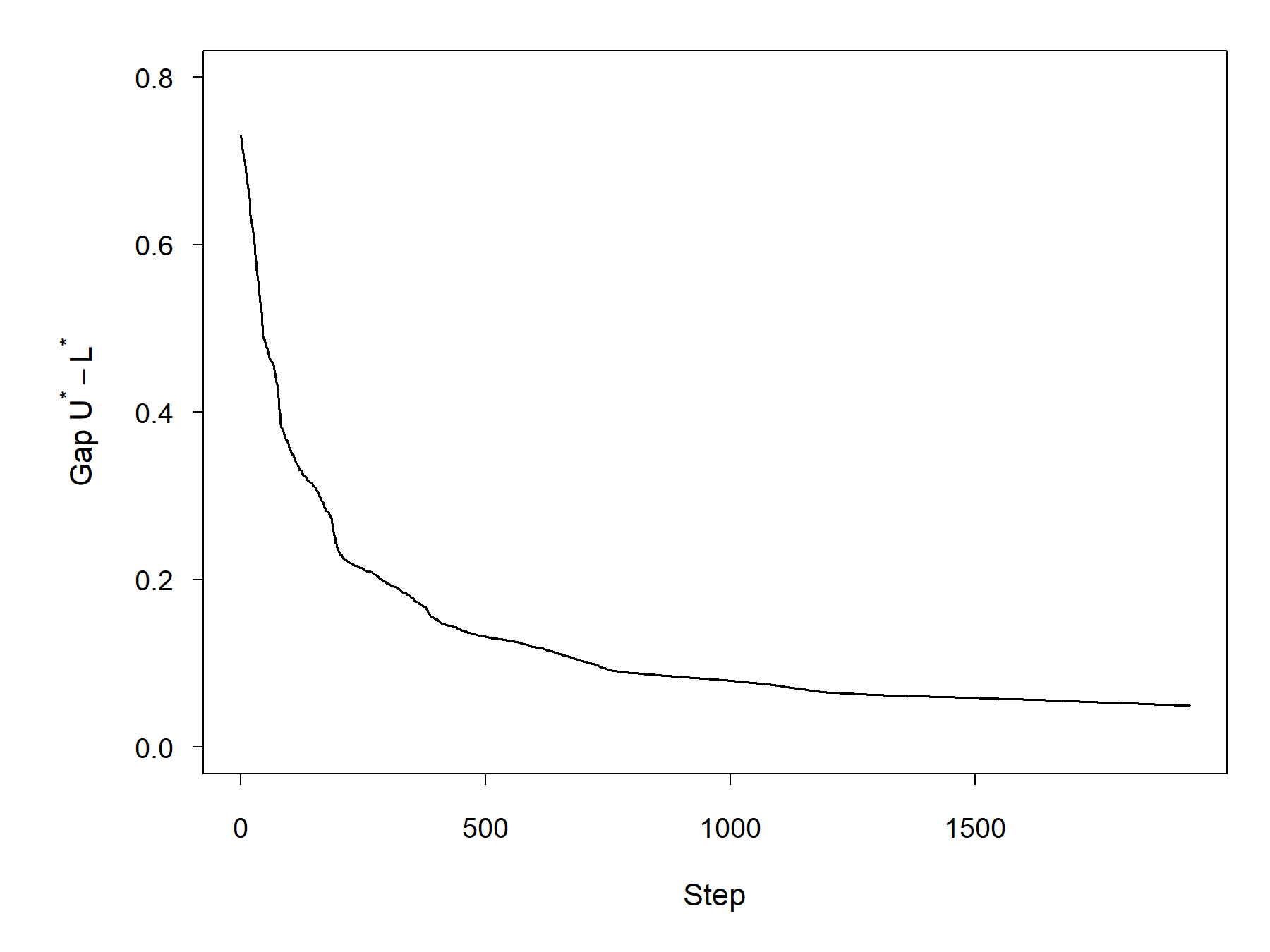} \\
\(1\) &
\includegraphics[width=\linewidth,draft=false]{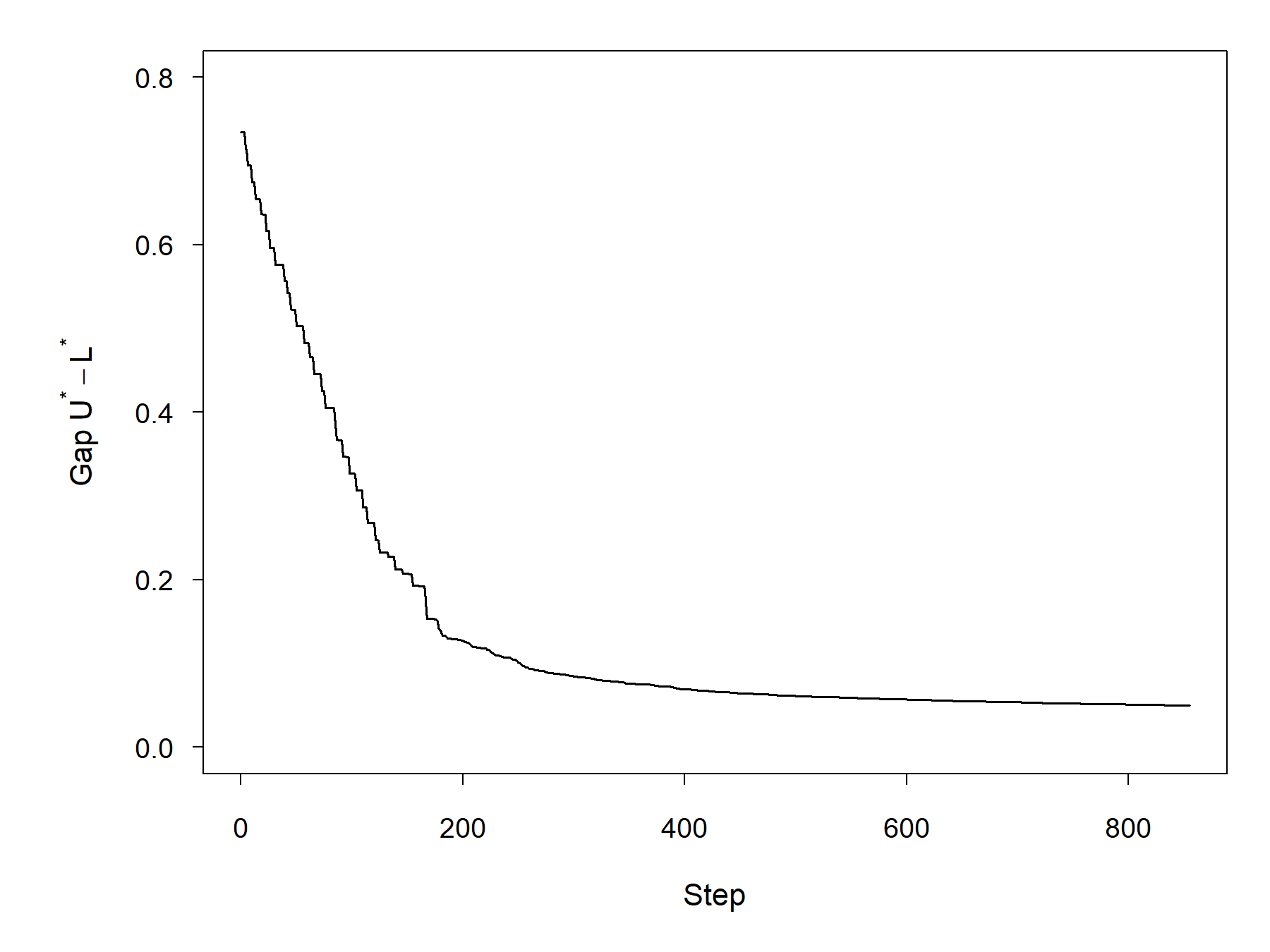} &
\includegraphics[width=\linewidth,draft=false]{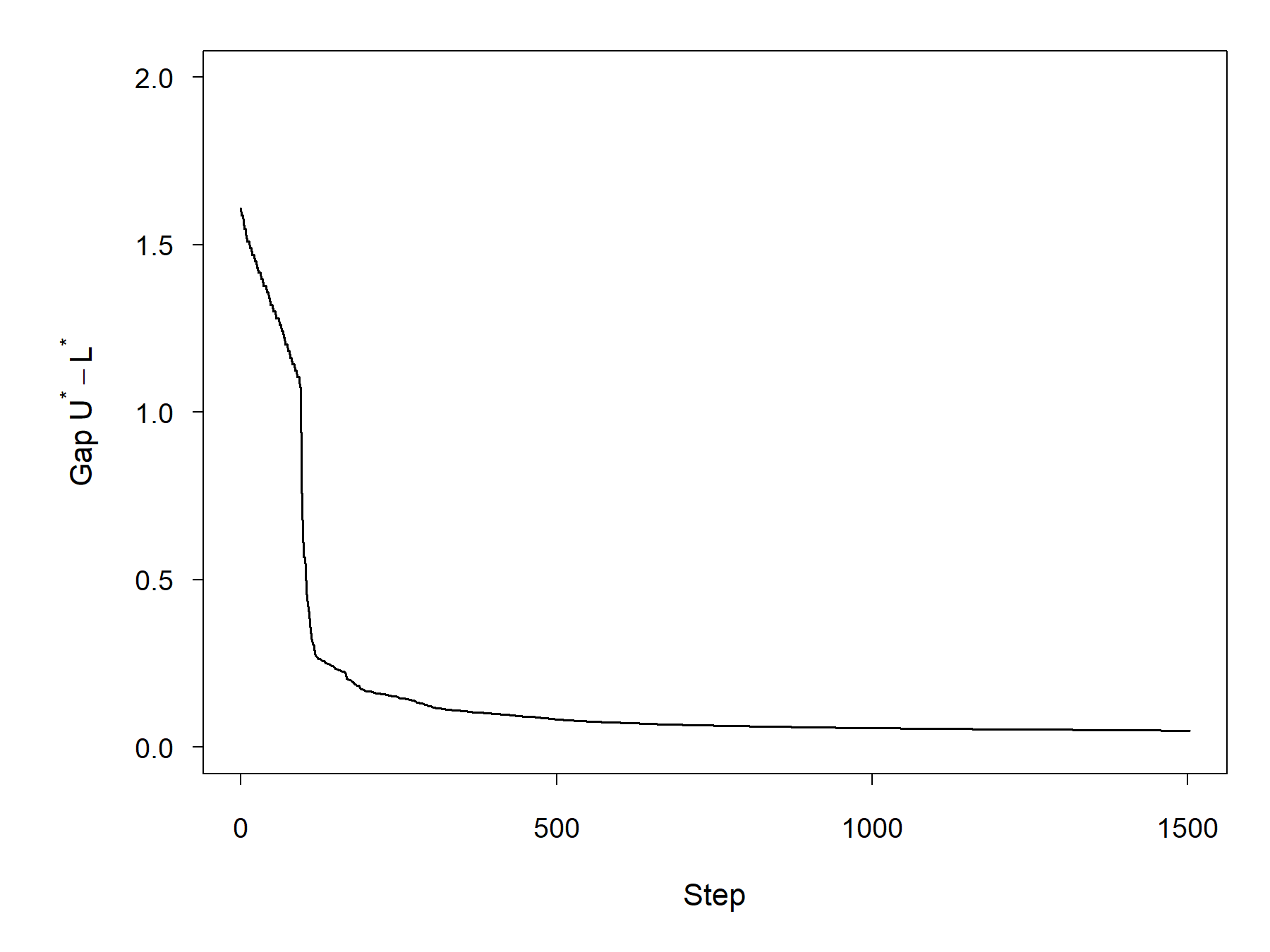} &
\includegraphics[width=\linewidth,draft=false]{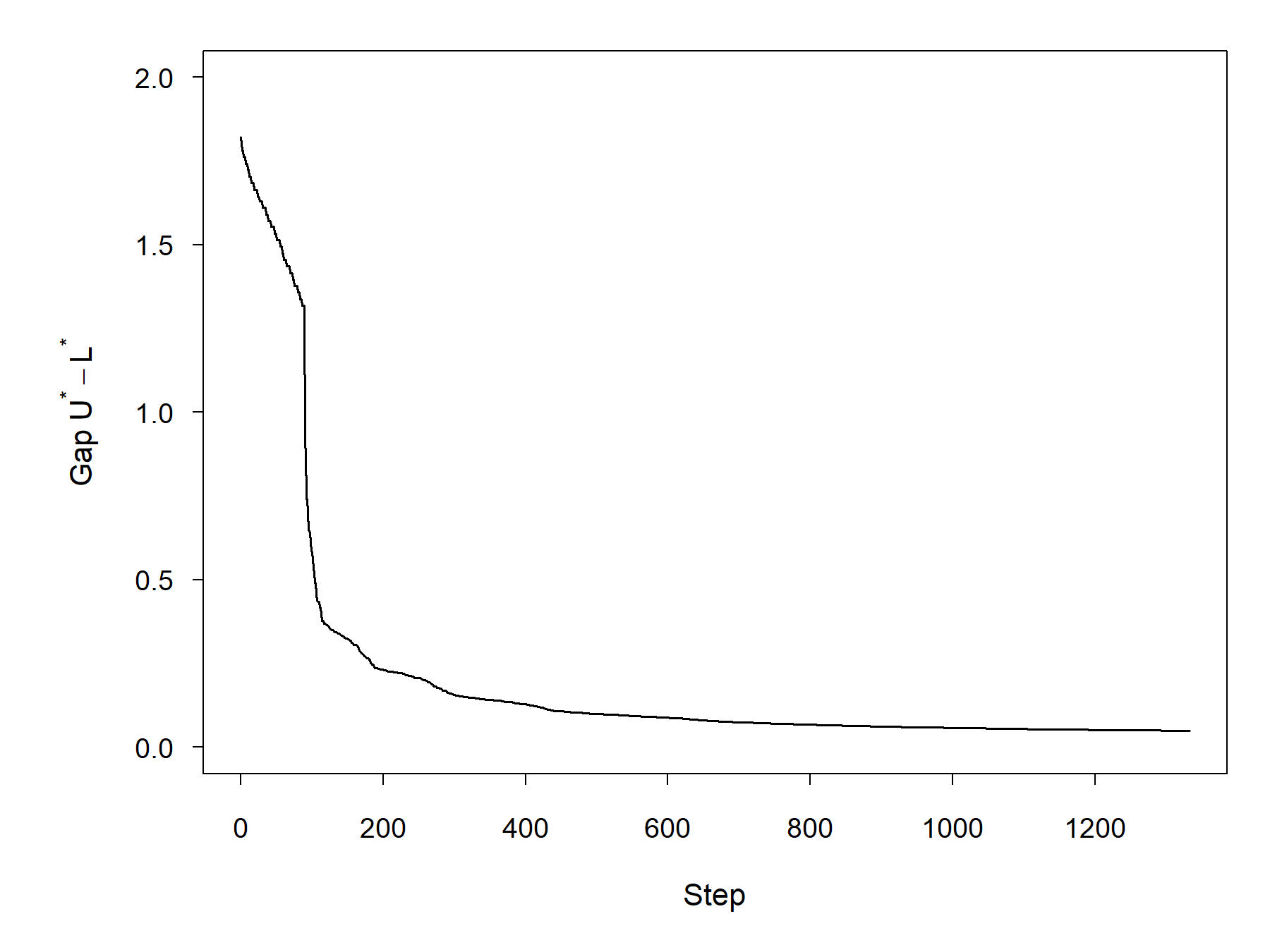} \\
\(2\) &
\includegraphics[width=\linewidth,draft=false]{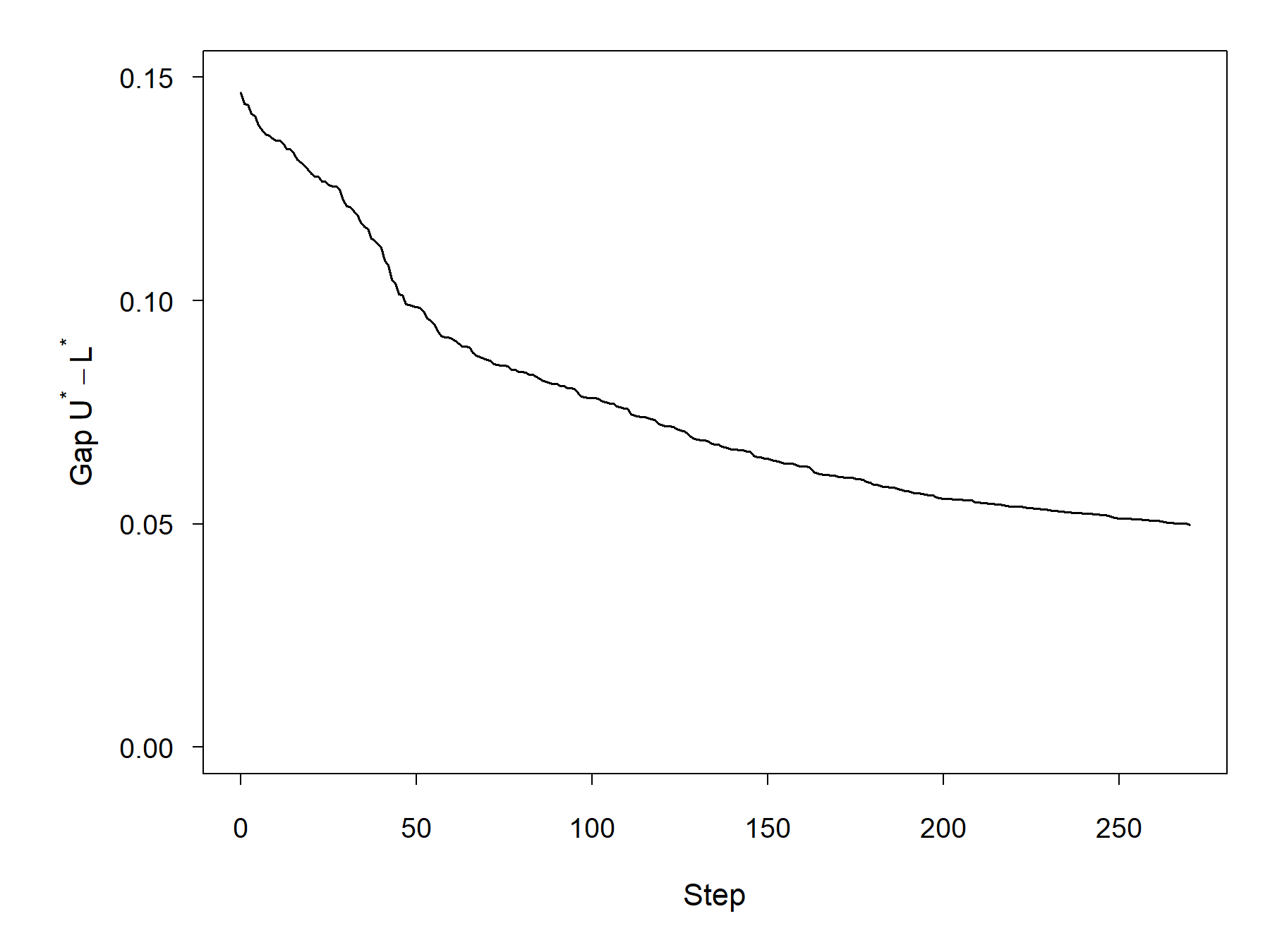} &
\includegraphics[width=\linewidth,draft=false]{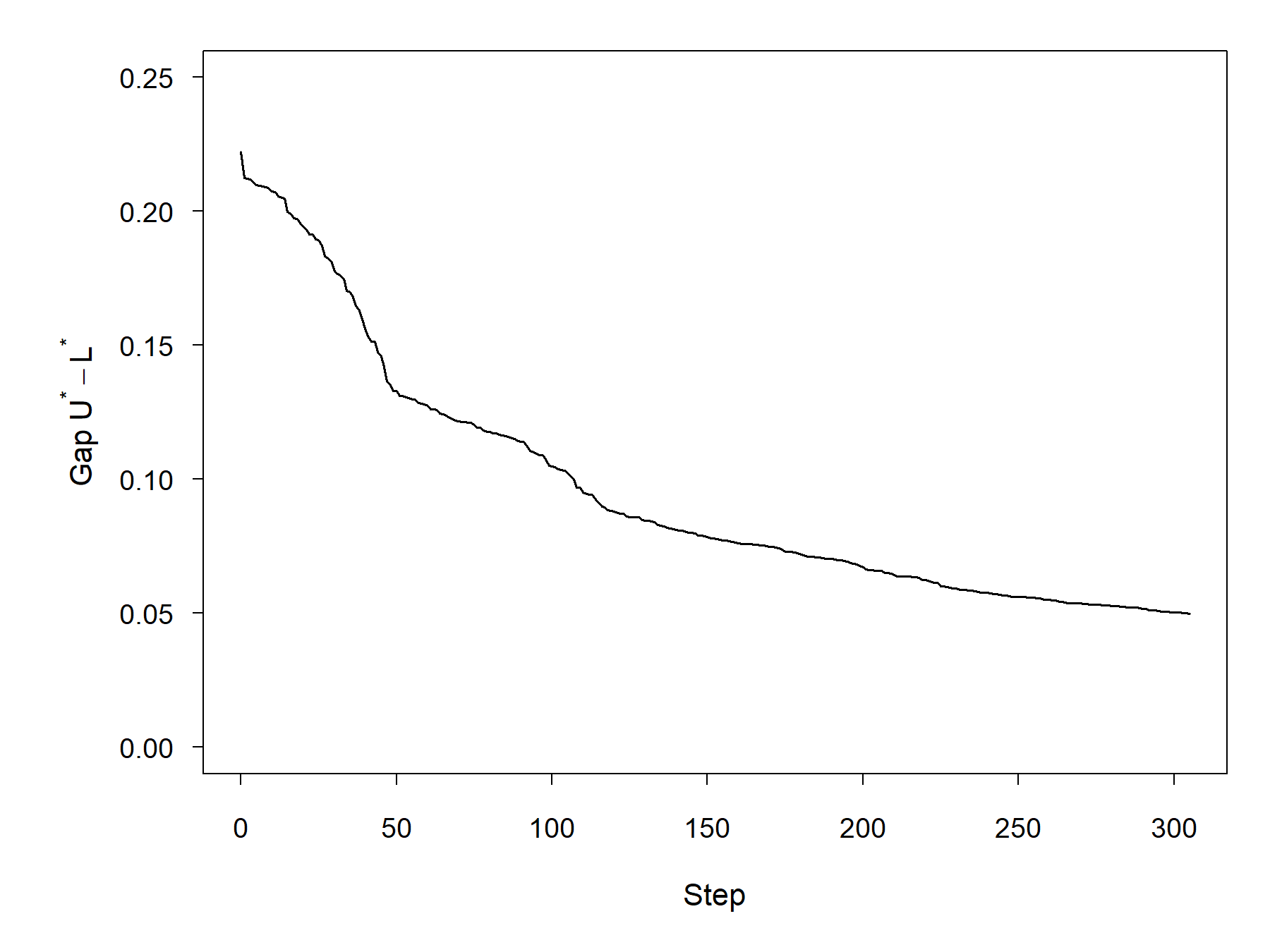} &
\includegraphics[width=\linewidth,draft=false]{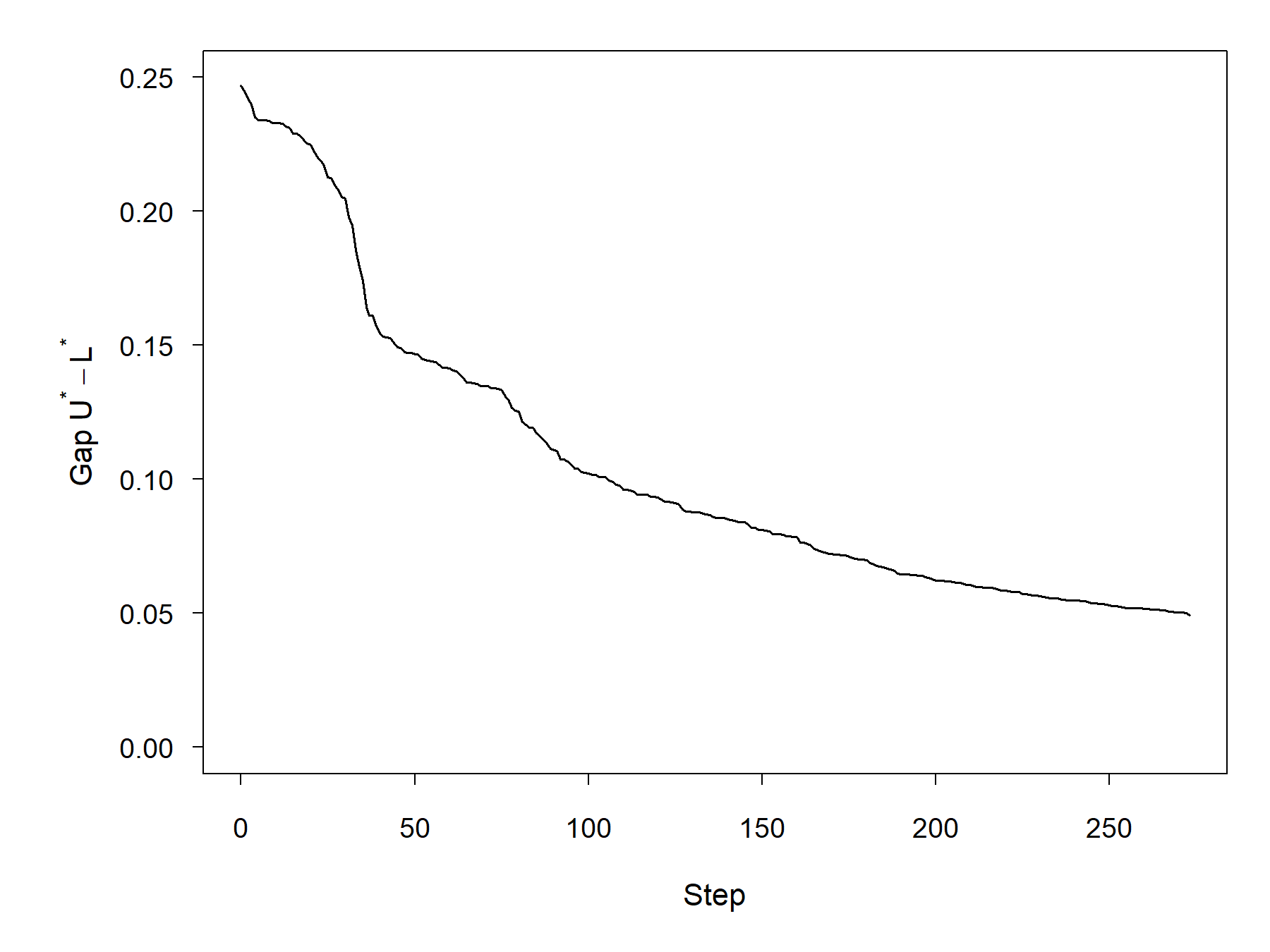} \\
\end{tabular}
\caption{Gap \(U^*-L^*\) during the certified interval-wise search for the twelve \((\varepsilon,\delta)\) pairs.}
\label{fig:appendix-gap-grid}
\end{figure}

\clearpage

\begin{figure}[!htbp]
\centering
\small
\setlength{\tabcolsep}{1pt}
\renewcommand{\arraystretch}{1.00}
\begin{tabular}{@{}>{\centering\arraybackslash}m{0.04\textwidth}|>{\centering\arraybackslash}m{0.35\textwidth}>{\centering\arraybackslash}m{0.35\textwidth}>{\centering\arraybackslash}m{0.35\textwidth}@{}}
\(\varepsilon\backslash\delta\) & \(0.1\) & \(0.01\) & \(0.001\) \\
\hline
\(0.25\) &
\includegraphics[width=\linewidth,draft=false]{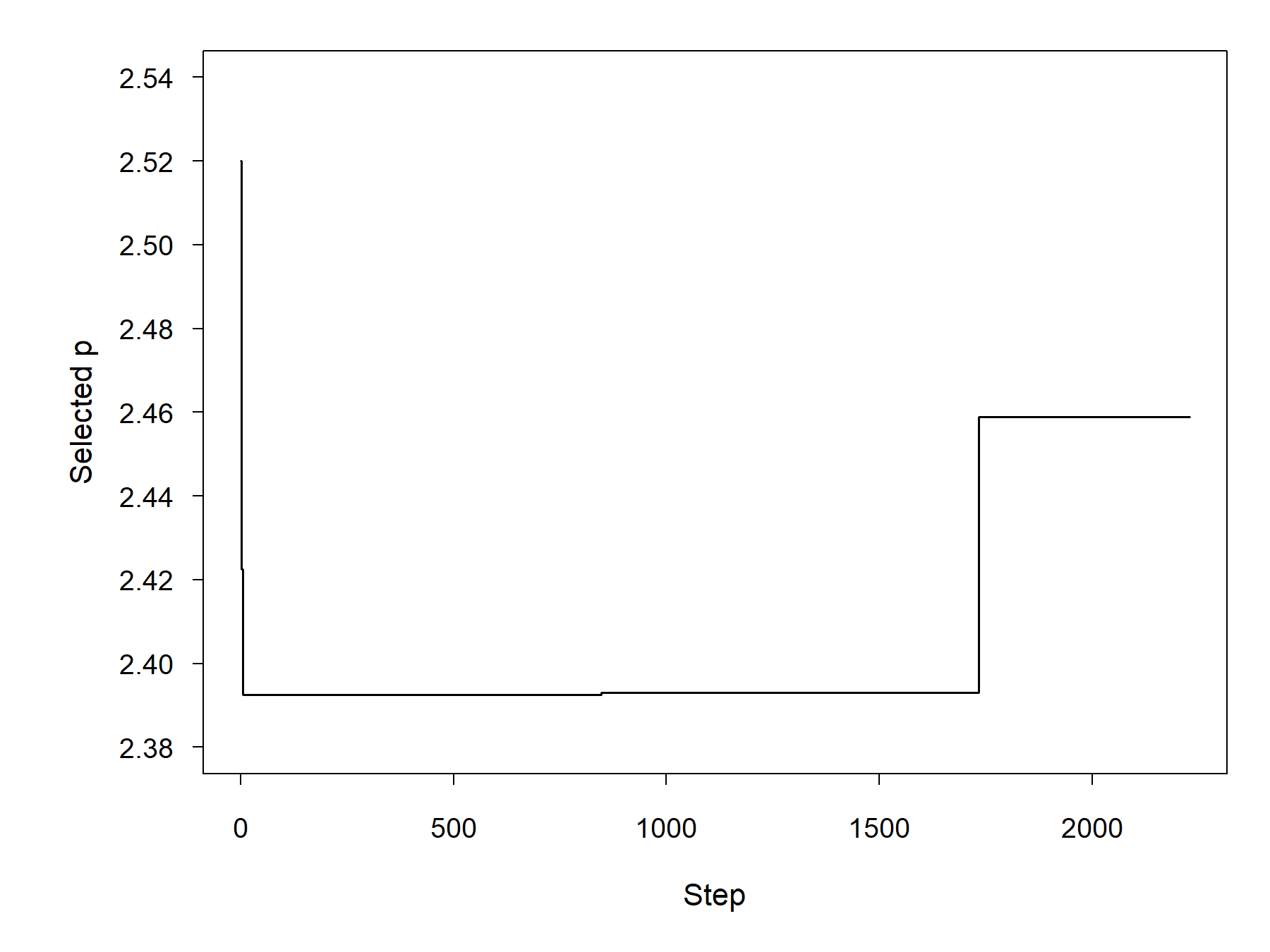} &
\includegraphics[width=\linewidth,draft=false]{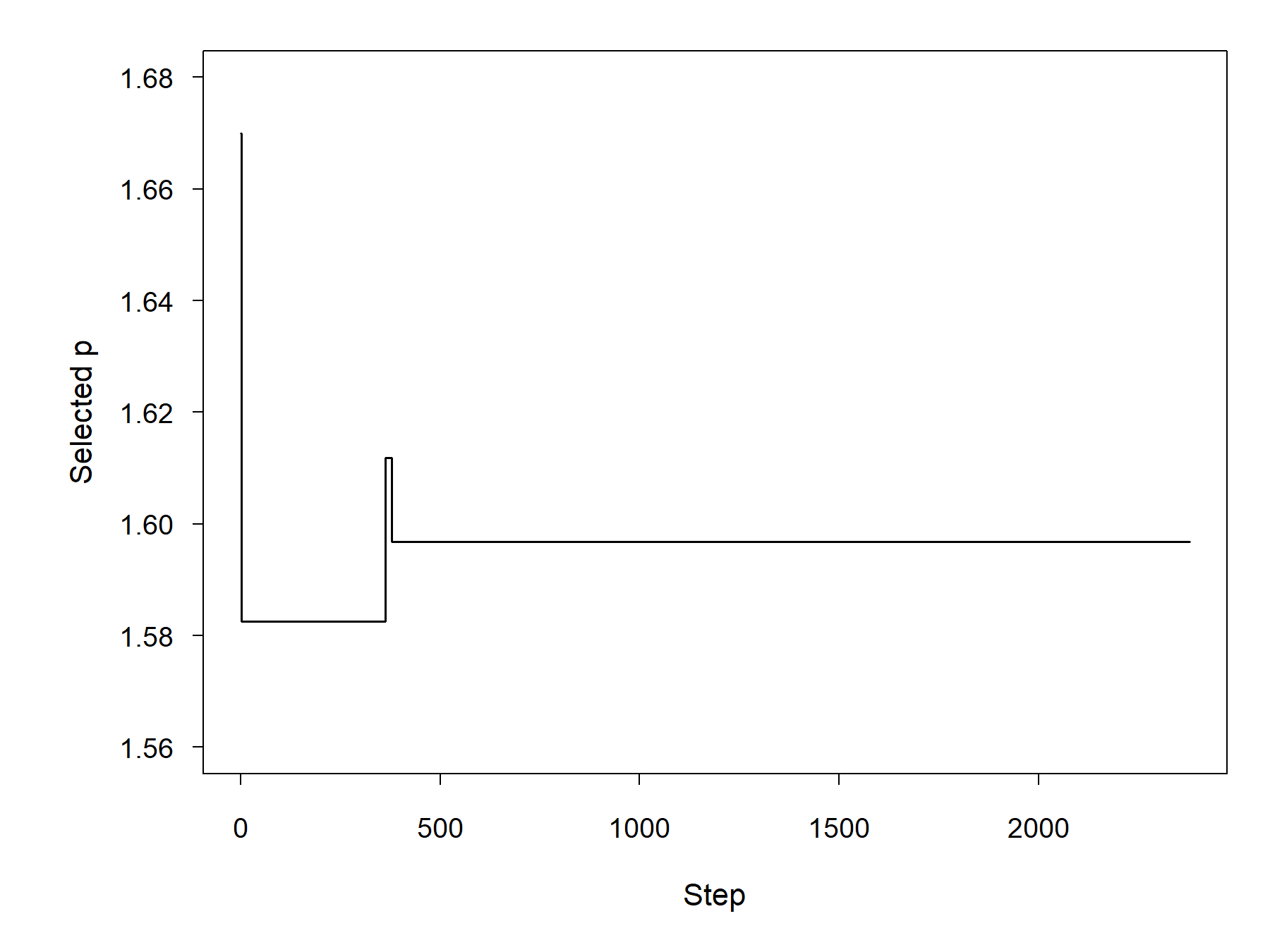} &
\includegraphics[width=\linewidth,draft=false]{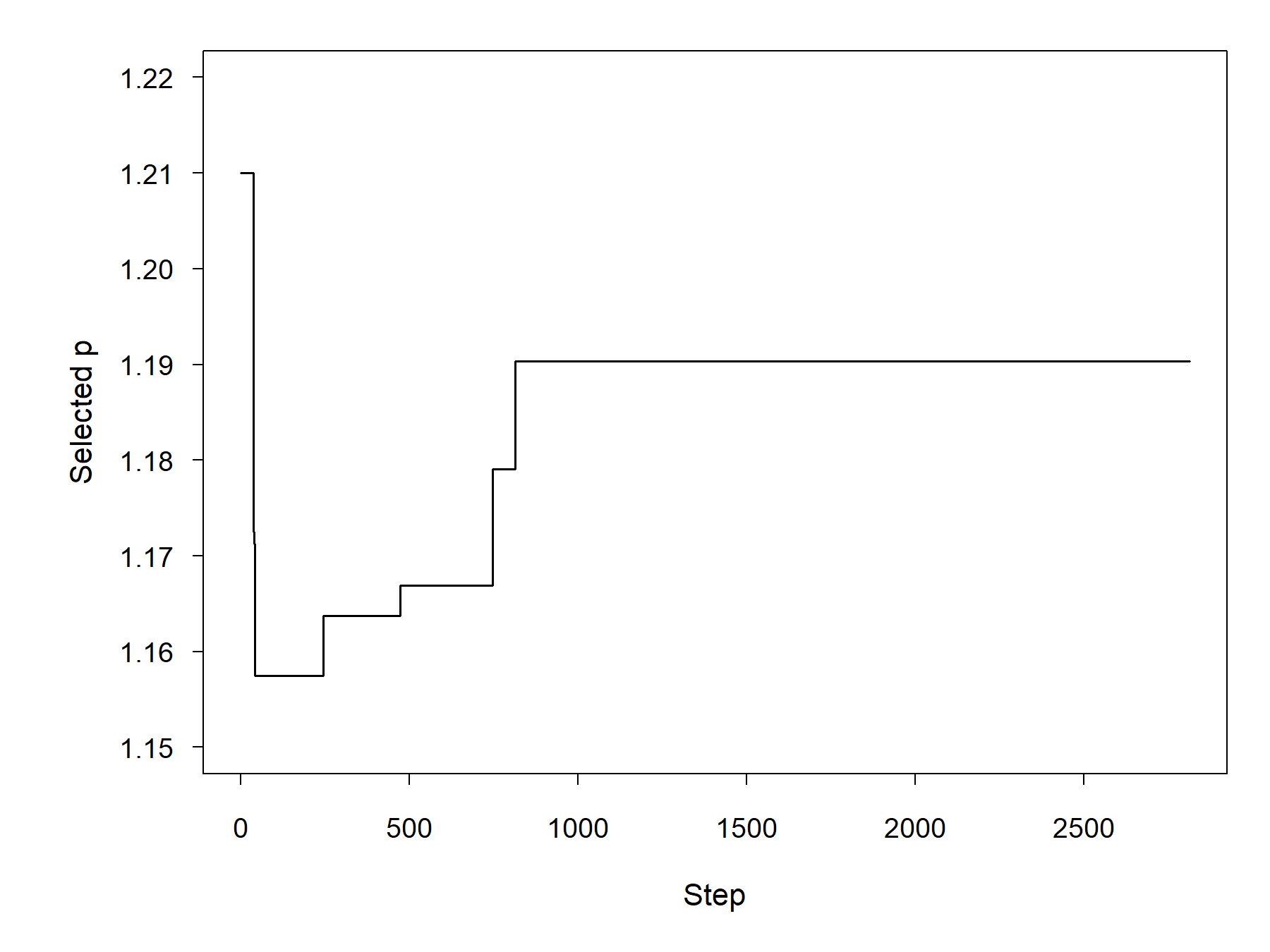} \\
\(0.5\) &
\includegraphics[width=\linewidth,draft=false]{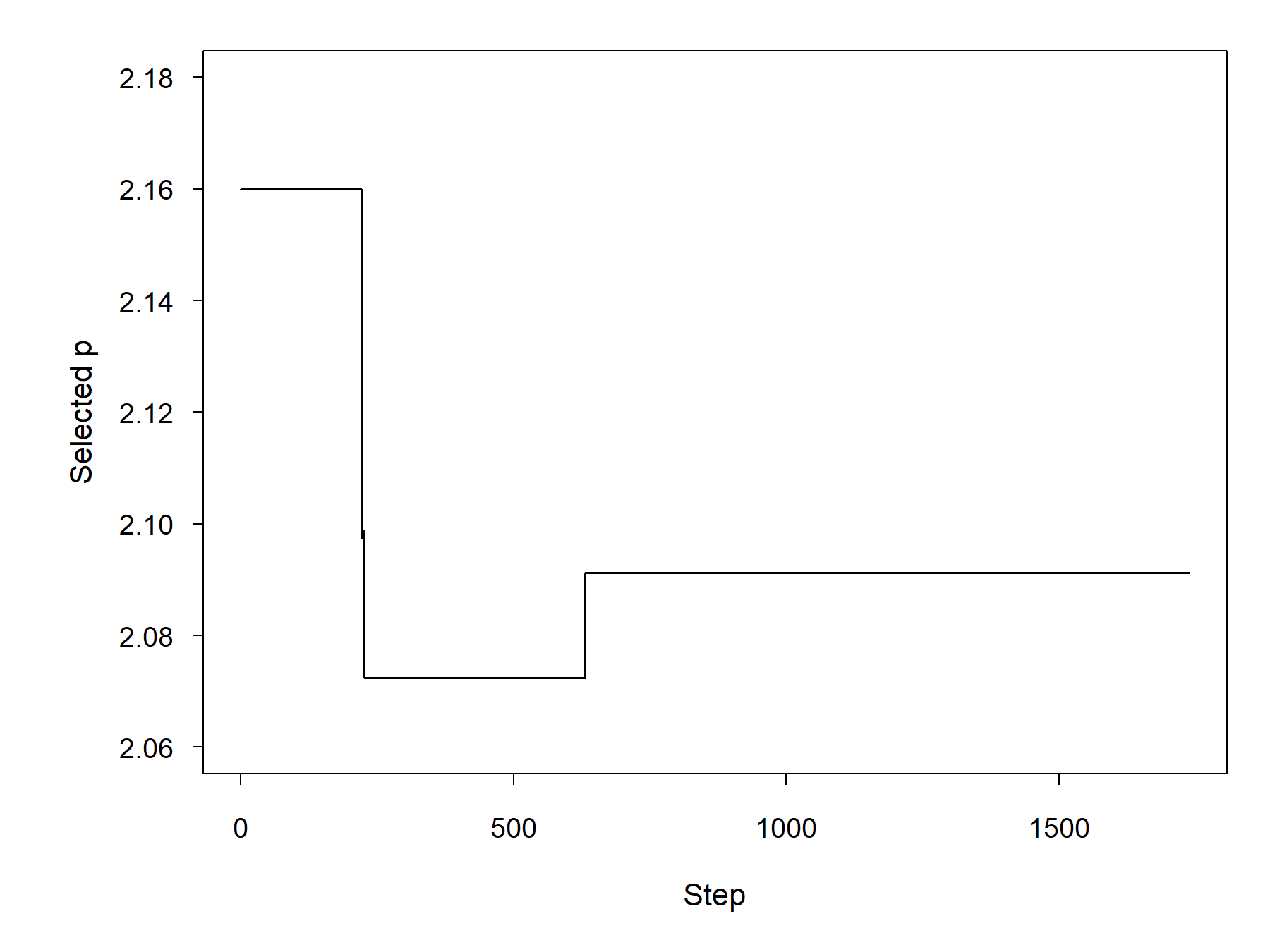} &
\includegraphics[width=\linewidth,draft=false]{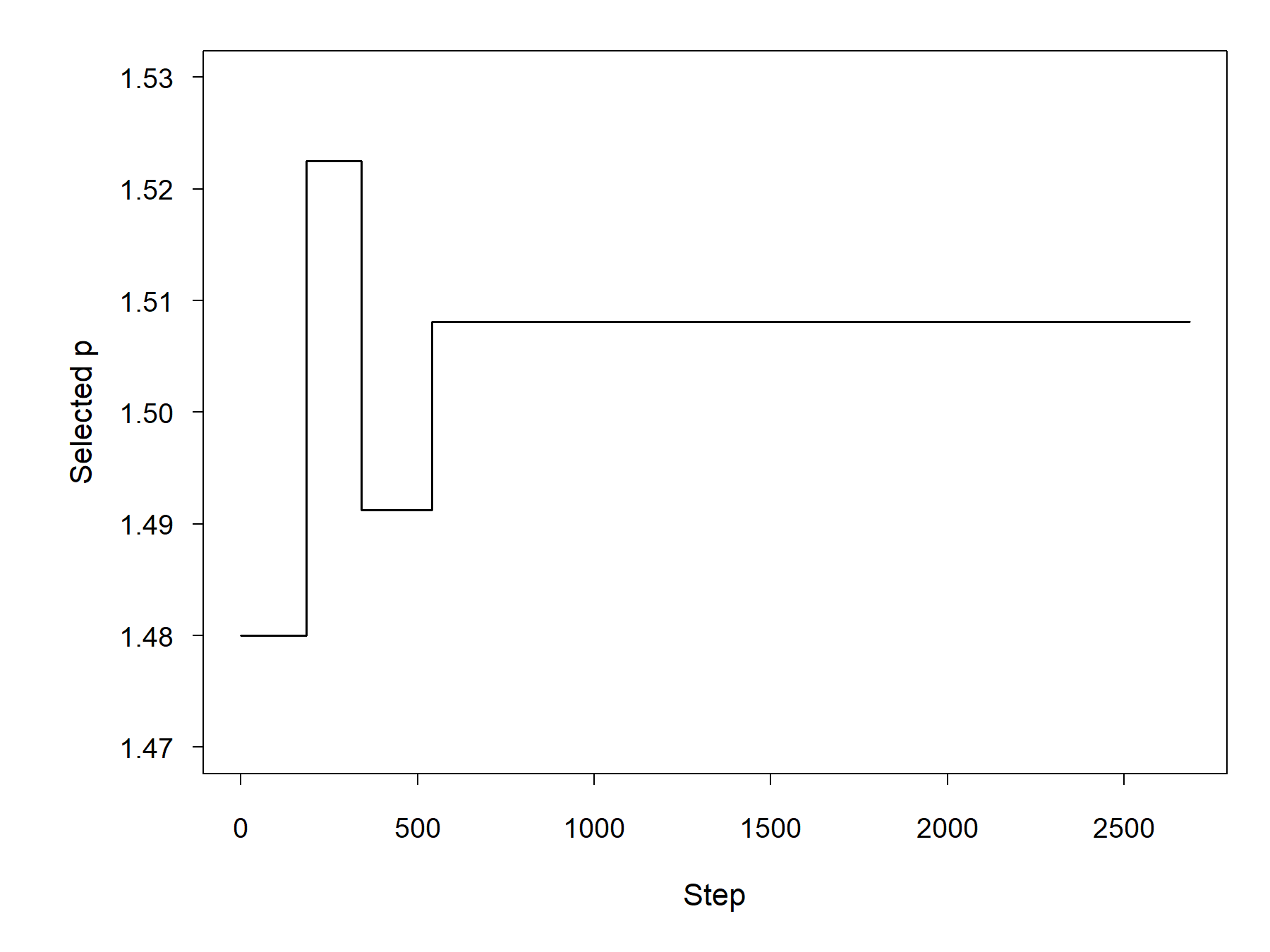} &
\includegraphics[width=\linewidth,draft=false]{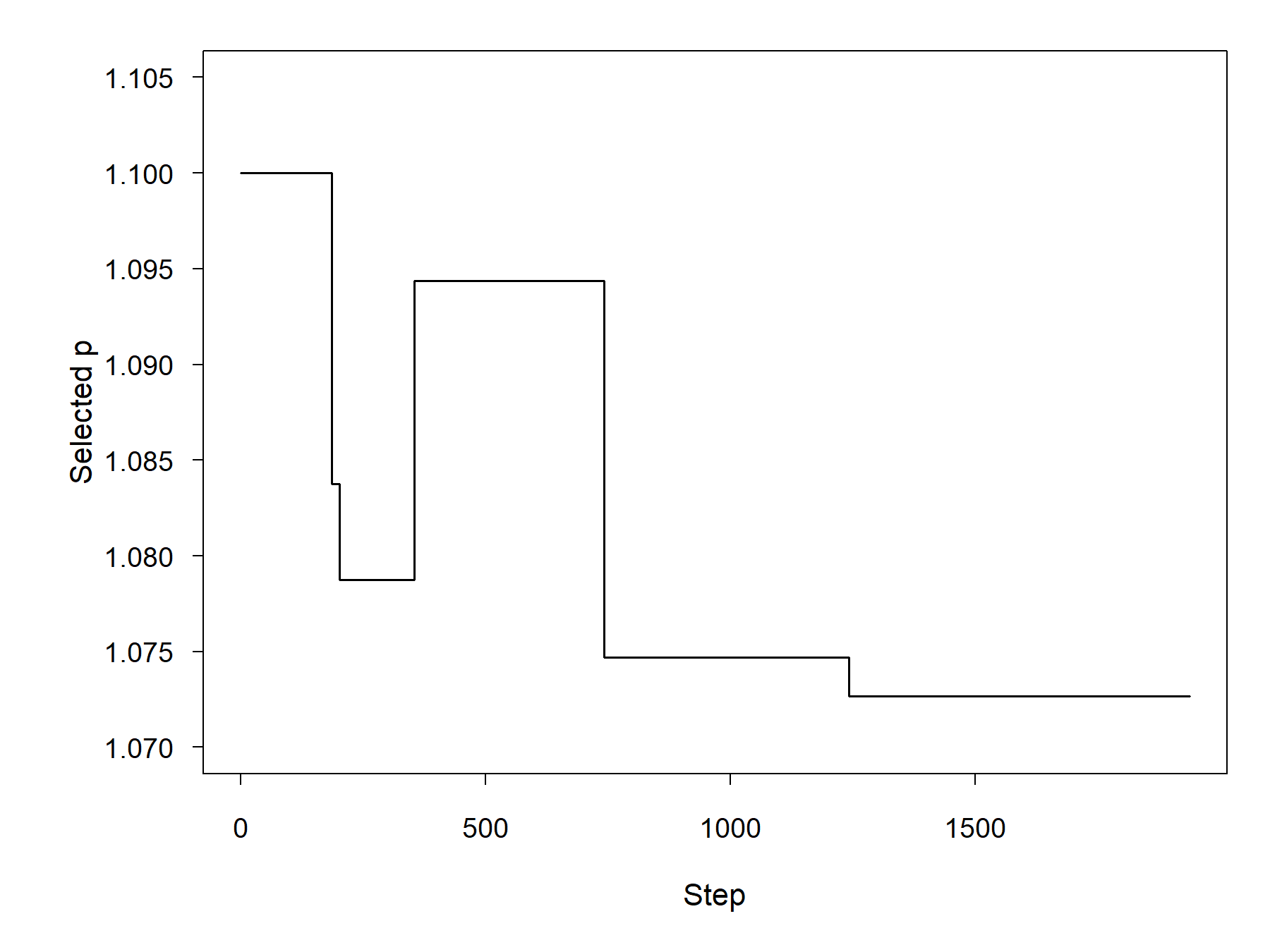} \\
\(1\) &
\includegraphics[width=\linewidth,draft=false]{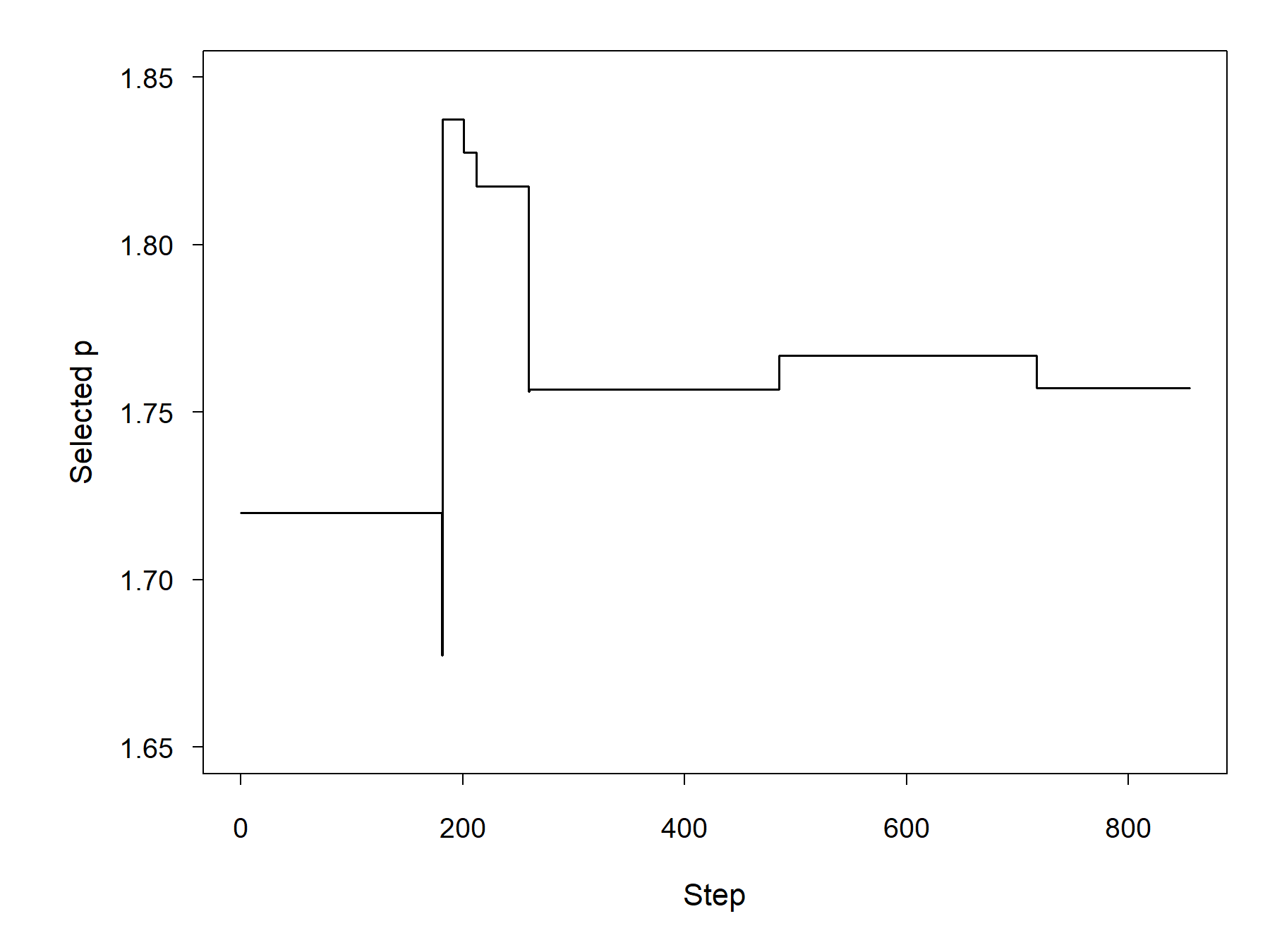} &
\includegraphics[width=\linewidth,draft=false]{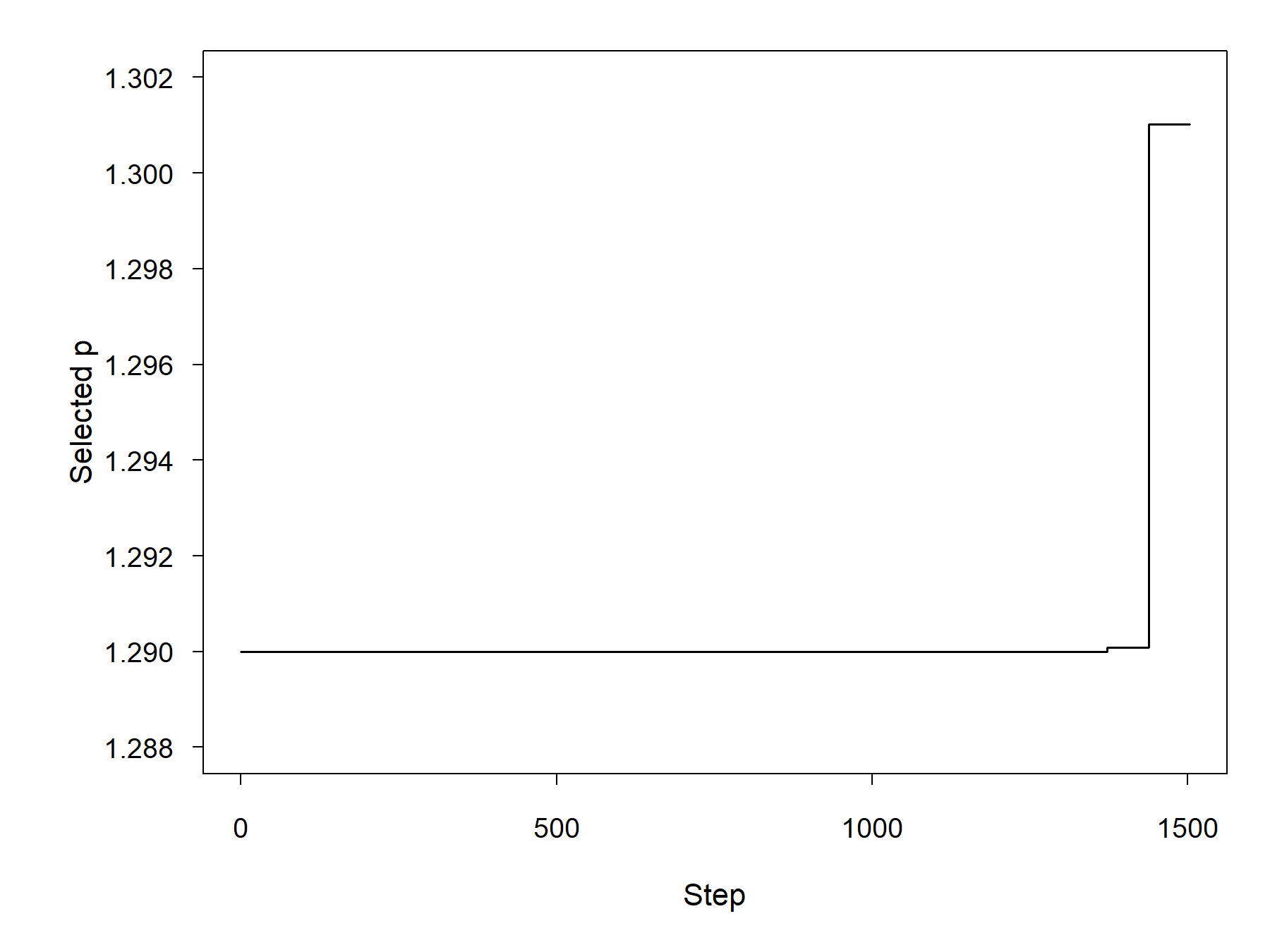} &
\includegraphics[width=\linewidth,draft=false]{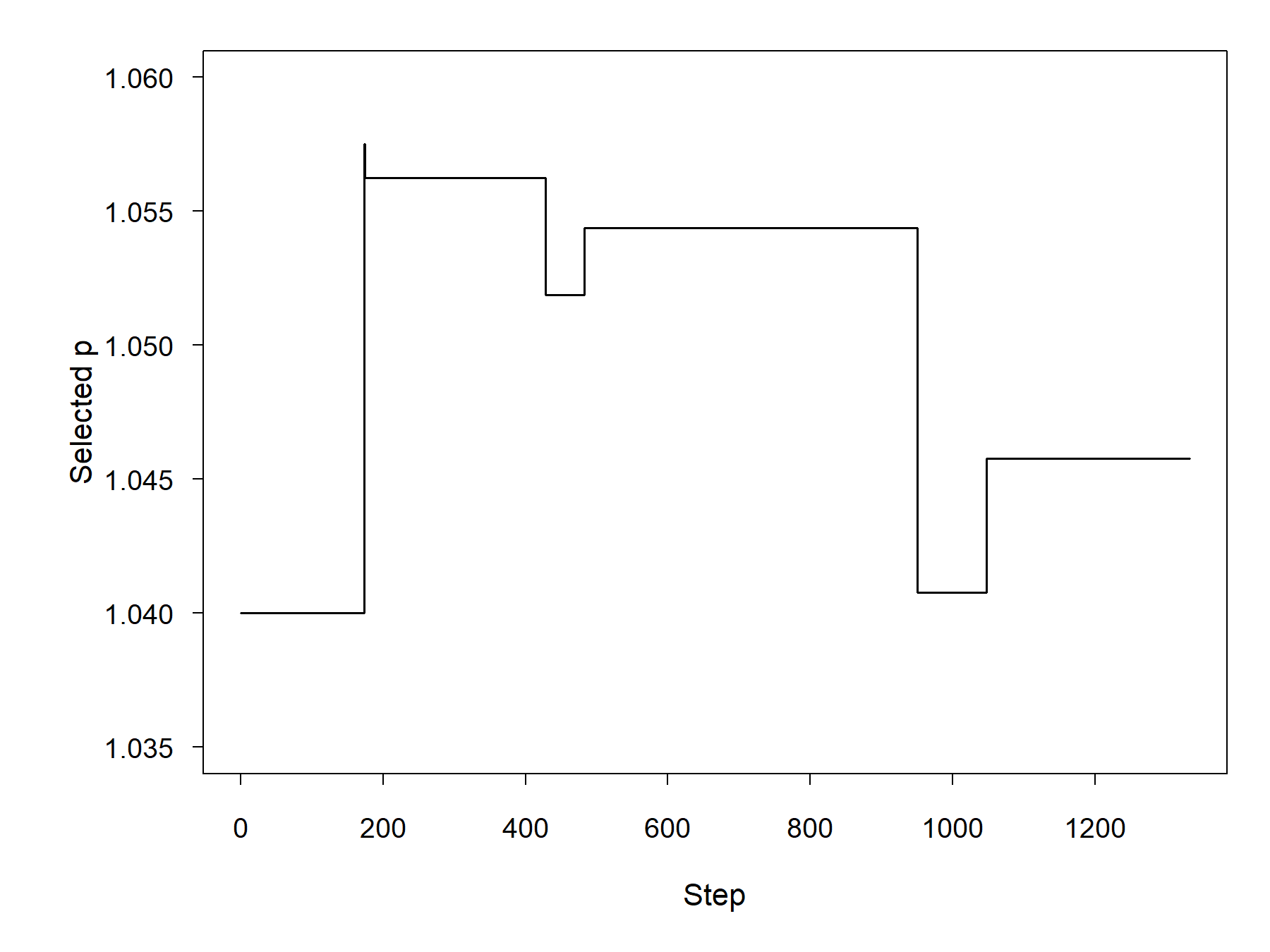} \\
\(2\) &
\includegraphics[width=\linewidth,draft=false]{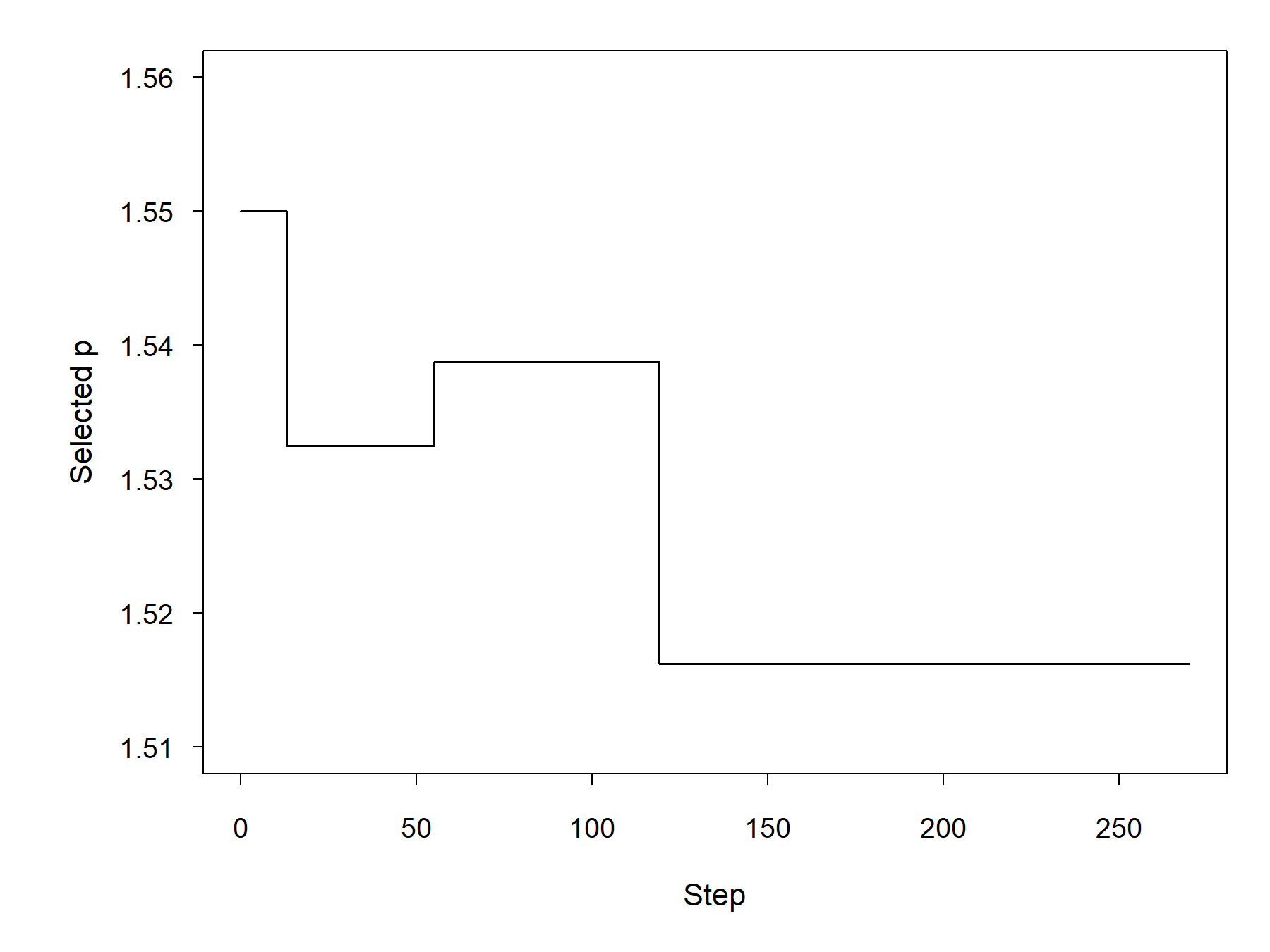} &
\includegraphics[width=\linewidth,draft=false]{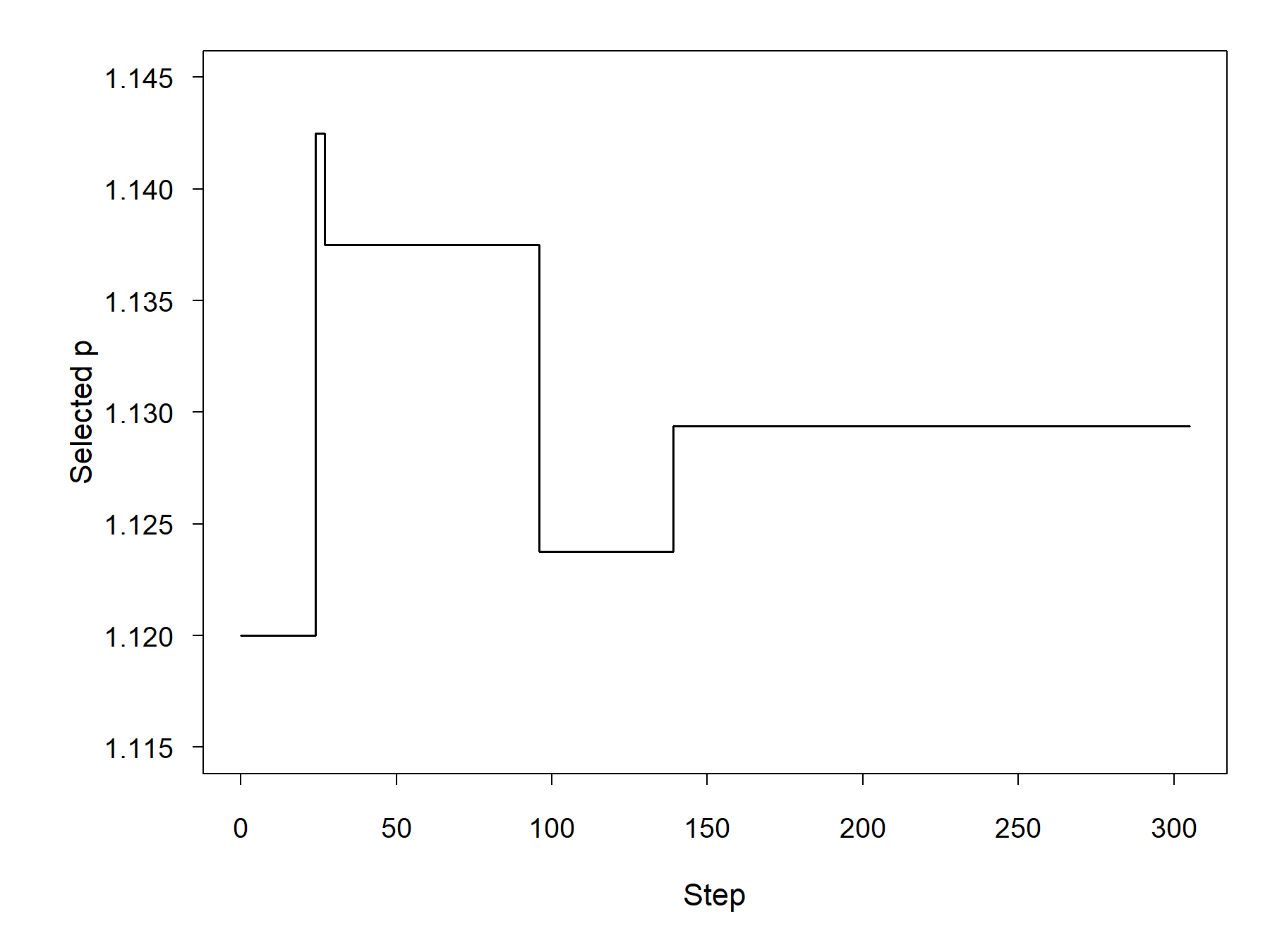} &
\includegraphics[width=\linewidth,draft=false]{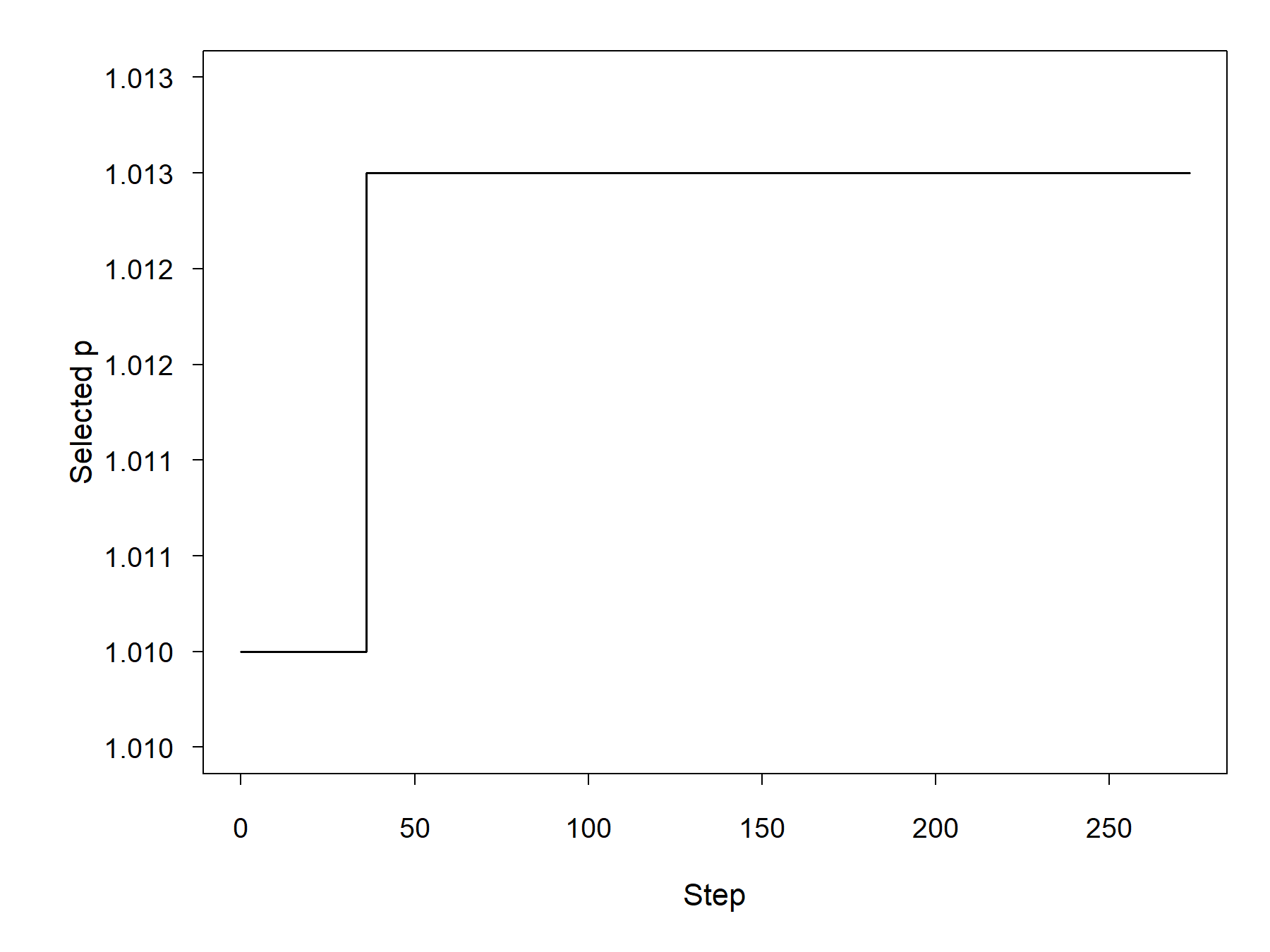} \\
\end{tabular}
\caption{Best shape parameter during the certified interval-wise search for the twelve \((\varepsilon,\delta)\) pairs.}
\label{fig:appendix-bestp-grid}
\end{figure}
\subsection{Uniform shape-derivative bound}
\label{app:task-specific-bound}

The following bound is used in Section~\ref{sec:task-specific-example}.

\begin{lemma}[Uniform shape-derivative bound]
\label{lem:uniform-task-derivative}
Let \(I=[p_L,p_R]\subset(1,\infty)\), and set
\(a_-=1/p_R\) and \(a_+=1/p_L\). Define
\[
\begin{aligned}
V(I):={}&
\frac{a_+^2}{2}
\max\{|\psi(a_-+1)|,|\psi(a_++1)|\}
\\
&+
\frac{a_+}{2}
\sqrt{
a_+(a_++1)
\left[
\psi_1(a_-+2)
+
\max\{\psi(a_-+2)^2,\psi(a_++2)^2\}
\right]
}.
\end{aligned}
\]
Then, for every \(c>0\),
\[
\sup_{p\in I}
\left|
\frac{d}{dp}
Q\left(\frac1p,c^p\right)
\right|
\le V(I).
\]
Consequently, if \(p_I=(p_L+p_R)/2\) and
\(h_I=(p_R-p_L)/2\), then
\[
\left|
Q\left(\frac1p,c^p\right)
-
Q\left(\frac1{p_I},c^{p_I}\right)
\right|
\le h_I V(I),
\qquad p\in I.
\]
\end{lemma}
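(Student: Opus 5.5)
The plan is to rewrite $Q(1/p,c^p)$ as a tail probability, differentiate under the integral sign, and then use the fact that the derivative of a total mass is zero to gain the factor $1/2$ that appears in $V(I)$.

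\emph{Integral representation.} With $Y_p\sim GGD(0,1,p)$ we have
\[
Q\left(\tfrac1p,c^p\right)=\mathbb P(|Y_p|>c)=\int_c^\infty \frac{p}{\Gamma(1/p)}e^{-x^p}\,dx .
\]
On the compact interval $I\subset(1,\infty)$, the $p$-derivative of the integrand is
\[
\Bigl(\tfrac1p+\tfrac{\psi(1/p)}{p^2}-x^p\log x\Bigr)\frac{p}{\Gamma(1/p)}e^{-x^p}.
\]
This is dominated by $C(1+x^{p_R}|\log x|)e^{-x^{p_L}}$, which is integrable on $(0,\infty)$. Hence we may differentiate under the integral sign.

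\emph{Change of variables.} Substitute $y=x^p$, so $dx=\tfrac1p y^{q-1}dy$ and $\log x=(\log y)/p$, where $q=1/p$. Using $1+q\psi(q)=q\psi(q+1)$, this gives
\[
\frac{d}{dp}Q\left(\tfrac1p,c^p\right)=\int_{c^p}^\infty k_q(y)\,dy,\qquad k_q(y):=\frac{q}{\Gamma(q)}y^{q-1}e^{-y}\bigl(q\psi(q+1)-y\log y\bigr).
\]
This is twice the function $D_q$ from Appendix~\ref{app:one-dimensional-tight-bound}.

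\emph{The factor $1/2$.} The same computation with $c=0$ differentiates $\mathbb P(|Y_p|>0)\equiv1$. Therefore $\int_0^\infty k_q=0$. It follows that
\[
\Bigl|\int_v^\infty k_q\Bigr|=\Bigl|\int_0^v k_q\Bigr|\quad\text{for every }v,
\]
and so $\bigl|\int_v^\infty k_q\bigr|\le\tfrac12\|k_q\|_{L^1(0,\infty)}$. This halving is the one step that needs care; without it the bound would be off by a factor of two.

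\emph{Bounding the $L^1$ norm.} Let $Y\sim\Gamma(q,1)$. Then
\[
\tfrac12\|k_q\|_{L^1}=\tfrac q2\,\mathbb E\bigl|q\psi(q+1)-Y\log Y\bigr|.
\]
By the triangle inequality and Cauchy--Schwarz,
\[
\tfrac12\|k_q\|_{L^1}\le \tfrac{q^2}{2}|\psi(q+1)|+\tfrac q2\bigl(\mathbb E[Y^2\log^2Y]\bigr)^{1/2}.
\]
To evaluate the second moment, write $\mathbb E[Y^2\log^2Y]=\frac{\Gamma(q+2)}{\Gamma(q)}\,\mathbb E[\log^2 W]$ with $W\sim\Gamma(q+2,1)$. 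Since $\mathbb E\log W=\psi(q+2)$ and $\operatorname{Var}\log W=\psi_1(q+2)$, this equals
\[
q(q+1)\bigl(\psi_1(q+2)+\psi(q+2)^2\bigr).
\]

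\emph{Uniformity over $I$.} Now let $q$ range over $[a_-,a_+]$. Then $q\le a_+$. Since $\psi$ is increasing, $|\psi(q+1)|$ and $\psi(q+2)^2$ are maximized at the endpoints $q=a_\pm$. Since $\psi_1$ is decreasing, $\psi_1(q+2)\le\psi_1(a_-+2)$. Substituting these bounds gives exactly $V(I)$, uniformly in $c>0$.

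\emph{Consequence.} The second claim follows from the mean value theorem applied to $p\mapsto Q(1/p,c^p)$ on the segment between $p$ and $p_I$, whose length is at most $h_I$.
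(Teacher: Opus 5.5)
Your proposal is correct and follows the paper's proof step for step: the tail-integral representation, differentiation under the integral sign followed by the substitution $y=x^p$, the zero-mass property that yields the factor $1/2$, the triangle inequality with Cauchy--Schwarz and the Gamma log-moment identity, monotonicity of $\psi$ and $\psi_1$ over $[a_-,a_+]$, and the mean value theorem. The only cosmetic difference is that you obtain the vanishing total integral by differentiating the constant mass $\mathbb P(|Y_p|>0)\equiv 1$, whereas the paper checks it directly from $1+a\psi(a)=a\psi(a+1)$.
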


\begin{proof}
Writing
\[
Q\left(\frac1p,c^p\right)
=
2\int_c^\infty f_p(x)\,dx,
\]
differentiation under the integral sign, justified by dominated convergence
on compact subintervals of \((1,\infty)\), followed by \(y=x^p\), gives,
with \(a=1/p\),
\[
\frac{d}{dp}Q\left(\frac1p,c^p\right)
=
2\int_{c^p}^{\infty}
\frac{a}{2\Gamma(a)}
y^{a-1}e^{-y}
\{1+a\psi(a)-y\log y\}\,dy.
\]
Moreover,
\[
\int_0^\infty
\frac{a}{2\Gamma(a)}
y^{a-1}e^{-y}
\{1+a\psi(a)-y\log y\}\,dy
=
\frac a2
\{1+a\psi(a)-a\psi(a+1)\}
=0.
\]
Hence the zero-mass property implies
\[
\left|
\frac{d}{dp}Q\left(\frac1p,c^p\right)
\right|
\le
\int_0^\infty
\left|
\frac{a}{2\Gamma(a)}
y^{a-1}e^{-y}
\{1+a\psi(a)-y\log y\}
\right|dy.
\]
Using \(1+a\psi(a)=a\psi(a+1)\), the triangle inequality, and
Cauchy--Schwarz,
\[
\begin{aligned}
\left|
\frac{d}{dp}Q\left(\frac1p,c^p\right)
\right|
\le{}&
\frac{a^2}{2}|\psi(a+1)|
+
\frac a2
\sqrt{
a(a+1)
\{\psi_1(a+2)+\psi(a+2)^2\}
}.
\end{aligned}
\]
Since \(a\in[a_-,a_+]\), the monotonicity of \(\psi\) and \(\psi_1\)
gives the bound \(V(I)\). The final inequality follows from the
mean value theorem.
\end{proof}

\end{document}